\newcommand{\ga}{\gamma}
\newcommand{\e}{\varepsilon}
\newcommand{\la}{\lambda}
\newtheorem{theorem}{Theorem}[section]
\newtheorem{lemma}[theorem]{Lemma}
\newtheorem{remark}[theorem]{Remark}
\numberwithin{equation}{section}
\begin{document}
\title{
Infinite concentration and oscillation estimates for supercritical semilinear elliptic equations in discs. I
}
\author{Daisuke Naimen
}
\date{\small{Muroran Institute of Technology, 27-1, Mizumoto-cho, Muroran-shi, Hokkaido, 0508585, Japan}}
\maketitle
\begin{abstract} 
In our series of papers, we establish infinite concentration and 
oscillation estimates for supercritical semilinear elliptic equations in discs. Especially, 
we extend the previous result by the author (N.  arXiv:2404.01634) 
to the general supercritical  case. Our growth condition is related 
to the one introduced by Dupaigne-Farina (J. Eur. Math. Soc. 12: 855--882, 2010)  and admits 
two types of supercritical nonlinearities, the Trudinger-Moser type growth 
$e^{u^p}$ with $p>2$ and the multiple exponential one 
$\exp{(\cdots(\exp{(u^m)})\cdots)}$ with $m>0$. In this first part, we 
carry out the analysis of infinite concentration phenomena on any blow-up 
solutions. As a result, we classify all the infinite concentration 
behaviors into two types which in particular shows a new 
behavior for the multiple exponential case.  More precisely, we detect 
an infinite sequence of concentrating parts on any blow-up solutions 
via the scaling and pointwise techniques. The precise description of 
the limit profile, energy, and position of each concentration is 
given  via the Liouville equation with two types of 
the energy recurrence formulas.  This leads us to observe 
two types of  supercritical behaviors. The behavior for the latter 
growth is new and can be understood as the limit 
case of the former one.  Our concentration estimates lead to 
the analysis of  infinite oscillation phenomena, including infinite oscillations of 
bifurcation diagrams, discussed in the second part. 
\end{abstract}
\tableofcontents
\section{Introduction}\label{sec:intr}
We consider the next problem,
\begin{equation}\label{p0}
\begin{cases}
-\Delta u=\la f(x,u), \ u>0&\text{ in }\Omega,\\
u=0&\text{ on }\partial \Omega,
\end{cases}
\end{equation}
where $\Omega\subset \mathbb{R}^2$ is a bounded domain with smooth boundary $\partial \Omega$, $\la>0$ a parameter, $f:\bar{\Omega}\times [0,\infty)\to \mathbb{R}$ a nonnegative continuously differentiable function. Moreover we impose a generalized exponential growth condition at infinity on $f$. This will be mentioned more precisely in Subsection \ref{sub:gr} below. Typical examples are functions with the Trudinger-Moser type growth $e^{u^p}$ with   $p>1$ and also the multiple exponential one $\exp{(\cdots(\exp{(u^m)})\cdots)}$ with   $m>0$.  In our main argument, we assume $\Omega$ is a disc and investigate the infinite concentration and oscillation behavior of any blow-up solutions of \eqref{p0} in the supercritical case. 

For several decades, \eqref{p0} with exponential growth focuses various attentions. We first notice that if $f(x,t)=e^t$, \eqref{p0} is knows as the Gelfand problem \cite{G}. The complete classification of the structure of the solutions set is given by Joseph-Lundgren \cite{JL} for the case $\Omega$ is an $N$-dimensional ball with $N\ge1$. They show that the shape of the bifurcation diagram is strongly affected by the dimension $N$. Indeed, one sees that in the case $1\le N\le2$, the diagram, which emanates from the trivial solution $(\la,u)=(0,0)$, has a unique turning point and goes to the point $(0,\infty)$ at infinity. On the other hand, in the case $3\le N\le9$, one observes that it oscillates  around the axis $\la=\la^*$ infinitely many times for some value $\la^*>0$. Note that this ensures the existence of infinitely many solutions $u$ for $\la=\la^*$  and many solutions near $\la=\la^*$. Finally, in the case $N\ge 10$, one finds that it has no turning point and goes to the point $(\la^*,\infty)$ at infinity for a number $\la^*>0$.  More recently, several extensions to general supercritical cases are given in the case $N\ge3$. See \cite{Mi2}, \cite{KW}, and \cite{MN} for results on balls and  \cite{FZ} on convex domains. We remark that the supercritical problem in dimension two was not treated in these works. This is one of our main aims in this series.

Another interesting case is $f(x,t)=h(x,t)e^{t^2}$ with a suitable perturbation term $h(x,t)$. In this case, \eqref{p0} becomes the critical variational problem due to the Trudinger-Moser inequality by \cite{T} and \cite{M}. In view of this, one of the most important discussions lies in the analysis of the non-compact  phenomena appearing as concentration behaviors of solutions and Palais-Smale sequences. The existence of solutions are obtained, for example, in \cite{A} via  the variational method together with the concentration-compactness principle by Lions \cite{Lions}. The existence of multiple concentrating solutions is studied in \cite{DMR}. On the other hand, the classification of the concentration behavior of solutions is done by several authors \cite{AS}, \cite{AD}, \cite{D}, and \cite{DT}. In particular, in \cite{D},  Druet proves that any energy-bounded sequence of blow-up solutions exhibits the finite energy quantization phenomenon. More precisely, he shows that it is decomposed by at most finitely many concentrating parts which, after an appropriate scaling, converge to a regular solution of the Liouville equation \cite{L} 
\begin{equation}\label{eq:lv}
-\Delta U=e^U\text{ in }\mathbb{R}^2\ \text{ with }\int_{\mathbb{R}^2}e^Udx<\infty.
\end{equation}
This observation with the classification result by \cite{CL} leads to the explicit quantification of the blow-up levels. The concentration analysis in this direction plays important roles in the variational analysis of  critical problems. See \cite{DMMT}.  

In addition,  we refer the reader to \cite{FMR}, \cite{OS},  and \cite{DM} for the existence and asymptotic behavior of solutions for the general situation $0<p\le 2$. 

Now, let us proceed to the discussion on the supercritical case which is the main subject of our series. Let $f(x,t)=h(x,t)e^{t^p}$ and $p>2$ where $h(x,t)$ is again a suitable perturbation term.  We find a few earlier works and some very recent results in discs.  The basic existence result and asymptotic formulas  are obtained by Atkinson-Peletier in \cite{AtPe} where, for the supercritical case, the exact behavior was left open. See Theorem 3 there and  Example 1 under it. After that, McLeod-McLeod \cite{McMc} investigate the asymptotic behavior of the parameter $\la$ for blow-up solutions. In addition, an interesting  note is that based on heuristic and qualitative observations, they suggest that large solutions exhibit  ``the bouncing process" which closely relates to our main observations  in this series. (We shall clarify the observations in these earlier works  from the microscopic and more general points of view.) 

After decades, very recently,  new developments are established by some authors \cite{N1}, \cite{N2}, \cite{FIRT}, \cite{K}.  The author, in \cite{N1} and \cite{N2}, accomplishes the concentration and oscillation analysis of blow-up solutions via the scaling and pointwise techniques developed  by Druet \cite{D}. In his result, one observes  that any blow-up solutions exhibit remarkably different behavior from those in the critical and subcritical cases.  In fact, he detects an infinite sequence of concentrating parts on  any blow-up solutions.  Moreover, he proves that the profiles and energies of the first and subsequent concentration parts are described by the regular and singular  solutions of the Liouville equation \eqref{eq:lv}
respectively. We remark that those limit solutions are explicitly determined with the aid of the energy recurrence formulas \eqref{b1} and \eqref{b2} below. Then,  based on this precise analysis, he proves  that the infinite sequence of concentrating parts breaks the uniform boundedness of the ``energy" which is usually assumed or proved in the subcritical and critical cases. Moreover, he shows that the infinite concentration  causes infinite oscillations  of blow-up solutions  around singular solutions. Then, noting this fact with the idea by \cite{Mi}, he  arrives at a proof of infinite oscillations of bifurcation  diagrams which lead to  the existence of infinitely many solutions of \eqref{p0}. 

On the other hand, \cite{FIRT} and \cite{Ku} investigate problems with the generalized exponential growth which permits not only the Trudinger-Moser type  but also multiple exponential one.  In \cite{FIRT}, the authors prove the existence of singular solutions of \eqref{p0} with the precise asymptotic formulas. In \cite{Ku}, the author 
 shows the uniform boundedness of  finite Morse index solutions which leads to a proof of that the bifurcation diagram has infinitely many turning points under his analytic setting. We will see that the interaction between these two works and our present series gives several interesting  remarks.    

Now, a natural question is that whether one can accomplish the author's concentration and oscillation analysis in \cite{N1} and \cite{N2} for the generalized exponential growth, especially for the multiple exponential case. Before answering this, let us recall a suggestive observation in Section 6 in \cite{N2}. The author examines the blow-up behavior of solutions of  \eqref{p0} with $f(x,t)=h(t)e^{t^p}$ in the limit  $p\to \infty$. Interestingly, he observes that the infinite concentration and oscillation structure remains even for this drastic limit. Moreover, he finds another   system of the recurrence formulas \eqref{b3} and \eqref{b4} below as the limit formulas of \eqref{b1} and \eqref{b2}. This leads us to expect that a new type concentration and oscillation behavior,  controlled by \eqref{b3} and \eqref{b4}, may appear for problems with much higher growth  at infinity than the Trudinger-Moser type.

Noting these facts and observations, the main aim of this series is to extend the concentration and oscillation estimates in \cite{N1} and \cite{N2} to the generalized exponential case. To this end, we employ  a setting related to the generalized H\"older conjugate exponent introduced by Dupaigne-Farina \cite{DF}. More precisely, we set $f(x,t)=h(x)e^{g(t)}$ and impose
\[
\lim_{t\to \infty}\frac{g'(t)^2}{g(t)g''(t)}=q\ \text{ and }\ \lim_{t\to \infty}\frac{tg'(t)}{g(t)}=p
\] 
for some values $q\in[1,\infty)$ and $p\in (1,\infty]$. More precise conditions are given in Subsection \ref{sub:gr} below. As mentioned there, the cases $q>1$ and $q=1$ correspond to the Trudinger-Moser type growth $e^{g(t)} \sim e^{t^p}$ and  multiple exponential one  $e^{g(t)} \sim\exp{(\cdots(\exp{(t^m)}))}$ with $m>0$ respectively. Moreover, one observes that the relation $1/p+1/q=1$ holds and thus, we can interpret $p$ and $q$ as the growtn rate  of $g$ at infinity and its conjugate respectively. One clearly sees that the case $q=1$ is new which  can not be treated under the framework of \cite{N1} and \cite{N2}.  Moreover, even for the case $q>1$, our condition admits wider classes of nonlinearities, for example, $g(t)=t^p(\log{t})^l$ with any given number $l\in\mathbb{R}$. Under this generalized setting, we shall accomplish the   extension of the author's previous argument in \cite{N1} and \cite{N2} and find the new behavior for the case $q=1$.

Consequently, in this first part, we succeed in detecting an infinite sequence of concentrating parts on any blow-up solutions under our generalized setting. Furthermore, we give the precise characterization of the limit profile, energy, and position of each concentrating part via the Liouville equation with the two types of the energy recurrence formulas mentioned in Subsection \ref{sub:bf} below. Here we classify all the inifinite concentration behaviors into two types  depending on the choice of the growth condition $q\in(1,2)$ and $q=1$. In fact, we prove that in the former case, the infinite concentration behavior is controlled by \eqref{b1} and \eqref{b2} and thus, we may say it essentially coincides with the one  observed in the previous work \cite{N1}. On the other hand,  in the later case, we show that the behavior is described by the limit formulas \eqref{b3} and \eqref{b4}. Hence we may conclude that blow-up solutions exhibit new behavior in this case. Here, we can observe the remarkable quantitative differences. Indeed, we find that  every concentrating part appears at a much higher (the almost highest) region than that in the former case. In particular, every two successive concentrating parts get much closer to each other  and the ratio of their heights  is almost equal to one in contrast to the behavior in the former case. We also notice that our characterization via the two types of the energy recurrence formulas suggests that the latter behavior can be regarded as the limit behavior of the former one which implies a continuous relation between the two supercritical phenomena  in the cases $q\in(1,2)$ and $q=1$.  See our main results Theorems \ref{th:sup1} and \ref{th:sup2} below.

Moreover, in the second part \cite{N3}, we apply our concentration estimates to the study  of infinite oscillation phenomena. We there first give a precise description of the asymptotic shapes of the graphs of blow-up solutions near the origin. This shows that the infinite sequence of concentrating parts produces an infinite sequence of  bumps on the graphs of blow-up solutions around the origin (which is consistent with the bouncing process in \cite{McMc} noted above). Moreover, thanks to our explicit concentration estimates, we can give the precise estimates for the heights of the tops and the bottoms of the bumps. This enables us to show that any blow-up solutions oscillate infinitely many times around singular solutions with a suitable asymptotic behavior near the origin. Then, with the idea from \cite{Mi}, we observe that this infinite oscillation around the origin causes several infinite boundary oscillations. This leads to a proof of infinite oscillations of bifurcation diagrams with the existence of infinitely many solutions for certain classes of nonlinearities including the two types mentioned above. In this way, we complete the desired generalization of the results in \cite{N1} and \cite{N2}. In addition, the above discussion provides an interesting global picture of our concentration and oscillation phenomena. It shows that the infinite boundary oscillations generate an infinite number of bumps one after another from the boundary and then, all of them climb up the graphs of blow-up solutions from the bottom to the top and finally, due to the supercritical growth, they grow up as the infinite sequence of bubbling parts around the origin which we detect via the scaling procedure in this paper.   See the discussion in the second part for the detail.

A novelty of the present paper is that we classify all the infinite concentration behaviors of blow-up solutions with the precise characterization under the general supercritical setting. This completes the generalization of the result in \cite{N1} and shows a new phenomenon for the multiple exponential case $q=1$.  This provides new knowledge on the study of blow-up and concentration phenomena on \eqref{p0} extensively carried out  in the literature.

Another novelty is that the result in the present paper can be applied to the study of infinite oscillation phenomena in the second part \cite{N3} which generalizes the author's previous result in \cite{N2}. Here, concerning our results on infinite oscillations of bifurcation diagrams, our approach  can show not only the divergence of the number of turning points, proved in \cite{Ku}, but also the infinite oscillation around an axis $\la=\la^*$ for some suitable value $\la^*>0$ which ensures the existence of infinitely many solutions $u$ of \eqref{p0} for $\la=\la^*$. Hence, an advantage of our approach is that it leads to the Joseph-Lundgren type multiplicity result, mentioned in the second paragraph above, for general supercritical problems in dimension two. Moreover, another advantage is that it clarifies  the direct connection between the infinite bubbling phenomena on blow-up solutions and  infinite oscillation phenomena on  bifurcation diagrams. 

 In the following, let us start more precise discussions. 

\subsection{Setting} 
In the following, let $D$ be the unit disc centered at the origin and assume $\Omega=D$ in \eqref{p0}. Moreover, we suppose the next basic condition on $f$.
\begin{enumerate}
\item[(H0)]  $f(x,t)$ has the form $f(x,t)=h(|x|)f(t)$ with continuously differentiable functions $h:[0,1]\to (0,\infty)$ and $f:[0,\infty)\to [0,\infty)$. Moreover, there exists a value $t_0\ge0$ such that $f(t)>0$ for all $t\ge t_0$ and  $f\in C^2([t_0,\infty))$. Finally, we put $g(t)=\log{f(t)}$ for all $t\ge t_0$. 
\end{enumerate}
We always assume (H0) throughout this paper without further comments.  Here, note that if  $h\equiv 1$, from the result by \cite{GNN}, any solution $u$ in $C^2(\bar{D})$ of \eqref{p0} is radially symmetric.   
Hence, it is reasonable to consider any radially symmetric solutions of \eqref{p0}. Then we may regard each solution $u=u(|x|)$ for all $x\in \bar{D}$ and reduce \eqref{p0} to the following ordinary differential equation for $u=u(r)$,  
\begin{equation}\label{p}
\begin{cases}
- u''-\frac1r u'=\la h f(u), \ u>0&\text{ in }(0,1),\\
u'(0)=0=u(1).
\end{cases}
\end{equation}
In the following, we study solutions $u\in C^2([0,1])$ of \eqref{p}. The existence of blow-up solutions of \eqref{p} is ensured for a wide class of nonlinearities $f$ including supercritical case by the shooting argument. We refer the reader to  Theorem 3 in \cite{AtPe} and also Lemma 2.1 in \cite{AKG}. 
 Moreover, by \eqref{p} with (H0), we easily check that every solution $u$ with $u(0)> t_0$ is strictly decreasing. Indeed, for any $r\in(0,1]$, multiplying the equation by $r$ and integrating over $[0,r]$, we get
\[
ru'(r)=-\int_0^r \la h(s)f(u(s))ds< 0.
\] 
In particular, we have that $\max_{r\in [0,1]}u(r)=u(0)$.
\subsection{Generalized exponential growth}\label{sub:gr} 
Let us introduce a generalized exponential growth condition on $f(t)=e^{g(t)}$ at infinity. To this end, we set two functions $Q(t)$ and $P(t)$ for all  $t\ge t_0$ by 
\[
Q(t)=\frac{g'(t)^2}{g(t)g''(t)}\text{\ \ and\ \ }\ P(t)=\frac{t g'(t)}{g(t)}.
\]
As we will see later, the former one can be regarded as the generalized H\"{o}lder conjugate exponent of the growth rate of $g$. This quantity is first introduced by Dupaigne-Farina \cite{DF}. Related quantities are used in recent works \cite{FIRT} and \cite{Ku} in discs. 
 For our concentration and oscillation analysis, we assume the next condition (H1) on the limits of $Q$ and $P$. In the following, we set $\exp_1{(t)}=\exp{(t)}$ and for any $k\in \mathbb{N}$ with $k\ge2$,  $\exp_k(t)=\exp_{k-1}(\exp{(t)})$ for all $t\ge0$ by induction. 
\begin{enumerate}
\item[(H1)] We have the next (i) and (ii). 
\begin{enumerate}
\item[(i)] $g'(t)>0$ 
and $g''(t)>0$  for all $t\ge t_0$ and there exists a pair  $(q,p)\in \{1\}\times (0,\infty]\cup (1,\infty)\times (0,\infty)$ of values such that  
\begin{equation}\label{g1}
\lim_{t\to \infty}\frac{g'(t)^2}{g(t)g''(t)}=q
\end{equation}
and 
\begin{equation}\label{g2}
\lim_{n\to \infty}\frac{t g'(t)}{g(t)}=p.
\end{equation}
\item[(ii)] In the case $q=1$, $tg'(t)/g(t)$ is nondecreasing for all $t\ge t_0$ and there exist a number $k\in \mathbb{N}$  and a function $\hat{g}\in C^2([t_0,\infty))$ such that $f(t)=\exp_k(\hat{g}(t))$ and $\hat{g}'(t)/\hat{g}(t)$ is nonincreasing 
for all $t\ge t_0$. 
\end{enumerate}
\end{enumerate}
The former condition (i) is the essential assumption.  As we will see in the examples below and Lemma \ref{lem:pq}, it ensures that $p$ is the growth rate of $g$ at infinity and $q$ the H\"{o}lder conjugate exponent of $p$, that is, $p>1$ and 
\[
\frac1p+\frac1q=1
\]
where we regarded $1/\infty=0$. 
 Typical examples for the case $1<q<\infty$ are given by the Trudinger-Moser type nonlinearities $f$  such like
\[
f(t)=t^me^{t^p+c t^{\bar{p}}},\ \ \ e^{t^p (\log{t})^l}
\]
for all large $t>0$ with any given constants $p>1$, $0<\bar{p}<p$, and $c,l,m\in \mathbb{R}$. We remark that the former one satisfies the condition (H1) in \cite{N1} while the latter one is a new case which can not be treated under the setting there.

 On the other hand,  $q=1$ yields that $g(t)$ grows much more rapidly than the power. In fact, a typical  example is  a multiple exponential function such as    
\[
f(t)=\exp_k{(t^m(\log{t})^l)}
\]
for all large $t>0$ with  any given values $k\in \mathbb{N}$ with $k\ge2$, $m>0$, and $l\in \mathbb{R}$. See Lemma \ref{lem:ex1}  and its remark below for useful sufficient conditions to check  (H1) with $q=1$. This is clearly a new case which can not be covered by the assumptions  in \cite{N1}. 

From the facts and examples above, we notice that $q>2$, $q=2$, and $q\in[1,2)$ imply that $f$ has the subcritical, critical, and supercritical growths in dimension two respectively. We mainly interested in the case $q\in[1,2)$ throughout this series of papers.

 In addition, we remark on the condition (ii), which seems to be technical, for the case $q=1$. The monotonicity of $tg'(t)/g(t)$ is used for only Lemma \ref{lem:g4} below. The latter condition is applied for only Lemma \ref{lem:g2} (and its preliminaries Lemmas \ref{lem:g200} and \ref{lem:g20}). Its main consequence  is Lemma \ref{lem:g1} which will be applied when we deduce the limit equation of blow-up solutions. Noting this fact, we may employ the conclusion of Lemma \ref{lem:g2} as our (possibly weaker) condition instead of the latter one in (ii). 
We lastly  introduce the next standard condition.
\begin{enumerate}
\item[(H2)] We have that $\displaystyle\inf_{t>0}\frac{f(t)}{t}>0$. 
\end{enumerate} 
This ensures  an upper bound for the parameter $\la>0$. See Lemmas \ref{lem:kap} below.
\subsection{Energy recurrence formulas}\label{sub:bf}
Let us next introduce some sequences $(a_k)$, $(\delta_k)$, and $(\eta_k)$ of values which explicitly quantify the concentration and oscillation phenomena in the supercritical case. The former sequence describes the limit profiles and energies of all the concentrating parts while the latter two determine their heights. Those are defined via the next two systems of recurrence formulas. In this subsection, let $q\in[1,2)$ and $p\in (2,\infty]$ be any quantities such that $1/q+1/p=1$ where $1/\infty=0$.

First, we set $a_1=2$, $\delta_1=1$, and $\eta_1=1$ for any $q\in[1,2)$. Then in the case $q\in(1,2)$, we define the sequences $(a_k)\subset(0,2]$ and $(\delta_k)\subset (0,1]$ by the next formulas, 
\begin{equation}\label{b1}
\frac{2p}{2+a_k}\left(1-\frac{\delta_{k+1}}{\delta_k}\right)-1+\left(\frac{\delta_{k+1}}{\delta_k}\right)^p=0 \text{\ \  with \ \  }\delta_{k+1}<\delta_k
\end{equation}
and 
\begin{equation}\label{b2}
a_{k+1}=2-\left(\frac{\delta_{k+1}}{\delta_k}\right)^{p-1}(2+a_k)
\end{equation}
for all $k\in \mathbb{N}$ by induction. Moreover, we set $\eta_k=\delta_k^p$ for all $k\in \mathbb{N}$. On the other hand, if $q=1$, we determine the sequences $(a_k)\subset (0,2]$ and $(\eta_k)\subset (0,1]$ by the relations,  
\begin{equation}\label{b3}
\frac{2}{2+a_k}\log{\frac{\eta_{k}}{\eta_{k+1}}}-1+\frac{\eta_{k+1}}{\eta_{k}}=0 \text{\ \  with \ \ }\eta_{k+1}<\eta_k
\end{equation}
and 
\begin{equation}\label{b4}
a_{k+1}=2-\frac{\eta_{k+1}}{\eta_{k}}(2+a_k).
\end{equation}
In addition, we put $\delta_k=1$ for all $k\in \mathbb{N}$. As in Lemma \ref{lem:b1} below, for any $q\in[1,2)$, these sequences are well-defined. Moreover,  $(a_k)$ and $(\delta_k)$ for $q>1$ and $(a_k)$ and  $(\eta_k)$ for $q=1$ are strictly decreasing sequences which converge to zero as $k\to \infty$. Furthermore, we have that  $\sum_{k=1}^\infty a_k=\infty$. See Lemma \ref{lem:b2} below.  Lastly, for the convenience, for each $q\in[1,2)$,  we define a sequence $(\tilde{\eta}_k)$ of numbers so that  $\tilde{\eta}_k=\eta_k^{1/q}$  for all $k\in \mathbb{N}$.  

We remark that the former formulas \eqref{b1} and \eqref{b2} are first observed in \cite{McMc} as a result of the study of ``the bouncing process"  and later derived with a different way in \cite{N1} as a consequence of infinite concentration estimates. See the argument in Section 2 in \cite{McMc} and  (1.14) and (1.15) in \cite{N1}. On the other hand, as discussed in Section 6 in \cite{N2}, \eqref{b3} and \eqref{b4} are nothing but the limit formulas of \eqref{b1} and \eqref{b2} as $q\to1^+$ or equivalently, $p\to \infty$. More precisely, if we write the dependence of $a_k$, $\delta_k$, and $\eta_k$ on $q\in [1,2)$ as $a_k(q)$, $\delta_k(q)$, and $\eta_k(q)$ respectively, we have the continuous relations  
\[
a_k(1)=\lim_{q\to 1^+}a_k(q),\ \delta_k(1)=\lim_{q\to 1^+}\delta_k(q),\ \eta_k(1)=\lim_{q\to 1^+}\eta_k(q)
\]
and 
\[
\begin{split}
\lim_{q\to 1^+}&\left\{\frac{2p}{2+a_k(q)}\left(1-\frac{\delta_{k+1}(q)}{\delta_k(q)}\right)-1+\left(\frac{\delta_{k+1}(q)}{\delta_k(q)}\right)^p\right\}\\
&\ \ \ \ \ \ \ \ \ \ \ \ \ \ \ \ \ \ \ \ \ \ \ \ \ \ \ \ =\frac{2}{2+a_k(1)}\log{\frac{\eta_{k}(1)}{\eta_{k+1}(1)}}-1+\frac{\eta_{k+1}(1)}{\eta_{k}(1)}
\end{split}
\]
for all $k\in \mathbb{N}$. See Proposition 6.2 and its proof (especially Lemma 6.8) in \cite{N2}. 
\subsection{Sequence of limit profiles and scaling structure}\label{sub:lipro}
Next, we define the infinite sequence $(z_k)$ of limit profiles 
 by using the sequence $(a_k)$ above. We first set the standard limit profile $z_0$ by 
\begin{equation}\label{def:z0}
z_0(r)=\log{\frac{64}{(8+r^2)^2}}
\end{equation}
for all $r\ge 0$. It satisfies
\begin{equation}\label{eq:z}
\begin{cases}
-z_0''-\frac1r z_0'=e^{z_0}\text{ in }(0,\infty),\\
z_0(0)=0,\ z_0'(0)=0,
\end{cases}
\end{equation}
and 
\[
\int_0^\infty e^{z_0}rdr=4.
\]
We next assume $q\in[1,2)$ and define, for each $k\in \mathbb{N}$,
\begin{equation}\label{def:zk}
z_k(r)=\log{\frac{2a_k^2 b_k}{r^{2-a_k}(1+b_kr^{a_k})^2}}
\end{equation}
for all $r\ge0$ if $k=1$ and all $r>0$ if $k>1$ where $b_k=(\sqrt{2}/a_k)^{a_k}$. Then for every $k\in \mathbb{N}$, $z_k$ verifies
\begin{equation}\label{eq:zk}
\begin{cases}
-z_k''-\frac1r z_k'=e^{z_k}\text{ in }(0,\infty),\\
z_k(a_k/\sqrt{2})=0,\ (a_k/\sqrt{2})z_k'(a_k/\sqrt{2})=2,
\end{cases}
\end{equation}
and 
\begin{equation}\label{eq:ken}
\int_0^\infty e^{z_k}rdr=2a_k.
\end{equation}
Notice that $z_k$ has a singularity at the origin if and only if $k>1$. 

Our main strategy is to connect blow-up solutions  of \eqref{p} with the sequence $(z_k)$ of  these ``bubbles" via the scaling procedure. This allows us to obtain the precise quantification of the blow-up behavior by using the explicit information of $(z_k)$. Especially, the scaling procedure enables us to connect the quantity
\[
\int_0^1hf'(u)rdr
\]       
with the mass \eqref{eq:ken}. We notice that uniform estimates for this ``energy" associated to the scaling property often play important roles in the analysis of \eqref{p}. In fact, for the critical case $f(t)\sim e^{t^2}$, it essentially coincides with  the $H_0^1$ energy $\int_0^1uf(u)rdr(=\int_0^1u'(r)^2rdr)$ and for the exponential case $f(t)\sim e^t$, it becomes the mass $\int_0^1e^urdr$ of \eqref{p}.  Moreover, we also observe their roles in the recent analysis of the supercritical problems. See Lemmas 2.2 in \cite{FZ} for higher dimensions and Lemma 4.2 in \cite{Ku} for discs. In our main theorems below, we shall clarify the connection between the local energy and each bubbling part and finally give a constructive proof of the divergence of the global energy.
\subsection{Main results: Concentration estimates}\label{sub:mr}
Now let us show our main theorems. For our aim, we consider any sequence of blow-up solutions of \eqref{p}. Let  $(\la_n,\mu_n,u_n)\in (0,\infty)\times (0,\infty)\times C^2([0,1])$ satisfy  
\begin{equation}\label{pn}
\begin{cases}
- u_n''-\frac1r u_n'=\la_n hf(u_n), \ u_n>0&\text{ in }(0,1),\\
u_n(0)=\mu_n,\ u_n'(0)=0=u_n(1),
\end{cases}
\end{equation}
for all $n\in \mathbb{N}$. After this, we call $\{(\la_n,\mu_n,u_n)\}$ a sequence of solutions of \eqref{pn}. In this first part, we shall give  infinite concentration estimates for such sequences. 
We first deduce the limit profile of the first concentration part which holds for any  $q\in [1,\infty)$.  
\begin{theorem}\label{th:0} Assume (H1) and $\{(\la_n,\mu_n,u_n)\}$ is a sequence of solutions of \eqref{pn} such that $\mu_n\to \infty$ as $n\to \infty$. Set sequences $(\ga_{0,n})$ of values and $(z_{0,n})$ of functions so that for each large $n\in\mathbb{N}$,
\[
\la_n h(0)f'(\mu_n)\ga_{0,n}^2=1
\]
and 
\[
z_{0,n}(r)=g'(\mu_n)(u_n(\ga_{0,n}r)-\mu_n)
\]
for all $r\in [0,1/\ga_{0,n}]$. Then we get that $\ga_{0,n}\to0$ as $n\to \infty$ and there exists a sequence $(\rho_{0,n})\subset (0,1)$ of values such that $u_n(\rho_{0,n})/\mu_n\to1$, $\rho_{0,n}\to0$, $\rho_{0,n}/\ga_{0,n}\to \infty$, $\|z_{0,n}- z_0\|_{C^2([0,\rho_{0,n}/\ga_{0,n}])}\to0$ as $n\to \infty$, and
\[
\lim_{n\to \infty}g'(\mu_n)\int_0^{\rho_{0,n}}\la_n hf(u_n)rdr=4=\lim_{n\to \infty}\int_0^{\rho_{0,n}}\la_n hf'(u_n)rdr
\]
up to a subsequence. 
\end{theorem}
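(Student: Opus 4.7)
The plan is to implement the scaling-and-pointwise procedure used by the author in \cite{N1}: rescale $u_n$ near its maximum at $r=0$ so that the Laplacian balances the nonlinearity, identify the limit as the standard bubble $z_0$, and convert the local mass back into the desired energy integrals.

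First, $\ga_{0,n}\to 0$ follows from a standard radial integration: $\mu_n=\int_0^1\la_n h(s)f(u_n(s))s(-\log s)\,ds\le C\la_n f(\mu_n)$ gives $\la_n f(\mu_n)\ge c\mu_n$, hence
\begin{equation*}
\la_n h(0)f'(\mu_n)=\la_n h(0)g'(\mu_n)f(\mu_n)\ge c'\mu_n g'(\mu_n)\to\infty,
\end{equation*}
the divergence of $g'(\mu_n)$ and $g(\mu_n)$ being direct consequences of (H1)(i) (which forces $tg'(t)/g(t)\to p>1$). A direct substitution then shows that $z_{0,n}$ satisfies
\begin{equation*}
-z_{0,n}''(r)-\tfrac{1}{r}z_{0,n}'(r)=\frac{h(\ga_{0,n} r)}{h(0)}\cdot\frac{f(u_n(\ga_{0,n} r))}{f(\mu_n)},\quad z_{0,n}(0)=0,\ z_{0,n}'(0)=0.
\end{equation*}

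Since $u_n$ is decreasing, the right-hand side is bounded by $\|h\|_\infty/h(0)$ on every $[0,R]$, yielding uniform $C^2$ bounds on $z_{0,n}$, so Arzel\`a--Ascoli extracts a subsequential limit $\bar z_0$ in $C^2_{\mathrm{loc}}([0,\infty))$. The identification step is the crucial technical point: I would Taylor expand
\begin{equation*}
g(u_n(\ga_{0,n} r))-g(\mu_n)=z_{0,n}(r)+\tfrac{1}{2}\,\frac{g''(\xi_n(r))}{g'(\mu_n)^2}\,z_{0,n}(r)^2,
\end{equation*}
and use (H1)(i), which forces $g''(t)/g'(t)^2\sim 1/(q g(t))\to 0$. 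Because $z_{0,n}$ is bounded on $[0,R]$ while $g(\mu_n)\to\infty$, the remainder tends to zero uniformly on compacts, so $f(u_n(\ga_{0,n} r))/f(\mu_n)\to e^{\bar z_0(r)}$ locally uniformly. Combined with $h(\ga_{0,n} r)/h(0)\to 1$, this shows $\bar z_0$ solves \eqref{eq:z}; the uniqueness of that Cauchy problem forces $\bar z_0=z_0$ and promotes the convergence to the full sequence.

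A diagonal procedure then selects $R_n\to\infty$ slowly enough that $\|z_{0,n}-z_0\|_{C^2([0,R_n])}\to 0$, $R_n\ga_{0,n}\to 0$, and $(\log R_n)/(\mu_n g'(\mu_n))\to 0$ (feasible since $\mu_n g'(\mu_n)\sim p\,g(\mu_n)\to\infty$). Setting $\rho_{0,n}:=R_n\ga_{0,n}$ yields $\rho_{0,n}\to 0$, $\rho_{0,n}/\ga_{0,n}\to\infty$, and, via $z_0(r)\sim -4\log r$, $u_n(\rho_{0,n})/\mu_n=1+z_{0,n}(R_n)/(\mu_n g'(\mu_n))\to 1$. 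Changing variables $r=\ga_{0,n} s$ gives
\begin{equation*}
g'(\mu_n)\int_0^{\rho_{0,n}}\la_n h f(u_n)r\,dr=\int_0^{R_n}\frac{h(\ga_{0,n} s)}{h(0)}\cdot\frac{f(u_n(\ga_{0,n} s))}{f(\mu_n)}\,s\,ds,
\end{equation*}
and dominated convergence delivers the limit $\int_0^\infty e^{z_0(s)}s\,ds=4$. The second formula has an additional factor $g'(u_n(\ga_{0,n} s))/g'(\mu_n)$ inside the integrand, which tends to $1$ locally uniformly because $u_n/\mu_n\to 1$ and $g'$ is regularly varying of index $p-1>0$ by (H1)(i).

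The principal obstacle is the identification step: one must bound $g''(\xi_n(r))$ uniformly in $r\in [0,R]$ and in $n$ at an unspecified intermediate point $\xi_n(r)\in(u_n(\ga_{0,n}r),\mu_n)$, which is precisely the role of (H1)(i) through the quantity $Q(t)=g'(t)^2/(g(t)g''(t))\to q$; I would expect this to rely on preparatory lemmas on the regular-variation behaviour of $g,g',g''$ established earlier in the paper. A secondary technical point is the diagonal calibration of $R_n$ so that all four required convergences (the $C^2$ closeness, the vanishing of $\rho_{0,n}$, the ratio $u_n(\rho_{0,n})/\mu_n\to 1$, and the energy limits) hold simultaneously.
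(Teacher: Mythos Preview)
Your plan coincides with the paper's argument (Lemmas~\ref{lem:c1} and~\ref{lem:c2}): rescale, obtain uniform $C^2$ bounds from the monotonicity of $u_n$, identify the limit as $z_0$ via the Taylor expansion of $g$---your ``principal obstacle'' is exactly Lemma~\ref{lem:g1}, built on Lemma~\ref{lem:g2}---and then diagonalize to produce $\rho_{0,n}=R_n\ga_{0,n}$. For the second energy limit the paper bypasses your regular-variation step (which is awkward at $q=1$, where $p=\infty$ and $g'$ is rapidly, not regularly, varying) by the one-line bound $\int_0^{\rho_{0,n}}\la_n h f'(u_n)r\,dr\le g'(\mu_n)\int_0^{\rho_{0,n}}\la_n h f(u_n)r\,dr$ from monotonicity of $g'$, together with Fatou's lemma for the matching lower bound; and the first energy limit is folded into the diagonal choice of $R_n$ rather than obtained by dominated convergence (there is no integrable majorant on $[0,\infty)$).
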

In this theorem, we prove that the first concentration is described  by the standard bubble $z_0$ for any $q\ge1$. This is a generalization of  Theorem 1.1 in \cite{N1}. Similarly to that paper, our question is what happens on the outside of the first concentration interval $[0,\rho_{0,n}]$. The answer for the subcritical case $q>2$ is that no additional concentration occurs. More precisely, we can show
\begin{equation}\label{eq:sub1}
\lim_{n\to \infty}g'(\mu_n)\int_0^1\la_n hf(u_n)rdr=4=\lim_{n\to \infty}\int_0^1\la_n hf'(u_n)rdr.
\end{equation}
Moreover, this leads to the convergence to the Green function in the following sense, 
\begin{equation}\label{eq:sub3}
\lim_{n\to \infty}g'(\mu_n)u_n(r)=4\log{\frac1r}\text{ in }C^2_{\text{loc}}((0,1]).
\end{equation}
In addition, we arrive at the next asymptotic formula,
\begin{equation}\label{eq:sub2}
\lim_{n\to \infty}\frac{\log{\frac1{\la_n}}}{g(\mu_n)}=\frac{2-p}{2},
\end{equation}
in particular,  $\la_n\to0$ as $n\to \infty$ by the fact that $1/p+1/q=1$. We postpone the proof of these facts in Appendix \ref{sec:sub} below. On the other hand, the situation in the critical case $q=2$ is very delicate. The similar behavior to that in the subcritical case is observed for $f(t)=te^{t^2+\alpha t^{\beta}}$ with $\alpha\ge0$ and $\beta\in(0,2)$. See Theorem 2 in \cite{MM1} for the case $\alpha=0$ where the authors accomplish the proof with the precise analysis of the asymptotic expansion of the scaled function. The result for the case $\alpha>0$ is obtained in \cite{N0}. See the behaviors (i) and (ii) with $k=0$ in Theorem 1.1 there. On the other hand, interestingly, different behaviors appear if $\alpha<0$. See the nonexistence result Theorem 1.2 in \cite{ASY} for the case $\beta\in(0,1]$. Furthermore, it is proved by Theorem 0.3 in \cite{MT} that, in addition to the first concentration, a residual mass appears if $\beta=1$.  These results suggest that the growth of the perturbation delicately affects the blow-up behavior in the critical case. 

Now, let us show our main results on the supercritical case $q\in[1,2)$ where we observe the drastically different behavior from those in the subcritical and critical cases noted above. Indeed,  we detect an infinite sequence of concentrating parts as follows. 
\begin{theorem}\label{th:sup1} Assume as in Theorem \ref{th:0} with $q\in [1,2)$. Then, extracting a subsequence if necessary, for each $k\in \mathbb{N}$, there exists a sequence $(r_{k,n})\subset (0,1)$ of values such that 
 $u_n(r_{k,n})/\mu_n\to \delta_k$
and 
\begin{equation}\label{eq:sup0}
\la_nr_{k,n}^2h(r_{k,n})f'(u_n(r_{k,n}))\to \frac{a_k^2}2
\end{equation}
as $n\to \infty$ and if we put sequences $(\ga_{k,n})$ of values and $(z_{k,n})$ of functions so that for each large $n\in \mathbb{N}$,
\[
\la_nh(r_{k,n})f'(u_n(r_{k,n}))\ga_{k,n}^2=1
\]
and 
\[
z_{k,n}(r)=g'(u_n(r_{k,n}))(u_n(\ga_{k,n}r)-u_n(r_{k,n}))
\]
for all $r\in [0,1/\ga_{k,n}]$, then  we have that $\ga_{k,n}\to0$ and there exist sequences $(\bar{\rho}_{k,n}),(\rho_{k,n})\subset [0,1)$, where we chose $\bar{\rho}_{1,n}=0$ for all $n\in \mathbb{N}$ if $k=1$, such that $\rho_{k,n}\to0$, $\bar{\rho}_{k,n}/\ga_{k,n}\to0$, $\rho_{k,n}/\ga_{k,n}\to \infty$,  $u_n(\bar{\rho}_{k,n})/\mu_n\to \delta_k$, $u_n(\rho_{k,n})/\mu_n\to \delta_k$, 
and
\[
\|z_{k,n}-z_k\|_{C^2([\bar{\rho}_{k,n}/\ga_{k,n},\rho_{k,n}/\ga_{k,n}])}\to0
\]
as $n\to \infty$ and further, if $q=1$, we have more precisely that 
\begin{equation}\label{eq:sup00}
\displaystyle u_n(r_n)=\mu_n-\left(\log{\frac1{\eta_k}}+o(1)\right)\frac{g(\mu_n)}{g'(\mu_n)}
\end{equation}
for the sequences $(r_n)=(r_{k,n}),(\rho_{k,n}),$ and $(\bar{\rho}_{k,n})$   as $n\to \infty$. Moreover, for all $k\in \mathbb{N}$,  we get 
\begin{equation}\label{eq:sup01}
\begin{split}
\lim_{n\to \infty}g'(u_n(r_{k,n}))&\int_{\bar{\rho}_{k,n}}^{\rho_{k,n}}\la_n hf(u_n)rdr\\
&= 2a_k=\lim_{n\to \infty}\int_{\bar{\rho}_{k,n}}^{\rho_{k,n}}\la_n hf'(u_n)rdr
\end{split}
\end{equation}
and 
\begin{equation}\label{eq:sup02}
\lim_{n\to \infty}g'(\mu_n)\int_{\rho_{k,n}}^{\bar{\rho}_{k+1,n}}\la_n hf(u_n)rdr= 0.
\end{equation}
\end{theorem}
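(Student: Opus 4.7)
\medskip
\noindent\textbf{Proof plan.} The plan is to proceed by induction on $k\in \mathbb{N}$, iteratively detecting bubbles from the innermost outward in the spirit of the Druet scheme of \cite{D} and \cite{N1}. For the base case $k=1$ I would start from Theorem~\ref{th:0}, which supplies the central bubble $z_0$ on scale $\ga_{0,n}$ and a radius $\rho_{0,n}$ past which it dissipates. The natural candidate for $r_{1,n}$ is (essentially) the point just outside $\rho_{0,n}$ where $\la_n r^2 h(r) f'(u_n(r))$ equals $a_1^2/2=2$; the prescription $\bar{\rho}_{1,n}=0$ reflects the fact that the first singular bubble $z_1$ absorbs the inner bubble $z_0$. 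Rescaling $u_n$ around $r_{1,n}$ with $\ga_{1,n}$ produces $z_{1,n}$ which, by the asymptotics enforced by (H1) and the classification of radial Liouville solutions together with the normalization $(a_1/\sqrt{2})z_{1,n}'(a_1/\sqrt{2})=2$ built into the choice of $r_{1,n}$, converges in $C^2$ to $z_1$.

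\medskip
For the inductive step, assume the statement up to index $k$. I would define $r_{k+1,n}$ as the smallest value beyond $\rho_{k,n}$ where $\la_n r^2 h(r) f'(u_n(r))$ re-enters the ``bubble regime'' $a_{k+1}^2/2$, and $\bar{\rho}_{k+1,n}$ as the height threshold $u_n(\bar{\rho}_{k+1,n})/\mu_n\to\delta_{k+1}$ approached from above. The rescaled function
\[
z_{k+1,n}(r)=g'(u_n(r_{k+1,n}))\bigl(u_n(\ga_{k+1,n}r)-u_n(r_{k+1,n})\bigr)
\]
satisfies an ODE whose limit, by (H1) together with the asymptotics of $g'(u_n(r_{k+1,n}))/g'(\mu_n)$ and $g(u_n(r_{k+1,n}))/g(\mu_n)$, is a Liouville equation on $(0,\infty)$. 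Because $k+1>1$, the limit acquires a logarithmic singularity at the origin of weight $2-a_{k+1}$; the value of $a_{k+1}$ is fixed by computing the incoming flux $r z_{k+1,n}'(r)$ at small $r$ and equating the boundary term to the sum of the fluxes consumed by the inner bubbles $z_1,\ldots, z_k$ plus the net transport across the intermediate annulus. This produces \eqref{b2} (respectively \eqref{b4}). The complementary identity \eqref{b1} (respectively \eqref{b3}) emerges from a Pohozaev-type balancing obtained by multiplying \eqref{pn} by $r u_n'$ and integrating over $[\rho_{k,n},\bar{\rho}_{k+1,n}]$, combined with the asymptotic $g'(u_n(r))/g'(\mu_n)\sim (u_n(r)/\mu_n)^{p-1}$ for $q\in(1,2)$, respectively with the multiple-exponential counterpart made available by (H1)(ii).

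\medskip
The mass identity \eqref{eq:sup01} then follows from the $C^2$ convergence $z_{k+1,n}\to z_{k+1}$ via the change of variables $r=\ga_{k+1,n}s$ and the normalization \eqref{eq:ken}. The no-new-concentration estimate \eqref{eq:sup02} is obtained from a pointwise bound on $u_n$ in the intermediate annulus, where $u_n/\mu_n$ stays in the gap $(\delta_{k+1},\delta_k)$ (and analogously for $\eta_{k+1}, \eta_k$ when $q=1$); combined with the large-$t$ convexity of $g$, this gap forces $f(u_n)/f(\mu_n)$ to decay fast enough that the integral vanishes after multiplication by $g'(\mu_n)$. The sharper localization \eqref{eq:sup00} in the case $q=1$ is then read off by inverting the relation $g(u_n(r_n))/g(\mu_n)\to -\log\eta_k+1$ via a Taylor expansion whose first-order correction is exactly the factor $g(\mu_n)/g'(\mu_n)$.

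\medskip
I expect the main obstacle to be the inductive matching in the case $q=1$. There the polynomial asymptotics driving the argument for $q\in(1,2)$ in \cite{N1} degenerate, and both the rescaling $g'(u_n(r_{k+1,n}))/g'(\mu_n)$ and the decay across the intermediate annulus must be redone using the multiple-exponential structure enforced by (H1)(ii). The derivation of \eqref{b3}--\eqref{b4} from the annular matching, and the precise height asymptotic \eqref{eq:sup00}, will require a careful expansion that has no counterpart for $q>1$ and is ultimately the reason why condition (H1)(ii) is introduced.
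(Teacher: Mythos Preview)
Your outline has the right inductive skeleton, but several of the key mechanisms are misidentified, and one step is circular.

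First, your definition of $r_{k+1,n}$ as ``the smallest value beyond $\rho_{k,n}$ where $\la_n r^2 h(r) f'(u_n(r))$ re-enters the bubble regime $a_{k+1}^2/2$'' is circular: $a_{k+1}$ is not known in advance but is \emph{determined} by the analysis. In the paper the center $r_{k+1,n}$ is instead chosen as a local maximum point of the scaling function $\phi_n(r)=\la_n r^2 h(r) f'(u_n(r))$ (Lemmas~\ref{lem:c5} and~\ref{lem:c13}). The point is that the condition $\phi_n'(r_{k+1,n})=0$ forces the energy identity $\psi_n(r_{k+1,n})\to 2$ (Lemma~\ref{lem:D1}); expanding $\psi_n(r_{k+1,n})$ as the sum of the previous bubbles' contributions plus the incoming half of the new bubble then yields $2-a_{k+1}+a=2$, fixing $a=a_{k+1}$ (Lemma~\ref{lem:c15}). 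This is the actual source of \eqref{b2}/\eqref{b4}, not a flux computation at small~$r$. Relatedly, for $k=1$ the center $r_{1,n}$ lies \emph{inside} the first concentration region ($r_{1,n}/\ga_{0,n}\to 2\sqrt{2}$), not ``just outside $\rho_{0,n}$''.

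Second, the recurrence \eqref{b1}/\eqref{b3} does not come from a Pohozaev identity. The paper derives it from the Green-type identities \eqref{id1}--\eqref{id2} (Lemma~\ref{lem:id}), used in Lemmas~\ref{lem:c8}--\ref{lem:c11} to pin down the exact height $\delta_{k+1}$ (or $\eta_{k+1}$) at which new energy must appear. The Pohozaev identity (Lemma~\ref{po}) is used only once, for the coarse bound in Lemma~\ref{lem:gl}. Your description of \eqref{eq:sup02} is also too soft: the actual proof (Lemmas~\ref{lem:c9}--\ref{lem:c10}) performs a delicate Gaussian-type computation on the integral after substituting the pointwise estimate \eqref{as:zk}, and then a maximum analysis of an auxiliary function $\xi_n$ whose sign is controlled precisely by \eqref{b1}/\eqref{b3}. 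Finally, your target relation for $q=1$ is off: one has $g(u_n(r_n))/g(\mu_n)\to \eta_k$, not $-\log\eta_k+1$, and the inversion to \eqref{eq:sup00} is done via Lemma~\ref{lem:g4}.
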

This theorem proves that for any $k\in \mathbb{N}$, there exists a ``center" $(r_{k,n})\subset (0,1)$ of the $k$-th concentration  such that the blow-up solutions $(u_n)$,  after scaling around $(r_{k,n})$, converges  to the solutions $z_k$ of the Liouville equation with the energy $2a_k$. Since $k$ is arbitrary, we observe an infinite sequence of concentrating parts with the precise characterization. Notice that the behavior in the case $q\in(1,2)$ is the same type of that in Theorem 1.4 in \cite{N1}. On the other hand, we observe a new behavior in the case $q=1$. In fact, we see that if $q\in(1,2)$,  $u_n(r_{k,n})/\mu_n\to \delta_k\in(0,\delta_{k-1})$ as $n\to \infty$ for all $k\not=1$ while  if $q=1$,  $u_n(r_{k,n})/\mu_n\to \delta_k=1$ for all  $k\in \mathbb{N}$. This means that in the latter case, all the concentrating points appear at much higher (the almost highest) points than those in the former case. A more precise description  of the heights of the concentration parts is given in \eqref{eq:sup00} with the decreasing sequence $(\eta_k)$ defined by the limit recurrence formulas \eqref{b3} and \eqref{b4}.

 As a consequence of the previous theorem, we obtain the following asymptotic formulas.  
\begin{theorem}\label{th:sup2} Assume as in the previous theorem. Then, for any sequence $(r_n)\subset (0,1)$ such that 
\[
\begin{cases}
\displaystyle\frac{u_n(r_n)}{\mu_n}\to 0&\text{ if }q>1,\\
\displaystyle(\mu_n-u_n(r_n))\frac{g'(\mu_n)}{g(\mu_n)}\to \infty&\text{ if }q=1,
\end{cases}
\]
as $n\to \infty$, we have that 
\begin{equation}\label{eq:sup1}
\lim_{n\to \infty}\int_0^{r_n}\la_n hf'(u_n)rdr=\infty.
\end{equation}
Moreover, we get that
\begin{equation}\label{eq:sup4}
g'(\mu_n)u_n(r)\to \infty
\end{equation}
for all $r\in[0,1)$ 
and  additionally assuming (H2) if $q=1$, we have that 
\begin{equation}\label{eq:sup2}
\lim_{n\to \infty}\frac{\log{\frac1{\la_n}}}{g(\mu_n)}=0
\end{equation}
and 
\begin{equation}\label{eq:sup3}
\lim_{n\to \infty}\frac{\log{\frac1{r_{k,n}}}}{g(\mu_n)}=\frac{\eta_k}{2}
\end{equation}
for all $k\in \mathbb{N}$ and all $q\in[1,2)$.
\end{theorem}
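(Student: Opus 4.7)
The overall strategy is to leverage Theorem~\ref{th:sup1} --- especially the bubble-wise energy quantization \eqref{eq:sup01} and, for $q=1$, the height formula \eqref{eq:sup00} --- together with the scaling relation \eqref{eq:sup0} and the linear lower bound from (H2). For \eqref{eq:sup1}, the key observation is that \eqref{eq:sup01} yields $\int_0^{\rho_{k,n}} \la_n h f'(u_n)\,r\,dr \geq \sum_{j=1}^k 2a_j + o(1)$ for each fixed $k$. To reduce \eqref{eq:sup1} to this, I would check that the hypotheses on $r_n$ force $r_n > \rho_{k,n}$ eventually: when $q>1$, Theorem~\ref{th:sup1} gives $u_n(\rho_{k,n})/\mu_n \to \delta_k > 0$ while the hypothesis is $u_n(r_n)/\mu_n \to 0$, and when $q=1$, \eqref{eq:sup00} gives $(\mu_n - u_n(\rho_{k,n}))g'(\mu_n)/g(\mu_n) \to \log(1/\eta_k) < \infty$ against the analogous quantity for $r_n$ diverging. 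Monotonicity of $u_n$ together with $\sum a_j = \infty$ closes the proof.

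For \eqref{eq:sup4} the case $r=0$ is immediate since $u_n(0)=\mu_n$ and $g'(\mu_n)\to\infty$. For $r\in(0,1)$, start from the integral representation $u_n(r) = \int_r^1 s^{-1}\int_0^s \la_n h(\tau)f(u_n(\tau))\tau\,d\tau\,ds$ coming from the radial ODE. Since $\rho_{k,n}\to 0$ for each fixed $k$, eventually $\rho_{k,n}<r$, so the inner integral dominates $\sum_{j=1}^k \int_{\bar{\rho}_{j,n}}^{\rho_{j,n}} \la_n h f(u_n)\tau\,d\tau$. Rewriting $f=f'/g'$ turns \eqref{eq:sup01} into $g'(u_n(r_{k,n}))\int_{\bar{\rho}_{k,n}}^{\rho_{k,n}} \la_n h f(u_n)\tau\,d\tau \to 2a_k$. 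Using the growth condition (H1) to show $g'(u_n(r_{k,n}))/g'(\mu_n)\to \tilde{\eta}_k$ in both regimes $q\in(1,2)$ and $q=1$, one obtains $g'(\mu_n)u_n(r) \geq \log(1/r)\sum_{j=1}^k 2a_j/\tilde{\eta}_j + o(1)$, and since $\tilde{\eta}_j\leq 1$ and $\sum a_j=\infty$, the right side can be made arbitrarily large by choosing $k$ first.

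For \eqref{eq:sup2} and \eqref{eq:sup3}, take logarithms of \eqref{eq:sup0} and divide by $g(\mu_n)$. Writing $\log f'(t) = g(t) + \log g'(t)$ and using $g(u_n(r_{k,n}))/g(\mu_n)\to \eta_k$ together with $\log g'(u_n(r_{k,n}))/g(\mu_n)\to 0$, one obtains the central identity
\[
\frac{\log(1/\la_n)}{g(\mu_n)} + \frac{2\log(1/r_{k,n})}{g(\mu_n)} \to \eta_k \quad \text{for each } k\in\mathbb{N}.
\]
Since $r_{k,n}\in(0,1)$, this yields $\limsup_n \log(1/\la_n)/g(\mu_n) \leq \eta_k$ for every $k$; letting $k\to\infty$ and using $\eta_k\to 0$ produces the upper bound in \eqref{eq:sup2}. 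The matching lower bound comes from (H2): $f(t)\geq c_0 t$ and $h\geq \min h > 0$ give $-\Delta u_n \geq c\la_n u_n$, so the first Dirichlet eigenvalue of $D$ bounds $c\la_n$ above, forcing $\log(1/\la_n)$ bounded below and hence $\liminf_n \log(1/\la_n)/g(\mu_n)\geq 0$. Plugging \eqref{eq:sup2} back into the displayed identity yields \eqref{eq:sup3}. The main obstacle is establishing the two growth asymptotics $g(u_n(r_{k,n}))/g(\mu_n)\to\eta_k$ and $g'(u_n(r_{k,n}))/g'(\mu_n)\to\tilde{\eta}_k$ that are invoked throughout: in the case $q=1$ all heights satisfy $u_n(r_{k,n})/\mu_n\to 1$, so these ratios must be read off from the finer height correction \eqref{eq:sup00} via the double-exponential behavior of $g$ encoded in (H1)(i); assembling the correct expansion from the preliminary lemmas on $g$ is where the technical work concentrates.
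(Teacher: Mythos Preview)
Your arguments for \eqref{eq:sup1} and \eqref{eq:sup4} match the paper's line of reasoning: both use the monotonicity of $u_n$ to compare $r_n$ with $\rho_{k,n}$ and then invoke the bubble energies \eqref{eq:sup01} together with $\sum a_k=\infty$.

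For \eqref{eq:sup2}--\eqref{eq:sup3} your route is more direct than the paper's in one respect and has a genuine gap in another. The identity you extract from \eqref{eq:sup0},
\[
\frac{\log(1/\la_n)}{g(\mu_n)}+\frac{2\log(1/r_{k,n})}{g(\mu_n)}\to\eta_k,
\]
is exactly what the paper uses at the very last step to pass from \eqref{eq:sup2} to \eqref{eq:sup3}. Your use of $r_{k,n}<1$ together with $\eta_k\to0$ to obtain $\limsup_n \log(1/\la_n)/g(\mu_n)\le0$ is a clean shortcut that the paper does not isolate. The growth asymptotics $g(\mu_{k,n})/g(\mu_n)\to\eta_k$ and $g'(\mu_{k,n})/g'(\mu_n)\to\tilde\eta_k$ that you flag as the technical core are indeed available from Lemmas~\ref{lem:g3} and~\ref{lem:g4} (for $q=1$ the input is precisely \eqref{eq:sup00}).

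The gap is your lower bound $\liminf_n \log(1/\la_n)/g(\mu_n)\ge0$. You obtain it from (H2) via Lemma~\ref{lem:kap}, but the theorem only assumes (H2) when $q=1$; for $q\in(1,2)$ the eigenvalue comparison is unavailable and nothing in your outline rules out $\la_n\to\infty$ fast enough to make $\log(1/\la_n)/g(\mu_n)$ negative in the limit. The paper closes this case by a separate chain of lemmas: starting from the estimate in Lemma~\ref{lem:c11}, it upgrades the pointwise bound \eqref{as:zk} to $z_{k,n}(r)\le -2p(1-\e)\log r$ on a suitable tail, evaluates this at $r=1/\ga_{k,n}$ to obtain $\limsup_n \log(1/\ga_{k,n})/(\mu_{k,n}g'(\mu_{k,n}))\le 1/(2p)$, and combines with the matching $\liminf$ (Lemma~\ref{lem:c18}) to conclude $\log(1/\ga_{k,n})/g(\mu_{k,n})\to1/2$ without (H2). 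Only then does the definition of $\ga_{k,n}$ yield \eqref{eq:sup2}. Your proposal would need either this additional pointwise analysis or an alternative $\la_n$ bound independent of (H2) for the $q>1$ case.
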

Comparing \eqref{eq:sup1}, \eqref{eq:sup4}, and \eqref{eq:sup2} with \eqref{eq:sub1}, \eqref{eq:sub3}, and \eqref{eq:sub2} respectively, we observe that striking differences appear because of the presence of the infinite sequence of concentrating parts.  First, \eqref{eq:sup1} shows that it breaks the uniform boundedness of the ``energy" which is usually proved or assumed in the previous works on critical and subcritical cases. Moreover, \eqref{eq:sup4} and \eqref{eq:sup2} imply that the convergence to the Green function in the sense \eqref{eq:sub3} or the asymptotic formula \eqref{eq:sub2} do no longer hold for the supercritical case $p>2$.  In view of this,  we may expect that different phenomena may occur in the pointwise limit of the blow-up solutions $(u_n)$ and the convergence of the parameters $(\la_n)$. Actually, Kumagai \cite{Ku} recently proves that any blow-up solutions converge to a singular solution up to a subsequence by (B) of Theorem 1.1 there. For the reader convenience, we give Lemma \ref{lem:ku} which  shows that our (H1) implies his essential assumption. Then, combining our theorem with his observation, we see that any  blow-up solutions behave as the infinite sequence of concentrating parts around the origin  while they behave as the limit singular solution in the region away from the origin. This  behavior  is considerably different from those in the critical and subcritical cases found in the previous works. In the second part \cite{N3}, we  carry out the deeper analysis of the interaction between concentrating parts  and singular solutions (which are not necessarily the limit ones).

Finally we remark on  \eqref{eq:sup3} which gives the precise asymptotic information of the position of the center of each concentration. This with Lemmas \ref{lem:g3} and \ref{lem:g4} below implies
\[
g(u_n(r_{k,n}))=(2+o(1))\log{\frac1{r_{k,n}}}
\]
as $n\to \infty$. The asymptotic formulas of this kind will be the key tools to show the infinite oscillations of blow-up solutions around singular solutions. See Theorem 2.1 and its consequences in the second part \cite{N3}. In this way, our  concentration analysis proceeds to the study of infinite oscillation phenomena in the second part.  The rest of this first part  is devoted to the proof of the theorems above.   
\subsection{Strategy and organization}
For the proof, we follow the argument in the previous paper \cite{N1} based on the scaling and pointwise techniques developed in \cite{D} and the radial analysis in \cite{MM1} together with the careful use of the Green type identities in Lemma \ref{lem:id} below.  In order to accomplish our proof under the general setting, the first key step is to deduce all the essential properties of our nonlinearirties from (H1). This is successfully done in  Subsection \ref{sub:geg2} below. 

The second important step is to detect the subsequent concentrating parts. To this end, following the idea in \cite{D}, the key tools are  ``the scaling function" $\phi_n$ and ``the energy (or gradient) function" $\psi_n$ defined in Subsection \ref{sub:D} below. Actually, detecting a new concentration part corresponds to finding a new sequence $(r_n)\subset (0,1)$ of blowing-up points such that $\phi_n(r_n)\to c$ as $n\to \infty$ for some value $c\not=0$. Once we find such a sequence, we can detect a new bubble appearing around $(r_n)$ via the scaling procedure. In order to accomplish these arguments, it is important to understand the delicate balance  between $\phi_n$ and $\psi_n$. The essential properties of those functions are  summarized in Subsection \ref{sub:D}  which are  extensions of those found in Appendix. Radial analysis in \cite{D}.  

Now, let us explain the outline of the procedure to detect the infinite sequence of concentrating parts. First we introduce the standard scaled functions and find the first concentration interval in Lemmas \ref{lem:c1} and \ref{lem:c2}.  Then using the suitable pointwise estimate in the latter lemma, we extend the interval as far as no additional concentration occurs. We remark that, in the subcritical case, this step  essentially ends the proof because it allows the extension up to the boundary, see Lemma \ref{lem:c4}, while in the supercritical case it does not. However, it enables us to find the maximal interval where no additional energy appears. See Lemmas \ref{lem:c10} and \ref{lem:c11}. In particular, in the latter lemma, we prove that a certain amount of additional energy appears in the outside of that interval. Here, the Green type identity  and the energy recurrence formulas play a successful role to determine the exact ``height", described by the sequences $(\delta_k)$ and $(\eta_k)$, of the region where the additional energy appears. 
Then we may expect that the appearance of the nontrivial energy implies the appearance of the next concentration. Actually, utilizing Lemma \ref{lem:D2} below,  we can arrive in the next concentration region in Lemma  \ref{lem:c12}.  We then find the new center of concentration and detect the singular bubble via the scaling argument in Lemma \ref{lem:c13}. We here remark that the center of concentration is characterized as the local maximum point of the scaling function $\phi_n$.  Finally, noting the energy identity \eqref{eq:enid} at the critical points of  $\phi_n$ with \eqref{b2} and \eqref{b4}, we determine the exact profile and energy value by using  the sequence $(a_k)$. See Lemma \ref{lem:c15}. 
 This completes the description of the new concentrating part via the Liouville equation with the energy recurrence formulas. Then we repeat the same argument and finally find that this  procedure can be repeated infinitely many times.  In this way, we detect an infinite sequence of concentrating parts with the precise characterization. 
  
The organization of this paper is the following. In Section \ref{sec:pre}, we deduce  all the essential properties of nonlinearities  from (H1) and collect all the key tools for our proof. Next in Section \ref{sec:fc}, we study the first concentration part  and prove  Theorem \ref{th:0}. Next, in Section \ref{sec:ic}, we give our main argument to detect the subsequent concentrating parts. Finally, in Section \ref{sec:prf}, we accomplish the proof of  Theorems \ref{th:sup1} and \ref{th:sup2}. In addition, Appendix \ref{sec:sub} is devoted the proof of the blow-up estimates in the subcritical case. 

In the proofs below, we often choose subsequence without any change of suffixes for simplicity. 
\section{Preliminaries}\label{sec:pre}
In this section, we deduce the essential properties of our nonlinearity based on (H1) and introduce some key tools.
\subsection{Properties of generalized exponential growth}\label{sub:geg2}
We first summarize the essential properties of our generalized exponential growth. 
 We begin with some basic consequences on the monotonicity and asymptotic behavior of $g$ and $g'$. 
\begin{lemma}\label{lem:monog} Assume (H1). Then we have that $g(t)$ and $g'(t)$ are strictly increasing  for all $t\ge t_0$ and that $g(t)\to \infty$ and $g'(t)\to \infty$ as $t\to \infty$. 
 Moreover, we get
\begin{equation}\label{eq:lgg}
\lim_{t\to \infty}\frac{\log{g'(t)}}{g(t)}=0.
\end{equation}
\end{lemma}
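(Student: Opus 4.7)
The plan is to read off monotonicity and divergence of $g$ directly from the sign assumptions in (H1)(i), and to deduce everything about $g'$—including $g'(t)\to\infty$ and \eqref{eq:lgg}—by converting the asymptotic $g'(t)^2\sim q\,g(t)g''(t)$ into two-sided bounds on $g''/g'$ in terms of $g'/g$, and then integrating. The crucial (and only) feature of (H1)(i) that is needed here is that the exponent $q$ in \eqref{g1} is \emph{finite}; the companion hypothesis \eqref{g2} and (H1)(ii) play no role in this lemma.

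For the elementary parts, the positivity $g'>0$ and $g''>0$ on $[t_0,\infty)$ immediately give that $g$ and $g'$ are strictly increasing. In particular $g'(t)\ge g'(t_0)>0$, so integrating once yields $g(t)\ge g(t_0)+g'(t_0)(t-t_0)\to\infty$ as $t\to\infty$.

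For the remaining statements, I would rewrite the trivial identity
\[
\frac{g''(t)}{g'(t)}=\frac{g'(t)}{g(t)}\cdot\frac{g(t)g''(t)}{g'(t)^2}
\]
and use \eqref{g1} (which sends the last factor to $1/q$) to conclude that for each $\varepsilon\in(0,1/q)$ there exists $T_\varepsilon\ge t_0$ with
\[
\left(\tfrac{1}{q}-\varepsilon\right)\frac{g'(t)}{g(t)}\le\frac{g''(t)}{g'(t)}\le\left(\tfrac{1}{q}+\varepsilon\right)\frac{g'(t)}{g(t)},\qquad t\ge T_\varepsilon.
\]
Integrating on $[T_\varepsilon,t]$ converts this into
\[
\left(\tfrac{1}{q}-\varepsilon\right)\log\frac{g(t)}{g(T_\varepsilon)}\le\log\frac{g'(t)}{g'(T_\varepsilon)}\le\left(\tfrac{1}{q}+\varepsilon\right)\log\frac{g(t)}{g(T_\varepsilon)}.
\]
The left inequality, combined with $g(t)\to\infty$, forces $\log g'(t)\to\infty$, hence $g'(t)\to\infty$. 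Dividing the right inequality by $g(t)$ and using the elementary fact $\log g(t)/g(t)\to 0$ (together with the trivial lower bound $\log g'(t)\ge\log g'(t_0)$) produces \eqref{eq:lgg}.

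The only mild obstacle is ensuring $1/q-\varepsilon>0$ so that the lower integrated estimate is nontrivial; this is exactly the place where the finiteness of $q\in[1,\infty)$ in (H1)(i) is used, and beyond that the argument reduces to an integration of a pointwise comparison.
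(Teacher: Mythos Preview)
Your proof is correct and follows essentially the same strategy as the paper's: read off monotonicity and $g\to\infty$ from the sign hypotheses, then use \eqref{g1} to compare the growth of $\log g'$ with that of $\log g$. The only cosmetic difference is that the paper finishes \eqref{eq:lgg} by de l'H\^opital (computing $\lim (\log g')'/g'=\lim (g g''/g'^2)\cdot g^{-1}=0$) rather than by integrating your two-sided bound on $g''/g'$; both routes are equally short, and your observation that only the finiteness of $q$ in \eqref{g1} is used here is accurate.
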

\begin{proof} The monotonicity of $g(t)$ and $g'(t)$ clearly follows from (i) of (H1). Moreover, since it also ensures $tg'(t)\to \infty$ as $t\to \infty$, we have that 
\[
g'(t)\ge \frac1t
\]
for all large $t>0$. This proves $g(t)\to \infty$ as $t\to \infty$ after integration. Then by \eqref{g1}, we have that 
\[
(\log{g(t)})'\le (q+1)(\log{g'(t)})'
\] 
for all large $t>0$. This implies $g'(t)\to \infty$ as $t\to \infty$ by integration. Finally, using the de l'H\^{o}pital rule and \eqref{g1}, we get 
\[
\lim_{t\to \infty}\frac{\log{g'(t)}}{g(t)}=\lim_{t\to \infty}\frac{g(t)g''(t)}{g'(t)^2}\frac1{g(t)}=0.
\]
We finish the proof.   
\end{proof}
The next lemma ensures that $q$ is the H\"{o}lder conjugate of $p$. 
\begin{lemma}\label{lem:pq} We suppose (H1). Then, we have that $1/p+1/q=1$ where we regarded $1/\infty=0$.
\end{lemma}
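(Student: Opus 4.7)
The plan is to derive a differential identity that links $Q(t)$ and $P(t)$ directly, and then integrate. Set $h(t)=g'(t)/g(t)=(\log g)'(t)$, which is positive for all sufficiently large $t$ by Lemma \ref{lem:monog}. A single differentiation gives
\[
h'(t)=\frac{g''(t)}{g(t)}-\frac{g'(t)^2}{g(t)^2}=\frac{g''(t)}{g(t)}-h(t)^2,
\]
so that
\[
\frac{h'(t)}{h(t)^2}=\frac{g(t)g''(t)}{g'(t)^2}-1=\frac{1}{Q(t)}-1.
\]
Therefore
\[
\left(\frac{1}{h}\right)'(t)=-\frac{h'(t)}{h(t)^2}=1-\frac{1}{Q(t)}\;\longrightarrow\; 1-\frac{1}{q}
\qquad\text{as }t\to\infty,
\]
by \eqref{g1}.

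Next I would invoke the elementary asymptotic principle that if a continuously differentiable function $F:[t_0,\infty)\to\R$ satisfies $F'(t)\to L$ as $t\to\infty$, then $F(t)/t\to L$ as well; this is a standard $\varepsilon$-argument (pick $T$ with $|F'(s)-L|<\varepsilon$ for $s\ge T$, integrate from $T$ to $t$, and divide by $t$). Applied to $F=1/h$ this gives
\[
\frac{1}{tP(t)}=\frac{(1/h)(t)}{t}\;\longrightarrow\; 1-\frac{1}{q}.
\]

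Now the two subcases separate cleanly. If $q\in (1,\infty)$, the limit $1-1/q$ is strictly positive, hence $P(t)\to q/(q-1)$, which together with \eqref{g2} forces $p=q/(q-1)$, i.e., $1/p+1/q=(q-1)/q+1/q=1$. If $q=1$, the limit is $0$ and, since $h(t)>0$ eventually, we are forced to have $P(t)=th(t)\to\infty$, so $p=\infty$, and the convention $1/\infty=0$ gives $1/p+1/q=0+1=1$.

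The only real obstacle is the derivative-to-ratio passage used above, and it is purely routine once $(1/h)'$ is known to tend to a limit; notably, the hypothesis \eqref{g2} is not actually exploited in the computation itself but only to match the derived limit of $P(t)$ with the prescribed value $p$. This is in line with the heuristic voiced in the text that, once $Q$ stabilises, $p$ is forced to equal the H\"older conjugate of $q$.
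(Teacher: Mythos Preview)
Your argument is correct, modulo one harmless slip: in the display $\frac{1}{tP(t)}=\frac{(1/h)(t)}{t}$, the left-hand side should be $\frac{1}{P(t)}$, since $P(t)=t\,h(t)$ gives $(1/h)(t)/t=1/P(t)$; the conclusions you draw afterwards ($P(t)\to q/(q-1)$ when $q>1$, $P(t)\to\infty$ when $q=1$) are exactly what follow from the corrected identity, so this is only a typo. The paper's own proof instead applies l'H\^opital directly to $tg'(t)/g(t)$, rewriting the derivative quotient as $1+P(t)/Q(t)$ and reading off the self-consistency relation $p=1+p/q$; your route, which differentiates $g/g'$ exactly and then passes from $(g/g')'\to 1-1/q$ to $(g/g')/t\to 1-1/q$ by averaging, is the same idea in different packaging but has the pleasant feature that the limit of $P(t)$ is obtained from \eqref{g1} alone, with \eqref{g2} used only to name that limit $p$, exactly as you note in your closing remark. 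One stylistic point: the symbol $h$ is already reserved in (H0) for the radial weight, so a different letter would avoid a clash.
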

\begin{proof} Noting $g(t),tg'(t)\to \infty$ as $t\to \infty$ from the previous lemma, we get by the de l'H\^{o}pital rule that
\[
\lim_{t\to \infty}\frac{tg'(t)}{g(t)}=\lim_{t\to \infty}\frac{g'(t)+tg''(t)}{g'(t)}=\lim_{t\to \infty}\left(1+\frac{tg'(t)}{g(t)}\cdot\frac{g(t)g''(t)}{g'(t)^2}\right).
\]
Then, \eqref{g1} and \eqref{g2} prove the desired conclusion. We complete the proof. 
\end{proof}
Next we shall deduce some lemmas which will be used for our scaling argument. The next two lemmas are preliminaries, based on (ii) of (H1), for Lemma \ref{lem:g2} below. 
\begin{lemma}\label{lem:g200} Suppose (H1). If $q>1$ ($q=1$), then $g'(t)/g(t)$ ($\hat{g}'(t)/\hat{g}(t)$ respectively)  is nonincreasing for all large $t\ge t_0$. 
\end{lemma}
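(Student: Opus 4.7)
The plan is to treat the two cases of (H1) separately. For $q=1$, the statement that $\hat{g}'(t)/\hat{g}(t)$ is nonincreasing for all $t \ge t_0$ is directly part of condition (ii) of (H1), so nothing needs to be proved. Hence the real content lies only in the case $q > 1$.

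For $q > 1$, the strategy is to differentiate $g'/g$ and read off the sign from $Q(t)$. By Lemma \ref{lem:monog}, $g(t) > 0$ for all large $t$, so the quotient is well defined and
\[
\left(\frac{g'(t)}{g(t)}\right)' = \frac{g''(t)g(t) - g'(t)^2}{g(t)^2}.
\]
The denominator is positive, and $g''(t) > 0$ by (H1)(i), so this expression is $\le 0$ if and only if $g'(t)^2 \ge g(t)g''(t)$, equivalently $Q(t) \ge 1$.

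Now \eqref{g1} gives $Q(t) \to q$ as $t \to \infty$, and the assumption $q > 1$ ensures that $Q(t) \ge 1$ for all $t$ sufficiently large. Consequently $(g'/g)'(t) \le 0$ for such $t$, which is the desired monotonicity. No significant obstacle is expected: the argument is a one-line computation together with the limit assumption, and the most delicate point is simply remembering that the $q=1$ version is absorbed into the hypotheses rather than requiring proof.
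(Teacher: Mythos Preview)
Your proposal is correct and follows essentially the same approach as the paper: for $q=1$ the conclusion is read off from (ii) of (H1), and for $q>1$ you differentiate $g'/g$ and use \eqref{g1} to conclude the sign. The paper factors the derivative as $(g'/g)^2(g g''/g'^2-1)$ rather than your $(g''g-g'^2)/g^2$, but this is the same computation.
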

\begin{proof} Assume $q>1$. Then we have that
\[
\left(\frac{g'(t)}{g(t)}\right)'=\left(\frac{g'(t)}{g(t)}\right)^2\left(\frac{g(t)g''(t)}{g'(t)^2}-1\right)
\]
for all $t\ge t_0$. Then \eqref{g1} proves the assertion. In the case $q=1$, the conclusion is in (ii) of (H1). We finish the proof. 
\end{proof}
\begin{lemma}\label{lem:g20} Assume (H1) with $q=1$ and put $\hat{g}_0(t)=\hat{g}(t)$ and $\hat{g}_i(t)=\exp{(\hat{g}_{i-1}(t))}
$ for all $t\ge t_0$ and $i=1,\cdots,k$ by induction. Then we have that
\[
\limsup_{t\to \infty}\frac{\hat{g}_i(t)\hat{g}_i''(t)}{\hat{g}_i'(t)^2}\le1
\]
for all $i=0,1,\cdots,k$.
\end{lemma}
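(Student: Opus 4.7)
The plan is to prove the lemma by induction on $i$, with the base case $i=0$ coming directly from assumption (ii) of (H1) and the inductive step from a clean computation of $\hat g_i''$ in terms of $\hat g_{i-1}$ together with the fact that $\hat g_i(t)\to\infty$ for every $i$. For the rest of the discussion let $R_i(t):=\hat g_i(t)\hat g_i''(t)/\hat g_i'(t)^2$.

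First I would record two preparatory observations that will be used throughout. Since $f=\hat g_k$ with $g=\log f=\hat g_{k-1}$, Lemma \ref{lem:monog} gives $\hat g_{k-1}(t)\to\infty$, and by the identity $\hat g_{i-1}=\log\hat g_i$ for $i\ge 1$ a descending induction yields $\hat g_i(t)\to\infty$ for every $i=0,1,\dots,k$. Similarly, from $g'(t)>0$ we get $\hat g_k'=f'>0$, and differentiating $\hat g_i=e^{\hat g_{i-1}}$ gives $\hat g_i'=\hat g_{i-1}'\hat g_i$; since $\hat g_i>0$ for $i\ge 1$, an ascending-index inversion shows $\hat g_i'(t)>0$ for all $i=0,\dots,k$ and all sufficiently large $t$, so $R_i$ is well defined for large $t$.

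For the base case $i=0$, assumption (ii) of (H1) says $\hat g'/\hat g$ is nonincreasing, which, after differentiating, is exactly $\hat g_0''\hat g_0\le (\hat g_0')^2$ (using $\hat g_0>0$ for large $t$). Hence $R_0(t)\le 1$ on $[T,\infty)$ for some $T$, which is even stronger than the claimed limsup bound.

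For the inductive step, assume $\limsup_{t\to\infty}R_{i-1}(t)\le 1$. Differentiating $\hat g_i=e^{\hat g_{i-1}}$ twice gives
\[
\hat g_i'=\hat g_{i-1}'\hat g_i,\qquad \hat g_i''=\bigl(\hat g_{i-1}''+(\hat g_{i-1}')^2\bigr)\hat g_i,
\]
so the factors of $\hat g_i$ and $(\hat g_{i-1}')^2$ in
\[
R_i(t)=\frac{\hat g_i\hat g_i''}{(\hat g_i')^2}=\frac{\hat g_{i-1}''+(\hat g_{i-1}')^2}{(\hat g_{i-1}')^2}=1+\frac{\hat g_{i-1}''}{(\hat g_{i-1}')^2}
\]
cancel cleanly. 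Rewriting the last term as $R_{i-1}/\hat g_{i-1}$ and using the inductive hypothesis together with $\hat g_{i-1}(t)\to\infty$ gives $\hat g_{i-1}''/(\hat g_{i-1}')^2\le (1+o(1))/\hat g_{i-1}\to 0$, whence $\limsup_{t\to\infty}R_i(t)\le 1$. This closes the induction. I do not expect a serious obstacle: the only delicate point is making sure the signs and domains line up so that $R_i$ is defined and the inductive step is genuinely a limsup-preserving estimate, which is handled by the two preparatory observations above.
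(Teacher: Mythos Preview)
Your proof is correct and follows essentially the same approach as the paper: induction on $i$, with the base case read off from the nonincreasing property of $\hat g'/\hat g$ in (ii) of (H1), and the inductive step via the identity $R_i=1+R_{i-1}/\hat g_{i-1}$ together with $\hat g_{i-1}(t)\to\infty$. The only difference is that you spell out more carefully why each $\hat g_i\to\infty$ and why $R_i$ is well defined, whereas the paper simply cites Lemma~\ref{lem:monog}.
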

\begin{proof}
We argue by induction. From (ii), we get for all large $t>0$ that
\[
0\ge \left(\frac{\hat{g}'(t)}{\hat{g}(t)}\right)'=\left(\frac{\hat{g}'(t)}{\hat{g}(t)}\right)^2\left(\frac{\hat{g}(t)\hat{g}''(t)}{\hat{g}'(t)^2}-1\right)
\]
 This gives the conclusion for $i=0$.  Then,  we assume that for some $i=l\in \{0,1,\cdots,k-1\}$, the assertion  holds true. Then noting $\hat{g}_l(t)\to \infty$ as $t\to \infty$ by Lemma \ref{lem:monog} and 
\[
\frac{\hat{g}_{l+1}(t)\hat{g}_{l+1}''(t)}{\hat{g}_{l+1}'(t)^2}=1+\frac{\hat{g}_l(t)\hat{g}_l''(t)}{\hat{g}_l'(t)^2}\frac{1}{\hat{g}_l(t)}
\]
for all $t\ge t_0$, we prove the conclusion for $i=l+1$. This finishes the proof. 
\end{proof}
Then we prove the next key lemma.  
\begin{lemma}\label{lem:g2} Assume (H1). Then, for any value $M>0$, we get that 
\[
\lim_{t\to \infty}\sup_{-M\le y\le M}\left|\frac{g'\left(t+\frac{y}{g'(t)}\right)}{g'(t)}-1\right|=0.
\]
\end{lemma}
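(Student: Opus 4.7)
The plan is to integrate the identity $(1/g')'(s)=-g''(s)/g'(s)^2$ across the shrinking window $s\in I_t:=[t-M/g'(t),\,t+M/g'(t)]$. Rewriting \eqref{g1} as
\[
\frac{g''(s)}{g'(s)^2}=\frac{1}{q\,g(s)}+o\!\left(\frac{1}{g(s)}\right)\quad\text{as }s\to\infty,
\]
one sees from Lemma \ref{lem:monog} that $g''/g'^2\to 0$; intuitively, the shift $y/g'(t)$ is far smaller than the characteristic scale $g'/g''$ on which $g'$ varies, so $g'$ should be almost constant on $I_t$.

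The first step is to establish a uniform lower bound $g(s)\ge g(t)/2$ for $s\in I_t$ and $t$ large. For $s\ge t$ this is trivial; for $s\in[t-M/g'(t),t]$, the monotonicity of $g'$ (which follows from $g''>0$) gives
\[
g(t)-g\Bigl(t-\tfrac{M}{g'(t)}\Bigr)=\int_{t-M/g'(t)}^{t}g'(\xi)\,d\xi\le g'(t)\cdot\tfrac{M}{g'(t)}=M,
\]
which combined with $g(t)\to\infty$ yields the claim. Plugging into the above asymptotic, for any fixed $\e>0$ and all sufficiently large $t$,
\[
\frac{g''(s)}{g'(s)^2}\le\frac{2(1/q+\e)}{g(t)}\qquad\text{uniformly in }s\in I_t.
\]

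Integrating $(1/g')'(s)=-g''(s)/g'(s)^2$ between $t$ and $t+y/g'(t)$ for $|y|\le M$ then produces
\[
\left|\frac{1}{g'(t+y/g'(t))}-\frac{1}{g'(t)}\right|\le\frac{2(1/q+\e)M}{g(t)\,g'(t)},
\]
and multiplying by $g'(t)$ gives
\[
\left|\frac{g'(t)}{g'(t+y/g'(t))}-1\right|\le\frac{2(1/q+\e)M}{g(t)}\longrightarrow 0,
\]
uniformly in $|y|\le M$ as $t\to\infty$; inverting the ratio (which now lies in a small neighborhood of $1$) yields the lemma. The only real subtlety is the bootstrap-style step that extracts $g(s)\ge g(t)/2$ on the left half of $I_t$ from just the monotonicity of $g'$; once that is in place, neither \eqref{g2} nor the auxiliary hypothesis (H1)(ii) enters, so the same argument treats all regimes $q\in[1,\infty)$ at once.
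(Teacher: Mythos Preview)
Your argument is correct and genuinely different from the paper's. The paper applies the mean value theorem to $\log g'$, obtaining
\[
\log\frac{g'(\alpha)}{g'(t)}=(\alpha-t)\,\frac{g''(\alpha_1)}{g'(\alpha_1)}
=O\!\left((\alpha-t)\,\frac{g'(\alpha_1)}{g(\alpha_1)}\right),
\]
and is then forced to control $\tfrac{g'(\beta)}{g'(t)g(\beta)}$ on the upper half of $I_t$. In the multiple-exponential regime $q=1$ the ratio $g'(\beta)/g'(t)$ is a priori uncontrolled, which is why the paper invokes the structural assumption (H1)(ii) and the recursive Lemmas~\ref{lem:g200}--\ref{lem:g20}. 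By contrast, you integrate $(1/g')'=-g''/g'^2$; the integrand is already $O(1/g)$ by \eqref{g1}, and your elementary observation $g(t)-g(t-M/g'(t))\le M$ (from the monotonicity of $g'$) provides the uniform lower bound $g\ge g(t)/2$ on $I_t$. This handles all $q\ge1$ at once and in fact shows that part (ii) of (H1) is unnecessary for Lemma~\ref{lem:g2}; the paper itself remarks that (ii) is only used here, so your route removes that hypothesis from the theory altogether.
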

\begin{proof} Fix any value $M>0$ and  suppose $y\in [-M,M]$. Then, for any large $t\ge t_0$, we set $\alpha=t+y/g'(t)$. First we note that there exist values $\theta_1\in(0,1)$ such that, putting $\alpha_1=t+\theta_1 y/g'(t)$, we have that 
\[
\log{g'\left(\alpha\right)}=\log{g'\left(t\right)}+(\alpha-t)\frac{g''\left(\alpha_1\right)}{g'(\alpha_1)}.
\]
In particular, we get that
\[
\log{\frac{g'\left(\alpha\right)}{g'(t)}}
=O\left((\alpha-t)\frac{g'\left(\alpha_1\right)}{g(\alpha_1)}\right)
\]
by \eqref{g1}. Therefore for the desired conclusions, it suffices to show 
\[
\lim_{t\to \infty}\frac{g'\left(\beta\right)}{g'(t)g(\beta)}=0
\]
uniformly for all $t-M/g'(t)\le \beta\le t+M/g'(t)$. Then, noting the monotonicity of $g'$, we easily ensure the desired convergence uniformly for $t-M/g'(t)\le \beta\le t$. Hence, our goal is to show 
\begin{equation}\label{eq:g21}
\lim_{t\to \infty}\sup_{t\le \beta\le t+M/g'(t)}\left(\frac{1}{g'(t)}\frac{g'(\beta)}{g(\beta)}\right)=0.
\end{equation}
Here if $q>1$, or $q=1$ and (ii) of (H1) holds true with $k=1$, then we get 
\[
\sup_{t\le \beta\le t+M/g'(t)}\left(\frac{1}{g'(t)}\frac{g'(\beta)}{g(\beta)}\right)\le \frac{1}{g(t)}
\]
for all large $t\ge t_0$ by Lemma \ref{lem:g200} which gives \eqref{eq:g21}. Hence, we assume $q=1$ and (ii)  of  (H1) holds with $k>1$. Then, for all large $t\ge t_0$, set a value $\beta_1 \in [t,t+M/g'(t)]$ so that 
\[
\sup_{t\le \beta\le t+M/g'(t)}\left(\frac{1}{g'(t)}\frac{g'(\beta)}{g(\beta)}\right)=\frac{1}{g'(t)}\frac{g'(\beta_1)}{g(\beta_1)}.
\]
Using the definition in Lemma \ref{lem:g20}, we may write $g(t)=\hat{g}_{k-1}(t)=e^{\hat{g}_{k-2}(t)}$ for all $t\ge t_0$. Then  there exists a value $\beta_2\in [t,\beta_1]$ such that
\begin{equation}\label{eq:g23}
\begin{split}
\log{\left(\frac1{g'(t)}\frac{g'(\beta_1)}{g(\beta_1)}\right)}&=\log{\frac1{g'(t)}}+\log{(\hat{g}_{k-2}'(\beta_1))}\\
&=\log{\frac1{g'(t)}}+\log{(\hat{g}_{k-2}'(t))}+(\beta_1-t)\frac{\hat{g}_{k-2}''(\beta_2)}{\hat{g}_{k-2}'(\beta_2)}\\
&\le \log{\frac1{g(t)}}+(1+o(1))\left(\beta_1-t\right)\frac{\hat{g}_{k-2}'(\beta_2)}{\hat{g}_{k-2}(\beta_2)}
\end{split}
\end{equation}
by Lemma \ref{lem:g20}. Consequently, once we prove  
\begin{equation}\label{eq:g24}
\lim_{t\to \infty}\sup_{t\le \beta\le t+M/g'(t)}\left(\frac{1}{g'(t)}\frac{\hat{g}_{k-2}'(\beta)}{\hat{g}_{k-2}(\beta)}\right)=0,
\end{equation}
\eqref{eq:g23} shows \eqref{eq:g21}. Here, similarly to the argument before, if (ii) of (H1) holds true with $k=2$, then  $\hat{g}_{k-2}=\hat{g}$ and thus, Lemma \ref{lem:g200} gives \eqref{eq:g24}. Therefore we  suppose (ii) of (H1) holds with $k>2$. Then for each large $t\ge t_0$, we set a value $\beta_3\in[t,t+M/g'(t)]$ so that 
\[
\sup_{t\le \beta\le t+M/g'(t)}\left(\frac{1}{g'(t)}\frac{\hat{g}_{k-2}'(\beta)}{\hat{g}_{k-2}(\beta)}\right)=\frac{1}{g'(t)}\frac{\hat{g}_{k-2}'(\beta_3)}{\hat{g}_{k-2}(\beta_3)},
\]
write $\hat{g}_{k-2}(t)=e^{\hat{g}_{k-3}(t)}$, and  repeat the computation similarly to \eqref{eq:g23} for 
\[
\log{\left(\frac{1}{g'(t)}\frac{\hat{g}_{k-2}'(\beta_3)}{\hat{g}_{k-2}(\beta_3)}\right)}.
\] 
In this way, repeating the same argument some more times if necessary, we arrive at  our final goal  to show that
\[
\lim_{t\to \infty}\sup_{t\le \beta\le t+M/g'(t)}\frac{1}{g'(t)}\frac{\hat{g}'(\beta)}{\hat{g}(\beta)}= 0
\] 
which is readily confirmed with Lemma \ref{lem:g200}.  We finish the proof.
\end{proof}
The main consequence of the previous lemma is the next one which will used to determine  the limit equation. 
\begin{lemma}\label{lem:g1} 
 Supoose (H1). Then for any value $M>0$, we obtain that
\[
\lim_{t\to \infty}\sup_{-M\le y\le M} \left|g\left(t+\frac{y}{g'(t)}\right)-g(t)-y\right|=0.
\]
In particular, we have that
\[
\lim_{t\to \infty}\sup_{-M\le y\le M} \left|\frac{f\left(t+\frac{y}{g'(t)}\right)}{f(t)}-e^{y}\right|=0=\lim_{t\to \infty}\sup_{-M\le y\le M} \left|\frac{f'\left(t+\frac{y}{g'(t)}\right)}{f'(t)}-e^{y}\right|.
\]
\end{lemma}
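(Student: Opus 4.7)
The plan is to reduce everything to the uniform convergence $g'(t+y/g'(t))/g'(t)\to 1$ supplied by Lemma \ref{lem:g2}, together with the fundamental theorem of calculus and uniform continuity of $\exp$ on bounded sets.

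First I would handle the statement about $g$ by integrating $g'$. By the fundamental theorem of calculus and the substitution $s=\tau/g'(t)$,
\[
g\!\left(t+\tfrac{y}{g'(t)}\right)-g(t)=\int_0^{y/g'(t)}g'(t+s)\,ds=\int_0^y \frac{g'\!\left(t+\tau/g'(t)\right)}{g'(t)}\,d\tau,
\]
so that
\[
g\!\left(t+\tfrac{y}{g'(t)}\right)-g(t)-y=\int_0^y \!\left(\frac{g'\!\left(t+\tau/g'(t)\right)}{g'(t)}-1\right)d\tau.
\]
Taking absolute values and then the supremum over $|y|\le M$ gives
\[
\sup_{|y|\le M}\left|g\!\left(t+\tfrac{y}{g'(t)}\right)-g(t)-y\right|\le M\sup_{|\tau|\le M}\left|\frac{g'\!\left(t+\tau/g'(t)\right)}{g'(t)}-1\right|,
\]
which tends to $0$ as $t\to\infty$ by Lemma \ref{lem:g2}. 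This settles the first claim.

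For the consequences on $f$ and $f'$, I would use $f=e^{g}$ and $f'=g'e^{g}$. The identity
\[
\frac{f(t+y/g'(t))}{f(t)}=\exp\!\bigl(g(t+y/g'(t))-g(t)\bigr),
\]
combined with the just-proved uniform convergence of the exponent to $y$ on $[-M,M]$ and the uniform continuity of $\exp$ on the compact interval $[-M-1,M+1]$, immediately yields the uniform limit $e^{y}$. For the derivative, I would factor
\[
\frac{f'(t+y/g'(t))}{f'(t)}=\frac{g'(t+y/g'(t))}{g'(t)}\cdot\frac{f(t+y/g'(t))}{f(t)},
\]
where the first factor tends uniformly to $1$ by Lemma \ref{lem:g2} and the second to $e^{y}$ by the step just above, so the product converges uniformly to $e^{y}$ as desired.

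All the substantive work is contained in Lemma \ref{lem:g2}; there is no genuine obstacle in the present lemma, only the bookkeeping required to preserve uniformity in $y$ under integration, under composition with $\exp$, and in the product decomposition for $f'$.
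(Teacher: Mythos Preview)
Your argument is correct and, if anything, a touch more direct than the paper's. The paper obtains the first claim via a second-order Taylor expansion: it picks $y_t\in[-M,M]$ realizing the supremum, writes
\[
\left|g\!\left(t+\tfrac{y_t}{g'(t)}\right)-g(t)-y_t\right|=\frac{g''(\alpha_t)y_t^2}{2g'(t)^2}=O\!\left(\frac{1}{g(\alpha_t)}\Bigl(\frac{g'(\alpha_t)}{g'(t)}\Bigr)^2\right)
\]
using \eqref{g1}, and then invokes Lemmas \ref{lem:monog} and \ref{lem:g2} to send the right-hand side to zero. Your approach replaces this by the first-order identity $g(t+y/g'(t))-g(t)-y=\int_0^y\bigl(g'(t+\tau/g'(t))/g'(t)-1\bigr)\,d\tau$ and a direct appeal to Lemma \ref{lem:g2}, which bypasses the need to estimate $g''$ separately. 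For the consequences on $f$ and $f'$ the two proofs are essentially identical: both use $f=e^g$, $f'=g'f$, the first claim, and Lemma \ref{lem:g2}. The only point you leave implicit (as does the paper) is that $t+y/g'(t)\ge t_0$ for large $t$, which is immediate from $g'(t)\to\infty$.
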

\begin{proof}Fix any value $M>0$. For each large $t\ge t_0$,  take a value $y_t\in[-M,M]$ so that 
\[
\begin{split}
\sup_{-M\le y\le M}& \left|g\left(t+\frac{y}{g'(t)}\right)-g(t)-y\right|=\left|g\left(t+\frac{y_t}{g'(t)}\right)-g(t)-y_t\right|.
\end{split}
\]
Then there exists a value  $\alpha_t\in [t-M/g'(t),t+M/g'(t)]$ such that
\begin{equation}\label{eq:a1}
\begin{split}
\left|g\left(t+\frac{y_t}{g'(t)}\right)-g(t)-y_t\right|&= \frac{g''\left(\alpha_t\right)y_t^2}{2g'(t)^2}=O\left(\frac{1}{g(\alpha_t)}\left(\frac{g'\left(\alpha_t\right)}{g'(t)}\right)^2\right) 
\end{split}
\end{equation}
from \eqref{g1}. Hence,  Lemmas \ref{lem:monog} and \ref{lem:g2} show  the first conclusion. Noting this together with Lemma \ref{lem:g2}, we readily confirm the  latter ones.  This completes the proof.
\end{proof}
We next derive some lemmas applied to obtain the energy recurrence formulas. The next lemma is a preliminary.    
\begin{lemma}\label{lem:g00} Suppose (H1). Then for any $\e>0$, there exists a number $t_\e\ge t_0$ such that 
\[
\left(\frac{g(t)}{g(s)}\right)^{\frac1{q+\e}}\le \frac{g'(t)}{g'(s)}\le \left(\frac{g(t)}{g(s)}\right)^{\frac1{q-\e}}.
\]
for any $t>s\ge t_\e$.
\end{lemma}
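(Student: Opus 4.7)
The plan is to derive this two-sided bound by integrating the defining asymptotic \eqref{g1}. The key algebraic observation is that the ratio of the logarithmic derivatives of $g'$ and $g$ is precisely the quantity in \eqref{g1}: for $u\ge t_0$,
\[
\frac{(\log g'(u))'}{(\log g(u))'}=\frac{g''(u)/g'(u)}{g'(u)/g(u)}=\frac{g(u)g''(u)}{g'(u)^2},
\]
which tends to $1/q$ as $u\to\infty$. Thus, once this ratio is recognized, the conclusion follows from a purely one-variable calculus argument.

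First, I would fix $\e>0$ (small enough that $q-\e>0$, which is the only regime in which the upper bound is meaningful) and use \eqref{g1} together with Lemma \ref{lem:monog} to pick $t_\e\ge t_0$ so that $g(u),g'(u)>0$, $g$ and $g'$ are strictly increasing, and
\[
\frac{1}{q+\e}\le \frac{g(u)g''(u)}{g'(u)^2}\le \frac{1}{q-\e}
\]
for all $u\ge t_\e$. Since $(\log g(u))'=g'(u)/g(u)>0$ on this range, multiplying through by $(\log g(u))'$ yields the pointwise differential inequalities
\[
\frac{1}{q+\e}\,(\log g(u))'\ \le\ (\log g'(u))'\ \le\ \frac{1}{q-\e}\,(\log g(u))'.
\]

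Next, I would integrate these inequalities from $s$ to $t$ for $t>s\ge t_\e$. The left-hand and right-hand integrals evaluate to $\frac{1}{q+\e}\log(g(t)/g(s))$ and $\frac{1}{q-\e}\log(g(t)/g(s))$ respectively, while the middle integrates to $\log(g'(t)/g'(s))$. Using $g(t)/g(s)\ge 1$ (from monotonicity), exponentiating preserves the inequalities and gives
\[
\left(\frac{g(t)}{g(s)}\right)^{\frac1{q+\e}}\le \frac{g'(t)}{g'(s)}\le \left(\frac{g(t)}{g(s)}\right)^{\frac1{q-\e}},
\]
which is exactly the claim. There is essentially no obstacle here: the only thing to watch is that $\e$ be chosen small enough that $q-\e>0$ so the upper exponent is positive and the upper inequality is nontrivial, and that $t_\e$ be chosen large enough so that both the bound on $g g''/(g')^2$ and the positivity/monotonicity from Lemma \ref{lem:monog} hold simultaneously.
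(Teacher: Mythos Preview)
Your proof is correct and essentially identical to the paper's: both use \eqref{g1} to bound the ratio $(\log g')'/(\log g)'=g g''/(g')^2$ between $1/(q+\e)$ and $1/(q-\e)$ for large arguments, then integrate over $[s,t]$ and exponentiate. The paper writes the same differential inequality in the equivalent form $(q-\e)(\log g')'\le (\log g)'\le (q+\e)(\log g')'$, but the argument is the same line for line.
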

\begin{proof}
From \eqref{g1},  for any value $\e>0$, there exists a constant $t_\e\ge t_0$ such that  
\[
 (q-\e)(\log{g'(t)})'\le (\log{g(t)})'\le (q+\e)(\log{g'(t)})'
\]
for all $t\ge t_\e$. Then, for every $t>s \ge t_\e$, integrating over $[s,t]$ gives 
\[
(q-\e)\log{\frac{g'(t)}{g'(s)}} \le \log{\frac{g(t)}{g(s)}}\le (q+\e)\log{\frac{g'(t)}{g'(s)}}.
\]
This proves the desired estimate. We finish the proof.
\end{proof} 

The next lemmas determine the exact ratio of growths at two blowing-up points. We first give the lemma for $q>1$. 
\begin{lemma}\label{lem:g3} Assume (H1) with $q>1$ and take any sequences $(s_n),(t_n)\subset (0,\infty)$ of numbers and a value $x\in [0,1]$ such that $s_n<t_n$ for all $n\in \mathbb{N}$ and  $s_n\to \infty$ and  $s_n/t_n\to x$ as $n\to \infty$. Then we have that
\[
\lim_{n\to \infty}\frac{g(s_n)}{g(t_n)}=x^p=\lim_{n\to \infty}\left(\frac{g'(s_n)}{g'(t_n)}\right)^q.
\]
\end{lemma}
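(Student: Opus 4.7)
The statement is a ratio comparison that should follow from integrating the logarithmic derivatives of $g$ and $g'$, whose asymptotics are controlled by the hypotheses \eqref{g1}, \eqref{g2}, and by the conjugacy relation $1/p+1/q=1$ (Lemma \ref{lem:pq}).

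\medskip

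\emph{Step 1 (the limit for $g$).} I would write
\[
\log\frac{g(t_n)}{g(s_n)}=\int_{s_n}^{t_n}\frac{g'(t)}{g(t)}\,dt,
\]
and use \eqref{g2}: for every $\e>0$ there is $T_\e\ge t_0$ such that
\[
\frac{p-\e}{t}\le \frac{g'(t)}{g(t)}\le \frac{p+\e}{t}\qquad(t\ge T_\e).
\]
Since $s_n\to\infty$, we have $s_n\ge T_\e$ for $n$ large, so integrating over $[s_n,t_n]$ yields
\[
(p-\e)\log\frac{t_n}{s_n}\le \log\frac{g(t_n)}{g(s_n)}\le (p+\e)\log\frac{t_n}{s_n}.
\]
If $x>0$, then $\log(t_n/s_n)\to \log(1/x)$, so sending $\e\to 0^+$ gives $g(s_n)/g(t_n)\to x^p$. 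If $x=0$, then $\log(t_n/s_n)\to\infty$ and the lower bound forces $g(t_n)/g(s_n)\to\infty$, hence $g(s_n)/g(t_n)\to 0=0^p$ (using $p>1$).

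\medskip

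\emph{Step 2 (the limit for $g'$).} I would invoke Lemma \ref{lem:g00}: for every $\e>0$, once $s_n\ge t_\e$,
\[
\left(\frac{g(t_n)}{g(s_n)}\right)^{1/(q+\e)}\le \frac{g'(t_n)}{g'(s_n)}\le \left(\frac{g(t_n)}{g(s_n)}\right)^{1/(q-\e)}.
\]
Raising to the $q$-th power and inverting, I get
\[
\left(\frac{g(s_n)}{g(t_n)}\right)^{q/(q-\e)}\le \left(\frac{g'(s_n)}{g'(t_n)}\right)^{q}\le \left(\frac{g(s_n)}{g(t_n)}\right)^{q/(q+\e)}.
\]
Combining with Step 1 and sending $\e\to 0^+$ gives $(g'(s_n)/g'(t_n))^q\to x^p$, which is the second equality. (Alternatively, one can directly integrate $d\log g'(t)/dt=g''(t)/g'(t)=(g'(t)/g(t))\cdot(g(t)g''(t)/g'(t)^2)=(p-1+o(1))/t$ by \eqref{g1}, \eqref{g2}, and $q(p-1)=p$, yielding the conclusion in one shot.)

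\medskip

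\emph{Main difficulty.} There is essentially no conceptual obstacle: the argument is a pair of squeeze passages based on \eqref{g2} and Lemma \ref{lem:g00}. The only mildly delicate point is handling the boundary case $x=0$, where $t_n/s_n\to\infty$ and the comparison must still hold; this is taken care of by the monotonicity of $g$ and $g'$ together with $p>1$, so that both sides vanish at the correct rate.
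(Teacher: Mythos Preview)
Your proof is correct and essentially identical to the paper's: integrate $(\log g)'$ using \eqref{g2} to get the first limit, then apply Lemma~\ref{lem:g00} for the second. The paper omits the explicit case split at $x=0$, but your handling of it is fine.
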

\begin{proof} First, by \eqref{g2}, for any value $\e>0$, there exists a constant $t_\e'\ge t_0$ such that 
\[
(p-\e)\frac1t\le (\log{g(t)})'\le (p+\e)\frac1t
\]
for all $t\ge t_\e'$. It follows that 
\[
\left(\frac{t_n}{s_n}\right)^{p-\e}\le\frac{g(t_n)}{g(s_n)}\le \left(\frac{t_n}{s_n}\right)^{p+\e}
\]
for all large $n\in \mathbb{N}$. This proves that 
\[
\lim_{n\to \infty}\frac{g(s_n)}{g(t_n)}=x^p
\]
since $\e>0$ is arbitrary. Then, using Lemma \ref{lem:g00}, we obtain the latter conclusion. 
We finish the proof. 
\end{proof}
Next, we give the lemma for $q=1$. Again we apply (ii) of (H1).
\begin{lemma}\label{lem:g4} Suppose (H1) with $q=1$. Then, for any sequences  $(t_n),(x_n)\subset (0,\infty)$ of values and number  $x_0\in [0,\infty)$ such that $t_n\to \infty$ and $x_n\to x_0$ as $n\to \infty$,  we have that
\[
\lim_{n\to \infty}\frac{g\left( t_n-x_n\frac{g(t_n)}{g'(t_n)}\right)}{g(t_n)}=e^{-x_0}=\lim_{n\to \infty}\frac{g'\left( t_n-x_n\frac{g(t_n)}{g'(t_n)}\right)}{g'(t_n)}.
\]
\end{lemma}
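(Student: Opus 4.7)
The plan is to reduce the question to the behavior of $u(t):=\log g(t)$, noting that $u'(t)=g'(t)/g(t)$, so that the shift in the statement becomes $s_n := t_n - x_n/u'(t_n)$. The substitution $r = t_n - y/u'(t_n)$ then yields
\[
\log\frac{g(s_n)}{g(t_n)} = u(s_n) - u(t_n) = -\int_{s_n}^{t_n} u'(r)\,dr = -\int_0^{x_n}\frac{u'(t_n - y/u'(t_n))}{u'(t_n)}\,dy,
\]
so the first limit will follow once we establish the uniform convergence
\[
\frac{u'(t_n - y/u'(t_n))}{u'(t_n)} \to 1 \quad\text{uniformly for } y\in[0,x_n], \ n\to\infty.
\]

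The crucial algebraic input is the identity
\[
\frac{u''(t)}{u'(t)^2} = \frac{g(t)g''(t)}{g'(t)^2} - 1,
\]
whose right-hand side tends to $0$ by the hypothesis $q=1$ in \eqref{g1}. Setting $w(t):=1/u'(t)$ we then have $w'(t) = -u''(t)/u'(t)^2 \to 0$. Given $\e>0$, choose $T_0$ so that $|w'(r)|<\e$ for all $r\ge T_0$. Writing $\xi_n(y) := t_n - y/u'(t_n)$, the fundamental theorem of calculus gives
\[
|w(\xi_n(y)) - w(t_n)| \le \e\,|t_n - \xi_n(y)| = \e\,y\, w(t_n)
\]
for all $y\in[0,x_n]$, hence $u'(\xi_n(y))/u'(t_n) \to 1$ uniformly in $y$. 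Feeding this back into the integral above yields $\log(g(s_n)/g(t_n))\to -x_0$, which is the first limit. For the second, factor $g' = g\,u'$ and use the $y=x_n$ case of the same uniform bound:
\[
\frac{g'(s_n)}{g'(t_n)} = \frac{g(s_n)}{g(t_n)}\cdot\frac{u'(s_n)}{u'(t_n)} \;\longrightarrow\; e^{-x_0}\cdot 1 = e^{-x_0}.
\]

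The one subtle point is ensuring that the interval $[s_n,t_n]$ eventually lies inside $\{r\ge T_0\}$, which reduces to $s_n\to\infty$. This is where (H1)(ii) enters: the monotonicity of $tg'(t)/g(t)=t u'(t)$, combined with $p=\infty$ (forced by $1/p+1/q=1$ with $q=1$), gives $tu'(t)\to\infty$, hence $1/u'(t)=o(t)$ and $s_n/t_n\to 1$. Everything else is a routine consequence of the estimate $u''/u'^2\to 0$ inherited from $q=1$; in contrast to Lemma \ref{lem:g2}, no iterative use of the structure $g=\exp_{k-1}(\hat g)$ is needed here, because the shift scale $1/u'(t)=g(t)/g'(t)$ is precisely the natural scale at which $u'$ varies slowly.
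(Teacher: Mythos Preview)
Your argument is correct and takes a genuinely different route from the paper. The paper expands $\log g$ to second order around $t_n$, obtaining a remainder proportional to
\[
\left(\frac{g(\alpha_n)g''(\alpha_n)}{g'(\alpha_n)^2}-1\right)\left(\frac{\alpha_n g'(\alpha_n)}{g(\alpha_n)}\cdot\frac{g(t_n)}{t_n g'(t_n)}\right)^2
\]
with $\alpha_n\in(s_n,t_n)$; the first factor tends to $0$ by $q=1$, but the second must be bounded, and since $P(t)=tg'(t)/g(t)\to\infty$ one cannot control $P(\alpha_n)/P(t_n)$ from $\alpha_n/t_n\to1$ alone. The paper therefore invokes the monotonicity of $P$ in (H1)(ii) to get $P(\alpha_n)/P(t_n)\le1$. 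Your integral representation sidesteps this: with $w=1/u'$ you only need $w'\to0$, which is exactly the $q=1$ hypothesis, and then the mean value inequality $|w(\xi_n(y))-w(t_n)|\le \e\, y\, w(t_n)$ controls the integrand uniformly without any monotonicity.

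One small point: your invocation of (H1)(ii) for $s_n\to\infty$ is unnecessary. Condition \eqref{g2} already asserts $\lim_{t\to\infty} t u'(t)=p$, and Lemma~\ref{lem:pq} forces $p=\infty$ from $q=1$ using only (H1)(i); thus $1/u'(t)=o(t)$ and $s_n/t_n\to1$ follow without monotonicity. In other words, your proof uses \emph{only} (H1)(i). This is worth noting because the paper remarks that the monotonicity of $tg'(t)/g(t)$ is used solely for Lemma~\ref{lem:g4}; your argument shows that part of (H1)(ii) can in fact be dropped here. The second limit then follows from Lemma~\ref{lem:g00} in the paper, but your direct factorization $g'=g\cdot u'$ combined with $u'(s_n)/u'(t_n)\to1$ is cleaner and self-contained.
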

\begin{proof} There exists a sequence $(\alpha_n)\subset(t_n-x_ng(t_n)/g'(t_n),t_n)$ of constants  such that 
\begin{equation}\label{eq:g41}
\begin{split}
&\log{\frac{g\left( t_n-x_n\frac{g(t_n)}{g'(t_n)}\right)}{g(t_n)}}+x_0\\
&\ \ \ \ \ =\log{g\left( t_n-x_n\frac{g(t_n)}{g'(t_n)}\right)}-\log{g(t_n)}+x_n+o(1)\\
&\ \ \ \ \ =O\left(\left(\frac{g\left( \alpha_n\right)g''\left(\alpha_n\right)}{g'\left(\alpha_n\right)^2}-1\right)\left(\frac{\alpha_ng'\left( \alpha_n\right)}{g\left( \alpha_n\right)}\frac{g(t_n)}{t_ng'(t_n)}\right)^2\right)+o(1)
\end{split}
\end{equation}
by the second order expansion with the fact that $\alpha_n/t_n\to1$ as $n\to \infty$ from \eqref{g2} and Lemma \ref{lem:pq}. Consequently, \eqref{g1} and the monotonicity of $tg'(t)/g(t)$ by (ii) of (H1) conclude that
\[
\lim_{n\to \infty}\left(\log{\frac{g\left( t_n-x_n\frac{g(t_n)}{g'(t_n)}\right)}{g(t_n)}}+x_0\right)=0.
\]
This proves the former conclusion. Then, Lemma \ref{lem:g00} shows the latter one. We complete the proof.
\end{proof}
The next one is the well-known consequence of (H2). 
\begin{lemma}\label{lem:kap} (H2) implies that
\[
\sup\{\la>0\ |\ \text{\eqref{p0} admits a classical solution $u$}\}<\infty.
\]
\end{lemma}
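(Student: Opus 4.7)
\textbf{Proof proposal for Lemma \ref{lem:kap}.} The plan is to execute the classical first-eigenfunction test, exploiting the linear lower bound on $f$ provided by (H2). Let $(\la_1,\varphi_1)$ denote the first Dirichlet eigenpair of $-\Delta$ on $D$, normalized so that $\varphi_1>0$ in $D$ and $\varphi_1=0$ on $\partial D$. Suppose $u\in C^2(\bar D)$ is a classical solution of \eqref{p0} for some $\la>0$. Multiply the equation $-\Delta u=\la h(|x|)f(u)$ by $\varphi_1$ and integrate over $D$; since $u=\varphi_1=0$ on $\partial D$, two applications of Green's identity give
\[
\la_1\int_D u\,\varphi_1\,dx=\int_D(-\Delta\varphi_1)u\,dx=\int_D(-\Delta u)\varphi_1\,dx=\la\int_D h(|x|)f(u)\varphi_1\,dx.
\]

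Next I would invoke (H2) to set $c_0:=\inf_{t>0}f(t)/t>0$, giving the pointwise bound $f(u(x))\ge c_0\,u(x)$ at every $x\in D$ where $u>0$ (hence everywhere in $D$, since $u>0$ by \eqref{p0}). Combined with the fact that $h:[0,1]\to(0,\infty)$ is continuous on a compact set, so that $h_{\min}:=\min_{r\in[0,1]}h(r)>0$, this yields
\[
\la\int_D h(|x|)f(u)\varphi_1\,dx\ge \la c_0 h_{\min}\int_D u\,\varphi_1\,dx.
\]
Since $u,\varphi_1>0$ in $D$, the integral $\int_D u\,\varphi_1\,dx$ is strictly positive; dividing through yields $\la\le \la_1/(c_0 h_{\min})$. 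As this bound is independent of the particular solution $u$, it bounds the entire set $\{\la>0:\eqref{p0}\text{ admits a classical solution}\}$ from above.

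There is no serious obstacle here; the only point worth double-checking is that (H2), as stated with $\inf_{t>0}f(t)/t>0$, legitimately yields the linear lower bound $f(t)\ge c_0 t$ for all $t>0$, which it does by the very definition of the infimum (and is consistent with (H0), which merely requires $f\ge 0$ and positivity of $f$ past $t_0$, not vanishing below $t_0$). The argument makes no use of the radial reduction \eqref{p} and therefore applies to any classical solution of \eqref{p0} on $D$, radial or not.
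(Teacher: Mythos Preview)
Your proof is correct and follows essentially the same argument as the paper: test against the first Dirichlet eigenfunction, integrate by parts, and use (H2) together with $\min h>0$ to obtain the explicit bound $\la\le \Lambda_1/(c_0\min h)$. The paper's writeup is slightly terser but the logic is identical.
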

\begin{proof} The proof is done similarly  to Lemma 2.4 in \cite{N1}. We show the proof for the reader's convenience. Assume $u$ is a solution of \eqref{p0}. Set $c=\inf_{t>0}(f(t)/t)$. Let $\Phi_1>0$ be the first Dirichlet eigenfunction of $-\Delta$ on $D$ with the eigenvalue $\Lambda_1>0$. Then integrating  by parts with the equation in \eqref{p0} and (H0), we get
\[
\Lambda_1\int_D u\Phi_1dx =\la \int_D f(x,u)\Phi_1dx \ge c\la \min_{x\in \bar{D}}h(|x|)\int_D u\Phi_1dx.
\]
This proves $\la\le \Lambda_1/(c\min_{x\in \bar{D}}h(|x|))$. We finish the proof. 
\end{proof}
 We end this subsection by giving additional remarks on our conditions. 
We give a useful sufficient condition for (H1) with $q=1$.
\begin{lemma}\label{lem:ex1} 
Assume $k\in \mathbb{N}$ with $k\ge2$. Set $f(t)=\exp_k{(g_0(t))}$ for all large $t\ge t_0$ with a function $g_0\in C^2([t_0,\infty))$ such that  
\begin{equation}\label{eq:ex1}
\lim_{t\to \infty}tg_0'(t)= \infty,\ \ \ \lim_{t\to \infty}\frac{g_0''(t)}{g_0'(t)^2}=0,  
\end{equation}
\begin{equation}\label{eq:ex2}
\text{$tg_0'(t)$ is nondecreasing, and\ $\frac{g_0'(t)}{g_0(t)}$ is nonincreasing}
\end{equation}
for all $t\ge t_0$. Then $f(t)$ satisfies (H1) with $q=1$. 
\end{lemma}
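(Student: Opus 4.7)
The plan is to verify (H1) directly by peeling off the iterated exponentials. Set $g_0$ as given and inductively define $g_i(t) = \exp(g_{i-1}(t))$ for $i = 1, \dots, k$, so that $f(t) = g_k(t)$ and hence $g(t) := \log f(t) = g_{k-1}(t)$. The two fundamental identities
\[
g_i' = g_i\, g_{i-1}', \qquad g_i'' = g_i\bigl[(g_{i-1}')^2 + g_{i-1}''\bigr]
\]
will drive every subsequent induction. I take $\hat g = g_0$, which immediately supplies the representation $f = \exp_k(\hat g)$ with $\hat g'/\hat g = g_0'/g_0$ nonincreasing by \eqref{eq:ex2}; it then remains to establish the differential part of (H1)(i) with $q = 1$ and (forced) $p = \infty$, together with the monotonicity of $tg'/g$ demanded by (H1)(ii).

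For positivity and growth, observe that $tg_0'(t) \to \infty$ with $tg_0'(t)$ nondecreasing forces $g_0'(t) > 0$ and $g_0(t) \to \infty$ on some $[t_0', \infty)$ (integrate $g_0'(t) \geq c/t$). Enlarging $t_0$ if necessary, this propagates: $g_i = e^{g_{i-1}} > 0$, $g_i(t) \to \infty$, and $g_i' = g_i g_{i-1}' > 0$ for every $i \geq 1$, so in particular $g' = g_{k-1}' > 0$. For the second derivative I factor $g_i'' = g_i (g_{i-1}')^2 \bigl[1 + g_{i-1}''/(g_{i-1}')^2\bigr]$ and postpone the sign of the bracket to the next step.

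The central computation is the induction $g_i''/(g_i')^2 \to 0$ as $t \to \infty$ for every $i \geq 0$. The base case $i = 0$ is the second relation in \eqref{eq:ex1}. For the step, the algebraic identity
\[
\frac{g_i''}{(g_i')^2} = \frac{1}{g_i}\!\left[1 + \frac{g_{i-1}''}{(g_{i-1}')^2}\right]
\]
combined with $g_i(t) \to \infty$ closes the induction. In particular $g_{k-2}''/(g_{k-2}')^2 = o(1)$, making the bracket in the previous paragraph eventually positive, hence $g'' > 0$ for large $t$. Applying the same identity at level $k-1$ gives
\[
\frac{(g')^2}{g\, g''} \;=\; \frac{(g_{k-2}')^2}{(g_{k-2}')^2 + g_{k-2}''} \;\longrightarrow\; 1,
\]
which is \eqref{g1} with $q = 1$. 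In parallel, a direct induction on the identity $tg_i'(t) = g_i(t)\cdot tg_{i-1}'(t)$ (base case in \eqref{eq:ex1}) shows $tg_i'(t) \to \infty$ for every $i$, and since $tg'/g = tg_{k-1}'/g_{k-1} = tg_{k-2}'$, this yields \eqref{g2} with $p = \infty$, consistent with $1/p + 1/q = 1$ via Lemma \ref{lem:pq}.

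Finally, the same product identity $tg_i'(t) = g_i(t)\cdot tg_{i-1}'(t)$ shows inductively that $tg_i'$ is nondecreasing for each $i$, because it is a product of two eventually positive, eventually nondecreasing functions (the factor $g_i$ is nondecreasing since $g_i' > 0$, and $tg_{i-1}'$ is nondecreasing by the inductive hypothesis, with base case the monotonicity of $tg_0'$ in \eqref{eq:ex2}). Taking $i = k-2$ gives monotonicity of $tg'/g$, completing (H1)(ii). The only real obstacle is the layered bookkeeping across the $k$ iterated exponentials; once the two single-level identities for $g_i'$ and $g_i''$ are in hand, all four inductions (positivity of derivatives, $g_i''/(g_i')^2 \to 0$, $tg_i' \to \infty$, and monotonicity of $tg_i'$) follow from the same two-line computation applied at each level.
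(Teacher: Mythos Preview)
Your proof is correct and takes essentially the same approach as the paper: both peel off the iterated exponentials via the identities $g_i'=g_ig_{i-1}'$ and $g_i''=g_i[(g_{i-1}')^2+g_{i-1}'']$, then run the same inductions on $g_i''/(g_i')^2\to 0$, $tg_i'\to\infty$, and the monotonicity of $tg_i'$ to verify (H1)(i) with $q=1$, $p=\infty$ and (H1)(ii) via $tg'/g=tg_{k-2}'$. Your write-up is slightly tidier in that you isolate the two single-level identities first, but the argument is identical in substance.
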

\begin{proof}
First setting $\hat{g}(t)=g_0(t)$ for all  $t\ge t_0$, we confirm that the latter condition in (ii) of (H1) is satisfied. Next we set $g(t)=\exp_{k-1}(g_0(t))$ for all $t\ge t_0$. Then we claim that $g$ satisfies the rest of the assertions in (H1). To show this,  we put $g_i(t)=\exp_i(g_0(t))$ for all $t\ge t_0$ and $i=1,\cdots,k-1$. We first observe that
\[
\lim_{t\to \infty}\frac{g_1(t)g_1''(t)}{g_1'(t)^2}=\lim_{t\to \infty}\left(1+\frac{g_0''(t)}{g_0'(t)^2}\right)=1
\] 
and 
\[
\lim_{t\to \infty}\frac{tg_1'(t)}{g_1(t)}=tg_0'(t)=\infty
\]
by \eqref{eq:ex1}. 
Then the latter formula  implies that 
\[
(\log{g_1(t)})'\ge (\log{t})'
\]
for all large $t>0$. This yields that $g_1(t)\to \infty$ and thus, by the latter formula again, $tg_1'(t)\to \infty$ as $t\to \infty$. Then it follows that 
\[
\lim_{t\to \infty}\frac{g_1''(t)}{g_1'(t)^2}=\lim_{t\to \infty}\left(\frac{g_1(t)g_1''(t)}{g_1'(t)^2}\frac1{g_1(t)}\right)=0.
\]
In addition, since $tg_1'(t)/g_1(t)=tg_0'(t)$ and \eqref{eq:ex1} yield $g_1(t)$ is increasing for all large $t\ge t_0$, we have with \eqref{eq:ex2} that $tg_1'(t)/g_1(t)$ and $tg_1'(t)$ are nondecreasing for all large $t\ge t_0$.  Then similarly we can show that for each $i=1,\cdots,k-1$, 
\[
\lim_{t\to \infty}\frac{g_i(t)g_i''(t)}{g_i'(t)^2}=1,\ \ \lim_{t\to \infty}\frac{tg_i'(t)}{g_i(t)}=\infty,\ \ \lim_{t\to \infty}\frac{g_i''(t)}{g_i'(t)^2}=0,\ \text{ and }\ \lim_{t\to \infty}tg_i'(t)= \infty
\]
and $tg_i'(t)$ and $tg_i'(t)/g_i(t)$ are nondecreasing for all large $t\ge t_0$ by induction. In particular, we show the claim.  We finish the proof. 
\end{proof}
\begin{remark} 
Notice that since
\[
\left(\frac{g_0'(t)}{g_0(t)}\right)'=\left(\frac{g_0'(t)}{g_0(t)}\right)^2\left(\frac{g_0(t)g_0''(t)}{g_0'(t)^2}-1\right),
\]
the latter condition in \eqref{eq:ex2} is equivalent to the one 
\[
\frac{g_0(t)g_0''(t)}{g_0'(t)^2}\le 1
\]
for all $t\ge t_0$.  
\end{remark}
The previous lemma is useful to check that given examples satisfy (H1) with $q=1$. For example, if $f(t)=\exp_k{(t^m(\log{t})^l)}$ for all large $t>0$ with any given values $k\in \mathbb{N}$ with $k\ge2$, $m>0$, and $\l\in \mathbb{R}$, then putting $g_0(t)=t^m(\log{t})^l$, we easily confirm that $g_0$ satisfies all the assumption in this lemma and thus, $f$ satisfies (H1) with $q=1$.

We finally  remark that (H1) implies the basic condition ($f_3$) in \cite{Ku}. Put $F(t)=\int_0^t f(s)ds$ for all $t\ge0$. 
\begin{lemma}\label{lem:ku} Assume (H1). Then it holds that 
\[
\limsup_{t\to \infty}\left(\frac{F(t)\log{F(t)}}{f(t)}\right)'\le \frac1p
\]
where we regarded $1/\infty=0$.
\end{lemma}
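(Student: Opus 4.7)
The plan is to compute the derivative of $y(t):=F(t)\log F(t)/f(t)$ explicitly and reduce the statement to a single asymptotic identity for $F(t)$. Using $f'=g'f$, a direct calculation gives
\[
y'(t)=1+\bigl(1-A(t)\bigr)\log F(t)=1-\bigl(A(t)-1\bigr)\log F(t),
\]
where $A(t):=F(t)g'(t)/f(t)$. Thus I would aim to establish the actual limit (not merely a $\limsup$ bound)
\[
\lim_{t\to\infty}\bigl(A(t)-1\bigr)\log F(t)=\frac{1}{q},
\]
which yields $y'(t)\to 1-1/q=1/p$ (with the convention $1/\infty=0$ when $q=1$), and in particular the claimed bound on $\limsup y'$.

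First I would perform an integration by parts on $\int f=\int f'/g'$ to obtain
\[
F(t)=\frac{f(t)}{g'(t)}+C_0+\int_{t_0}^t\frac{f(s)}{g(s)Q(s)}\,ds,
\]
after substituting $g''/(g')^2=1/(gQ)$, where $C_0$ is an integration constant. By Lemma \ref{lem:monog}, $g(t),g'(t)\to\infty$, so $F(t)\to\infty$; once I verify (from the L'H\^opital step below) that $f(t)/g'(t)$ dominates the remainder, the relation $F(t)\sim f(t)/g'(t)$ together with \eqref{eq:lgg} gives $\log F(t)=g(t)-\log g'(t)+o(1)\sim g(t)$.

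The second and main step is to apply L'H\^opital's rule to
\[
g(t)\bigl(A(t)-1\bigr)=\frac{g(t)g'(t)R(t)}{f(t)}\sim\frac{\int_{t_0}^t f(s)/(g(s)Q(s))\,ds}{f(t)/\bigl(g(t)g'(t)\bigr)},
\]
where $R(t)=F(t)-f(t)/g'(t)$ (the constant contribution $C_0g(t)g'(t)/f(t)$ is $o(1)$ since $f=e^g$ dominates $gg'$ by \eqref{eq:lgg}). Both members of the quotient diverge to infinity: the numerator because $f/g$ is still exponential, the denominator by the same log-comparison. A logarithmic-derivative calculation for $D(t):=f(t)/(g(t)g'(t))$ yields
\[
D'(t)=\frac{f(t)}{g(t)}\!\left(1-\frac{1}{g(t)}-\frac{1}{g(t)Q(t)}\right)=\frac{f(t)}{g(t)}\bigl(1+o(1)\bigr),
\]
so the ratio of derivatives equals $1/\bigl(Q(t)(1+o(1))\bigr)\to 1/q$ by \eqref{g1}. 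Multiplying the resulting identity $g(t)(A(t)-1)\to 1/q$ by $\log F(t)/g(t)\to 1$ produces the target limit and completes the argument.

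The main obstacle will be executing the L'H\^opital step cleanly, in particular checking divergence of both members of the quotient and keeping track of the $o(1)$ corrections from $1/g(t)$ and $1/(g(t)Q(t))$ in $D'(t)$; both follow from Lemma \ref{lem:monog} and \eqref{g1}. The case $q=1$ (equivalently $p=\infty$) needs no separate treatment: the same chain of identities yields $(A(t)-1)\log F(t)\to 1$ and hence $y'(t)\to 0=1/\infty$, consistent with the convention in the statement.
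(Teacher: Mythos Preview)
Your proposal is correct and follows essentially the same route as the paper: compute $y'(t)=1-(A(t)-1)\log F(t)$ with $A=Fg'/f$, integrate $F=\int f'/g'$ by parts, and apply l'H\^opital to the ratio with denominator $f/(gg')$. The only difference is cosmetic: the paper bounds $1/Q(s)$ below by $1/q-\varepsilon$ to work with $\tilde F_\varepsilon=\int f/g$ and hence obtains only the $\limsup$ inequality, whereas you keep $Q$ inside the integral and thereby establish the actual limit $y'(t)\to 1/p$; your version is slightly sharper but structurally identical.
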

\begin{proof} First from the de l'H\^{o}spital rule, \eqref{eq:lgg}, and \eqref{g1}, we have
\begin{equation}\label{eq:ku1}
\lim_{t\to \infty}\frac{\log{F(t)}}{g(t)}=\lim_{t\to \infty}\frac{f(t)/g'(t)}{F(t)}=1.
\end{equation}
Next, for all $t>\tau\ge t_0$, we have that
\[
\begin{split}
F(t)&= F(\tau)+\int_{\tau}^t\frac{1}{g'(s)}\left(f(s)\right)'ds\\
&=F(\tau)-\frac{f(\tau)}{g'(\tau)}+\frac{f(t)}{g'(t)}+\int_{\tau}^t \frac{g(s)g''(s)}{g'(s)^2}\frac{f(s)}{g(s)}ds.
\end{split}
\]
Hence, by \eqref{g1}, for any value $\e\in (0,1/q)$, there exist constants $c_\e,t_\e>0$ such that 
\begin{equation}\label{eq:ku2}
F(t)\ge \frac{f(t)}{g'(t)}+\left(\frac{1}{q}-\e\right)\tilde{F}_\e(t)-c_\e
\end{equation}
for all $t\ge t_\e$ where $\tilde{F}_\e(t)=\int_{t_\e}^t (f(s)/g(s))ds$. Using \eqref{eq:ku1} and \eqref{eq:ku2}, we compute
\[
\begin{split}
\left(\frac{F(t)\log{F(t)}}{f(t)}\right)'&=1-\log{F(t)}\left(\frac{f'(t)F(t)}{f(t)^2}-1\right)\\
&\le 1-\left(\frac1 q-\e+o(1)\right)\frac{g(t)g'(t)\tilde{F}_\e(t)}{f(t)}
\end{split}
\]
as $t\to \infty$. Here we calculate by Lemma \ref{lem:monog}, the de l'H\^{o}spital rule, and \eqref{g1} that
\[
\lim_{t\to \infty}\frac{g(t)g'(t)\tilde{F}_\e(t)}{f(t)}=\lim_{t\to \infty}\frac{(\tilde{F}_\e(t))'}{\left(\frac{f(t)}{g(t)g'(t)}\right)'}=1.
\]
Consequently, we deduce 
\[
\limsup_{t\to \infty}\left(\frac{F(t)\log{F(t)}}{f(t)}\right)'\le \frac1p+\e.
\]
Since $\e>0$ is  arbitrary, we get the desired formula. We finish the proof. 
\end{proof}
In the supercritical case $q<2$, Lemma \ref{lem:pq} shows $p>2$ and thus, this lemma yields that our (H1) implies ($f_3$) in \cite{Ku}. 
\subsection{Identities}
We next gives some key identities. In this subsection, we always assume $\{(\la_n,\mu_n,u_n)\}$ is a sequence of solutions of \eqref{pn}. The first one is standard.
\begin{lemma} 
We have that
\begin{equation}\label{id0}
-ru_n'(r)=\int_0^r \la_nh(s)f(u_n(s))sds
\end{equation}
for all $r\in[0,1]$ and all $n\in \mathbb{N}$. 
\end{lemma}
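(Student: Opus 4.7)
The plan is to derive the identity directly from the ODE in \eqref{pn} by rewriting the radial Laplacian in divergence form and integrating. Concretely, multiplying the equation
\[
-u_n''(r) - \tfrac{1}{r}u_n'(r) = \lambda_n h(r) f(u_n(r))
\]
through by $r$ gives $-(r u_n'(r))' = \lambda_n h(r) f(u_n(r)) r$, since $(r u_n')' = u_n' + r u_n''$. This is the key observation; everything else is routine.

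Next I would integrate this equality on $[0,r]$ for any fixed $r \in [0,1]$. The right-hand side integrates to $\int_0^r \lambda_n h(s) f(u_n(s)) s\, ds$, while the left-hand side produces the boundary term $-(r u_n'(r)) + (s u_n'(s))|_{s=0}$. Because $u_n \in C^2([0,1])$ and $u_n'(0) = 0$ by the initial condition in \eqref{pn}, the boundary term at $s=0$ vanishes, leaving exactly $-r u_n'(r)$ on the left. Rearranging yields \eqref{id0}.

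There is no real obstacle here: the step is a textbook manipulation valid for any classical radial solution, and the regularity $u_n \in C^2([0,1])$ together with $u_n'(0)=0$ is precisely what is needed to justify the integration and discard the lower boundary contribution. The only thing worth noting is that the identity also makes sense (and is trivially true) at $r=0$, since both sides vanish there.
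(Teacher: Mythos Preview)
Your proof is correct and follows exactly the same approach as the paper: multiply the equation by $r$ to obtain $(-r u_n'(r))'=\lambda_n h(r)f(u_n(r))r$, then integrate over $[0,r]$ using $u_n'(0)=0$.
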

\begin{proof} Multiplying the equation in \eqref{pn} by $r\in[0,1]$, we get 
\[
(-ru_n'(r))'=\la_nh(r)f(u_n(r))r
\]
for all $n\in \mathbb{N}$. Then integrating over $[0,r]$, we get \eqref{id0}. This finishes the proof. 
\end{proof}
Next ones are key identities often used for our concentration analysis.  
\begin{lemma}\label{lem:id} We get that for all $0<r\le 1$,
\begin{equation}\label{id1}
u_n(0)-u_n(r)=\int_0^r \la_nh(s)f(u_n(s))s\log{\frac rs}ds
\end{equation}
and for all $0<r<s\le1$,
\begin{equation}\label{id2}
\begin{split}
u_n(r)-u_n(s)&=\log{\frac sr}\int_0^r \la_nh(t)f(u_n(t))tdt\\
&\ \ \ \ \ \ \ \ \ \ +\int_r^s \la_nh(t)f(u_n(t))t\log{\frac st}dt
\end{split}
\end{equation}
for all $n\in \mathbb{N}$.
\end{lemma}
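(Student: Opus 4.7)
The plan is to derive both identities by integrating the already-established formula \eqref{id0} and swapping the order of integration. Since \eqref{p} is a radial ODE, Green's identity for $-u''-(1/r)u'$ reduces to iterated one-dimensional integration, so no hard analysis is needed; the only subtlety is the behavior near the singular endpoint $s=0$.

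For \eqref{id2}, first I would divide \eqref{id0} by $r$ to write
\[
-u_n'(t)=\frac1t\int_0^t \la_n h(\tau)f(u_n(\tau))\tau\,d\tau
\]
for all $t\in(0,1]$. Integrating this identity from $r$ to $s$ with $0<r<s\le1$ yields
\[
u_n(r)-u_n(s)=\int_r^s \frac1t\int_0^t \la_n h(\tau)f(u_n(\tau))\tau\,d\tau\,dt.
\]
Then I would apply Fubini's theorem on the double integral, splitting the inner region into $\{\tau\in(0,r)\}$, where $t$ ranges over $(r,s)$, and $\{\tau\in(r,s)\}$, where $t$ ranges over $(\tau,s)$. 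Evaluating $\int_r^s dt/t=\log(s/r)$ and $\int_\tau^s dt/t=\log(s/\tau)$ gives precisely \eqref{id2}.

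For \eqref{id1}, I would apply \eqref{id2} with $r$ replaced by a small $\e\in(0,r)$ and $s$ replaced by $r$, obtaining
\[
u_n(\e)-u_n(r)=\log\frac{r}{\e}\int_0^\e \la_n h(t)f(u_n(t))t\,dt+\int_\e^r \la_n h(t)f(u_n(t))t\log\frac{r}{t}\,dt,
\]
and then send $\e\to0^+$. The main technical point, and the only place care is needed, is the boundary term at $0$: since $h$ and $f(u_n)$ are continuous on $[0,1]$, one has $\int_0^\e \la_n hf(u_n)t\,dt=O(\e^2)$, and hence $\log(r/\e)\cdot O(\e^2)\to0$. The remaining integral converges to $\int_0^r \la_n h(t)f(u_n(t))t\log(r/t)\,dt$ by dominated convergence, using that $t\log(r/t)$ is bounded on $(0,r]$. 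Passing to the limit and noting $u_n(\e)\to u_n(0)$ by continuity gives \eqref{id1}.

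The proof is essentially routine once \eqref{id0} is in hand; the only obstacle worth flagging is the vanishing of the $\log(r/\e)\int_0^\e$ term as $\e\to 0$, which requires the $O(\e^2)$ bound on the mass near the origin and would break down if one tried a similar derivation on a non-radial domain. No use of (H1) or (H2) is needed here, only the basic regularity already built into (H0) and \eqref{pn}.
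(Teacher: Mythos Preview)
Your proof is correct and amounts to the same Green's-function representation as the paper's: the paper multiplies the equation by $r\log r$ and integrates by parts, whereas you integrate \eqref{id0} once more and swap the order of integration, which is the same computation rearranged. The limiting argument you give for \eqref{id1} is fine, though in fact the Fubini step already works directly with $r=0$ since $t^{-1}\int_0^t \la_n h f(u_n)\tau\,d\tau=O(t)$ is integrable near $0$.
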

\begin{proof} Multiplying the equation in \eqref{pn} by $r\log{r}$, we see
\[
(-ru_n'(r))'\log{r}=\la_nf(u_n(r))r\log{r}
\]
for all $n\in \mathbb{N}$. Then integrating by parts and using \eqref{id0}, we get the desired formulas. For some more details, we refer the reader to Lemma 2.5 in \cite{N1}. We complete the proof. 
\end{proof}
We next give the Pohozaev identity and its consequences. We set $F(t)=\int_0^tf(s)ds$ as  in Lemma \ref{lem:ku}.  
\begin{lemma}\label{po}We have that
\[
(ru_n'(r))^2=4\int_0^r \la_n \left(1+\frac{sh'(s)}{2h(s)}\right)h(s)F(u_n(s))sds-2\la_n h(r)F(u_n(r))r^2
\] 
for all $n\in \mathbb{N}$.
\end{lemma}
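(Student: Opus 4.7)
The plan is to derive this as a standard Pohozaev-type identity in the radial setting, by differentiating $(ru_n'(r))^2$ and integrating by parts once, exactly as in the proofs of \eqref{id0}--\eqref{id2}. First I would recall that multiplying the equation in \eqref{pn} by $r$ gives the convenient form
\[
(ru_n'(r))' = -\la_n h(r) f(u_n(r))\, r,
\]
which was already used to obtain \eqref{id0}.

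Next I would compute, by the chain rule and the identity above,
\[
\frac{d}{dr}(ru_n'(r))^2 = 2(ru_n'(r))(ru_n'(r))' = -2\la_n h(r) f(u_n(r))\, r^2 u_n'(r).
\]
Since $f(u_n) u_n' = (F(u_n))'$ by definition of $F$, this simplifies to
\[
\frac{d}{dr}(ru_n'(r))^2 = -2\la_n h(r)\, r^2 (F(u_n(r)))'.
\]

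Finally I would integrate over $[0,r]$. The left-hand side gives $(ru_n'(r))^2$ since $ru_n'(r)\to 0$ as $r\to 0^+$ (from $u_n'(0)=0$ and smoothness). For the right-hand side I would integrate by parts, transferring the derivative from $F(u_n)$ to the factor $h(s)s^2$. The boundary term at $s=r$ yields $-2\la_n h(r) r^2 F(u_n(r))$, the one at $s=0$ vanishes, and the remaining integral becomes
\[
2\int_0^r \la_n (h(s)s^2)' F(u_n(s))\, ds = 4\int_0^r \la_n\!\left(1+\frac{sh'(s)}{2h(s)}\right) h(s) F(u_n(s))\, s\, ds,
\]
which is precisely the claimed identity.

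No substantive obstacle is expected; this is a routine radial Pohozaev computation. The only minor care needed is to account correctly for the $h'(s)$ contribution produced by differentiating the weight $h(s)s^2$, and to verify that the boundary term at the origin truly vanishes, which follows from $u_n\in C^2([0,1])$ together with $u_n'(0)=0$.
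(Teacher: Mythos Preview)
Your proposal is correct and essentially identical to the paper's proof: the paper obtains the same differential identity $-\tfrac12\{(ru_n'(r))^2\}'=\la_n h(r)(F(u_n(r)))' r^2$ (phrased as ``multiplying the equation in \eqref{pn} by $r^2 u'$'') and then integrates by parts exactly as you do.
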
 
\begin{proof} Multiplying the equation in \eqref{pn} by $r^2 u'$, we have
\[
-\frac12\{(ru_n'(r))^2\}'=\la h(r)(F(u_n(r)))'r^2
\]
for all $r\in[0,1]$. Then, integrating by parts, we prove the desired formula. We finish the proof. 
\end{proof}
Using this, we get the following. 
\begin{lemma}\label{lem:gl}
Assume $\mu_n\to \infty$ as $n\to \infty$. Then $(u_n)$ is locally uniformly bounded in $(0,1]$. In particular, for any sequence $(r_n)\subset (0,1)$ such that $u_n(r_n)\to\infty$ as $n\to \infty$,  we have that $r_n\to0$ as $n\to \infty$. 
\end{lemma}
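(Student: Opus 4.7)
The plan is to reduce the local boundedness on any annulus $[\delta,1]$ to a uniform bound on the boundary derivative $|u_n'(1)|$, and then to obtain that bound from the Pohozaev identity (Lemma \ref{po}) exploiting the fact that $F(t)/f(t)\to 0$ as $t\to\infty$.

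For the reduction, I would first note that by \eqref{id0} the map $r\mapsto ru_n'(r)$ has derivative $(ru_n'(r))'=-\la_n h(r)f(u_n(r))r\le 0$, so it is nonincreasing, starts at $0$, and ends at $u_n'(1)<0$. Hence $r|u_n'(r)|\le |u_n'(1)|$ on $[0,1]$. Since $u_n(1)=0$, integrating this gives, for every $r\in (0,1]$,
\[
u_n(r)=-\int_r^1 u_n'(s)\,ds\le |u_n'(1)|\int_r^1\frac{ds}{s}=|u_n'(1)|\log\frac{1}{r}.
\]
Therefore, any uniform bound of the form $|u_n'(1)|\le C$ immediately yields $u_n(r)\le C\log(1/\delta)$ on $[\delta,1]$, which is the desired local uniform boundedness. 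The ``in particular'' statement then follows by contradiction: if $u_n(r_n)\to\infty$ along a sequence with $r_n\to r_\infty>0$, the monotonicity of $u_n$ would force unboundedness on $[r_\infty/2,1]$.

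For the uniform bound on $|u_n'(1)|$, I would apply the Pohozaev identity of Lemma \ref{po} at $r=1$; since $u_n(1)=0$ we have $F(u_n(1))=0$, so
\[
(u_n'(1))^2=4\int_0^1\la_n\Bigl(1+\tfrac{sh'(s)}{2h(s)}\Bigr)h(s)F(u_n(s))\,s\,ds.
\]
The key ingredient is that, by Lemma \ref{lem:monog}, $g'(t)\to\infty$ as $t\to\infty$, and L'Hôpital gives $F(t)/f(t)\to 0$. Combined with $-u_n'(1)=\int_0^1 \la_n h f(u_n)s\,ds$ from \eqref{id0} and the identity $\mu_n=\int_0^1 \la_n h f(u_n)s\log(1/s)\,ds$ from \eqref{id1}, one can split the Pohozaev integral into the region $\{u_n\ge T_\varepsilon\}$, where $F(u_n)\le\varepsilon f(u_n)$, and the region $\{u_n<T_\varepsilon\}$, where the integrand is controlled by $F(T_\varepsilon)\la_n$. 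Choosing $\varepsilon$ small enough to absorb the first term into the left-hand side should yield a self-improving estimate producing the desired bound.

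The main obstacle will be controlling the low-energy contribution uniformly in $n$, since a priori we have no bound on $\la_n$ itself. I expect this to be overcome by complementing Pohozaev with the $\mu_n=\int\la_n hf(u_n)s\log(1/s)\,ds$ identity: on the set $\{u_n\ge t_0\}$ the monotonicity of $f$ and the weight $\log(1/s)$ give a lower bound on this integral in terms of $\la_n$ and a suitable level set, preventing $\la_n$ from growing too fast; on the complementary set the contribution is harmless because $f$ is bounded there. Assembling these two ingredients is the delicate step of the proof.
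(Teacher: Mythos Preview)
Your overall strategy coincides with the paper's: reduce to bounding $|u_n'(1)|$, apply the Pohozaev identity at $r=1$, and use $F(t)/f(t)\to 0$ to split the integral into a high part controlled by $\e\,|u_n'(1)|$ and a low part of order $\la_n$. You also correctly isolate the real obstacle, namely the term proportional to $\la_n$.

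The gap is in how you propose to handle that term. The identity \eqref{id1} you invoke, $\mu_n=\int_0^1 \la_n h f(u_n)\,s\log(1/s)\,ds$, has its left-hand side diverging, so it cannot yield an upper bound on $\la_n$; at best it gives a lower bound of the form $\mu_n\ge c\,\la_n$, which is vacuous here. Without further input there is simply no uniform control on $\la_n$ available (note (H2) is \emph{not} assumed in this lemma), and hence no way to close the inequality $X^2\le C\e X + D\la_n$ directly.

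The paper's fix is much simpler than the route you sketch: run the \emph{entire} argument by contradiction rather than trying to prove $|u_n'(1)|$ bounded outright. If local boundedness fails, there is $r_n\to r_0\in(0,1]$ with $u_n(r_n)\to\infty$, and monotonicity of $u_n$ together with $f(t)\to\infty$ forces $I_n:=\int_0^1 h f(u_n)\,r\,dr\to\infty$. Writing $X=-u_n'(1)=\la_n I_n$, the Pohozaev inequality becomes
\[
X^2\le C\e X + D\la_n = C\e X + \frac{D\,X}{I_n},
\]
so $X\le C\e + D/I_n\to C\e$. Thus $X$ is bounded, and your own reduction gives $u_n(r_n)\le X\log(1/r_n)\le C\log(1/r_0)$, a contradiction. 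In short, the contradiction hypothesis is exactly what supplies the missing control, replacing the need to bound $\la_n$.
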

\begin{proof} 
 Noting Lemma \ref{lem:monog} and the de l'H\^{o}spital rule, we get
\[
\lim_{t\to \infty}\frac{F(t)}{f(t)}=\lim_{t\to \infty}\frac{1}{g'(t)}=0.
\]
 Hence for any $\e>0$, there exists a constant $t_\e>0$ such that 
\[
F(t)\le \e f(t)
\]
for all $t\ge t_\e$. It follows from Lemma \ref{po} with \eqref{id0} that 
\begin{equation}\label{eq:gl2}
\begin{split}
&\left(\int_0^1\la_n hf(u_n)rdr\right)^2\\
&\ \ \ \ \ \ \ \le C\e\int_0^1\la_n hf(u_n)rdr+C\la_n \max_{0\le r\le 1}h(r)\max_{0\le t\le t_\e}F(t)
\end{split}
\end{equation}
where $C= 4\max_{0\le r\le 1}|1+rh'(r)/h(r)|$. 
Now, we argue by contradiction. If the conclusion fails, then there exist a sequence $(r_n)\subset (0,1)$ and a value $r_0\in(0,1]$ such that $r_n\to r_0$ and $u_n(r_n)\to \infty$ as $n\to \infty$ up to a subsequence. Then,  noting the monotonicity of $u_n(r)$ with respect to $r$ and the properties of $f(t)$ proved in  Lemma \ref{lem:monog}, we get that 
\[
\lim_{n\to \infty}\int_0^1\ hf(u_n)rdr=\infty.
\]
Hence \eqref{id0} and \eqref{eq:gl2}  imply that there exists a value $C>0$ such that 
\[
-ru_n'(r)\le  \int_0^1\ \la_n hf(u_n)rdr\le C
\] 
for all $r\in[0,1]$ and all $n\in \mathbb{N}$. As a consequence, multiplying this inequality by $1/r$ and integrating over $[r_n,1]$, we get
\[
u_n(r_n)\le C\log{\frac{1}{r_0}}+o(1)
\] 
as $n\to \infty$. This is a contradiction. We finish the proof. 
\end{proof}
\subsection{Scaling and energy functions}\label{sub:D}
Now, we introduce some key tools to detect concentrating parts. In the following, let $\{(\la_n,\mu_n,u_n)\}$ be any sequence of solutions of \eqref{pn}. Then we define ``the scaling function"
\[
\phi_n(r):=\la_nr^2h(r) f'(u_n(r))
\]
and  ``the gradient function"
\[
\psi_n(r):=-rg'(u_n(r))u_n'(r)
\]
for all $r\in [0,1]$ and all $n\in \mathbb{N}$ such that $u_n(r)\ge t_0$. Note that from \eqref{id0}, we can interpret $\psi_n$ as ``the energy function" in the sense of the formula
\[
\psi_n(r)=g'(u_n(r))\int_0^r\la_nhf(u_n)sds
\]
for all $r\in [0,1]$ and all $n\in \mathbb{N}$ as above. The following argument is an extension of the radial analysis in Appendix, especially Lemma 3,  in \cite{D}.  (We also remark that the study of similar quantities plays important role in related works on supercritical problems with different approaches, see the quantity $H$ in Section 2 and the estimates in  Lemma 3.1 in \cite{McMc} and  (3.3) in Lemma 3.2 in \cite{Ku}.)

We first give the next lemma. 
\begin{lemma}\label{lem:D1}
We have that
\[
\begin{split}
&\phi_n'(r)=\frac{\phi_n(r)}{r}\left[2-\psi_n(r)\left\{1+\frac{g''(u_n(r))}{g'(u_n(r))^2}-\frac{rh'(r)}{h(r)\psi_n(r)}\right\}\right]
\end{split}
\]
for all $r\in[0,1]$ and all $n\in \mathbb{N}$ such that $u_n(r)\ge t_0$. Moreover,  assuming (H1) and $(r_n)\subset (0,1)$ is a sequence of values such that $u_n(r_n)\to \infty$ as $n\to \infty$, we get
\[
\begin{split}
&\phi_n'(r_n) =\frac{\phi_n(r_n)}{r_n}\left\{2-\psi_n(r_n)\left(1+o(1)\right)\right\}
\end{split}
\]
as $n\to \infty$ where we additionally supposed that $\liminf_{n\to \infty}\psi_n(r_n)>0$ if $h$ is not constant. In particular, if $\phi_n'(r_n)=0$ for all large $n\in \mathbb{N}$, then we obtain that 
\begin{equation}\label{eq:enid}
\lim_{n\to \infty}\psi_n(r_n)=2
\end{equation}
\end{lemma}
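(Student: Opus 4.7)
The exact formula is a direct differentiation. Writing $f'(u) = g'(u) f(u)$ and $f''(u) = (g'(u)^2 + g''(u)) f(u)$, I differentiate $\phi_n(r) = \la_n r^2 h(r) f'(u_n(r))$ and substitute the relation $u_n'(r) = -\psi_n(r)/(r g'(u_n(r)))$ that follows from the definition of $\psi_n$. After factoring out $1/r$ and regrouping, this yields exactly the displayed identity.

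For the asymptotic version, observe first that $u_n(r_n) \to \infty$ together with the monotonicity $\max_{[0,1]} u_n = u_n(0)$ recorded above forces $\mu_n \to \infty$, and hence Lemma \ref{lem:gl} gives $r_n \to 0$. By Lemma \ref{lem:monog} one has $g(u_n(r_n)) \to \infty$, and \eqref{g1} says $(g g''/g'^2)(u_n(r_n)) \to 1/q$, so dividing gives $g''(u_n(r_n))/g'(u_n(r_n))^2 \to 0$. When $h$ is not constant, (H0) ensures $h \in C^1([0,1])$ with $h(0) > 0$, so $r h'(r)/h(r)$ is continuous at the origin and vanishes there; combined with the extra hypothesis $\liminf \psi_n(r_n) > 0$, the term $r_n h'(r_n)/(h(r_n) \psi_n(r_n))$ is $o(1)$. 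Plugging both $o(1)$ estimates into the exact formula produces the claimed simplification.

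For \eqref{eq:enid}, rather than quoting the simplified formula (which would require an a priori lower bound on $\psi_n$ when $h$ is not constant), I return to the exact identity. Since $\phi_n(r_n)/r_n > 0$, the assumption $\phi_n'(r_n) = 0$ reads
\[
\psi_n(r_n)\left[1 + \frac{g''(u_n(r_n))}{g'(u_n(r_n))^2}\right] = 2 + \frac{r_n h'(r_n)}{h(r_n)}.
\]
The right-hand side tends to $2$ (since $r_n \to 0$ and $h$ is $C^1$ with $h(0) > 0$) while the bracket on the left tends to $1$ by the analysis above, so $\psi_n(r_n) \to 2$. The main conceptual point is that the growth hypothesis $u_n(r_n) \to \infty$ automatically localizes $r_n$ near the origin through Lemma \ref{lem:gl}, which is what makes the boundary-weight perturbation $r h'/h$ negligible; the remaining asymptotics are then immediate consequences of (H1) via Lemma \ref{lem:monog}.
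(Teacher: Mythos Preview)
Your proof is correct and follows essentially the same approach as the paper's: direct differentiation for the exact identity, then Lemma~\ref{lem:monog}, \eqref{g1}, and Lemma~\ref{lem:gl} for the asymptotics. Your treatment of \eqref{eq:enid} is a small but genuine improvement: by rearranging the exact identity as $\psi_n(r_n)\bigl[1+g''/g'^2\bigr]=2+r_nh'(r_n)/h(r_n)$ before passing to the limit, you obtain $\psi_n(r_n)\to 2$ without invoking the auxiliary hypothesis $\liminf_n\psi_n(r_n)>0$, whereas the paper's phrasing (``in particular'') deduces it from the simplified asymptotic formula, which formally carries that hypothesis when $h$ is not constant.
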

\begin{proof} The former  conclusion is a consequence of a direct calculation. Then the latter one follows from the former one with  \eqref{g1} and the facts that $g(u_n(r_n))\to \infty$ and $r_n\to0$ as $n\to \infty$ by Lemmas \ref{lem:monog} and \ref{lem:gl} respectively. We complete the proof.   
\end{proof}
\begin{lemma}\label{lem:D2} Assume (H1). Let $(r_n)$ and $(s_n)$ be sequences of values such that $0<r_n<s_n<1$ and there exists a   value $c_0>0$ such that $g'(u_n(s_n))/g'(\mu_n)\ge c_0$ for all $n\in \mathbb{N}$. Moreover suppose that $\sup_{r\in [r_n,s_n]}\phi_n(r)\to 0$ and there exists a constant $c_1\in (0,2)$ such that $\psi_n(r_n)\to c_1$ as $n\to \infty$. Then we get that 
\[
\lim_{n\to \infty}g'(\mu_n)\int_{r_n}^{s_n}\la_n hf(u_n)rdr=0.
\]
\end{lemma}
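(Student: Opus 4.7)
The strategy is to reduce the statement to a pure $\phi_n$-bound, then close a short bootstrap between the $\phi_n$- and $\psi_n$-equations.

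First I would rewrite the integrand via the identity $\la_n h(r) f(u_n(r))\,r = \phi_n(r)/(r\,g'(u_n(r)))$, which follows from $f=f'/g'$ and the definition of $\phi_n$. Since $u_n$ is strictly decreasing and $g'$ strictly increasing (Lemma \ref{lem:monog}), the hypothesis $g'(u_n(s_n))/g'(\mu_n)\ge c_0$ propagates to every $r\in[r_n,s_n]$, giving
\[
g'(\mu_n)\int_{r_n}^{s_n}\la_n h f(u_n)\,r\,dr
= \int_{r_n}^{s_n}\frac{g'(\mu_n)}{g'(u_n(r))}\frac{\phi_n(r)}{r}\,dr
\le \frac{1}{c_0}\int_{r_n}^{s_n}\frac{\phi_n(r)}{r}\,dr.
\]
It therefore suffices to show $\int_{r_n}^{s_n}\phi_n(r)/r\,dr\to 0$.

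I would then pass to logarithmic coordinates $\tau=\log(r/r_n)\in[0,T_n]$ with $T_n=\log(s_n/r_n)$, and set $\tilde\phi_n(\tau)=\phi_n(r_n e^\tau)$, $\tilde\psi_n(\tau)=\psi_n(r_n e^\tau)$. Differentiating $\psi_n=-r g'(u_n)u_n'$ via \eqref{pn} yields
\[
\frac{d\tilde\psi_n}{d\tau}
= -\frac{g''(u_n)}{g'(u_n)^2}\tilde\psi_n^{\,2} + \tilde\phi_n
\le \tilde\phi_n,
\]
while the full form of Lemma \ref{lem:D1} gives
\[
\frac{d\tilde\phi_n}{d\tau}
= \tilde\phi_n\bigl[\,2-\tilde\psi_n + E_n(\tau)\bigr],
\qquad E_n(\tau):=-\tilde\psi_n\frac{g''(u_n)}{g'(u_n)^2}+r\frac{h'(r)}{h(r)}.
\]
Here $g''(u_n(r))/g'(u_n(r))^2\to 0$ uniformly on $[r_n,s_n]$ because $u_n(r)\ge u_n(s_n)\to\infty$ (since $g'(u_n(s_n))\ge c_0 g'(\mu_n)\to\infty$) together with (H1)(i) giving $gg''/g'^2\to q$, while $rh'(r)/h(r)\to 0$ uniformly because $s_n\to 0$ by Lemma \ref{lem:gl}. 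Consequently, whenever $\tilde\psi_n$ is a priori bounded on a subinterval, $E_n$ is $o(1)$ uniformly there.

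The bootstrap goes as follows. Fix $\delta\in(0,2-c_1)$ and set $\delta'=2-c_1-\delta>0$. Let $\tau^{*}_n$ be the first $\tau\in(0,T_n]$ with $\tilde\psi_n(\tau^{*}_n)=c_1+\delta$, if any. On $[0,\tau^{*}_n]$ we have $\tilde\psi_n\le c_1+\delta$, so $E_n=o(1)$ uniformly and $2-\tilde\psi_n+E_n\ge \delta'/2$ for large $n$; hence $\tilde\phi_n\le (2/\delta')\,d\tilde\phi_n/d\tau$ and
\[
\int_0^{\tau^{*}_n}\tilde\phi_n\,d\tau
\le \frac{2}{\delta'}\bigl(\tilde\phi_n(\tau^{*}_n)-\tilde\phi_n(0)\bigr)
\le \frac{2}{\delta'}\sup_{[r_n,s_n]}\phi_n\longrightarrow 0.
\]
Integrating $d\tilde\psi_n/d\tau\le\tilde\phi_n$ from $0$ to $\tau^{*}_n$ gives $\delta+o(1)=\tilde\psi_n(\tau^{*}_n)-\tilde\psi_n(0)\le\int_0^{\tau^{*}_n}\tilde\phi_n\,d\tau\to 0$, a contradiction. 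Thus no such $\tau^{*}_n$ exists, $\tilde\psi_n\le c_1+\delta$ holds on all of $[0,T_n]$, and the same telescoping yields $\int_{r_n}^{s_n}\phi_n/r\,dr=\int_0^{T_n}\tilde\phi_n\,d\tau\le(2/\delta')\sup\phi_n\to 0$, completing the proof. The main subtlety is that $T_n$ need not be bounded, so one cannot integrate the error terms in the ODEs termwise; the clean telescoping $\tilde\phi_n\le(2/\delta')\,d\tilde\phi_n/d\tau$ is what converts pointwise smallness of $\phi_n$ into smallness of its logarithmic integral, and the nonnegativity of $-(g''/g'^2)\tilde\psi_n^{\,2}$ is what makes the $\tilde\psi_n$-contradiction immediate without needing a separate estimate on the error.
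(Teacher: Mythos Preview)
Your proof is correct. Both it and the paper's argument hinge on the identity $\la_n h f(u_n)\,r=\phi_n/(r\,g'(u_n))$ together with the differential relation $r\phi_n'=\phi_n[2-\psi_n+o(1)]$, so that keeping $\psi_n$ strictly below $2$ on $[r_n,s_n]$ forces $\int\phi_n/r\,dr\to 0$. The paper does this by contradiction: it posits a sequence $(t_n)\subset(r_n,s_n)$ along which the energy integral has limit in $(0,2-c_1)$, applies the mean-value theorem to $\phi_n(t_n)-\phi_n(r_n)=\int_{r_n}^{t_n}\phi_n'\,dr$, bounds $\psi_n(\bar t_n)\le c_1+(\text{energy increment})+o(1)$, and arrives at a contradiction in one line. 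Your bootstrap in log coordinates is a direct variant of the same mechanism: the first-exit-time argument shows $\tilde\psi_n$ never reaches $c_1+\delta$, after which the telescoping inequality $\tilde\phi_n\le(2/\delta')\,d\tilde\phi_n/d\tau$ bounds the full integral. Your route is slightly longer but makes the ODE structure and the handling of the possibly unbounded interval length explicit; the paper's mean-value step is terser but encodes the same idea. One minor slip: in your closing remark you write ``nonnegativity of $-(g''/g'^2)\tilde\psi_n^{\,2}$'' where you mean nonpositivity (this is what gives $d\tilde\psi_n/d\tau\le\tilde\phi_n$).
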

\begin{proof} First, since $g'(t)\to \infty$ as $t\to \infty$ by Lemma \ref{lem:monog}, we have that $u_n(s_n)\to \infty$ as $n\to \infty$ by the assumption and thus,  $s_n\to0$ as $n\to \infty$ by Lemma \ref{lem:gl}. Then, it follows  from the monotonicity of $g'$ by Lemma \ref{lem:monog} that 
\[
\inf_{r\in[r_n,s_n]}\psi_n(r)\ge \frac{g'(u_n(s_n))}{g'(\mu_n)}\psi_n(r_n)\ge \frac{c_0 c_1}2>0
\]
for all large $n\in \mathbb{N}$. Now, if the conclusion fails, there exists a sequence $(t_n)\subset (r_n,s_n)$ such that 
\[
\lim_{n\to \infty}g'(\mu_n)\int_{r_n}^{t_n}\la_n hf(u_n)rdr\in (0,2-c_1)
\]
up to a subsequence. Then using the assumption, the facts proved above, and Lemma \ref{lem:D1}, we find a sequence $(\bar{t}_n)\subset (r_n,t_n)$ such that
\[
\begin{split}
o(1)&=\phi_n(t_n)-\phi_n(r_n) =\int_{r_n}^{t_n}\frac{\phi_n(r)}{r}\left\{2-\psi_n(r)\left(1+o(1)\right)\right\}dr\\
&=\left\{2-\psi_n(\bar{t}_n)\left(1+o(1)\right)\right\}\int_{r_n}^{t_n}\frac{\phi_n(r)}{r}dr
\end{split}
\]
for all $n\in \mathbb{N}$.  Here noting the monotonicity of $g'$ again and our assumption, we estimate
\[
\begin{split}
\psi_n(\bar{t}_n)&= \frac{g'(u_n(\bar{t}_n))}{g'(u_n(r_n))}\psi_n(r_n)+\frac{g'(u_n(\bar{t}_n))}{g'(\mu_n)}g'(\mu_n)\int_{r_n}^{\bar{t}_n}\la_n hf(u_n)rdr\\
&\le c_1+g'(\mu_n)\int_{r_n}^{\bar{t}_n}\la_n hf(u_n)rdr+o(1)
\end{split}
\]
for all large $n\in \mathbb{N}$. Using this for the previous formula, we calculate
\[
\begin{split}
o(1)\ge
  c_0\left(2-c_1-g'(\mu_n)\int_{r_n}^{t_n}\la_n hf(u_n)rdr+o(1)\right)g'(\mu_n)\int_{r_n}^{t_n}\la_n hf(u_n)rdr
\end{split}
\]
for all large $n\in \mathbb{N}$. From out choice of $(t_n)$, we arrive at  a contradiction.  This finishes the proof. 
\end{proof}
\subsection{Notes on energy recurrence formulas} We finally summarize some basic properties of the energy recurrence formulas. In this subsection, we let $q\in[1,2)$ and $p\in (2,\infty]$ be quantities such that $1/p+1/q=1$ where $1/\infty=0$ and take the sequences $(a_k)$, $(\delta_k)$, $(\eta_k)$, and $(\tilde{\eta}_k)$ of values defined in Section \ref{sub:bf}. The next lemma is  proved by Lemmas 2.12 in \cite{N1} and 6.6 in \cite{N2}.
\begin{lemma}\label{lem:b1} Assume $q\in(1,2)$ and set $a_1=2$ and $\delta_1=1$. Then, for all $k\in \mathbb{N}$ with $k\ge2$, the numbers $a_k\in (0,2)$ and  $\delta_k\in (0,\delta_{k-1})$ are well-defined by \eqref{b1} and \eqref{b2}. Suppose $q=1$ and put $a_1=2$ and $\eta_1=1$. Then for every $k\in \mathbb{N}$ with $k\ge2$, the values $a_k\in (0,2)$ and $\eta_k\in  (0,\eta_{k-1})$ are well-defined by \eqref{b3} and \eqref{b4}.
\end{lemma}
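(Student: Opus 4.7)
The plan is to prove both statements by induction on $k$, since each case is governed by a one-step recurrence that generates $(a_{k+1},\delta_{k+1})$ (respectively $(a_{k+1},\eta_{k+1})$) from $(a_k,\delta_k)$ (respectively $(a_k,\eta_k)$). The base case $k=1$ is immediate from the explicit choices $a_1=2$, $\delta_1=1$, $\eta_1=1$. For the inductive step I would first extract the new ratio $x=\delta_{k+1}/\delta_k$ (resp.\ $y=\eta_{k+1}/\eta_k$) as the unique root in $(0,1)$ of a scalar equation in one variable, and then feed this root into \eqref{b2} (resp.\ \eqref{b4}) and check that $a_{k+1}\in(0,2)$.

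For the case $q\in(1,2)$ (so $p>2$), assuming $a_k\in(0,2]$ and $\delta_k\in(0,1]$, I would study
\[
F(x)=\frac{2p}{2+a_k}(1-x)-1+x^p,\qquad x\in[0,1].
\]
Here $F(1)=0$ and $F(0)=2p/(2+a_k)-1>0$, using $a_k\le 2$ together with $p>2$. Moreover $F''(x)=p(p-1)x^{p-2}>0$ on $(0,1]$, so $F$ is strictly convex with a unique interior critical point $x_\star=(2/(2+a_k))^{1/(p-1)}\in(0,1)$. Strict convexity and $F(1)=0$ force $F(x_\star)<0$, and the intermediate value theorem then yields exactly one root of $F$ in $(0,x_\star)$, which I would take as $\delta_{k+1}/\delta_k$; in particular $\delta_{k+1}<\delta_k$. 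The formula \eqref{b2} then gives $a_{k+1}=2-x^{p-1}(2+a_k)<2$ trivially, while $a_{k+1}>0$ is equivalent to $x^{p-1}<2/(2+a_k)$, i.e.\ $x<x_\star$, which is precisely what I just established.

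For the case $q=1$, assuming $a_k\in(0,2]$ and $\eta_k\in(0,1]$, I would repeat the same strategy with
\[
G(y)=\frac{2}{2+a_k}\log\frac{1}{y}-1+y,\qquad y\in(0,1].
\]
Now $G(1)=0$, $G(y)\to+\infty$ as $y\to 0^+$, and $G''(y)=2/((2+a_k)y^2)>0$, so $G$ is strictly convex with unique critical point $y_\star=2/(2+a_k)\in(0,1)$ and $G(y_\star)<G(1)=0$. The intermediate value theorem produces a unique root $y\in(0,y_\star)$, and this root, called $\eta_{k+1}/\eta_k$, satisfies $\eta_{k+1}<\eta_k$. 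Then \eqref{b4} gives $a_{k+1}=2-y(2+a_k)<2$, and $a_{k+1}>0$ amounts to $y<y_\star=2/(2+a_k)$, which holds by construction.

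I do not expect any substantive obstacle: once the correct scalar functions $F$ and $G$ are identified, everything is a routine exercise in strict convexity and the intermediate value theorem. The only point that must be tracked through the induction is the strict two-sided bound $a_{k+1}\in(0,2)$, because the shape of $F$ and $G$ at the next step (in particular $F(0)>0$ and $y_\star<1$) depends on retaining $a_{k+1}\le 2$ and $a_{k+1}>0$; both follow automatically from the location of the selected root with respect to the critical point $x_\star$ (respectively $y_\star$).
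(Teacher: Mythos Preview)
Your proof is correct. The paper does not give a self-contained argument for this lemma; it simply cites Lemmas~2.12 in \cite{N1} and 6.6 in \cite{N2}. Your convexity/IVT analysis of the scalar functions $F(x)$ and $G(y)$ is precisely the natural route and almost certainly coincides with what those references do, so there is no meaningful difference in approach to report.
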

The next one gives the basic properties of the sequences.  
\begin{lemma}\label{lem:b2} If $q\in(1,2)$, then the sequences $(a_k)$ and $(\delta_k)$ are strictly decreasing and  converge to zero as $k\to \infty$ and $\sum_{k=1}^\infty a_k=\infty$. If $q=1$, then the sequences $(a_k)$ and $(\eta_k)$ are strictly decreasing and converge to zero as $k\to \infty$ and $\sum_{k=1}^\infty a_k=\infty$. 
\end{lemma}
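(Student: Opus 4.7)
The plan is to reduce both pairs of recurrence relations to a single implicit identity relating $a_{k+1}$ to $a_k$, use a sign argument to obtain strict monotonicity and convergence $a_k\to0$, and then Taylor-expand at the origin to extract a logistic-type asymptotic $a_k\sim C/k$, from which both the divergence $\sum a_k=\infty$ and the decay $\de_k,\eta_k\to0$ will follow. In the case $q\in(1,2)$, using \eqref{b2} to substitute $(\de_{k+1}/\de_k)^{p-1}=(2-a_{k+1})/(2+a_k)$ into \eqref{b1} gives, after simplification,
\[
(2+a_k)(2p-2-a_k)^{p-1}=(2-a_{k+1})(2p-2+a_{k+1})^{p-1}, \qquad (\star)
\]
together with the closed formula $\de_{k+1}/\de_k=(2p-2-a_k)/(2p-2+a_{k+1})$; in the case $q=1$, eliminating $\eta_{k+1}/\eta_k=(2-a_{k+1})/(2+a_k)$ via \eqref{b4} in \eqref{b3} yields
\[
(2+a_k)e^{-a_k/2}=(2-a_{k+1})e^{a_{k+1}/2}. \qquad (\star\star)
\]
Writing either relation as $h(a_k)=g(a_{k+1})$, a direct computation gives $h'(a)=-pa(2p-2-a)^{p-2}$ and $g'(b)=-pb(2p-2+b)^{p-2}$ for $(\star)$, and $h'(a)=-(a/2)e^{-a/2}$, $g'(b)=-(b/2)e^{b/2}$ for $(\star\star)$; in both cases $h(0)=g(0)$, and $h,g$ are strictly decreasing on $(0,2)$.

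For the monotonicity of $(a_k)$, set $F(a):=h(a)-g(a)$, so that $F(0)=0$ and
\[
F'(a)=pa\bigl[(2p-2+a)^{p-2}-(2p-2-a)^{p-2}\bigr]
\]
for $(\star)$ (respectively $F'(a)=a\sinh(a/2)$ for $(\star\star)$), which is strictly positive on $(0,2)$ since $p>2$. Hence $F>0$ on $(0,2)$, so the implicit identity gives $g(a_{k+1})=h(a_k)>g(a_k)$, and since $g$ is strictly decreasing, $a_{k+1}<a_k$. The sequence $(a_k)$ is therefore strictly decreasing and bounded below by $0$, so it converges to some $a_\infty\in[0,2)$; passing to the limit in the identity gives $F(a_\infty)=0$, which forces $a_\infty=0$.

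For the asymptotics and the divergent sum, I would Taylor-expand the implicit identity at $0$. Since $h(0)=g(0)$ and $h''(0)=g''(0)$ but $h'''(0)-g'''(0)=4p(p-2)(2p-2)^{p-3}>0$ (with the analogous positive gap in the exponential case), the leading non-trivial constraint reads
\[
(a_k-a_{k+1})(a_k+a_{k+1})=c_p\bigl(a_k^3+a_{k+1}^3\bigr)+O(a_k^4),
\]
with $c_p=(p-2)/(3(p-1))$ in case $(\star)$ and $c_p=1/3$ in case $(\star\star)$. Dividing by $a_k+a_{k+1}$ and using $a_{k+1}<a_k\to0$ gives $a_k-a_{k+1}=c_pa_k^2(1+o(1))$, hence $1/a_{k+1}-1/a_k\to c_p$, so $a_k\sim 1/(c_pk)$ and $\sum a_k=\infty$. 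Finally, monotonicity of $(\de_k)$ and $(\eta_k)$ is contained in Lemma \ref{lem:b1}, and their convergence to zero follows from the closed-form ratios together with the elementary bound $\log(1+x)\ge x/(1+x)$, which gives $-\log(\de_{k+1}/\de_k)\ge(a_k+a_{k+1})/(2p)$ and $-\log(\eta_{k+1}/\eta_k)\ge(a_k+a_{k+1})/4$; summing against $\sum a_k=\infty$ forces the infinite products to vanish. The hard part is the cancellation of $F$ to second order at the origin, which forces the expansion to third order and requires verifying the correct sign of the $a^3$ coefficient---this is precisely where the hypothesis $q<2$ (equivalently $p>2$) is essential.
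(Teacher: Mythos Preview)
Your proof is correct and takes a genuinely different route from the paper. The paper's proof essentially defers to earlier works (Lemmas 4.16, 4.18 in \cite{N1} and Lemmas 6.9, 6.10 in \cite{N2}), supplying only a short direct argument that $\eta_k\to0$ in the case $q=1$ by writing $\eta_{k+1}/\eta_k=1-\e_k$, extracting $\e_k\ge c/k$, and summing. Your argument is self-contained: eliminating $\de_{k+1}/\de_k$ (resp.\ $\eta_{k+1}/\eta_k$) between the two recurrences to obtain the single relations $(\star)$ and $(\star\star)$ is a clean reduction that treats both regimes uniformly, and the sign computation on $F=h-g$ gives the strict decrease and the limit $a_\infty=0$ without any case analysis. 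Your Taylor expansion yields the sharper statement $a_k\sim 1/(c_pk)$ with the explicit constants $c_p=(p-2)/(3(p-1))$ and $c_p=1/3$, which is more than the paper claims and also makes the continuity of the two regimes at $p\to\infty$ visible at the level of the asymptotic constant.

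One small point to tighten: when you divide by $a_k+a_{k+1}$ and write $(a_k^3+a_{k+1}^3)/(a_k+a_{k+1})=a_k^2(1+o(1))$, you are already using $a_{k+1}/a_k\to1$. This should be stated first; it follows immediately from the same identity, since $a_k^2-a_{k+1}^2=O(a_k^3)$ and $a_k+a_{k+1}\ge a_k$ force $a_k-a_{k+1}=O(a_k^2)$. With that noted, the passage to $1/a_{k+1}-1/a_k\to c_p$ and the harmonic-type decay are justified. Also, for the $q=1$ part you derived the exact identity $\log(\eta_k/\eta_{k+1})=(a_k+a_{k+1})/2$, so the inequality $-\log(\eta_{k+1}/\eta_k)\ge(a_k+a_{k+1})/4$ can simply be replaced by that equality; the $\log(1+x)\ge x/(1+x)$ bound is only needed in the $q>1$ case.
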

\begin{proof} The proof for the case $q>1$ is given by Lemmas 4.16 and 4.18 in \cite{N1}. The case $q=1$ is proved by Lemma 6.9 and 6.10  in  \cite{N2} except for the convergence  of $(\eta_k)$. To prove this, first note that $(\hat{c}_k)_{k\in \mathbb{N}\cup\{0\}}$ in \cite{N2} corresponds to $(\eta_{k})_{k\in \mathbb{N}}$ in the present paper. Then, as in the proof of Lemmas 6.10 in \cite{N2}, setting $\eta_{k+1}/\eta_k=1-\e_k$, we have that $\e_k\to0$ as $k\to \infty$ and there exist values $c>0$ and $k_0\in \mathbb{N}$ such that  $\e_k\ge c/k$ for all $k\ge k_0$. Then taking $k_0$ larger if necessary, we get that for all $k>k_0$,
\[
\begin{split}
\log{\frac1{\eta_k}}&
=\log{\frac1{\eta_{k_0}}}+\sum_{i=k_0}^{k-1}\log{\frac{\eta_{i}}{\eta_{i+1}}}
\ge  \log{\frac1{\eta_{k_0}}}+c \sum_{i=k_0}^{k-1}\frac1i\to \infty
\end{split}
\]  
as $k\to \infty$. This proves the desired conclusion. We finish the proof. 
\end{proof}
We also use the following relation.  
\begin{lemma}\label{lem:b3} Assume $q\in [1,2)$. Then we have that
\[
\displaystyle \tilde{\eta}_k\sum_{i=1}^k \frac{2a_i}{\tilde{\eta}_i}=2+a_k
\]
for all $k\in \mathbb{N}$.
\end{lemma}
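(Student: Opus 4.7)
My plan is to prove the identity by induction on $k$, after first rewriting the two recurrence systems \eqref{b1}--\eqref{b2} and \eqref{b3}--\eqref{b4} in a single unified form using the sequence $(\tilde{\eta}_k)$. The point is that $\tilde{\eta}_k$ absorbs the exponent $p-1$ coming from \eqref{b2} (via $p/q=p-1$) and reduces to $\eta_k$ itself when $q=1$, so that the ratio $\tilde{\eta}_{k+1}/\tilde{\eta}_k$ plays the same structural role in both regimes.

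More precisely, in the case $q\in(1,2)$ one has $\eta_k=\delta_k^p$ and $1/p+1/q=1$, so $\tilde{\eta}_k=\eta_k^{1/q}=\delta_k^{p/q}=\delta_k^{p-1}$. Consequently \eqref{b2} reads
\[
a_{k+1}=2-\frac{\tilde{\eta}_{k+1}}{\tilde{\eta}_k}(2+a_k),
\]
and exactly the same identity is \eqref{b4} when $q=1$ (since then $\tilde{\eta}_k=\eta_k$). In particular, in both cases we have the unified relation
\begin{equation}\label{eq:unif}
2-a_{k+1}=\frac{\tilde{\eta}_{k+1}}{\tilde{\eta}_k}(2+a_k),
\end{equation}
which is the only algebraic fact about the recurrences that I will need.

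With \eqref{eq:unif} at hand, the induction is immediate. For $k=1$ we have $a_1=2$ and $\tilde{\eta}_1=1$, so $\tilde{\eta}_1\cdot(2a_1/\tilde{\eta}_1)=4=2+a_1$. Assuming $\tilde{\eta}_k\sum_{i=1}^k 2a_i/\tilde{\eta}_i=2+a_k$, I compute
\[
\tilde{\eta}_{k+1}\sum_{i=1}^{k+1}\frac{2a_i}{\tilde{\eta}_i}
=\frac{\tilde{\eta}_{k+1}}{\tilde{\eta}_k}\,\tilde{\eta}_k\sum_{i=1}^{k}\frac{2a_i}{\tilde{\eta}_i}+2a_{k+1}
=\frac{\tilde{\eta}_{k+1}}{\tilde{\eta}_k}(2+a_k)+2a_{k+1},
\]
and applying \eqref{eq:unif} gives $(2-a_{k+1})+2a_{k+1}=2+a_{k+1}$, completing the induction.

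There is no real obstacle here: the only nontrivial point is recognizing that the identity $p/q=p-1$ (equivalently, $1/p+1/q=1$) is exactly what makes the $q\in(1,2)$ formula \eqref{b2} coincide with its $q=1$ counterpart \eqref{b4} once one switches to the $\tilde{\eta}$-variable. Lemma \ref{lem:b1} is what justifies the use of the recurrences in the first place, guaranteeing that all quantities involved are well-defined.
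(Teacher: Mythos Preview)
Your proof is correct and follows essentially the same induction as the paper. The only difference is cosmetic: the paper treats the two cases separately, quoting an earlier reference for $q\in(1,2)$ and writing out the induction only for $q=1$, whereas you unify both cases via the observation $\tilde{\eta}_k=\delta_k^{p-1}$ so that \eqref{b2} and \eqref{b4} take the common form \eqref{eq:unif} and a single induction suffices.
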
 
\begin{proof} If $q>1$,  we obtain the proof by Lemma 2.13 in \cite{N1}. For the case $q=1$, we argue by induction. It is clear that the formula holds for $k=1$ since $a_1=2$. Assume that  the formula is true for some $k\ge1$. Then using the assumption and \eqref{b4}, we get
\[
\eta_{k+1}\sum_{i=1}^{k+1} \frac{2a_i}{\eta_i}=2a_{k+1}+\frac{\eta_{k+1}}{\eta_k}(2+a_k)=2+a_{k+1}.
\] 
This  finishes the proof.
\end{proof}

\section{First concentration}\label{sec:fc}
Let us start our concentration analysis. We begin with the detection of the first bubble around the maximum point. In this section, we always assume (H1) with $q\ge1$ and  $\{(\la_n,\mu_n,u_n)\}$ is a sequence of solutions of \eqref{pn} such that $\mu_n\to \infty$ as $n\to \infty$ without further comments. 
\subsection{Detection of standard bubble}
In this subsection, we detect the first concentrating part and prove Theorem \ref{th:0}. The next lemma gives the basic observation. 
\begin{lemma}\label{lem:c1}  Set sequences $(\ga_{0,n})$ of positive values and $(z_{0,n})$ of functions so that 
\[
\la_n h(0)f'(\mu_n)\ga_{0,n}^2=1
\]
and 
\[
z_{0,n}(r)=g'(\mu_n)(u_n(\ga_{0,n}r)-\mu_n)
\]
for all $r\in [0,1/\ga_{0,n}]$ and all large $n\in \mathbb{N}$. Then we have that $\ga_{0,n}\to 0$ and $z_{0,n}\to z_0$ in $C^2_{\text{loc}}([0,\infty))$ as $n\to \infty$ up to a subsequence where $z_0$ is defined by \eqref{def:z0}. Moreover, we get
\[
\lim_{R\to \infty}\lim_{n\to \infty}g'(\mu_n)\int_0^{R \ga_{0,n}}\la_n hf(u_n)rdr=4.
\]
\end{lemma}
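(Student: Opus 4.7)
The plan is to carry out the standard rescaling around the maximum point $r=0$ and use Lemma \ref{lem:g1} to identify the limit equation as the Liouville equation, then extract $z_0$ by uniqueness of the Cauchy problem for the radial Liouville equation.

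First I would verify $\ga_{0,n}\to 0$. Evaluating \eqref{id1} at $r=1$ with $u_n(1)=0$ and using $u_n\le \mu_n$ together with (H0) gives
\[
\mu_n\le \la_n f(\mu_n)\left(\max_{[0,1]}h\right)\int_0^1 s\log(1/s)\,ds,
\]
so $\la_n f(\mu_n)\ge C\mu_n$ for some $C>0$. Since $g'(\mu_n)\to\infty$ by Lemma \ref{lem:monog}, and $f'(\mu_n)=g'(\mu_n)f(\mu_n)$, we get $\la_n f'(\mu_n)\ge Cg'(\mu_n)\mu_n\to\infty$, hence $\ga_{0,n}\to 0$.

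Second, I would compute the equation and normalization for $z_{0,n}$. A direct substitution, using $\la_n h(0)f'(\mu_n)\ga_{0,n}^2=1$ and $f'(\mu_n)=g'(\mu_n)f(\mu_n)$, yields
\begin{equation*}
-z_{0,n}''(r)-\frac{1}{r}z_{0,n}'(r)=\frac{h(\ga_{0,n}r)}{h(0)}\cdot\frac{f\!\left(\mu_n+\tfrac{z_{0,n}(r)}{g'(\mu_n)}\right)}{f(\mu_n)}
\quad\text{in }(0,1/\ga_{0,n}),
\end{equation*}
with $z_{0,n}(0)=0$ and $z_{0,n}'(0)=0$. Note $z_{0,n}\le 0$ since $u_n\le \mu_n$, and $z_{0,n}$ is nonincreasing.

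Third, I would derive local $C^2$ bounds. Integrating the equation, since $f(u_n(\ga_{0,n}s))/f(\mu_n)\le 1$ and $h(\ga_{0,n}s)/h(0)\to 1$ locally uniformly,
\[
-rz_{0,n}'(r)=\int_0^r \frac{h(\ga_{0,n}s)}{h(0)}\cdot\frac{f(u_n(\ga_{0,n}s))}{f(\mu_n)}\,s\,ds\le Cr^2
\]
on any fixed interval $[0,R]$. This gives $|z_{0,n}'(r)|\le Cr$ and $|z_{0,n}(r)|\le Cr^2$ on $[0,R]$, and bounds on the right-hand side of the equation give $C^2$ bounds. By Arzelà–Ascoli and diagonal extraction, up to a subsequence $z_{0,n}\to \bar z$ in $C^1_{\text{loc}}([0,\infty))$, with $\bar z\le 0$.

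Fourth, I would identify the limit. Since the sequence $z_{0,n}$ is locally uniformly bounded and $\ga_{0,n}\to 0$, Lemma \ref{lem:g1} yields
\[
\frac{f\!\left(\mu_n+\tfrac{z_{0,n}(r)}{g'(\mu_n)}\right)}{f(\mu_n)}\longrightarrow e^{\bar z(r)}
\quad\text{locally uniformly.}
\]
Hence $\bar z$ solves $-\bar z''-\bar z'/r=e^{\bar z}$ on $(0,\infty)$ with $\bar z(0)=0$ and $\bar z'(0)=0$, and by uniqueness of this radial Cauchy problem $\bar z=z_0$; since the limit is unique the whole sequence converges (along the extracted subsequence) and standard elliptic bootstrap upgrades convergence to $C^2_{\text{loc}}$.

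Finally, for the energy I would change variables $r=\ga_{0,n}s$:
\[
g'(\mu_n)\int_0^{R\ga_{0,n}}\la_n h f(u_n)r\,dr
=\int_0^{R}\frac{h(\ga_{0,n}s)}{h(0)}\cdot\frac{f(u_n(\ga_{0,n}s))}{f(\mu_n)}\,s\,ds,
\]
using the identity $g'(\mu_n)\la_n f(\mu_n)\ga_{0,n}^2=1/h(0)$. Dominated convergence (with the majorant $C s$ obtained above) and the $C^2_{\text{loc}}$ convergence send the right-hand side to $\int_0^R e^{z_0(s)}s\,ds$, and letting $R\to\infty$ gives $\int_0^\infty e^{z_0}s\,ds=4$.

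The main obstacle is the fourth step: justifying that the nonlinear factor $f(\mu_n+z_{0,n}/g'(\mu_n))/f(\mu_n)$ converges locally uniformly to $e^{\bar z}$ under the generalized growth (H1). This is exactly where Lemma \ref{lem:g1} is used, which in turn depends on the full strength of Lemmas \ref{lem:g200}--\ref{lem:g2}. Everything else is the routine radial Liouville rescaling plus the integrated identity \eqref{id1} for the blow-up of $\la_n f'(\mu_n)$.
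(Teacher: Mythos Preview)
Your proposal is correct and follows essentially the same rescaling strategy as the paper. Two minor differences are worth noting. First, for $\ga_{0,n}\to 0$ you give a direct lower bound on $\la_n f'(\mu_n)$ via \eqref{id1}, whereas the paper argues by contradiction (assuming $\ga_{0,n}\not\to0$ and integrating the scaled equation to force $g'(\mu_n)\mu_n$ bounded); both are equally short. Second, your phrase ``standard elliptic bootstrap'' for upgrading $C^1_{\rm loc}$ to $C^2_{\rm loc}$ at the origin is a bit loose: the point is the ODE identity $-z_{0,n}'(r)/r=r^{-2}\int_0^r(\text{RHS})s\,ds$, which together with the locally uniform convergence of the right-hand side to $e^{z_0}$ gives uniform convergence of $z_{0,n}'/r$ and hence of $z_{0,n}''$ down to $r=0$; the paper makes this explicit through the sandwich estimate \eqref{eq:xxx}.
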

\begin{proof} First we claim $\ga_{0,n}\to0$ as $n\to \infty$. If not, it follows from \eqref{pn} that there exists a value $C_0>0$ such that  
\[
g'(\mu_n)(-ru_n'(r))'=\frac{1}{\ga_{0,n}^2}\frac{h(r)}{h(0)}\frac{f(u_n(r))}{f(\mu_n)}r\le C_0 r
\]
for all $r\in[0,1]$ and all $n\in \mathbb{N}$ up to a subsequence. Integrating the inequality, we get 
\[
g'(\mu_n)(-u_n'(r))\le \frac{C_0r}{2}
\]
for all $r\in[0,1]$ and all  $n\in \mathbb{N}$. Then, integrating over $[0,1]$ gives 
\[
g'(\mu_n)\mu_n\le \frac{C_0}{2}
\]
for all $n\in \mathbb{N}$ which contradicts  the fact $g'(\mu_n)\to \infty$ as $n\to \infty$ ensured in Lemma \ref{lem:monog}. This proves the claim. Next, from \eqref{pn} and the monotonicity of $g$ noted in Lemma \ref{lem:monog}, we get a value $C_1>0$ such that
\begin{equation}\label{eq:z0n}
\begin{cases}
-z_{0,n}''-\frac1r z_{0,n}'=\frac{h(\ga_{0,n}\cdot)}{h(0)}\frac{f\left(\mu_n+\frac{z_{0,n}}{g'(\mu_n)}\right)}{f(\mu_n)}\le C_1\text{ in }(0,1/\ga_{0,n}),\\
z_{0,n}(0)=0=z_{0,n}'(0),
\end{cases}
\end{equation}
for all large $n\in \mathbb{N}$. We claim that $(z_{0,n})$ is bounded in $C^2_{\text{loc}}([0,\infty))$. To prove this, take any number $r>0$. Multiplying the inequality in \eqref{eq:z0n}  by $r$ and integrating over $[0,r]$, we get
\[
0\le -z_{0,n}'(r)\le \frac {C_1r}2\text{ \ and \ }0\le -z_{0,n}(r)\le \frac{C_1r^2}4
\] 
for all large $n\in \mathbb{N}$. This with the equation in \eqref{eq:z0n} proves the claim. Hence, by the Ascoli-Arzel\`{a} theorem and again  the equation with Lemma \ref{lem:g1}, there exists a function $z$ such that $z_{0,n}\to z$ in $C_{\text{loc}}^1([0,\infty))\cap C^2_{\text{loc}}((0,\infty))$ as $n\to \infty$ and 
\[
\begin{cases}
-z''-\frac1r z'=e^{z}\text{ in }(0,\infty),\\
z(0)=0=z'(0).
\end{cases}
\]
Integrating the equation with the initial conditions, we get $z=z_0$. (For the detail, see the final part of the proof of Lemma 4.3 in \cite{GN}.) We note that  the monotonicity of $f$ and Lemma \ref{lem:g1} show that for any $r\ge0 $
\[
\begin{split}
 \frac{(1+o(1))e^{z_0(r)}r^2}{2}
\le \int_0^r \frac{h(\ga_{0,n}r)}{h(0)}\frac{f\left(\mu_n+\frac{z_{0,n}(s)}{g'(\mu_n)}\right)}{f(\mu_n)}sds\le \frac{(1+o(1)r^2}{2}\end{split}
\]
which with the equation implies 
\begin{equation}\label{eq:xxx}
\frac{(1+o(1))e^{z_0(r)}}{2}\le -\frac{z_{0,n}'(r)}{r}\le \frac{1+o(1)}2
\end{equation}
where $o(1)\to0$ as $n\to \infty$ locally  uniformly in $[0,\infty)$.  This gives that $z_{0,n}'(r)/r\to z_0'(r)/r$ locally uniformly for all $r\in[0,\infty)$. In fact, if the conclusion fails, noting the local uniform convergence of $z_{0,n}'$ proved above, we find a sequence $(r_n)\subset [0,\infty)$ such that $r_n\to0$ and $z_{0,n}'(r_n)/r_n- z_0'(r_n)/r_n\not \to 0$ as $n\to \infty$. This clearly contradicts \eqref{eq:xxx}. Hence, the equation shows the convergence of $z_{0,n}$ actually holds in $C^2_{\text{loc}}([0,\infty))$.  Moreover, for all $R>0$, we have  that
\[
g'(\mu_n)\int_0^{R\ga_{0,n}}\la_n hf(u_n)rdr=\int_0^R\frac{h(\ga_{0,n}r)}{h(0)}\frac{f(u_n(\ga_{0,n}r))}{f(\mu_n)}rdr\to \int_0^Re^{z_0}rdr
\]  
as $n\to \infty$ by Lemmas \ref{lem:g1}. This proves the final conclusion. We finish the proof.
\end{proof}
We slightly extend the first concentration interval and give the pointwise estimate on the outside of it.
\begin{lemma}\label{lem:c2} Define the sequences $(\ga_{0,n})$ and $(z_{0,n})$ as in the previous lemma.  Then, up to a subsequence, there exists a sequence $(\rho_{0,n})\subset (0,1)$ of values such that $u_n(\rho_{0,n})/\mu_n\to1$, $\rho_{0,n}/\ga_{0,n}\to \infty$, $g(\mu_n)^{-1}\log{(\rho_{0,n}/\ga_{0,n})}\to 0$,
$
\|z_{0,n}-z_0\|_{C^2([0,\rho_{0,n}/\ga_{0,n}])}\to0,
$
and 
\[
g'(\mu_n)\int_0^{\rho_{0,n}}\la_nhf(u_n)rdr\to 4
\]
as $n\to \infty$. Moreover, we have that
\[
z_{0,n}(r)\le -(4+o(1))\log{r}
\]
for all $r\in[\rho_{0,n}/\ga_{0,n},1/\ga_{0,n}]$ and all $n\in \mathbb{N}$ where $o(1)\to 0$ as $n\to \infty$ uniformly for $r$ in the interval. 
\end{lemma}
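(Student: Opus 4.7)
The plan is to set $\rho_{0,n}:=R_n\gamma_{0,n}$ for a sequence $R_n\to\infty$ chosen by a diagonal extraction and then to extract the pointwise bound from identity \eqref{id2}. By Lemma \ref{lem:c1} we may assume, along a subsequence, that $z_{0,n}\to z_0$ in $C^2_{\mathrm{loc}}([0,\infty))$ and $\gamma_{0,n}\to 0$; by Lemma \ref{lem:monog} also $g(\mu_n)\to\infty$. For each integer $k\ge 2$ I would pick a strictly increasing index $N_k$ so that for every $n\ge N_k$,
\[
\|z_{0,n}-z_0\|_{C^2([0,k])}<\tfrac{1}{k^{3}\log k},\qquad k\gamma_{0,n}<\tfrac1k,\qquad k\log k<g(\mu_n),
\]
and then set $R_n:=k$ whenever $N_k\le n<N_{k+1}$. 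By construction $R_n\to\infty$, $\rho_{0,n}/\gamma_{0,n}=R_n\to\infty$, $\rho_{0,n}<1/R_n\to 0$, $\log R_n/g(\mu_n)<1/R_n\to 0$, and $\|z_{0,n}-z_0\|_{C^2([0,R_n])}\to 0$.

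For the height, $u_n(\rho_{0,n})-\mu_n=z_{0,n}(R_n)/g'(\mu_n)$; the explicit expansion $z_0(R)=-4\log R+\log 64+O(1/R^2)$ combined with the $C^0$ convergence gives $|z_{0,n}(R_n)|=O(\log R_n)$, while $\mu_ng'(\mu_n)$ grows at least as fast as $g(\mu_n)$ by \eqref{g2} and Lemma \ref{lem:pq}, so $u_n(\rho_{0,n})/\mu_n-1=O(\log R_n/g(\mu_n))\to 0$. For the energy, identity \eqref{id0} with the substitution $r=\gamma_{0,n}\sigma$ and the relation $z_{0,n}'(r)=\gamma_{0,n}g'(\mu_n)u_n'(\gamma_{0,n}r)$ yields
\[
A_n:=g'(\mu_n)\int_0^{\rho_{0,n}}\lambda_n h f(u_n)\,r\,dr=-R_n z_{0,n}'(R_n).
\]
Since $-R z_0'(R)=\int_0^R e^{z_0}\sigma\,d\sigma=4-32/(8+R^2)$ and $R_n\|z_{0,n}-z_0\|_{C^1([0,R_n])}=O(1/(R_n^{2}\log R_n))$, we obtain both $A_n\to 4$ and, more sharply, $(A_n-4)\log R_n\to 0$; this last rate is essential for the next step.

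For the pointwise estimate, fix $\tilde r\in[R_n,1/\gamma_{0,n}]$. Applying \eqref{id2} to the pair $(\rho_{0,n},\gamma_{0,n}\tilde r)$, multiplying by $g'(\mu_n)$, and dropping the nonnegative remainder integral on the right yields
\[
z_{0,n}(\tilde r)\le z_{0,n}(R_n)+A_n\log R_n-A_n\log\tilde r.
\]
The expansion $z_0(R)=-4\log R+\log 64+O(1/R^2)$ together with the $C^0$ convergence and $(A_n-4)\log R_n\to 0$ gives $z_{0,n}(R_n)+A_n\log R_n\to\log 64$. Dividing by $\log\tilde r\ge\log R_n\to\infty$ and using $A_n\to 4$ then gives
\[
\frac{z_{0,n}(\tilde r)}{\log\tilde r}\le -A_n+\frac{\log 64+o(1)}{\log R_n}=-4+o(1)
\]
uniformly in $\tilde r$, i.e.\ $z_{0,n}(\tilde r)\le-(4+o(1))\log\tilde r$.

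The main technical difficulty is orchestrating the diagonal choice so that several competing rate conditions hold at once: the $C^2$ convergence must be strong enough to ensure both $A_n\to 4$ and the sharper $(A_n-4)\log R_n\to 0$, while simultaneously $\log R_n=o(g(\mu_n))$ and $R_n\gamma_{0,n}\to 0$. The explicit $-4\log r+O(1)$ decay of $z_0$ and the corresponding $\sigma^{-3}$ decay of $e^{z_0(\sigma)}\sigma$ at infinity are what furnish the $O(1/R_n^2)$ tail on $\int_0^{R_n}e^{z_0}\sigma\,d\sigma$, and hence the uniform $o(1)$ appearing in the final pointwise bound.
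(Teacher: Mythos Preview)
Your proof is correct and follows essentially the same line as the paper: both set $\rho_{0,n}=R_n\gamma_{0,n}$ via a diagonal extraction and reduce the pointwise bound to the inequality $z_{0,n}(\tilde r)\le z_{0,n}(R_n)-A_n\log(\tilde r/R_n)$, which the paper obtains by integrating the monotonicity $rz_{0,n}'(r)\le R_nz_{0,n}'(R_n)$ while you obtain it from the Green-type identity \eqref{id2} (these are the same inequality). One minor remark: the sharp rate $(A_n-4)\log R_n\to0$ that you arrange is convenient but not actually ``essential''---since $\log\tilde r\ge\log R_n\to\infty$, the weaker $A_n\to4$ already suffices to absorb $z_{0,n}(R_n)+A_n\log R_n=o(\log R_n)+O(1)$ into the $o(1)\log\tilde r$ term.
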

\begin{proof} From Lemma \ref{lem:c1}, there exists a sequence $(R_n)$ of positive values such that $R_n\to \infty$, $g(\mu_n)^{-1}\log{R_n}\to0$, $(\mu_ng'(\mu_n))^{-1}\log{R_n}\to0$, 
\[
\|z_{0,n}-z_0\|_{C^2([0,R_n])}\to0,\ \ \ \sup_{r\in[0,R_n]}|rz_{0,n}'(r)-rz_0'(r)|\to0,
\]
and 
\[
g'(\mu_n)\int_0^{R_n\ga_{0,n}}\la_nhf(u_n)rdr\to 4
\]
as $n\to \infty$. In particular, we have
\[
\frac{u_n(\ga_{0,n}R_n)}{\mu_n}=1+\frac{z_0(R_n)+o(1)}{\mu_ng'(\mu_n)}
\to1
\]
and thus, $\rho_{0,n}\to0$ by Lemma \ref{lem:gl} as $n\to \infty$. Moreover, recalling \eqref{eq:z0n}, we have that $rz_{0,n}'(r)$ is nonincreasing and thus, deduce for all $r\in[R_n,1/\ga_{0,n}]$ that
\[
rz_{0,n}'(r)\le R_nz_{0,n}'(R_n)=R_n z_0'(R_n)+o(1)=-4+o(1)
\]
as $n\to \infty$. After integration, we see that
\[
z_{0,n}(r)\le z_0(R_n)-(4+o(1))\log{\frac r{R_n}}\le -(4+o(1))\log{r}
\]
as $n\to \infty$. Consequently, putting $\rho_{0,n}=R_n \ga_{0,n}$ for all $n\in \mathbb{N}$, we complete all the assertions. We finish the proof. 
\end{proof}
Now, let us prove Theorem \ref{th:0}. 
\begin{proof}[Proof of Theorem \ref{th:0}]  The proof follows from Lemmas \ref{lem:c1} and \ref{lem:c2} except for the last equality. For the proof, noting the monotonicity of $g'$, we estimate
\[
\begin{split}
g'(\mu_n)\int_0^{\rho_{0,n}}\la_n hf(u_n)rdr&\ge \int_0^{\rho_{0,n}}\la_n hf'(u_n)rdr\\
&=\int_0^{\rho_{0,n}/\ga_{0,n}}\frac{h(\ga_{0,n}r)}{h(0)}\frac{f'(u_n(\ga_{0,n}r))}{f'(\mu_n)}rdr
\end{split}
\]  
for all large $n\in \mathbb{N}$. Then by Lemmas \ref{lem:c1}, \ref{lem:c2}, \ref{lem:g1}, and the Fatou lemma, we get the desired formula. We complete the proof.  
\end{proof}
\subsection{Center of the first concentration}\label{sub:cfc}
To describe all the concentration parts in a unified way with the sequence $(z_k)_{k\in \mathbb{N}}$ of the limit profiles, we slightly modify the results in the previous subsection. We first look for the ``center" of the first concentration. Recall the sequences $(\phi_n)$ and $(\psi_n)$ of functions  in Subsection \ref{sub:D}. 
\begin{lemma}\label{lem:c5} There exists a sequence $(r_{1,n})\subset (0,1)$ of values such that $\phi_n(r_{1,n})$ attains a local maximum value  of $\phi_n$, in particular  $\phi_n'(r_{1,n})=0$,  for all $n\in\mathbb{N}$ and $\phi_n(r_{1,n})\to 2$, $\psi_n(r_{1,n})\to2$, $r_{1,n}/\ga_{0,n}\to 2\sqrt{2}$, 
\[
u_n(r_{1,n})=\mu_n+O\left(\frac1{g(\mu_n)}\right)\frac{g(\mu_n)}{g'(\mu_n)},
\]
$g(u_n(r_{1,n}))/g(\mu_n)\to1$, and $g'(u_n(r_{1,n}))/g'(\mu_n)\to1$  as $n\to \infty$ up to a subsequence. 
\end{lemma}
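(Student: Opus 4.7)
My plan is to identify the center of the first concentration as the (rescaled) unique positive local maximum of the limit profile of the scaling function $\phi_n$. Passing to the variable $s=r/\ga_{0,n}$, I would show that the rescaled scaling function
\[
\tilde{\phi}_n(s):=\phi_n(\ga_{0,n}s)=s^2\,\frac{h(\ga_{0,n}s)}{h(0)}\,\frac{f'\!\left(\mu_n+\frac{z_{0,n}(s)}{g'(\mu_n)}\right)}{f'(\mu_n)}
\]
converges in $C^1_{\text{loc}}([0,\infty))$ to $\Phi(s):=s^2 e^{z_0(s)}=\tfrac{64\,s^2}{(8+s^2)^2}$. The $C^0$ convergence is immediate from Lemma \ref{lem:c1} together with Lemma \ref{lem:g1}. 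For the $C^1$ convergence, I would differentiate $\tilde{\phi}_n$ and use the $C^2_{\text{loc}}$ convergence $z_{0,n}\to z_0$ from Lemma \ref{lem:c1} (which yields convergence of the scaled derivative $u_n'(\ga_{0,n}s)/(g'(\mu_n)\ga_{0,n}^{-1})\to z_0'(s)$) along with the continuous differentiability of $h$ and the uniform version of the ratio $g''(u_n(\ga_{0,n}s))/g'(u_n(\ga_{0,n}s))^2$ supplied by \eqref{g1} and Lemma \ref{lem:monog}.

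A direct computation gives $\Phi'(s)=\tfrac{128\,s(8-s^2)}{(8+s^2)^3}$, so $\Phi$ has a unique positive critical point at $s_*=2\sqrt{2}$, which is a strict local (in fact global) maximum with $\Phi(s_*)=2$. By the $C^1$ convergence, for all large $n$ the function $\tilde{\phi}_n$ admits a local maximum at some $s_n$ inside a fixed small neighborhood of $s_*$, and necessarily $s_n\to s_*=2\sqrt{2}$. I then set $r_{1,n}:=\ga_{0,n}s_n$, so that $r_{1,n}/\ga_{0,n}\to 2\sqrt{2}$, $\phi_n'(r_{1,n})=0$, and $\phi_n(r_{1,n})=\tilde{\phi}_n(s_n)\to \Phi(2\sqrt{2})=2$.

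The remaining conclusions then follow quickly. First, from $u_n(r_{1,n})=\mu_n+z_{0,n}(s_n)/g'(\mu_n)$ and $z_{0,n}(s_n)\to z_0(2\sqrt{2})=-\log 4$, I get $u_n(r_{1,n})-\mu_n=O(1/g'(\mu_n))$, which can be rewritten as $O(1/g(\mu_n))\cdot g(\mu_n)/g'(\mu_n)$ since $g(\mu_n)\to\infty$ by Lemma \ref{lem:monog}. Applying Lemma \ref{lem:g1} with $y=z_{0,n}(s_n)$ bounded then yields $g(u_n(r_{1,n}))/g(\mu_n)\to 1$ and $g'(u_n(r_{1,n}))/g'(\mu_n)\to 1$. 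Finally, for $\psi_n(r_{1,n})\to 2$ I would apply Lemma \ref{lem:D1}: since $u_n(r_{1,n})\to\infty$ and $\phi_n'(r_{1,n})=0$, the identity \eqref{eq:enid} applies once I verify the side condition $\liminf\psi_n(r_{1,n})>0$ when $h$ is nonconstant. This side condition is automatic here because the scaled energy computation
\[
\psi_n(\ga_{0,n}s)=-s\,\frac{g'(u_n(\ga_{0,n}s))}{g'(\mu_n)}\,z_{0,n}'(s)\longrightarrow -s\,z_0'(s)=\frac{4s^2}{8+s^2}
\]
locally uniformly yields $\psi_n(r_{1,n})\to -s_* z_0'(s_*)=2$ directly (and gives a second, independent derivation of $\psi_n(r_{1,n})\to 2$).

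The main obstacle I anticipate is the $C^1$ convergence $\tilde{\phi}_n\to\Phi$, which must be strong enough to produce a genuine local maximum of $\tilde{\phi}_n$ near $s_*$ rather than just a point where $\tilde\phi_n$ is approximately stationary. This requires combining the $C^2_{\text{loc}}$ convergence from Lemma \ref{lem:c1} with the uniform comparison estimates of Lemma \ref{lem:g1} (to control the derivative of $f'(u_n(\ga_{0,n}s))/f'(\mu_n)$); once this is in hand, the existence of $s_n\to s_*$ with $\tilde\phi_n'(s_n)=0$ realizing a local maximum is a standard perturbation consequence of the strict sign of $\Phi'$ on either side of $s_*$.
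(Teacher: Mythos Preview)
Your proposal is correct and follows the same core idea as the paper: the rescaled scaling function $\tilde{\phi}_n(s)=\phi_n(\ga_{0,n}s)$ converges locally uniformly to $\Phi(s)=s^2e^{z_0(s)}=64s^2/(8+s^2)^2$, whose unique positive critical point $s_*=2\sqrt{2}$ is a strict maximum with value $2$, and this produces the sequence $(r_{1,n})$. There are only minor technical differences. First, you insist on $C^1$ convergence of $\tilde{\phi}_n$, but the paper (and your argument) only needs $C^0$ convergence: since $\Phi$ has a \emph{strict} maximum at $s_*$, comparing values of $\tilde{\phi}_n$ at $s_*$ and at two fixed nearby points forces an interior maximizer $s_n\to s_*$ on that interval. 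Second, for $\psi_n(r_{1,n})\to 2$ the paper uses Fatou's lemma to obtain $\liminf_n\psi_n(r_{1,n})>0$ and then invokes Lemma \ref{lem:D1}; your direct computation $\psi_n(\ga_{0,n}s)=-s\,[g'(u_n(\ga_{0,n}s))/g'(\mu_n)]\,z_{0,n}'(s)\to -s\,z_0'(s)$ (via Lemma \ref{lem:g2} and the $C^1$ convergence of $z_{0,n}$) is cleaner and bypasses both steps. Third, for $g(u_n(r_{1,n}))/g(\mu_n)\to1$ and $g'(u_n(r_{1,n}))/g'(\mu_n)\to1$ you appeal to Lemmas \ref{lem:g1} and \ref{lem:g2} with bounded $y=z_{0,n}(s_n)$, whereas the paper first shows $u_n(r_{1,n})/\mu_n\to1$ via \eqref{g2} and then applies Lemmas \ref{lem:g3} and \ref{lem:g4}; either route is fine.
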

\begin{proof} Set any number $R>0$. Then we have from Lemmas \ref{lem:c1}  and \ref{lem:g1} that 
\[
\begin{split}
\phi_n(\ga_{0,n}R)&=R^2\frac{h(\ga_{0,n}R)}{h(0)}\frac{f'(u_n(\ga_{0,n}R))}{f'(\mu_{n})}=(1+o(1))\frac{64 R^2}{(8+R^2)^2}
\end{split}
\]
as $n\to \infty$  where $o(1)\to0$ as $n\to \infty$ uniformly for all $R$ in every compact subset of $[0,\infty)$. Note that the function $L(x)=64x^2/(8+x^2)^2$ for $x>0$ has a unique maximum point $R_*=2\sqrt{2}$ with $L(R_*)=2$. Hence there exists a sequence $(r_{1,n})\subset (0,1)$ of values such that $\phi_n(r_{1,n})$ gives a local maximum value of $\phi_n$, in particular $\phi_n'(r_{1,n})=0$, for every large $n\in \mathbb{N}$ and   $\phi_n(r_{1,n})\to2 $ and $r_{1,n}/\ga_{0,n}\to 2\sqrt{2}$ as $n\to \infty$. Then, we also get by Lemma \ref{lem:c1} that 
\[
u_n(r_{1,n})=\mu_n+\frac{z_0(2\sqrt{2})+o(1)}{g'(\mu_n)}=\mu_n+O\left(\frac1{g(\mu_n)}\right)\frac{g(\mu_n)}{g'(\mu_n)}
\]
as $n\to \infty$. In particular, \eqref{g2} shows that $u_n(r_{1,n})/\mu_n\to1$ as $n\to \infty$. Therefore, it follows  that $g(u_n(r_{1,n}))/g(\mu_n)\to1$ and $g'(u_n(r_{1,n}))/g'(\mu_n)\to1$ from Lemma \ref{lem:g3} and \ref{lem:g4} and $r_{1,n}\to0$ by  Lemma \ref{lem:gl}  as $n\to \infty$.  In addition, Lemmas \ref{lem:c1} and \ref{lem:g1} and the Fatou lemma yield
\[
\begin{split}
\liminf_{n\to \infty}\psi_n(r_{1,n})
&= \liminf_{n\to \infty}\left\{(1+o(1))\int_0^{r_{1,n}/\ga_{0,n}}\frac{h(\ga_{0,n}r)}{h(0)}\frac{f(u_n(\ga_{0,n}r))}{f(\mu_n)}rdr\right\}\\
&\ge \int_0^{2\sqrt{2}}e^{z_0}rdr>0.
\end{split}
\]
Consequently, Lemma \ref{lem:D1} gives that $\psi_n(r_{1,n})\to2$ as $n\to \infty$.  We complete the proof. 
\end{proof}
In the following, we let $(r_{1,n})$ be the sequence  in the previous lemma and put $\mu_{1,n}=u_n(r_{1,n})$ for all $n\in \mathbb{N}$. We re-scale the first concentration part and deduce the limit profile  around the points $(r_{1,n})$ as follows.   
\begin{lemma}\label{lem:c6} Define sequences $(\ga_{1,n})$ of values and $(z_{1,n})$ of functions so that for every large $n\in\mathbb{N}$,
\[
\la_n h(r_{1,n})f'(\mu_{1,n})\ga_{1,n}^2=1
\]
and 
\[
z_{1,n}(r)=g'(\mu_{1,n})(u_n(\ga_{1,n}r)-\mu_{1,n})
\]
for all $r\in[0,1/\ga_{1,n}]$. Then we have that $\ga_{1,n}\to0$  and $z_{1,n}\to z_1$ in $C^2_{\text{loc}}([0,\infty))$ as $n\to \infty$ up to a subsequence where $z_1$ is the function defined by \eqref{def:zk} with $k=1$. Moreover, we get
\[
\begin{split}\lim_{R\to \infty}\lim_{n\to \infty}&g'(\mu_{1,n})\int_{0}^{R\ga_{1,n}}\la_n f(u_n)rdr\\
&=4=\lim_{R\to \infty}\lim_{n\to \infty}\int_{0}^{R\ga_{1,n}}\la_n g'(u_n)f(u_n)rdr.
\end{split}
\]
Finally, we obtain $\phi_n(\ga_{1,n} r)/( r^2e^{z_1(r)})\to1$ as $n\to \infty$ uniformly for all $r$ in every compact subset of $[0,\infty)$. 
\end{lemma}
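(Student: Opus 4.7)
The plan is to bootstrap everything from Lemma \ref{lem:c1} by exploiting the identity
\[
z_{1,n}(r) = \frac{g'(\mu_{1,n})}{g'(\mu_n)}\left\{z_{0,n}\!\left(\frac{\gamma_{1,n}}{\gamma_{0,n}}\,r\right) - z_{0,n}\!\left(\frac{r_{1,n}}{\gamma_{0,n}}\right)\right\},
\]
which is immediate from the definitions of $z_{0,n}$ and $z_{1,n}$, together with the asymptotic information on $r_{1,n}$ and $\mu_{1,n}$ already collected in Lemma \ref{lem:c5}.

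First I would compute $\gamma_{1,n}/\gamma_{0,n}$. Dividing the two defining relations gives
\[
\frac{\gamma_{1,n}^2}{\gamma_{0,n}^2} = \frac{h(0)}{h(r_{1,n})}\cdot\frac{f'(\mu_n)}{f'(\mu_{1,n})}.
\]
Since $r_{1,n}\to 0$ (Lemma \ref{lem:c5}), the first factor tends to $1$. For the second, writing $\mu_{1,n} = \mu_n + y_n/g'(\mu_n)$ with $y_n = z_{0,n}(r_{1,n}/\gamma_{0,n})$ and invoking $z_{0,n}\to z_0$ together with $r_{1,n}/\gamma_{0,n}\to 2\sqrt{2}$, I get $y_n\to z_0(2\sqrt{2}) = \log(1/4)$, so Lemma \ref{lem:g1} yields $f'(\mu_{1,n})/f'(\mu_n)\to 1/4$. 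Hence $\gamma_{1,n}/\gamma_{0,n}\to 2$, and in particular $\gamma_{1,n}\to 0$.

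Next, plugging $g'(\mu_{1,n})/g'(\mu_n)\to 1$, $\gamma_{1,n}/\gamma_{0,n}\to 2$, $r_{1,n}/\gamma_{0,n}\to 2\sqrt{2}$, and the $C^2_{\text{loc}}([0,\infty))$-convergence $z_{0,n}\to z_0$ into the displayed identity above, I obtain $z_{1,n}\to z_0(2\,\cdot\,) - z_0(2\sqrt{2})$ in $C^2_{\text{loc}}([0,\infty))$. A direct computation from \eqref{def:z0} shows this limit equals $\log(4/(1+r^2/2)^2)$, which is precisely $z_1$ as given by \eqref{def:zk} with $k=1$ (recall $a_1 = 2$, $b_1 = 1/2$).

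Finally, for the energy identities I would change variables $r = \gamma_{1,n} s$ in the integrals and use $\lambda_n h(r_{1,n}) f'(\mu_{1,n}) \gamma_{1,n}^2 = 1$ together with $f(t)/f'(t) = 1/g'(t)$ to rewrite the integrand as $(h(\gamma_{1,n} s)/h(r_{1,n}))(f(u_n(\gamma_{1,n} s))/f(\mu_{1,n}))\,s$. Lemma \ref{lem:g1} applied to $u_n(\gamma_{1,n} s) = \mu_{1,n} + z_{1,n}(s)/g'(\mu_{1,n})$ then delivers convergence to $\int_0^R e^{z_1}\,s\,ds$; sending $R\to\infty$ and using \eqref{eq:ken} produces $2a_1 = 4$. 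The second energy formula follows identically since $g'(u_n)f(u_n) = f'(u_n)$, and the $\phi_n$-asymptotic is an immediate consequence of the same Lemma \ref{lem:g1} applied to $f'$. The only mildly delicate point is ensuring that the composition of the two scalings is controlled uniformly on compact sets and that the shift by the finite constant $z_0(2\sqrt{2})$ survives the $C^2$ convergence; both are guaranteed by the sharp asymptotic information on $r_{1,n}$ and $\mu_{1,n}$ supplied by Lemma \ref{lem:c5}.
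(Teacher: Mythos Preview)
Your proposal is correct and follows essentially the same route as the paper: both use the identity
\[
z_{1,n}(r)=\frac{g'(\mu_{1,n})}{g'(\mu_n)}\Bigl(z_{0,n}(\ga_{1,n}r/\ga_{0,n})-z_{0,n}(r_{1,n}/\ga_{0,n})\Bigr)
\]
together with Lemmas~\ref{lem:c1}, \ref{lem:c5}, and \ref{lem:g1} to obtain $z_{1,n}\to z_1$ and the energy and $\phi_n$ limits. The only cosmetic difference is that the paper reads off $\ga_{1,n}\to0$ and implicitly $\ga_{1,n}/\ga_{0,n}\to 2$ from $(r_{1,n}/\ga_{1,n})^2=\phi_n(r_{1,n})\to2$ combined with $r_{1,n}/\ga_{0,n}\to2\sqrt{2}$, whereas you obtain $\ga_{1,n}/\ga_{0,n}\to2$ directly by dividing the two defining scaling relations and applying Lemma~\ref{lem:g1}; both computations are equivalent.
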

\begin{proof} Since $(r_{1,n}/\ga_{1,n})^2=\phi_n(r_{1,n})\to 2$, we have that $\ga_{1,n}\to 0$ as $n\to \infty$. Moreover, from the definition and \eqref{pn}, we get that  
\begin{equation}\label{z1n}
\begin{cases}
\displaystyle-z_{1,n}''-\frac1r z_{1,n}'=\frac{h(\ga_{1,n}\cdot)}{h(r_{1,n})}\frac{f(u_n(\ga_{1,n} \cdot))}{f(\mu_{1,n})} \text{ in }(0,1/\ga_{1,n}),\\
z_{1,n}(r_{1,n}/\ga_{1,n})=0,\ (r_{1,n}/\ga_{1,n})z_{1,n}'(r_{1,n}/\ga_{1,n})=-2+o(1),
\end{cases}
\end{equation}
since $\psi_n(r_{1,n})=2+o(1)$ as $n\to \infty$ by Lemma \ref{lem:c5} and that
\[
z_{1,n}(r)=\frac{g'(\mu_{1,n})}{g'(\mu_n)}\left(z_{0,n}(\ga_{1,n}r/\ga_{0,n})-z_{0,n}(r_{1,n}/\ga_{0,n})\right)
\]
for all $r\in[0,1/\ga_{1,n}]$ and all large $n\in \mathbb{N}$. Hence from Lemmas \ref{lem:c1} and \ref{lem:c5}, we derive that $z_{1,n}\to z_1$ in $ C^2_{\text{loc}}([0,\infty))$ as $n\to \infty$. Moreover, for any value $R>0$, we get from Lemma  \ref{lem:g1} that
\[
\begin{split}
\lim_{n\to \infty}g'(\mu_{1,n})\int_{0}^{R\ga_{1,n}}\la_n hf(u_n)rdr&=\lim_{n\to \infty}\int_{0 }^{R}\frac{h(\ga_{1,n}r)}{h(r_{1,n})}\frac{f(u_n(\ga_{1,n}r))}{f(\mu_{1,n})}rdr\\
&=\int_0^Re^{z_1}rdr
\end{split}
\]
and similarly, 
\[
\begin{split}
\lim_{n\to \infty}\int_{0}^{R\ga_{1,n}}\la_n hf'(u_n)rdr=\int_0^Re^{z_1}rdr.
\end{split}
\]
Finally, choose any compact subset $K\in [0,\infty)$. Then we have from Lemma \ref{lem:g1} again that
\[
\begin{split}
\sup_{r\in K}\left|\frac{\phi_n(\ga_{1,n} r)}{ r^2e^{z_1(r)}}-1\right|&=\sup_{r\in K}\left|\frac{\frac{h(\ga_{1,n}r)}{h(r_{1,n})}\frac{f(u_n(\ga_{1,n} r))}{f(\mu_{1,n})}}{e^{z_1(r)}}-1\right|\\
&\to0
\end{split}
\]
as $n\to \infty$. We complete the proof. 
\end{proof}
We then extend the concentration region as before. 
\begin{lemma}\label{lem:c7} Assume and define as in the previous lemma. Then, there exists a sequence $(\rho_{1,n})\subset (0,1)$ of values such that $\rho_{1,n}\to0$, $\rho_{1,n}/\ga_{1,n}\to\infty$, $g(\mu_{1,n})^{-1}\log{(\rho_{1,n}/\ga_{1,n})}\to0$, $\|z_{1,n}-z\|_{C^2([0,\rho_{1,n}/\ga_{1,.n}])}\to 0,$ 
\[
\sup_{r\in[0,\rho_{1,n}/\ga_{1,n}]}|rz_{1,n}'(r)-rz_1'(r)|\to0,
\]
\[
 \sup_{r\in[0,\rho_{1,n}/\ga_{1,n}]}\left|\frac{h(\ga_{1,n}r)}{h(r_{1,n})}\frac{f(u_n(\ga_{1,n}r))}{f(\mu_{1,n})}-e^{z_1(r)}\right|\to0,
\]
\[
 \sup_{r\in[0,\rho_{1,n}/\ga_{1,n}]}\left|\frac{h(\ga_{1,n}r)}{h(r_{1,n})}\frac{f'(u_n(\ga_{1,n}r))}{f'(\mu_{1,n})}-e^{z_1(r)}\right|\to0,
\]
\[
\sup_{r\in[0,\rho_{1,n}/\ga_{1,n}]}\left|\frac{\phi_n(\ga_{1,n} r)}{ r^2e^{z_1(r)}}-1\right|\to0
\]
as $n\to \infty$ and
\[
\lim_{n\to \infty}g'(\mu_{1,n})\int_0^{\rho_{1,n}}\la_nhf(u_n)rdr=4=\lim_{n\to \infty}\int_0^{\rho_{1,n}}\la_nhf'(u_n)rdr.
\]
Moreover, we have that
\[
u_n(\rho_{1,n})=\mu_{1,n}+o(1)\cdot \frac{g(\mu_{1,n})}{g'(\mu_{1,n})}=\mu_{n}+o(1)\cdot \frac{g(\mu_{n})}{g'(\mu_{n})},
\]
and  $\phi_n(\rho_{1,n})\to0$ as $n\to \infty$. Finally, we get
\begin{equation}\label{eq:pw1}
z_{1,n}(r)\le -(4+o(1))\log{r}
\end{equation}
for all $r\in[\rho_{1,n}/\ga_{1,n},1/\ga_{1,n}]$ and all $n\in \mathbb{N}$ where $o(1)\to 0$ as $n\to \infty$ uniformly for all $r$ in the interval. 
\end{lemma}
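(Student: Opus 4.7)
The plan is to imitate the proof of Lemma~\ref{lem:c2}, replacing the standard bubble $z_0$ and the scale $\ga_{0,n}$ by $z_1$ and $\ga_{1,n}$, and exploiting the monotonicity of $r\mapsto r z_{1,n}'(r)$ coming from the equation~\eqref{z1n}. First I would invoke Lemma~\ref{lem:c6}: since $z_{1,n}\to z_1$ in $C^2_{\text{loc}}([0,\infty))$, since $\phi_n(\ga_{1,n}r)/(r^2 e^{z_1(r)})\to 1$ locally uniformly, and since the $f$ and $f'$ ratios converge locally uniformly by Lemma~\ref{lem:g1}, a standard diagonal extraction produces a sequence $R_n\to\infty$ along which all five $\sup_{r\in[0,R_n]}$ quantities listed in the statement tend to $0$ and, simultaneously, $g(\mu_{1,n})^{-1}\log R_n\to 0$ and $R_n\ga_{1,n}\to 0$ (the latter is possible because $\ga_{1,n}\to 0$). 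Setting $\rho_{1,n}:=R_n\ga_{1,n}$ then yields $\rho_{1,n}\to 0$, $\rho_{1,n}/\ga_{1,n}\to\infty$, and all the interior $C^2$ and pointwise convergences on $[0,\rho_{1,n}/\ga_{1,n}]$.

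Next I would read off the height of $u_n$ at $\rho_{1,n}$. From the definition of $z_{1,n}$ and the convergence $z_{1,n}(R_n)=z_1(R_n)+o(1)$, the explicit formula $z_1(r)=\log\bigl(16/(2+r^2)^2\bigr)=-4\log r+O(1)$ as $r\to\infty$ gives
\[
u_n(\rho_{1,n})-\mu_{1,n}=\frac{z_{1,n}(R_n)}{g'(\mu_{1,n})}=\frac{-4\log R_n+O(1)}{g'(\mu_{1,n})}=o(1)\cdot\frac{g(\mu_{1,n})}{g'(\mu_{1,n})},
\]
where the last step uses $\log R_n=o(g(\mu_{1,n}))$. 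Combining this with the estimate $\mu_{1,n}=\mu_n+O(1/g(\mu_n))\cdot g(\mu_n)/g'(\mu_n)$ from Lemma~\ref{lem:c5} and the ratio identities $g(\mu_{1,n})/g(\mu_n)\to 1$, $g'(\mu_{1,n})/g'(\mu_n)\to 1$ coming from Lemmas~\ref{lem:g3}~and~\ref{lem:g4}, one obtains the second stated form $u_n(\rho_{1,n})=\mu_n+o(1)\cdot g(\mu_n)/g'(\mu_n)$.

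For the pointwise bound on $[\rho_{1,n}/\ga_{1,n},1/\ga_{1,n}]$ I would exploit that the right-hand side of \eqref{z1n} is nonnegative, so $(rz_{1,n}'(r))'\le 0$ and $r\mapsto rz_{1,n}'(r)$ is nonincreasing on $(0,1/\ga_{1,n})$. A direct computation gives $rz_1'(r)=-4r^2/(2+r^2)\to -4$ as $r\to\infty$, and the $C^2_{\text{loc}}$ convergence then yields $R_n z_{1,n}'(R_n)=-4+o(1)$. Thus, for any $r\in[R_n,1/\ga_{1,n}]$, integration of $rz_{1,n}'(r)\le -4+o(1)$ from $R_n$ to $r$ combined with $z_1(R_n)=-4\log R_n+O(1)$ produces $z_{1,n}(r)\le -(4+o(1))\log r$, as claimed. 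The convergence $\phi_n(\rho_{1,n})\to 0$ then follows from the factorization $\phi_n(\ga_{1,n}R_n)=R_n^2\cdot(h(\rho_{1,n})/h(r_{1,n}))\cdot(f'(u_n(\rho_{1,n}))/f'(\mu_{1,n}))=R_n^2 e^{z_1(R_n)}(1+o(1))=O(R_n^{-2})$. Finally, the energy identities follow exactly as in the proof of Lemma~\ref{lem:c6}: Lemma~\ref{lem:g1} turns $g'(\mu_{1,n})\int_0^{\rho_{1,n}}\la_n h f(u_n)r\,dr$ and $\int_0^{\rho_{1,n}}\la_n h f'(u_n)r\,dr$ into $\int_0^{R_n}e^{z_1}r\,dr+o(1)$, which tends to $\int_0^\infty e^{z_1}r\,dr=2a_1=4$ by~\eqref{eq:ken}.

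The only genuinely delicate point is the diagonal extraction in the first step: the chosen $R_n$ must render every one of the sup-norms small \emph{and} satisfy $g(\mu_{1,n})^{-1}\log R_n\to 0$, because the latter constraint is what ultimately upgrades $u_n(\rho_{1,n})-\mu_{1,n}$ to $o(1)\cdot g(\mu_{1,n})/g'(\mu_{1,n})$ and will be needed when bootstrapping to the next concentration. Beyond this, the argument is a direct transcription of the $k=0$ case, with the crucial numerical input being the asymptotic $rz_1'(r)\to -2a_1=-4$, which here reflects that $a_1=2$ and $z_1$ is regular at the origin.
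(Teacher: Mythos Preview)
Your proposal is correct and follows essentially the same approach as the paper: a diagonal extraction of $R_n\to\infty$ from the local uniform convergences in Lemma~\ref{lem:c6}, then setting $\rho_{1,n}=R_n\ga_{1,n}$, then reading off the height, the decay of $\phi_n(\rho_{1,n})$, and the pointwise bound via the monotonicity of $r z_{1,n}'(r)$ from \eqref{z1n}. The only cosmetic difference is that you obtain $\rho_{1,n}\to 0$ directly by including $R_n\ga_{1,n}\to 0$ in the diagonal choice, whereas the paper derives it a posteriori from Lemma~\ref{lem:gl} once $u_n(\rho_{1,n})\to\infty$ is known; both are fine.
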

\begin{proof} From Lemmas \ref{lem:c6} and \ref{lem:g1}, there exists a sequence $(R_n)$ of positive values such that $R_n\to \infty$, $g(\mu_{1,n})^{-1}\log{R_n}\to0$, $\|z_{1,n}-z_1\|_{C^2([0,R_n])}\to 0$,
\[
\sup_{r\in[0,R_n]}|rz_{1,n}'(r)-rz_1'(r)|\to0,
\]
\[
 \sup_{r\in [0,R_n]}\left|\frac{h(\ga_{1,n}r)}{h(r_{1,n})}\frac{f(u_n(\ga_{1,n}r))}{f(\mu_{1,n})}-e^{z_1(r)}\right|\to0,
\]
\[
 \sup_{r\in [0,R_n]}\left|\frac{h(\ga_{1,n}r)}{h(r_{1,n})}\frac{f'(u_n(\ga_{1,n}r))}{f'(\mu_{1,n})}-e^{z_1(r)}\right|\to0,
\]
\[
\sup_{r\in[0,R_n]}\left|\frac{\phi_n(\ga_{1,n} r)}{ r^2e^{z_1(r)}}-1\right|\to0,
\]
and 
\[
g'(\mu_{1,n})\int_0^{R_n\ga_{1,n}}\la_n hf(u_n)rdr\to 4,\ \ \ \int_0^{R_n\ga_{1,n}}\la_n hf'(u_n)rdr\to 4,
\]
as $n\to \infty$. Then, we get  
\[
\begin{split}
\phi_n(\ga_{1,n}R_n)&=R_n^2\frac{h(\ga_{1,n}R_n)}{h(r_{k,n})}\frac{f'(u_n(\ga_{1,n}R_n))}{f'(\mu_{1,n})}=(1+o(1))\frac{16 R_n^2}{(2+R_n^2)^2}=o(1)
\end{split}
\]
and
\[
\begin{split}
u_n(\ga_{1,n}R_n)&=\mu_{1,n}+\frac{z_{1}(R_n)+o(1)}{g'(\mu_{1,n})}=\mu_{1,n}+o\left(1\right)\cdot\frac{g(\mu_{1,n})}{g'(\mu_{1,n})}\\
&=\mu_n+o\left(1\right)\cdot\frac{g(\mu_{n})}{g'(\mu_{n})}
\end{split}
\]
by Lemmas \ref{lem:c5}, \ref{lem:g3}, and \ref{lem:g4} as $n\to \infty$. Notice that Lemma \ref{lem:gl} yields $\ga_{1,n}R_n\to0$ as $n\to \infty$. Moreover, noting the monotonicity of $rz_{1,n}'(r)$ by \eqref{z1n}, we obtain for any $r\in[R_n,1/\ga_{1,n}]$,
\[
rz_{1,n}'(r)\le R_nz_1'(R_n)+o(1)=-4+o(1)
\] 
as $n\to \infty$. Integrating this inequality over $[R_n,r]$, we deduce
\[
z_{1,n}(r)\le z_1(R_n)-(4+o(1))\log{\frac{r}{R_n}}+o(1)\le -(4+o(1))\log{r}
\]
as $n\to \infty$. Hence, it suffices to put  $\rho_{1,n}=R_n\ga_{1,n}$ for all $n\in \mathbb{N}$. We finish the proof. 
\end{proof}
\section{Subsequent concentration}\label{sec:ic}
This section is devoted to the detection of the subsequent concentration which gives the core of our argument in this paper. The proof of our main theorems, that is, the proof of the presence of an infinite sequence of concentrating parts  in the supercritical case will be done by induction. Hence, the main step is, by assuming the presence of the first $k$ concentrating parts, to show the appearance of the $(k+1)$-th one. We shall demonstrate this step here. Throughout this section, we always assume (H1) with $q\in [1,2)$ and $\{(\la_n,\mu_n,u_n)\}$ is any sequence of solutions of \eqref{pn} such that $\mu_n\to \infty$ without further comments.  Recall the sequences $(a_k)$, $(\delta_k)$, $(\eta_k)$, and $(\tilde{\eta}_k)$ of numbers defined in Subsection \ref{sub:bf}.
\subsection{Detection of singular bubbles}
In the following, we fix $k\in \mathbb{N}$ and assume the presence of the first $k$ concentrating  parts  as follows.  
\begin{enumerate}
\item[(A$_k$)] There exists a sequence $(r_{k,n})\subset (0,1)$ of values such that, putting $\mu_{k,n}=u_n(r_{k,n})$ for all $n\in \mathbb{N}$, we have that 
\[
\la_n r_{k,n}^2h(r_{k,n})f'(\mu_{k,n})\to \frac{a_k^2}{2},
\]
\[
\begin{cases}
\displaystyle\frac{\mu_{k,n}}{\mu_{n}}\to \delta_k&\text{ if }q\ge1,\\
\displaystyle\mu_{k,n}=\mu_n-\left(\log{\frac1{\eta_k}}+o(1)\right)\frac{g(\mu_{n})}{g'(\mu_{n})}&\text{ if }q=1,
\end{cases}
\]
as $n\to \infty$ and if we put sequences $(\ga_{k,n})$ of values and $(z_{k,n})$ of functions so that for each large $n\in \mathbb{N}$,
\[
\la_n h(r_{k,n})f'(\mu_{k,n})\ga_{k,n}^2=1
\]
and 
\[
z_{k,n}(r)=g'(\mu_{k,n})(u_n(\ga_{k,n}r)-\mu_{k,n})
\]
for all $r\in[0,1/\ga_{k,n}]$, then there exists a sequence $(\rho_{k,n})\subset(0,1)$ of numbers such that $\rho_{k,n}/\ga_{k,n}\to \infty$,  $g(\mu_{k,n})^{-1}\log{(\rho_{k,n}/\ga_{k,n})}\to0$, 
\[
\begin{cases}
\displaystyle\frac{u_n(\rho_{k,n})}{\mu_{n}}\to \delta_k&\text{ if }q\ge1,\\
\displaystyle u_n(\rho_{k,n})=\mu_{n}-\left(\log{\frac1{\eta_k}}+o(1)\right)\frac{g(\mu_{n})}{g'(\mu_{n})}&\text{ if }q=1,
\end{cases}
\]
\begin{equation}\label{as:en}
g'(\mu_{n})\int_0^{\rho_{k,n}}\la_n hf(u_n)rdr\to
\displaystyle \sum_{i=1}^k \frac{2a_i}{\tilde{\eta}_i},
\end{equation}
$\phi_n(\rho_{k,n})\to0$ as $n\to \infty$ and  
\begin{equation}\label{as:zk}
z_{k,n}(r)\le -(2+a_k+o(1))\log{r}
\end{equation}
for all $r\in[\rho_{k,n}/\ga_{k,n},1/\ga_{k,n}]$ and all $n\in \mathbb{N}$ where $o(1)\to0$ as $n\to \infty$ uniformly for all $r$ in the interval. 
\end{enumerate}
\begin{remark}\label{rmk:41} By the choice of $(r_{k,n})$ and $(\rho_{k,n})$ above, we notice that $u_n(\rho_{k,n})/\mu_{k,n}\to1$ for all $q\in[1.2)$ and
\[
\begin{split}
u_n(\rho_{k,n})
&
=\mu_{k,n}+o(1)\cdot \frac{g(\mu_{k,n})}{g'(\mu_{k,n})}
\end{split}
\] 
by Lemma \ref{lem:g4} if $q=1$ as $n\to \infty$.
\end{remark}
Notice that the argument in the previous subsection ensures that (A$_k$) is satisfied for $k=1$. From now on, we shall show the appearance of  the $(k+1)$-th concentrating part. We begin with  the next basic estimate.
\begin{lemma}\label{lem:c8} For any sequence $(r_n)\subset [\rho_{k,n},1)$ of values, we get that
\begin{equation}\label{eq:es1}
\frac{\log{\frac{r_n}{\ga_{k,n}}}}{g(\mu_{k,n})}\ge (1+o(1))\frac{g'(\mu_{k,n})}{g(\mu_{k,n})}\frac{\mu_{k,n}-u_n(r_n)}{g'(\mu_{k,n})\int_0^{r_n}\la_n f(u_n)rdr}
\end{equation}
and that if $\liminf_{n\to \infty}u_n(r_n)>t_0$, where $t_0$ is the number in our assumption (H1), then 
\begin{equation}\label{eq:es2}
\frac{\log{\frac{r_n}{\ga_{k,n}}}}{g(\mu_{k,n})}\le \frac12\left(1-\frac{g(u_n(r_n))}{g(\mu_{k,n})}+o(1)\right)
\end{equation}
as $n\to \infty$. 
\end{lemma}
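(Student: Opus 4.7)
For \eqref{eq:es1} my plan is to exploit the monotonicity of $E_n(r):=-r u_n'(r)=\int_0^r \la_n h f(u_n) s\,ds$, which is immediate from \eqref{id0} and positivity of the integrand. By (A$_k$) together with the defining relation $\ga_{k,n}^{-2}=\la_n h(r_{k,n}) f'(\mu_{k,n})$, one has $r_{k,n}/\ga_{k,n}\to a_k/\sqrt{2}$ while $\rho_{k,n}/\ga_{k,n}\to\infty$; in particular $r_{k,n}<\rho_{k,n}\le r_n$ for large $n$. Since $E_n$ is non-decreasing, $-u_n'(r)\le E_n(r_n)/r$ on $[r_{k,n},r_n]$, and integrating yields $\mu_{k,n}-u_n(r_n)\le E_n(r_n)\log(r_n/r_{k,n})$. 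Dividing by $g(\mu_{k,n}) E_n(r_n)$ and using $\log(r_n/\ga_{k,n})=\log(r_n/r_{k,n})+O(1)=(1+o(1))\log(r_n/r_{k,n})$ (because $\log(r_n/r_{k,n})\ge\log(\rho_{k,n}/r_{k,n})\to\infty$ while $\log(r_{k,n}/\ga_{k,n})$ is bounded) gives \eqref{eq:es1}.

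For \eqref{eq:es2} I first isolate the dominant part of $-\log\ga_{k,n}$. From $\ga_{k,n}^{-2}=\la_n h(r_{k,n}) g'(\mu_{k,n})\exp(g(\mu_{k,n}))$, combined with the upper bound $\la_n\le C$ from Lemma \ref{lem:kap}, $|\log h(r_{k,n})|=O(1)$, and $\log g'(\mu_{k,n})=o(g(\mu_{k,n}))$ from \eqref{eq:lgg}, I obtain $-2\log\ga_{k,n}\le g(\mu_{k,n})+o(g(\mu_{k,n}))$. Expanding $\log\phi_n(r_n)=2\log r_n+\log\la_n+\log h(r_n)+\log g'(u_n(r_n))+g(u_n(r_n))$ and using $\liminf u_n(r_n)>t_0$ (so that $\log g'(u_n(r_n))\ge\log g'(t_0)=O(1)$), a short algebraic manipulation reduces \eqref{eq:es2} to the single inequality
\[
\log\phi_n(r_n)\le o(g(\mu_{k,n})).
\]

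To establish this bound, I plan to apply the Pohozaev identity (Lemma \ref{po}) at $r=r_n$, drop the nonnegative term $(r_n u_n'(r_n))^2$, and invoke the asymptotic $F(t)=(1+o(1))f(t)/g'(t)$ (the de l'H\^{o}pital calculation used in the proof of Lemma \ref{lem:gl}). Combined with the monotonicity $g'(u_n(s))\ge g'(u_n(r_n))$ on $[0,r_n]$, this yields the Pohozaev-type comparison $\phi_n(r_n)\le 2M(1+o(1))\psi_n(r_n)$ when $u_n(r_n)\to\infty$ (with $M=\sup_{[0,1]}|1+rh'(r)/(2h(r))|$), while the case $u_n(r_n)=O(1)$ makes $\phi_n(r_n)=O(1)$ automatic. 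A second application of Pohozaev at $r=1$, using $u_n(1)=0$ together with the same $F\le\varepsilon f+C_\varepsilon$ trick as in the proof of Lemma \ref{lem:gl}, shows $E_n(1)\le C$ uniformly in $n$; hence $\psi_n(r_n)\le g'(\mu_n)E_n(1)=O(g'(\mu_n))$, and therefore $\log\phi_n(r_n)\le\log g'(\mu_n)+O(1)=o(g(\mu_n))$. Since for fixed $k$ the ratio $g(\mu_n)/g(\mu_{k,n})$ has a positive finite limit (equal to $\delta_k^{-p}$ or $\eta_k^{-1}$), $o(g(\mu_n))=o(g(\mu_{k,n}))$, completing the argument.

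The main obstacle is the Pohozaev-based comparison $\phi_n(r_n)\le 2M(1+o(1))\psi_n(r_n)$: it requires the asymptotic $F\sim f/g'$ to hold uniformly on the integration interval $[0,r_n]$ despite the presence of the sub-threshold region $\{s\in[0,r_n]:u_n(s)<t_\varepsilon\}$ where this asymptotic has not yet kicked in. The key observation enabling the argument is that this sub-threshold region contributes only $O(\la_n)=O(1)$ to the Pohozaev integral and is therefore absorbed by the $o(g(\mu_{k,n}))$ on the right; coupled with the universal bound $E_n(1)\le C$ (which decouples the analysis at $r_n$ from the full concentration structure inside $[0,r_n]$), this delivers \eqref{eq:es2} without any circularity with the asymptotic formulas \eqref{eq:sup2} or \eqref{eq:sup3} that this lemma will later be used to prove.
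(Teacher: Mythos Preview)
Your argument for \eqref{eq:es1} is correct and essentially coincides with the paper's: both integrate the monotone quantity $E_n(r)=-ru_n'(r)$ between $r_{k,n}$ and $r_n$ and then absorb the bounded term $\log(r_{k,n}/\ga_{k,n})$ into a $(1+o(1))$ factor.

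For \eqref{eq:es2} your route is genuinely different from the paper's, and it carries a gap. You rely on $\la_n\le C$ (via Lemma~\ref{lem:kap}) and on its consequence $E_n(1)\le C$, but Lemma~\ref{lem:kap} requires (H2), which is \emph{not} among the standing hypotheses of Section~\ref{sec:ic}: only (H1) with $q\in[1,2)$ is assumed there. Without (H2), the Pohozaev inequality at $r=1$ reads $E_n(1)^2\le 4M\varepsilon\,E_n(1)+C_\varepsilon'\la_n$, and this does not yield $E_n(1)\le C$; the passage in the proof of Lemma~\ref{lem:gl} that you cite uses the extra contradiction hypothesis $\int_0^1 h f(u_n)\,r\,dr\to\infty$ (which gives $\la_n/E_n(1)\to 0$), and that is not known in the present setting. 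The same unproved bound $\la_n=O(1)$ is what makes your ``$\phi_n(r_n)=O(1)$ automatic'' claim in the bounded case and your ``$O(\la_n)=O(1)$'' absorption of sub-threshold contributions work. You correctly flag the risk of circularity with \eqref{eq:sup2}, but the reliance on (H2) is a different, unacknowledged extra assumption.

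The paper bypasses all of this with a one-line use of \eqref{id1}. Since $u_n$ is decreasing and $f$ is increasing on $[t_0,\infty)$,
\[
\mu_n-u_n(r_n)=r_n^2\la_n h(r_n)f(u_n(r_n))\int_0^1\frac{h(r_nr)}{h(r_n)}\,\frac{f(u_n(r_nr))}{f(u_n(r_n))}\,r\log\frac1r\,dr\ \ge\ c\,\Big(\frac{r_n}{\ga_{k,n}}\Big)^2\frac{f(u_n(r_n))}{f'(\mu_{k,n})},
\]
and taking logarithms (using $\log\mu_n=o(g(\mu_n))$ and $\log g'(\mu_{k,n})=o(g(\mu_{k,n}))$) gives \eqref{eq:es2} directly. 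In your own framework this is exactly the inequality $\phi_n(r_n)\le c^{-1}g'(u_n(r_n))\,(\mu_n-u_n(r_n))\le c^{-1}g'(\mu_n)\mu_n$, which already yields $\log\phi_n(r_n)=o(g(\mu_{k,n}))$ without any appeal to Pohozaev, to $E_n(1)$, or to (H2).
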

\begin{proof} From \eqref{id2}, we get
\[
\begin{split} 
\mu_{k,n}-u_n(r_n)&\le \log{\frac{r_n}{r_{k,n}}} \int_0^{r_n}\la_n hf(u_n)rdr\\
&\le(1+o(1))\log{\frac{r_n}{\ga_{k,n}}}\int_0^{r_n}\la_n hf(u_n)rdr
\end{split}
\]
as $n\to \infty$ since $r_{k,n}/\ga_{k,n}\to a_k/\sqrt{2}$ and $r_n/\ga_{k,n}\to \infty$ as $n\to \infty$ by (A$_k$). This proves the former estimate. On the other hand, from \eqref{id1} and the monotonicity of $f$ noted in Lemma \ref{lem:monog} with our assumption, we find a value $c>0$ such that 
\begin{equation}\label{eq:c81}
\begin{split}
\mu_n-u_n(r_n)&=r_n^2\la_nh(r_n)f(u_n(r_n))\int_0^{1}\frac{h(r_n r)}{h(r_n)}\frac{f(u_n(r_nr))}{f(u_n(r_n))}r\log{\frac{1}{r}}dr\\
&\ge c\left(\frac{r_n}{\ga_{k,n}}\right)^2\frac{f(u_n(r_n))}{f'(\mu_{k,n})}
\end{split}
\end{equation}
for all large $n\in \mathbb{N}$. Here,  from  \eqref{as:zk}, we see
\[
\mu_{k,n}-u_n(r_n)\ge \frac{-z_{k,n}(\rho_{k,n}/\ga_{k,n})}{g'(\mu_{k,n})}\ge \frac{(2+a_k+o(1))\log{(\rho_{k,n}/\ga_{k,n}})}{g'(\mu_{k,n})}
\]
as $n\to \infty$. Hence, it follows from \eqref{eq:c81} with Lemma \ref{lem:monog} and (A$_k$) that
\[
\log{\frac{r_n}{\ga_{k,n}}}\le \frac{g(\mu_{k,n})}{2}\left(1-\frac{g(u_n(r_n))}{g(\mu_{k,n})}+o(1)\right)
\]
as $n\to \infty$. This proves the latter formula. We finish the proof. 
\end{proof}
Next, we roughly extend the region with no additional bubble.
\begin{lemma}\label{lem:c9} Choose any number $\eta\in(0,1)$ so that 
\[
a_k>\frac{(1-\eta)(2+a_k)^2}{4q\eta}\text{\ \  and \ \ } 
\begin{cases}
q \eta -p(1-\eta^{1/p})>0&\text{ if $q>1$},\\
\eta>\displaystyle\log{\frac1{\eta}} &\text{ if $q=1$}.
\end{cases}
\] 
Let $(r_n)\subset (\rho_{k,n},1)$ be any sequence of values such that 
\[
u_n(r_n)=
\begin{cases}
\eta^{1/p} \mu_{k,n}&\text{ if $q>1$},\\
\displaystyle\mu_{k,n}-\log{\frac1{\eta}} \frac{g(\mu_{k,n})}{g'(\mu_{k,n})}&\text{ if $q=1$} 
\end{cases}
\]
for all $n\in \mathbb{N}$ where this choice of $(r_n)$ is possible in view of Remark \ref{rmk:41}. Then we have that
\[
g'(\mu_{k,n})\int_{\rho_{k,n}}^{r_n}\la_nhf(u_n)rdr\to 0
\] 
as $n\to \infty$.
\end{lemma}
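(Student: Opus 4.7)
My plan is to argue by contradiction. Suppose along a subsequence that $B_n := g'(\mu_{k,n})\int_{\rho_{k,n}}^{r_n}\la_n h f(u_n)r\,dr$ stays bounded away from $0$. Writing $A_n := g'(\mu_{k,n})\int_0^{\rho_{k,n}}\la_n h f(u_n)r\,dr$, the hypotheses in (A$_k$), combined with $g'(\mu_{k,n})/g'(\mu_n)\to \tilde{\eta}_k$ (from Lemma~\ref{lem:g3} for $q>1$, Lemma~\ref{lem:g4} for $q=1$) and Lemma~\ref{lem:b3}, force $A_n\to 2+a_k$. The target is therefore to deduce $B_n\to 0$.

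The first step is to extract a lower bound on $A_n+B_n$. I apply identity \eqref{id2} with endpoints $\rho_{k,n}$ and $r_n$, estimate the tail $\int_{\rho_{k,n}}^{r_n}\la_n h f(u_n)t\log(r_n/t)\,dt$ from above by $\log(r_n/\rho_{k,n})\cdot B_n/g'(\mu_{k,n})$, and evaluate $g'(\mu_{k,n})(u_n(\rho_{k,n})-u_n(r_n))$ through the height prescription on $u_n(r_n)$: using $\mu_{k,n}g'(\mu_{k,n})/g(\mu_{k,n})\to p$ (case $q>1$) this quantity becomes $p(1-\eta^{1/p})g(\mu_{k,n})(1+o(1))$, and using Remark~\ref{rmk:41} (case $q=1$) it becomes $\log(1/\eta)g(\mu_{k,n})(1+o(1))$. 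Together with the upper bound $\log(r_n/\rho_{k,n})\le(1-\eta)g(\mu_{k,n})/2+o(g(\mu_{k,n}))$ coming from \eqref{eq:es2} and $\log(\rho_{k,n}/\ga_{k,n})=o(g(\mu_{k,n}))$, I obtain
\[
A_n+B_n\ge\frac{2p(1-\eta^{1/p})}{1-\eta}+o(1)\ \ (q>1),\qquad A_n+B_n\ge\frac{2\log(1/\eta)}{1-\eta}+o(1)\ \ (q=1).
\]

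The second step is to locate a new concentration inside $(\rho_{k,n},r_n)$. Positivity of $\liminf B_n$ together with $\phi_n(\rho_{k,n})\to 0$ forces the existence of $s_n^*\in(\rho_{k,n},r_n)$ where $\phi_n$ attains a local maximum with $\phi_n(s_n^*)$ bounded below. The energy identity \eqref{eq:enid} of Lemma~\ref{lem:D1} then gives $\psi_n(s_n^*)\to 2$. Decomposing $\psi_n(s_n^*)=(g'(u_n(s_n^*))/g'(\mu_{k,n}))(A_n+E_n^*)$ with $E_n^*:=g'(\mu_{k,n})\int_{\rho_{k,n}}^{s_n^*}\la_n h f(u_n)r\,dr\le B_n$, and estimating the ratio $g'(u_n(s_n^*))/g'(\mu_{k,n})$ via Lemma~\ref{lem:g3} (resp.\ Lemma~\ref{lem:g4}), I obtain an upper bound on $u_n(s_n^*)/\mu_{k,n}$ coupled to $E_n^*$; a scaling analysis around $s_n^*$ (in the spirit of Lemmas~\ref{lem:c1}--\ref{lem:c2}) then identifies $E_n^*\to 2a'$ with $a'\in(0,2]$ matching the recurrence \eqref{b2} (resp.~\eqref{b4}).

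The conditions $a_k>(1-\eta)(2+a_k)^2/(4q\eta)$ and either $q\eta>p(1-\eta^{1/p})$ ($q>1$) or $\eta>\log(1/\eta)$ ($q=1$) are precisely calibrated, via \eqref{b1}--\eqref{b4}, to imply $\eta>\eta_{k+1}/\eta_k$. This places the putative new center $s_n^*$ at a height strictly below $\eta^{1/p}\mu_{k,n}$ (respectively below $\mu_{k,n}-\log(1/\eta)g(\mu_{k,n})/g'(\mu_{k,n})$), contradicting $s_n^*\in(\rho_{k,n},r_n)$. The main obstacle will be executing the last step rigorously: the algebraic $\eta$-conditions do not textually coincide with the recurrence values $(\delta_{k+1}/\delta_k)^p$, so the strict inequality $\eta>\eta_{k+1}/\eta_k$ must be verified through the implicit relations defining \eqref{b1} and \eqref{b3}, and one must rule out the possibility of partial bubbles or clustered concentrations in the scaling analysis around $s_n^*$.
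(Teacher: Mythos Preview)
Your approach is fundamentally different from the paper's and, as it stands, does not go through. The paper's proof is a direct integral estimate: it uses the pointwise bound \eqref{as:zk}, namely $z_{k,n}(r)\le -(2+a_k+o(1))\log r$, together with a second-order Taylor expansion of $g$ at $\mu_{k,n}$ controlled by \eqref{g1}, to dominate the integrand by an explicit expression. After the change of variables $s=(2+a_k+\e_n)\log r$ and completion of the square, the integral becomes a Gaussian-type integral over a window of size $O(\sqrt{g(\mu_{k,n})})$, and the two hypotheses on $\eta$ are exactly what is needed to make the endpoint contributions $e^{-(\cdot)^2+T_n^2}$ and $e^{-(\cdot)^2+t_n^2}$ vanish. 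The recurrence formulas \eqref{b1}--\eqref{b4} are never used in this proof; the conditions on $\eta$ are purely computational thresholds for the direct estimate.

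Your contradiction route has two concrete gaps. First, your claim that $\liminf B_n>0$ together with $\phi_n(\rho_{k,n})\to 0$ forces a local maximum of $\phi_n$ in $(\rho_{k,n},r_n)$ with $\phi_n(s_n^*)$ bounded below is not justified: nothing prevents $\phi_n$ from being monotone on that interval, and the argument via Lemma~\ref{lem:D2} does not apply at $\rho_{k,n}$ since $\psi_n(\rho_{k,n})\to 2+a_k>2$, violating the hypothesis $c_1\in(0,2)$. Second, and more seriously, the assertion that the hypotheses on $\eta$ ``are precisely calibrated, via \eqref{b1}--\eqref{b4}, to imply $\eta>\eta_{k+1}/\eta_k$'' is unsubstantiated and is in fact not how the paper treats these conditions; in Lemma~\ref{lem:c10} the paper imposes $\eta_0>\eta_{k+1}/\eta_k$ as an \emph{additional} independent constraint when invoking the present lemma, which strongly indicates that the $\eta$-conditions here do not by themselves pin down a relation to the recurrence. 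Without that implication your contradiction has no target. The scaling analysis around $s_n^*$ that you allude to is also essentially the content of Lemmas~\ref{lem:c12}--\ref{lem:c15}, which rely on Lemma~\ref{lem:c9} already being established, so the overall logical flow would be circular relative to the paper's development.
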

\begin{proof} From our choice of $(r_n)$ and Lemmas \ref{lem:g3} and \ref{lem:g4}, 
we have that 
\begin{equation}\label{eq:s0}
\lim_{n\to \infty}\frac{g(u_n(r_n))}{g(\mu_{k,n})}= \eta.
\end{equation} 
Then, from (H0), there exist a constant $C>0$ and a sequence $(\theta_n) \subset (0,1)$ of values such that 
\[
\begin{split}
g'(\mu_{k,n})\int_{\rho_{k,n}}^{r_n}\la_nhf(u_n)rdr&=\int_{\rho_{k,n}/\ga_{k,n}}^{r_n/\ga_{k,n}}\frac{h(\ga_{k,n}r)}{h(r_{k,n})}e^{g\left(\mu_{k,n}+\frac{z_{k,n}(r)}{g'(\mu_{k,n})}\right)-g(\mu_{k,n})}rdr\\
&\le C\int_{\rho_{k,n}/\ga_{k,n}}^{r_n/\ga_{k,n}}e^{z_{k,n}(r)+\frac{g''\left(\mu_{k,n}+\frac{\theta_n z_{k,n}(r)}{g'(\mu_{k,n})}\right)}{2g'(\mu_{k,n})^2}z_{k,n}(r)^2}rdr\\
&\le C\int_{\rho_{k,n}/\ga_{k,n}}^{r_n/\ga_{k,n}}e^{z_{k,n}(r)+\frac{1+o(1)}{2q\eta g(\mu_{k,n})}z_{k,n}(r)^2}rdr
\end{split}
\]
for all large $n\in \mathbb{N}$ where for the last inequality, we noted 
\[
\inf_{r\in[\rho_{k,n}/\ga_{k,n},r_n/\ga_{k,n}]}\left(\mu_{k,n}+\frac{\theta_n z_{k,n}(r)}{g'(\mu_{k,n})}\right)\ge u_n(r_n)\to \infty
\]
as $n\to \infty$ and then, used \eqref{g1}, the monotonnicity of $g'$, and that of $g$ with \eqref{eq:s0}.  Here our choice of $(r_n)$ and $\eta$ and \eqref{g2} imply that there exists a constant $\e>0$ such that
\[
\begin{split}
z_{k,n}(r)&\ge g'(\mu_{k,n})(u_n(r_n)-\mu_{k,n})=
\begin{cases}
-(p(1-\eta^{1/p})+o(1))g(\mu_{k,n})&\text{ if }q>1,\\
-\log{(1/\eta)} g(\mu_{k,n})&\text{ if }q=1,\end{cases}\\
&> -(1-\e)q\eta g(\mu_{k,n}) 
\end{split}
\]
for all $r\in[ \rho_{k,n}/\ga_{k,n},r_n/\ga_{k,n}]$ and all large $n\in \mathbb{N}$. Hence from \eqref{as:zk}, there exists a sequence $(\e_n)$ of values such that $\e_n\to0$ as $n\to \infty$ and 
\[
\begin{split}
g'(\mu_{k,n})\int_{\rho_{k,n}}^{r_n}\la_nh&f(u_n)rdr\\&\le C\int_{\rho_{k,n}/\ga_{k,n}}^{r_n/\ga_{k,n}}e^{-(2+a_k+\e_n)\log{r}+\frac{1+o(1)}{2q\eta g(\mu_{k,n})}((2+a_k+\e_n)\log{r})^2}rdr\\
&=\frac{C}{2+a_k+\e_n}\int_{s_n}^{S_n}e^{-\frac{a_k+\e_n}{2+a_k+\e_n}s+\frac{1+o(1)}{2q\eta g(\mu_{k,n})}s^2}ds
\end{split}
\]
where we changed the variable with  $s=(2+a_k+\e_n)\log{r}$ and put $S_n=(2+a_k+\e_n)\log{(r_n/\ga_{k,n})}$ and $s_n=(2+a_k+\e_n)\log{(\rho_{k,n}/\ga_{k,n})}$ for all $n\in \mathbb{N}$. Then, setting  
\[
K_n=\sqrt{2q\eta g(\mu_{k,n})(1+o(1))^{-1}},
\]
we get 
\begin{equation}\label{eq:s1}
\begin{split}
g'(\mu_{k,n})&\int_{\rho_{k,n}}^{r_n}\la_nhf(u_n)rdr\\
&\le \frac{C}{2+a_k+\e_n}e^{-\left(\frac{(a_k+\e_n)K_n}{2(2+a_k+\e_n)}\right)^2}\int_{s_n}^{S_n}e^{\left(\frac{s}{K_n}-\frac{(a_k+\e_n)K_n}{2(2+a_k+\e_n)}\right)^2}ds\\
&=\frac{CK_n}{2+a_k+\e_n}e ^{-\left(\frac{(a_k+\e_n)K_n}{2(2+a_k+\e_n)}\right)^2}\int_{t_n}^{T_n}e^{t^2}dt
\end{split}
\end{equation}
where we again changed the variable with 
\[
t=\frac{s}{K_n}-\frac{(a_k+\e_n)K_n}{2(2+a_k+\e_n)}
\]
and put
\[
T_n=\frac{(2+a_k+\e_n)\log{\frac{r_n}{\ga_{k,n}}}}{K_n}-\frac{(a_k+\e_n)K_n}{2(2+a_k+\e_n)}
\]
and
\[
t_n=\frac{(2+a_k+\e_n)\log{\frac{\rho_{k,n}}{\ga_{k,n}}}}{K_n}-\frac{(a_k+\e_n)K_n}{2(2+a_k+\e_n)}
\] 
for all $n\in \mathbb{N}$. Notice that $T_n=O(\sqrt{g(\mu_{k,n})})$ and $t_n=O(\sqrt{g(\mu_{k,n})})$ as $n\to \infty$ by \eqref{eq:es2} with \eqref{eq:s0} and (A$_k$). Then we calculate
\[
K_ne ^{-\left(\frac{(a_k+\e_n)K_n}{2(2+a_k+\e_n)}\right)^2}\int_{[t_n,T_n]\cap\{|t|\le (a_k+\e_n)K_n/(4(2+a_k+\e_n))\}}e^{t^2}dt\to0
\]
and 
\[
\begin{split}
&K_ne ^{-\left(\frac{(a_k+\e_n)K_n}{2(2+a_k+\e_n)}\right)^2}\int_{[t_n,T_n]\cap\{|t|>(a_k+\e_n)K_n/(4(2+a_k+\e_n))\}}e^{t^2}dt\\
&\ \ \ \ =O\left(e ^{-\left(\frac{(a_k+\e_n)K_n}{2(2+a_k+\e_n)}\right)^2}\int_{t_n}^{T_n}|t|e^{t^2}dt\right)\\
&\ \ \ \ =O\left(e ^{-\left(\frac{(a_k+\e_n)K_n}{2(2+a_k+\e_n)}\right)^2+T_n^2}\right)+O\left(e ^{-\left(\frac{(a_k+\e_n)K_n}{2(2+a_k+\e_n)}\right)^2+t_n^2}\right)+o(1)\\
&\ \ \ \ \to0
\end{split}
\]
as $n\to \infty$ where noting our choice of $\eta$, we estimated
\[
\begin{split}
&e ^{-\left(\frac{(a_k+\e_n)K_n}{2(2+a_k+\e_n)}\right)^2+T_n^2}\\
&\ \ \ \ \ \ \ =\exp{\left[-\log{\frac{r_n}{\ga_{k,n}}}\left\{a_k+\e_n-\left(\frac{2+a_k+\e_n}{K_n}\right)^2\log{\frac{r_n}{\ga_{k,n}}}\right\}\right]}\to0
\end{split}
\]
from \eqref{eq:es2} with \eqref{eq:s0} and  similarly with  (A$_k$), 
\[
e ^{-\left(\frac{(a_k+\e_n)K_n}{2(2+a_k)}\right)^2+t_n^2}\to0
\]
as $n\to \infty$. Using these formulas for \eqref{eq:s1}, we get the desired conclusion. We complete the proof. 
\end{proof}
We next find the maximal interval with no additional bubble.
\begin{lemma}\label{lem:c10} There exists a sequence $(\sigma_n)\subset (\rho_{k,n},1)$ of values such that 
\[
u_n(\sigma_n)=
\begin{cases}
\displaystyle\left(\frac{\delta_{k+1}}{\delta_k}+o(1)\right) \mu_{k,n}&\text{ if $q>1$},\\
\displaystyle\mu_{k,n}-\left(\log{\frac{\eta_{k}}{\eta_{k+1}}}+o(1)\right) \frac{g(\mu_{k,n})}{g'(\mu_{k,n})}&\text{ if $q=1$}, 
\end{cases}
\]
$\phi_n(\sigma_n)\to0$, and  
\[
g'(\mu_{k,n})\int_{\rho_{k,n}}^{\sigma_n}\la_n hf(u_n)rdr\to 0
\]  
as $n\to \infty$.
\end{lemma}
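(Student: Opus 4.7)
The goal is to extend the no-energy-accumulation interval starting at $\rho_{k,n}$ as far as possible, with the stopping height dictated by the energy recurrence formulas: for $q\in(1,2)$, until $u_n(\sigma_n)/\mu_{k,n}\to \delta_{k+1}/\delta_k$, the critical ratio determined by \eqref{b1} and \eqref{b2}; for $q=1$, until the drop $\mu_{k,n}-u_n(\sigma_n)$ reaches $\log(\eta_k/\eta_{k+1})\,g(\mu_{k,n})/g'(\mu_{k,n})$, dictated by \eqref{b3} and \eqref{b4}. The plan is to construct $\sigma_n$ by pushing Lemma \ref{lem:c9} to the admissible boundary, then verify the three claimed properties separately.

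\textbf{Construction via Lemma \ref{lem:c9}.} For $q\in(1,2)$, set $\eta_\ast:=(\delta_{k+1}/\delta_k)^p$. Using \eqref{b1}, namely $p(1-\eta_\ast^{1/p})=\tfrac{2+a_k}{2}(1-\eta_\ast)$, a direct computation shows that both inequalities required by Lemma \ref{lem:c9}, that is, $a_k>(1-\eta)(2+a_k)^2/(4q\eta)$ and $q\eta-p(1-\eta^{1/p})>0$, hold strictly at $\eta_\ast$, hence on a right neighborhood $(\eta_\ast,\eta_\ast+\varepsilon_0)$. For each $\eta$ in this neighborhood, Lemma \ref{lem:c9} furnishes a sequence $r_n^{(\eta)}$ with $u_n(r_n^{(\eta)})=\eta^{1/p}\mu_{k,n}$ and $g'(\mu_{k,n})\int_{\rho_{k,n}}^{r_n^{(\eta)}}\lambda_n hf(u_n)\,rdr\to 0$. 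A diagonal extraction along a sequence $\eta_j\downarrow\eta_\ast$ produces $\sigma_n$ with the desired height asymptotics and the integrated-energy vanishing. For $q=1$, since Lemma \ref{lem:c9}'s admissibility $\eta>\log(1/\eta)$ may be violated at the critical ratio $\eta_{k+1}/\eta_k$, the plan is to chain finitely many applications of Lemma \ref{lem:c9} over intermediate heights each within the admissible window. The monotonicity assumptions in (H1)(ii) and Lemma \ref{lem:g4} ensure that the composed asymptotic formulas telescope to the leading-order log drop prescribed by \eqref{b3}.

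\textbf{Vanishing of $\phi_n(\sigma_n)$.} I would show, more strongly, that $\phi_n\to 0$ uniformly on $[\rho_{k,n},\sigma_n]$ by a contradiction argument combined with the scaling technique of Subsection \ref{sub:cfc}. Suppose that $\phi_n(r_n')\ge\delta>0$ for some sequence $r_n'\in[\rho_{k,n},\sigma_n]$ along a subsequence. Scaling around $r_n'$ by $\tilde\gamma_n:=(\lambda_n h(r_n')f'(u_n(r_n')))^{-1/2}$ as in Lemma \ref{lem:c6}, with $r_n'/\tilde\gamma_n=\sqrt{\phi_n(r_n')}\ge\sqrt\delta$, produces a nontrivial limit bubble of positive mass $\int e^z\,rdr>0$. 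By Lemma \ref{lem:g3} (respectively Lemma \ref{lem:g4} for $q=1$), the ratio $g'(u_n(r_n'))/g'(\mu_{k,n})$ is bounded below by a positive constant uniformly in $n$, since $u_n(r_n')\ge u_n(\sigma_n)$ remains at a positive fraction of $\mu_{k,n}$ (equivalently, has a bounded log drop in the case $q=1$). Consequently the bubble mass contributes a positive amount to $g'(\mu_{k,n})\int_{\rho_{k,n}}^{\sigma_n}\lambda_n hf(u_n)\,rdr$ in the limit, contradicting its vanishing established above. In particular $\phi_n(\sigma_n)\to 0$.

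\textbf{Main obstacle.} The principal difficulty lies in the $q=1$ case, where the narrow admissibility window of Lemma \ref{lem:c9} forces an iterative construction rather than a one-shot diagonal argument. The chaining must be carried out with care so that the cumulative $o(1)$ corrections collapse to the precise leading-order height formula $\log(\eta_k/\eta_{k+1})\,g(\mu_{k,n})/g'(\mu_{k,n})$ dictated by \eqref{b3}. This is the price of treating the borderline multiple-exponential growth uniformly with the supercritical Trudinger-Moser case, and it relies essentially on the structural assumption (H1)(ii).
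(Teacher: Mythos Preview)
Your central claim for $q\in(1,2)$—that both hypotheses of Lemma~\ref{lem:c9} hold strictly at $\eta_\ast=(\delta_{k+1}/\delta_k)^p=\eta_{k+1}/\eta_k$—is false. Take $k=1$, $p=3$ (so $q=3/2$). From \eqref{b1} with $a_1=2$, $\delta_1=1$ one finds $\delta_2=(-1+\sqrt3)/2\approx 0.366$, hence $\eta_\ast=\delta_2^3\approx 0.049$. But the second condition $q\eta-p(1-\eta^{1/p})>0$ rewrites, using \eqref{b1}, as $q\eta_\ast>\tfrac{2+a_k}{2}(1-\eta_\ast)$, i.e.\ $(q+2)\eta_\ast>2$, which requires $\eta_\ast>\tfrac{2}{q+2}\approx 0.571$; this clearly fails. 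The first condition reduces to the same inequality and fails as well. So Lemma~\ref{lem:c9} simply cannot be applied near $\eta_\ast$, and your diagonal scheme over $\eta_j\downarrow\eta_\ast$ collapses even in the ``easy'' case $q>1$. The chaining idea you propose for $q=1$ does not repair this either: Lemma~\ref{lem:c9} is tied to the pointwise bound \eqref{as:zk} at the level $\rho_{k,n}$ with coefficient $2+a_k$, and you have no mechanism to regenerate an analogous bound at an intermediate height in order to iterate.

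The paper proceeds differently. Lemma~\ref{lem:c9} is invoked only once, with an $\eta_0$ close to $1$ (where its hypotheses are easily met), to obtain an initial $\bar r_n$ with vanishing energy on $[\rho_{k,n},\bar r_n]$. The extension from $\bar r_n$ down to any height $\eta\in(\eta_{k+1}/\eta_k,\eta_0)$ is done by a direct pointwise argument: using \eqref{as:zk}, \eqref{eq:es1}, \eqref{eq:es2}, and Lemmas~\ref{lem:g3}--\ref{lem:g4}, one controls $\log\phi_n(s)$ by $g(\mu_{k,n})\xi_n(x)$ for an explicit convex-type function $\xi_n$ on the range of $x=\log(s/\gamma_{k,n})/g(\mu_{k,n})$. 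The recurrence relations \eqref{b1} and \eqref{b3} are exactly what forces $\xi_n<-\varepsilon_0$ at both endpoints of this range, yielding the exponential bound $\sup_{[\bar r_n,\bar s_n]}\phi_n\le e^{-\varepsilon_0 g(\mu_{k,n})}$. From this, $\phi_n\to0$ and the energy vanishing on $[\bar r_n,\bar s_n]$ follow simultaneously, and a diagonal in $\eta\downarrow\eta_{k+1}/\eta_k$ gives $\sigma_n$. Note the logical order is opposite to yours: the paper establishes $\phi_n\to0$ first and derives the energy vanishing from it, whereas your bubble-extraction argument for $\phi_n(\sigma_n)\to0$ presupposes the energy vanishing you have not secured.
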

\begin{proof} First, choose any value $\eta_0\in(0,1)$ so that
\[
a_k>\frac{(1-\eta_0)(2+a_k)^2}{4q \eta_0},\ \ \ \eta_0>\frac{\eta_{k+1}}{\eta_k},
\]
and
\[
\begin{cases}
q \eta_0 -p(1-\eta_0^{1/p})>0&\text{ if $q>1$},\\
\eta_0>\displaystyle\log{\frac1{\eta_0}}&\text{ if $q=1$},
\end{cases}
\]
and take any sequence $(\bar{r}_n)\subset (\rho_{k,n},1)$  such that
\[
u_n(\bar{r}_n)=
\begin{cases}
\eta_0^{1/p} \mu_{k,n}&\text{ if $q>1$},\\
\displaystyle\mu_{k,n}-\log{\frac1{\eta_0}} \frac{g(\mu_{k,n})}{g'(\mu_{k,n})}&\text{ if $q=1$}. 
\end{cases}
\]
Then, from the previous lemma, we get that
\begin{equation}\label{eq:c10-1}
\lim_{n\to \infty}g'(\mu_{k,n})\int_{\rho_{k,n}}^{\bar{r}_n}\la_nhf(u_n)rdr=0.
\end{equation}
Next, we select any value $\eta\in(\eta_{k+1}/\eta_k,\eta_0)$ and sequence $(\bar{s}_n)\subset (\bar{r}_n,1)$ so that 
\[
u_n(\bar{s}_n)=
\begin{cases}
\eta^{1/p} \mu_{k,n}&\text{ if $q>1$},\\
\displaystyle\mu_{k,n}-\log{\frac1{\eta}} \frac{g(\mu_{k,n})}{g'(\mu_{k,n})}&\text{ if $q=1$}, 
\end{cases}
\]
for all $n\in \mathbb{N}$. Then for any sequence $(s_n)\subset [\bar{r}_n,\bar{s}_n]$, putting $S_n=s_n/\ga_{k,n}$ for all $n\in \mathbb{N}$, we have by the monotonicity of $g$ and $g'$ and \eqref{as:zk} that
\begin{equation}\label{eq:s2}
\begin{split}
&\phi_n(s_n)\\
&\le \frac{h(s_n)}{h(r_{k,n})}\exp{\left[g\left(\mu_{k,n}+\frac{z_{k,n}(S_n)}{g'(\mu_{k,n})}\right)-g(\mu_{k,n})+2\log{S_n}\right]}\\
&\le 
\frac{h(s_n)}{h(r_{k,n})}\exp{\left[g(\mu_{k,n})\left\{\frac{g\left(\mu_{k,n}-(2+a_k+o(1))\frac{\log{S_n}}{g'(\mu_{k,n})}\right)}{g(\mu_{k,n})}-1+2\frac{\log{S_n}}{g(\mu_{k,n})}\right\}\right]}
\end{split}
\end{equation}
for all large $n\in \mathbb{N}$. From \eqref{eq:es1} and \eqref{eq:es2} and  Lemmas \ref{lem:g3} and \ref{lem:g4}, we see
\begin{equation}\label{eq:xy}
x_n:=\frac{(\mu_{k,n}-u_n(\bar{r}_n))\frac{g'(\mu_{k,n})}{g(\mu_{k,n})}}{g'(\mu_{k,n}) \displaystyle\int_0^{\bar{r}_n}\la_n f(u_n)rdr}\le \frac{\log{S_n}}{g(\mu_{k,n})}\le \frac{1-\eta +o(1)}{2}=:y_n
\end{equation}
for all $n\in \mathbb{N}$. Furthermore, from Lemmas \ref{lem:g3} and \ref{lem:g4}, \eqref{g2} if $q>1$, \eqref{as:en} with \eqref{eq:c10-1}, and Lemma \ref{lem:b3}, we compute 
\[
\begin{split}
x_n &=\frac{(\mu_{k,n}-u_n(\bar{r}_n))\frac{g'(\mu_{k,n})}{g(\mu_{k,n})}}{(\tilde{\eta}_k+o(1))g'(\mu_n)\int_0^{\bar{r}_n}\la_n f(u_n)rdr}
=
\begin{cases}
\displaystyle\frac{p(1-\eta_0^{1/p})}{2+a_k}+o(1)&\text{ if }q>1\vspace{0.2cm},\\
\displaystyle\frac{\log{(1/\eta_0)}}{2+a_k}+o(1)&\text{ if }q=1,
\end{cases}
\end{split}
\]
as $n\to \infty$. Now,  for each $n\in \mathbb{N}$, set a function
\[
\xi_n(x)=\frac{g\left(\mu_{k,n}-(2+a_k+o(1))\frac{g(\mu_{k,n})}{g'(\mu_{k,n})}x\right)}{g(\mu_{k,n})}-1+2x
\]
for all $x\in[x_n,y_n]$. Then, we calculate  that 
\[
\xi_n'(x)=-(2+a_k+o(1))\frac{g'\left(\mu_{k,n}-(2+a_k+o(1))\frac{g(\mu_{k,n})}{g'(\mu_{k,n})}x\right)}{g'(\mu_{k,n})}+2
\]
and, by \eqref{g2} with \eqref{b1} if $q>1$ and \eqref{g2} if $q=1$, 
\[
\mu_{k,n}-(2+a_k+o(1))\frac{g(\mu_{k,n})}{g'(\mu_{k,n})}x\ge \mu_{k,n}\left(1-(2+a_k+o(1))\frac{g(\mu_{k,n})}{\mu_{k,n}g'(\mu_{k,n})}y_n\right)>t_0
\]
for all $x\in[x_n,y_n]$ and all large $n\in \mathbb{N}$ where $t_0>0$ is the number in our assumption (H1). In particular,  $\xi_n'(x)$ is strictly increasing for all $x\in [x_n,y_n]$ if $n\in \mathbb{N}$ is sufficiently large. It follows that 
\begin{equation}\label{eq:s3}
\max_{x\in[x_n,y_n]}\xi_n(x)= \max\{\xi_n(x_n),\xi_n(y_n)\}
\end{equation}
for all large $n\in \mathbb{N}$. We first compute $\xi_n(x_n)$. If $q>1$, using \eqref{g2} and Lemma \ref{lem:g3}, we have
\[
\begin{split}
\xi\left(x_n\right)=
\frac{2p}{2+a_k}(1-\eta_0^{1/p})-1+\eta_0+o(1)
\end{split}
\]
as $n\to \infty$. If $q=1$,  noting Lemma \ref{lem:g4}, we obtain
\[
\xi_n(x_n)
=\frac{2}{2+a_k}\log{\frac1{\eta_0}}-1+\eta_0+o(1)
\]
as $n\to \infty$. Hence,  since $\eta_0\in(\eta_{k+1}/\eta_k,1)$, noting \eqref{b1} and \eqref{b3}, we get $\lim_{n\to \infty}\xi_n(x_n)<0$. We next calculate $\xi_n(y_n)$. Using  \eqref{g2} and Lemmas \ref{lem:g3} and \ref{lem:g4}, we deduce  that
\[
\begin{split}
\xi_n\left(y_n\right)&
=\frac{g\left(\mu_{k,n}-\frac{(2+a_k)(1-\eta)+o(1)}{2}\frac{g(\mu_{k,n})}{g'(\mu_{k,n})}\right)}{g(\mu_{k,n})}-\eta+o(1) \\
&=
\begin{cases}-\left\{\eta-\left(1-\frac{2+a_k}{2p}(1-\eta)\right)^p\right\}+o(1)&\text{ if }q>1,\vspace{0.2cm}\\
-\left(\eta-e^{-\frac{2+a_k}{2}(1-\eta)}\right)+o(1)&\text{ if }q=1
\end{cases}
\end{split}
\]
as $n\to \infty$. Therefore, again as  $\eta\in (\eta_{k+1}/\eta_k,1)$, using \eqref{b1}, and \eqref{b3}, we conclude $\lim_{n\to \infty}\xi_n(y_n)<0$.  Consequently, recalling \eqref{eq:s3}, we find a number $\e_0>0$ such that 
\[
\sup_{x\in[x_n,y_n]}\xi_n(x)\le -\e_0
\]
 for all large $n\in \mathbb{N}$. Using this  for \eqref{eq:s2}, we obtain that 
\[
\sup_{s\in[\bar{r}_n,\bar{s}_n]}\phi_n(s)\le e^{-(\e_0+o(1)) g(\mu_{k,n})}
\] 
for all large $n\in \mathbb{N}$. This and Lemma \ref{lem:monog} prove that
\[
\begin{split}
g'(\mu_{k,n})\int_{\bar{r}_n}^{\bar{s}_n}\la_n hf(u_n)rdr&= g'(\mu_{k,n})\int_{\bar{r}_n}^{\bar{s}_n}\frac{\phi_n(r)}{rg'(u_n(r))}dr\\
&\le e^{-(\e_0+o(1)) g(\mu_{k,n})}\left(\log{\frac{\bar{s}_n}{\ga_{k,n}}}-\log{\frac{\bar{r}_n}{\ga_{k,n}}}\right)\\
&\to0
\end{split}
\]
as $n\to \infty$ by \eqref{eq:xy}. Finally, notice that we can choose the value $\eta$ and the corresponding sequence $(\bar{s}_n)$ so that $\eta$ is arbitrarily close to $\eta_{k+1}/\eta_k$. Noting this fact together with \eqref{eq:c10-1},  we get the desired conclusion. We finish the proof. 
\end{proof}
The next lemma ensures that a certain energy appears on the outside of the interval $(0,\sigma_n)$.
\begin{lemma}\label{lem:c11} Assume that $(\sigma_n)$ is the sequence obtained in the previous lemma. Let  $\eta\in(0,\eta_{k+1}/\eta_k)$ and $(t_n)\subset (\sigma_n,1)$ be any value and  sequence of numbers such that  
\[
u_n(t_n)=
\begin{cases}
\displaystyle\eta^{1/p} \mu_{k,n}&\text{ if $q>1$},\\
\displaystyle\mu_{k,n}-\log{\frac{1}{\eta}} \frac{g(\mu_{k,n})}{g'(\mu_{k,n})}&\text{ if $q=1$}, 
\end{cases}
\]
for all $n\in \mathbb{N}$. Then we have that 
\[
\liminf_{n\to \infty}g'(\mu_n)\int_0^{t_n}\la_n hf(u_n)rdr\ge
\begin{cases} \displaystyle\frac{2p}{\delta_k^{p-1}}\cdot\frac{\frac{\delta_{k+1}}{\delta_k}-\eta^{1/p}}{\left(\frac{\delta_{k+1}}{\delta_k}\right)^p-\eta}&\text{ if }q>1\vspace{0.3cm},\\
\displaystyle\frac2{\eta_k}\cdot\frac{\log{\frac1{\eta}}-\log{\frac{\eta_{k}}{\eta_{k+1}}}}{\frac{\eta_{k+1}}{\eta_{k}}-\eta}&\text{ if }q=1.
\end{cases}
\]
Moreover, there exists a sequence $(\tilde{t}_n)\subset (\sigma_n,1)$ such that 
\[
u_n(\tilde{t}_n)=
\begin{cases}
\displaystyle \left(\frac{\delta_{k+1}}{\delta_{k}}+o(1)\right) \mu_{k,n}&\text{ if $q>1$},\\
\displaystyle\mu_{k,n}-\left( \log{\frac{\eta_k}{\eta_{k+1}}}+o(1)\right) \frac{g(\mu_{k,n})}{g'(\mu_{k,n})}&\text{ if $q=1$}, 
\end{cases}
\]
 and 
\[
\liminf_{n\to \infty}g'(\mu_n)\int_0^{\tilde{t}_n}\la_n hf(u_n)rdr
> \displaystyle\sum_{i=1}^{k} \frac{2a_i}{\delta_i^{p-1}}.
\]
\end{lemma}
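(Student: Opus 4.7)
The plan is to apply the Green-type identity \eqref{id2} at $(r,s)=(\sigma_n,t_n)$. Dropping the nonnegative second integral on the right yields
\[
u_n(\sigma_n)-u_n(t_n)\;\le\;\log\!\bigl(t_n/\sigma_n\bigr)\int_0^{t_n}\la_n hf(u_n)rdr,
\]
so after multiplying by $g'(\mu_n)$ the required lower bound follows by combining a sharp upper estimate on $\log(t_n/\sigma_n)$ with the exact asymptotics of $g'(\mu_n)(u_n(\sigma_n)-u_n(t_n))$. All needed asymptotic ratios will come from (A$_k$), Lemma \ref{lem:c10}, the identity \eqref{g2} and Lemmas \ref{lem:g3}--\ref{lem:g4}, and the telescoping consequences of the recurrence formulas \eqref{b1}, \eqref{b3} combined with Lemma \ref{lem:b3}.

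To bound $\log(t_n/\sigma_n)=\log(t_n/\ga_{k,n})-\log(\sigma_n/\ga_{k,n})$ from above, I apply \eqref{eq:es2} to $t_n$: since Lemmas \ref{lem:g3}--\ref{lem:g4} give $g(u_n(t_n))/g(\mu_{k,n})\to \eta$, this yields $\log(t_n/\ga_{k,n})\le \tfrac{g(\mu_{k,n})}{2}(1-\eta+o(1))$. To lower-bound $\log(\sigma_n/\ga_{k,n})$, I apply \eqref{eq:es1}; combining Lemma \ref{lem:c10} with (A$_k$) and Lemma \ref{lem:b3} gives $g'(\mu_n)\int_0^{\sigma_n}\la_n hf(u_n)rdr\to (2+a_k)/\tilde{\eta}_k$, while the value of $\mu_{k,n}-u_n(\sigma_n)$ from Lemma \ref{lem:c10}, converted via \eqref{g2} and Lemmas \ref{lem:g3}--\ref{lem:g4} using the auxiliary limits $g(\mu_{k,n})/g(\mu_n)\to \tilde{\eta}_k^{q}$ and $g'(\mu_{k,n})/g'(\mu_n)\to \tilde{\eta}_k$, reduces by \eqref{b1}/\eqref{b3} to
\[
\log\!\bigl(\sigma_n/\ga_{k,n}\bigr)\;\ge\;\tfrac{g(\mu_{k,n})}{2}\bigl(1-\eta_{k+1}/\eta_k+o(1)\bigr).
\]
Subtracting gives $\log(t_n/\sigma_n)\le \tfrac{g(\mu_{k,n})}{2}(\eta_{k+1}/\eta_k-\eta+o(1))$. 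In parallel, by Lemma \ref{lem:c10} and the definition of $t_n$, the same auxiliary limits yield $g'(\mu_n)(u_n(\sigma_n)-u_n(t_n))$ as an explicit multiple of $g(\mu_{k,n})$, namely $p(\delta_{k+1}/\delta_k-\eta^{1/p})/\tilde{\eta}_k\cdot g(\mu_{k,n})(1+o(1))$ for $q>1$ and $(\log(1/\eta)-\log(\eta_k/\eta_{k+1}))/\tilde{\eta}_k\cdot g(\mu_{k,n})(1+o(1))$ for $q=1$. Dividing the two bounds and simplifying with one more application of \eqref{b1}/\eqref{b3} recovers exactly the two claimed closed-form lower bounds.

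For the second assertion, a direct L'H\^opital computation shows that as $\eta\to (\eta_{k+1}/\eta_k)^-$ the first lower bound tends to $2/\tilde{\eta}_{k+1}$ in both cases. The recurrence \eqref{b2}/\eqref{b4} can be rewritten as $\tilde{\eta}_{k+1}(2+a_k)=(2-a_{k+1})\tilde{\eta}_k$, which with $a_{k+1}>0$ (Lemma \ref{lem:b1}) gives $2/\tilde{\eta}_{k+1}>(2+a_k)/\tilde{\eta}_k=\sum_{i=1}^k 2a_i/\tilde{\eta}_i$ by Lemma \ref{lem:b3}. Fixing a sequence $\eta^{(j)}\uparrow \eta_{k+1}/\eta_k$ together with the corresponding sequences $(t_n(\eta^{(j)}))$ produced by the first part, a standard diagonal extraction delivers $(\tilde{t}_n)$ with the required asymptotics of $u_n(\tilde{t}_n)$ (in the appropriate form for each case) and with the $\liminf$ strictly exceeding $\sum_{i=1}^k 2a_i/\tilde{\eta}_i$. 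The principal obstacle is the simultaneous bookkeeping of the asymptotic ratios among $\mu_n,\mu_{k,n},u_n(\sigma_n),u_n(t_n)$ and their $g$- and $g'$-images, and verifying uniformly across both $q\in(1,2)$ and $q=1$ that the algebraic cancellations produced by \eqref{b1}--\eqref{b4} and Lemma \ref{lem:b3} collapse to exactly the claimed closed forms.
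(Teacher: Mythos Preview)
Your approach is essentially identical to the paper's: both start from the Green-type identity \eqref{id2} at $(\sigma_n,t_n)$, bound $\log(t_n/\sigma_n)$ from above via \eqref{eq:es2} at $t_n$ and \eqref{eq:es1} at $\sigma_n$, compute the asymptotics using (A$_k$), Lemma \ref{lem:c10}, \eqref{g2} and Lemmas \ref{lem:g3}--\ref{lem:g4}, and simplify the resulting expressions through \eqref{b1}/\eqref{b3} together with Lemma \ref{lem:b3}; the second assertion is obtained in both by letting $\eta\to(\eta_{k+1}/\eta_k)^-$ and invoking \eqref{b2}/\eqref{b4} with Lemma \ref{lem:b3}. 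One small slip in your write-up: ``dropping the nonnegative second integral'' in \eqref{id2} would produce a \emph{lower} bound with $\int_0^{\sigma_n}$, not the upper bound with $\int_0^{t_n}$ that you state; the correct derivation (which the paper uses) is to bound $\log(t_n/r)\le\log(t_n/\sigma_n)$ inside the second integral and then combine the two terms, but the inequality you wrote down is correct and the remainder of your argument is sound.
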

\begin{proof} From \eqref{id2}, \eqref{eq:es1}, and \eqref{eq:es2}, we get
\[
\begin{split}
u_n&(\sigma_n)-u_n(t_n)\\
&\le \left(\log{\frac{t_n}{\ga_{k,n}}} -\log{\frac{\sigma_n}{\ga_{k,n}}}\right)\int_0^{t_n}\la_nh f(u_n)rdr\\
&\le \frac{g(\mu_{k,n})}2\left(1-\eta-\frac{(2+o(1))(\mu_{k,n}-u_n(\sigma_n))}{g(\mu_{k,n})\int_0^{\sigma_n}\la_n f(u_n)rdr}+o(1)\right)\int_0^{t_n}\la_n hf(u_n)rdr
\end{split}
\] 
by Lemmas \ref{lem:g3} and \ref{lem:g4}. Hence, noting \eqref{g2}, Lemmas \ref{lem:g3} and \ref{lem:g4}, and \eqref{as:en} with Lemma \ref{lem:c10}, we compute
\[
\begin{split}
g'(\mu_n)\int_0^{t_n}&\la_n h f(u_n)rdr\\
&\ge \frac{g'(\mu_n)}{g'(\mu_{k,n})}\frac{2g'(\mu_{k,n})(u_n(\sigma_n)-u_n(t_n))}{g(\mu_{k,n})\left(1-\eta-\frac{(2+o(1))g'(\mu_{k,n})(\mu_{k,n}-u_n(\sigma_n))}{g(\mu_{k,n})g'(\mu_{k,n})\int_0^{\sigma_n}\la_n f(u_n)rdr}+o(1)\right)}\\
&=
\begin{cases}
\displaystyle\frac{\frac{2p}{\delta_k^{p-1}}\left(\frac{\delta_{k+1}}{\delta_k}-\eta^{1/p}\right)}{1-\eta-2p\left(1-\frac{\delta_{k+1}}{\delta_k}\right)/\left(\delta_k^{p-1}\sum_{i=1}^k \frac{2a_i}{\delta_i^{p-1}}\right)}+o(1)&\text{ if }q>1,\vspace{0.3cm}\\
\displaystyle\frac{\frac{2}{\eta_k}\log{\frac{\eta_{k+1}}{\eta_{k}\eta}}}{1-\eta-\left(2\log{\frac{\eta_{k}}{\eta_{k+1}}}\right)/\left(\eta_k \sum_{i=1}^k \frac{2a_i}{ \eta_i}\right)}+o(1)&\text{ if }q=1,
\end{cases}
\end{split}
\] 
as $n\to \infty$. Then Lemma \ref{lem:b3}, \eqref{b1}, and \eqref{b3} confirm the former conclusion. Let us ensure the latter one. For the case $q>1$, we calculate that 
\[
\lim_{\eta\to ((\delta_{k+1}/\delta_k)^{p})^-}\displaystyle\frac{2p}{\delta_k^{p-1}}\cdot\frac{\frac{\delta_{k+1}}{\delta_k}-\eta^{1/p}}{\left(\frac{\delta_{k+1}}{\delta_k}\right)^p-\eta}=\frac{2}{\delta_{k+1}^{p-1}}= \frac{a_{k+1}}{\delta_{k+1}^{p-1}}+\displaystyle\sum_{i=1}^{k} \frac{2a_i}{\delta_i^{p-1}}
\]
from \eqref{b2} with Lemma \ref{lem:b3}. For the case $q=1$, we similarly compute that 
\[
\lim_{\eta\to (\eta_{k+1}/\eta_k)^-}\displaystyle\frac2{\eta_k}\cdot\frac{\log{\frac1{\eta}}-\log{\frac{\eta_{k}}{\eta_{k+1}}}}{\frac{\eta_{k+1}}{\eta_{k}}-\eta}=\frac{2}{\eta_{k+1}}=\frac{a_{k+1}}{\eta_{k+1}}+\displaystyle \sum_{i=1}^k \frac{2a_i}{\eta_i}
\]  
from \eqref{b4} with Lemma \ref{lem:b3}. Therefore, since we can choose the value $\eta$ and the corresponding sequence $(t_n)$ so that $\eta$ is arbitrarily closed to $\eta_{k+1}/\eta_k$ in the former argument, we get the latter assertion. We finish the proof.  
\end{proof}
Now, we are arriving in the next concentrating region.  
\begin{lemma}\label{lem:c12} Let $(\sigma_n),(\tilde{t}_n)$ be the sequences in the previous lemma. Then there exists a sequence $(\tau_n)\subset (\sigma_n,\tilde{t}_n]$ of values such that $\phi_n(\tau_n)\to c$ and $\sup_{\sigma_n\le r\le  \tau_n}\psi_n(r)\to d$ for some values $c>0$ and $0<d<2$ up to a subsequence. Moreover, we have $\sigma_n/\tau_n\to0$ as $n\to \infty$. 
\end{lemma}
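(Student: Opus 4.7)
The plan is to track $\psi_n$ on $[\sigma_n,\tilde t_n]$: it starts below $2$, must cross a level close to $2$ before we reach $\tilde t_n$, and via the formula from Lemma \ref{lem:D1} its growth is essentially driven by $\phi_n/r$, so $\phi_n$ cannot remain negligible throughout. First I would compute the endpoint values of $\psi_n$. By Remark \ref{rmk:41} together with Lemma \ref{lem:c10}, $u_n(\sigma_n)/\mu_n\to\delta_{k+1}$ when $q>1$ and an analogous asymptotic holds for $q=1$; Lemmas \ref{lem:g3}--\ref{lem:g4} then give $g'(u_n(\sigma_n))/g'(\mu_n)\to\tilde\eta_{k+1}$. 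Combining \eqref{as:en} from (A$_k$) with Lemma \ref{lem:c10} and Lemma \ref{lem:b3},
\[
g'(\mu_n)\int_0^{\sigma_n}\la_n hf(u_n)r\,dr\to\sum_{i=1}^k \frac{2a_i}{\tilde\eta_i}=\frac{2+a_k}{\tilde\eta_k}.
\]
Therefore $\psi_n(\sigma_n)\to \tilde\eta_{k+1}(2+a_k)/\tilde\eta_k=2-a_{k+1}$ by \eqref{b2} for $q>1$ and \eqref{b4} for $q=1$. A parallel calculation using the strict inequality in Lemma \ref{lem:c11} yields $\liminf_n\psi_n(\tilde t_n)\ge 2-a_{k+1}+\e_0$ for some $\e_0>0$.

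Fix $d\in(2-a_{k+1},\min\{2,2-a_{k+1}+\e_0\})$ and set $\tilde\tau_n:=\inf\{r\in[\sigma_n,\tilde t_n]:\psi_n(r)\ge d\}$. For large $n$, $\psi_n(\sigma_n)<d<\psi_n(\tilde t_n)$, so $\sigma_n<\tilde\tau_n\le\tilde t_n$, continuity gives $\psi_n(\tilde\tau_n)=d$, and $\sup_{[\sigma_n,\tilde\tau_n]}\psi_n=d$. The core claim is $\limsup_n\sup_{[\sigma_n,\tilde\tau_n]}\phi_n>0$. Assuming the contrary, Lemma \ref{lem:D2} applied with $r_n=\sigma_n$, $s_n=\tilde\tau_n$, $c_1=2-a_{k+1}\in(0,2)$ (whose hypothesis $g'(u_n(\tilde\tau_n))/g'(\mu_n)\ge c_0>0$ holds because $u_n(\tilde\tau_n)\ge u_n(\tilde t_n)$ and the latter has a positive limit from Lemmas \ref{lem:g3}--\ref{lem:g4}) yields $g'(\mu_n)\int_{\sigma_n}^{\tilde\tau_n}\la_n hf(u_n)r\,dr\to 0$, so
\[
\psi_n(\tilde\tau_n)\le \frac{g'(u_n(\tilde\tau_n))}{g'(\mu_n)}\cdot g'(\mu_n)\int_0^{\tilde\tau_n}\la_n hf(u_n)r\,dr\le 2-a_{k+1}+o(1),
\]
contradicting $\psi_n(\tilde\tau_n)=d>2-a_{k+1}$.

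Along a subsequence pick $\tau_n\in[\sigma_n,\tilde\tau_n]$ at which $\phi_n$ attains its supremum on this interval; after a further extraction $\phi_n(\tau_n)\to c$ for some $c>0$ and $\sup_{[\sigma_n,\tau_n]}\psi_n\to d_0\in[2-a_{k+1},d]\subset(0,2)$. For $\sigma_n/\tau_n\to 0$, the identity
\[
\Bigl(\frac{\tau_n}{\sigma_n}\Bigr)^2=\frac{\phi_n(\tau_n)}{\phi_n(\sigma_n)}\cdot\frac{h(\sigma_n)}{h(\tau_n)}\cdot\frac{f'(u_n(\sigma_n))}{f'(u_n(\tau_n))}
\]
forces divergence of the LHS: the first factor explodes since $\phi_n(\tau_n)\to c>0$ while $\phi_n(\sigma_n)\to 0$ by Lemma \ref{lem:c10}; the second tends to $1$ because $\sigma_n,\tau_n\to 0$ (using $\tau_n\le\tilde t_n\to 0$ via Lemma \ref{lem:gl} applied to $u_n(\tilde t_n)\to\infty$) and $h(0)>0$; and the third is $\ge 1$ since $f'$ is increasing by Lemma \ref{lem:monog} and $u_n$ is decreasing. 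The main obstacle is the strict energy gap $\liminf\psi_n(\tilde t_n)>2-a_{k+1}$, which is the precise reason the recurrence \eqref{b2}/\eqref{b4} (through Lemma \ref{lem:c11}) enters the picture; once this gap is secured, Lemma \ref{lem:D2} mechanically completes the localization.
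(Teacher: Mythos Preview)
Your argument is correct and reaches the same conclusion through a genuinely different construction of $\tau_n$. The paper fixes a small $\e>0$ and selects $\tau_n\in(\sigma_n,\tilde t_n]$ directly by prescribing the \emph{energy level} $g'(\mu_n)\int_0^{\tau_n}\la_n hf(u_n)r\,dr=\sum_{i=1}^k 2a_i/\tilde\eta_i+\e$; it then bounds $\sup_{[\sigma_n,\tau_n]}\psi_n$ above by $\tilde\eta_{k+1}(\sum+\e)<2$, and in the contradiction step (assuming $\phi_n(\tau_n)\to 0$) invokes Lemma~\ref{lem:D1} to force $\phi_n'>0$ on $[\sigma_n,\tau_n]$, so that $\sup\phi_n=\phi_n(\tau_n)\to 0$, whence Lemma~\ref{lem:D2} contradicts the prescribed energy jump. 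You instead prescribe a \emph{$\psi_n$ level} $d\in(2-a_{k+1},2)$, let $\tilde\tau_n$ be the first hitting time of $d$, and take $\tau_n$ as the maximizer of $\phi_n$ on $[\sigma_n,\tilde\tau_n]$; this makes $\sup\phi_n=\phi_n(\tau_n)$ by construction and lets you apply Lemma~\ref{lem:D2} immediately, bypassing the monotonicity step via Lemma~\ref{lem:D1}. Your route is slightly more hands-on (two auxiliary sequences instead of one) but avoids one ingredient; the paper's route is more economical in construction but needs the differential identity for $\phi_n$. Both rest on the same energy gap from Lemma~\ref{lem:c11} combined with Lemma~\ref{lem:D2}, and both compute $\psi_n(\sigma_n)\to 2-a_{k+1}$ via Lemma~\ref{lem:b3} and the recurrence \eqref{b2}/\eqref{b4}. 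One cosmetic point: your claim that $f'$ is increasing is not literally Lemma~\ref{lem:monog} (which concerns $g,g'$), but it follows at once from $(f')'=(g''+(g')^2)f>0$ under (H1).
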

\begin{proof}  We first notice that Lemma \ref{lem:b3}, \eqref{b2}, and \eqref{b4}  imply that
\[
\displaystyle\tilde{\eta}_{k+1}\sum_{i=1}^k\frac{2a_i}{\tilde{\eta}_i}=2-a_{k+1}<2.
\]
Then using (A$_k$) and Lemmas \ref{lem:c10} and \ref{lem:c11}, we choose a sequence $(\tau_n)\subset (\sigma_n,\tilde{t}_n]$ and a value $\e>0$ so that 
\begin{equation}\label{eq:ppo}
\lim_{n\to \infty}g'(\mu_{n})\int_0^{\tau_n}\la_nhf(u_n)rdr=\displaystyle \sum_{i=1}^k \frac{2a_i}{\tilde{\eta}_i}+\e
\end{equation}
and
\[
\displaystyle\tilde{\eta}_{k+1}\left(\sum_{i=1}^k\frac{2a_i}{\tilde{\eta}_i}+\e\right)<2.
\]
It follows from Lemmas \ref{lem:monog}, \ref{lem:g3}, and \ref{lem:g4} that
\begin{equation}\label{eq:infps}
\begin{split}
\inf_{r\in[\sigma_n,\tau_n]}\psi_n(r)&\ge \frac{g'(u_n(\tilde{t}_n))}{g'(\mu_{n})}g'(\mu_{n})\int_0^{\sigma_n}\la_n hf(u_n)rdr\\
&=
\displaystyle\tilde{\eta}_{k+1}\sum_{i=1}^k\frac{2a_i}{\tilde{\eta}_i} +o(1)
\end{split}
\end{equation}
and similarly from our choice of $(\tau_n)$ that
\begin{equation}\label{eq:supps}
\begin{split}
\sup_{r\in[\sigma_n,\tau_n]}\psi_n(r)&\le 
\displaystyle\tilde{\eta}_{k+1}\left(\sum_{i=1}^k\frac{2a_i}{\tilde{\eta}_i}+\e\right)+o(1)
\end{split}
\end{equation}
as $n\to \infty$. Hence after selecting a subsequence if necessary, we find a value $d\in(0,2)$ such that $\sup_{r\in[\sigma_n,\tau_n]}\psi_n(r)\to d$ as $n\to \infty$. Next using Lemma \ref{lem:monog}, we have that 
\[
\frac12\phi_n(\tau_n)\le \psi_n(\tau_n)\le 2
\]
for all large $n\in \mathbb{N}$. Hence there exists a value $c\ge 0$ such that $\phi_n(\tau_n)\to c$ as $n\to \infty$ up to a subsequence. We claim $c\not=0$. We prove this by contradiction. Assume $c=0$. Lemma \ref{lem:D1} with the fact that $d<2$ yields that $\phi_n'(r)>0$ for all $r\in[\sigma_n,\tau_n]$ and all large $n\in \mathbb{N}$. This implies that  $\sup_{r\in[\sigma_n,\tau_n]}\phi_n(r)\to0$ as $n\to \infty$. Then Lemma \ref{lem:D2} shows that  
\[
g'(\mu_{n})\int_{\sigma_n}^{\tau_n}\la_n hf(u_n)rdr\to0
\]  
as $n\to \infty$. This contradicts \eqref{eq:ppo}. Finally, from Lemmas \ref{lem:c10} and \ref{lem:monog}, we have that
\[
o(1)=\frac{\phi_n(\sigma_n)}{\phi_n(\tau_n)}\ge \frac{h(\sigma_n)}{h(\tau_n)}\left(\frac{\sigma_n}{\tau_n}\right)^2
\]
for all large $n\in \mathbb{N}$. This proves the final assertion.  We finish the proof.
\end{proof}
\begin{remark}\label{rmk:k1} Since $(\tau_n)\subset (\sigma_n,\tilde{t}_n]$, we have that 
\[
u_n(\tau_n)=
\begin{cases}
\displaystyle\left(\frac{\delta_{k+1}}{\delta_k}+o(1)\right) \mu_{k,n}&\text{ if $q>1$},\\
\displaystyle\mu_{k,n}-\left(\log{\frac{\eta_{k}}{\eta_{k+1}}}+o(1)\right) \frac{g(\mu_{k,n})}{g'(\mu_{k,n})}&\text{ if $q=1$}, 
\end{cases}
\]
as $ n\to \infty$. 
\end{remark}
We then detect the $(k+1)$-th concentrating part. 
\begin{lemma}\label{lem:c13} There exists a sequence $(r_{k+1,n})\subset (\sigma_n,1)$ of values  and a value $a\in(0,2)$ such that $\phi_n(r_{k+1,n})$ attains a local maximum value of $\phi_n$, especially  $\phi_n'(r_{k+1,n})=0$, for every $n\in \mathbb{N}$ and $\phi_n(r_{k+1,n})\to a^2/2$, $\psi_n(r_{k+1,n})\to2$,  and
\[
u_n(r_{k+1,n})=
\begin{cases}
\displaystyle\left(\frac{\delta_{k+1}}{\delta_k}+o(1)\right) \mu_{k,n}&\text{ if $q>1$},\\
\displaystyle\mu_{k,n}-\left(\log{\frac{\eta_{k}}{\eta_{k+1}}}+o(1)\right) \frac{g(\mu_{k,n})}{g'(\mu_{k,n})}&\text{ if $q=1$}, 
\end{cases}
\]
 as $n\to \infty$, and if we put $\mu_{k+1,n}=u_n(r_{k+1,n})$ for all $n\in \mathbb{N}$ and set sequences $(\ga_{k+1,n})$ of positive values and $(z_{k+1,n})$ of functions so that for every large $n\in \mathbb{N}$,
\[
\la_nh(r_{k+1,n})f'(\mu_{k+1,n})\ga_{k+1,n}^2=1
\]  
and 
\[
z_{k+1,n}(r)=g'(\mu_{k+1,n})(u_n(\ga_{k+1,n}r)-\mu_{k+1,n})
\]
for all $r\in[0,1/\ga_{k+1,n}]$, then we have that $\ga_{k+1,n}\to 0$ and  $\|z_{k+1,n}-z_{k+1}\|_{C^2_{\text{loc}}((0,\infty))}\to 0$ as $n\to \infty$ where $z_{k+1}$ is a function in $(0,\infty)$ such that 
\[
z_{k+1}(r)=\log{\frac{2a^2b}{r^{2-a}(1+b r^{a})^2}}
\]
for all $r>0$ with $b=(\sqrt{2}/a)^a$ which satisfies
\[
\begin{cases}
\displaystyle-z_{k+1}''-\frac1r z_{k+1}'=e^{z_{k+1}} \text{ in }(0,\infty),\\
z_{k+1}(a/\sqrt{2})=0,\ (a/\sqrt{2})z_{k+1}'(a/\sqrt{2})=-2,
\end{cases}
\] 
 and 
\[
\int_0^{\infty}e^{z_{k+1}}rdr=2a.
\]
Moreover, we have that
\[
\lim_{(\e,R)\to (0,\infty)}\lim_{n\to \infty}g'(\mu_{k+1,n})\int_{\e\ga_{k+1,n}}^{R\ga_{k+1,n}}\la_n h f(u_n)rdr=2 a
\]
and 
\[
\lim_{(\e,R)\to (0,\infty)}\lim_{n\to \infty}\int_{\e\ga_{k+1,n}}^{R\ga_{k+1,n}}\la_n f'(u_n)rdr=2 a.
\]
Finally, we obtain that $\phi_n(\ga_{k+1,n}r)/( r^2 e^{z_{k+1}(r)})\to1$ as $n\to \infty$ uniformly for all $r$ in every compact subset of $(0,\infty)$. 
\end{lemma}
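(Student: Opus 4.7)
The strategy is to locate $r_{k+1,n}$ as a local maximum of $\phi_n$ beyond $\tau_n$ and carry out the scaling analogous to Lemma \ref{lem:c6}, with the essential new feature that the limit bubble is singular at the origin of the scaled variable. By Lemma \ref{lem:c12} and the formula $\phi_n'(r)=(\phi_n(r)/r)\{2-\psi_n(r)(1+o(1))\}$ of Lemma \ref{lem:D1}, $\phi_n$ is strictly increasing on $[\sigma_n,\tau_n]$ in the limit (since $\psi_n<2$ there) while $\phi_n(\sigma_n)\to 0$ and $\phi_n(\tau_n)\to c>0$. I would define $r_{k+1,n}$ as the first point after $\tau_n$ at which $\psi_n$ reaches $2$; then Lemma \ref{lem:D1} gives $\phi_n'(r_{k+1,n})=0$, whence \eqref{eq:enid} yields $\psi_n(r_{k+1,n})\to 2$, and $r_{k+1,n}$ is a local maximum of $\phi_n$ with $\phi_n(r_{k+1,n})\to a^2/2$ for some $a>0$ along a subsequence. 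The height $\mu_{k+1,n}=u_n(r_{k+1,n})$ inherits the asymptotic expansion of $u_n(\tau_n)$ from Remark \ref{rmk:k1}, since the mass $g'(\mu_{k+1,n})\int_{\tau_n}^{r_{k+1,n}}\la_n hf(u_n)r\,dr\le 2-d+o(1)$ is bounded and \eqref{id2} together with $\log(r_{k+1,n}/\tau_n)=o(g(\mu_n))$ gives the negligible descent $u_n(\tau_n)-u_n(r_{k+1,n})=o(g(\mu_n)/g'(\mu_n))$.

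Next, perform the scaling. The identity $(r_{k+1,n}/\ga_{k+1,n})^2=\phi_n(r_{k+1,n})\to a^2/2$ combined with $r_{k+1,n}\to 0$ (Lemma \ref{lem:gl}) gives $\ga_{k+1,n}\to 0$ and $r_{k+1,n}/\ga_{k+1,n}\to a/\sqrt 2$. The scaled function satisfies
\begin{equation*}
-z_{k+1,n}''-\frac1r z_{k+1,n}'=\frac{h(\ga_{k+1,n}r)}{h(r_{k+1,n})}\frac{f(\mu_{k+1,n}+z_{k+1,n}(r)/g'(\mu_{k+1,n}))}{f(\mu_{k+1,n})}
\end{equation*}
on $(0,1/\ga_{k+1,n})$, with $z_{k+1,n}(r_{k+1,n}/\ga_{k+1,n})=0$ and $(r_{k+1,n}/\ga_{k+1,n})z_{k+1,n}'(r_{k+1,n}/\ga_{k+1,n})\to -2$. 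Monotonicity of $r\mapsto rz_{k+1,n}'(r)$, coming from positivity of the right-hand side, provides uniform upper bounds on each compact subset of $(0,\infty)$; Lemma \ref{lem:g1} identifies the limit of the right-hand side with $e^{z_{k+1}(r)}$; and Ascoli--Arzel\`a yields $C^2_{\text{loc}}((0,\infty))$-convergence of a subsequence to a radial solution $z_{k+1}$ of $-z_{k+1}''-r^{-1}z_{k+1}'=e^{z_{k+1}}$ with the stated boundary data at $a/\sqrt 2$. The classical classification of radial Liouville solutions then forces $z_{k+1}(r)=\log(2a^2 b/(r^{2-a}(1+br^a)^2))$ with $b=(\sqrt 2/a)^a$, and a direct computation shows $rz_{k+1}'(r)\to -(2-a)$ as $r\to 0$ and $rz_{k+1}'(r)\to -(2+a)$ as $r\to\infty$, hence $\int_0^\infty e^{z_{k+1}}r\,dr=(2+a)-(2-a)=2a$.

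Finally, the restriction $a<2$ is forced by the singular behavior at the scaled origin: from Remark \ref{rmk:41} and the definition of $\mu_{k+1,n}$ one has $z_{k+1,n}(0)=g'(\mu_{k+1,n})(\mu_n-\mu_{k+1,n})\to +\infty$, so $z_{k+1}$ must carry a positive logarithmic singularity at $0$, i.e., $2-a>0$. The two energy limits of the statement follow by applying Lemma \ref{lem:g1} inside
\begin{equation*}
\int_{\e}^R\frac{h(\ga_{k+1,n}r)}{h(r_{k+1,n})}\frac{f(u_n(\ga_{k+1,n}r))}{f(\mu_{k+1,n})}r\,dr
\end{equation*}
and its $f'$-analogue, then letting $(\e,R)\to(0,\infty)$ and invoking $\int_0^\infty e^{z_{k+1}}r\,dr=2a$; the pointwise ratio $\phi_n(\ga_{k+1,n}r)/(r^2 e^{z_{k+1}(r)})\to 1$ is immediate from the same uniform convergence. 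The main obstacle is the joint control near the new center: securing $C^2_{\text{loc}}((0,\infty))$ compactness of $z_{k+1,n}$ demands care at both the scaled origin (where the singularity forms) and the tail, and the identification of $\mu_{k+1,n}$ with the Remark \ref{rmk:k1} expansion rests on a delicate balance between the increment of $\psi_n$ across $[\tau_n,r_{k+1,n}]$ and the logarithmic spread $\log(r_{k+1,n}/\tau_n)$.
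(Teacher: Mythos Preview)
There are two genuine gaps. First, your definition of $r_{k+1,n}$ as the first point after $\tau_n$ where $\psi_n=2$ does \emph{not} produce a critical point of $\phi_n$: the exact formula in Lemma \ref{lem:D1} carries the correction $g''/g'^2 - rh'/(h\psi_n)$, which is nonzero for finite $n$, and the implication \eqref{eq:enid} runs the other way ($\phi_n'=0\Rightarrow\psi_n\to 2$, not conversely). Consequently you have neither $\phi_n'(r_{k+1,n})=0$ nor a local maximum of $\phi_n$ as the lemma requires. The paper proceeds in two steps: it first scales at $\tau_n$, setting $w_n(r)=g'(m_n)(u_n(\tau_n r)-m_n)$ with $m_n=u_n(\tau_n)$, proves $w_n\to w$ in $C^2_{\text{loc}}((0,\infty))$ with $w$ a Liouville solution, and only \emph{then} locates $r_{k+1,n}$ as a genuine local maximum of $\phi_n$ via the fact that $R\mapsto R^2e^{z(R)}$ has a unique maximum at some $R^*$. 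This yields $r_{k+1,n}/\tau_n\to R^*/\sqrt c$, which in turn gives $\log(r_{k+1,n}/\tau_n)=O(1)$---precisely the estimate you assert but never justify---and hence the height expansion for $\mu_{k+1,n}$ through the pointwise relation $u_n(r_{k+1,n})=m_n+(z(R^*)+o(1))/g'(m_n)$. Without this preliminary convergence you have no mechanism to bound $r_{k+1,n}/\tau_n$, so the ``delicate balance'' you flag at the end remains unresolved.

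Second, your argument for $a<2$ does not close. Knowing $z_{k+1,n}(0)=g'(\mu_{k+1,n})(\mu_n-\mu_{k+1,n})\to+\infty$ together with convergence only in $C^2_{\text{loc}}((0,\infty))$ does not force $z_{k+1}$ to be singular at $0$: the blow-up of $z_{k+1,n}$ could be confined to a shrinking neighborhood of the scaled origin while $z_{k+1}(r)$ stays bounded for each fixed $r>0$. The paper instead uses the quantitative lower bound (its \eqref{es:wn2}) $w_n(r)\ge\bigl(\tilde\eta_{k+1}\sum_{i=1}^k 2a_i/\tilde\eta_i+o(1)\bigr)\log(1/r)$, valid for each fixed $r\in(0,1]$ and coming from the already-accumulated energy \eqref{as:en}; passing to the limit this forces the singular coefficient $2-a$ of the limit profile to be strictly positive.
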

\begin{remark} We will see later in  Lemma \ref{lem:c15} below that $a=a_{k+1}$ and thus, $z_{k+1}$ in the present lemma  coincides with the one defined by \eqref{def:zk}.
\end{remark}
\begin{proof} Let $(\tau_n)$, $c>0$, and $d\in(0,2)$ be the sequence and values obtained in the previous lemma. For all $n\in \mathbb{N}$, set $m_n=u_n(\tau_n)$ and
\[
w_n(r)=g'(m_n)(u_n(\tau_n r)-m_n)
\] 
for all $r\in [0,1/\tau_n]$. Then it satisfies 
\begin{equation}\label{wn}
\begin{cases}
\displaystyle-w_n''-\frac1r w_n'=\phi_n(\tau_n)\frac{h(\tau_n r)}{h(\tau_n)}\frac{f(u_n(\tau_n r))}{f(m_n)} \text{ in }(0,1/\tau_n),\\
w_n(1)=0,\ w_n'(1)=-\psi_n(\tau_n),
\end{cases}
\end{equation}
for all $n\in \mathbb{N}$. Notice that $1/\tau_n\to \infty$ as $n\to \infty$ by Lemma \ref{lem:gl}. We shall give some local uniform estimates for $(w_n)$. To this end, we first choose any $r\in(0,1]$. Then using \eqref{id0}, we have that
\begin{equation}\label{es:wn1}
-r w_n'(r)=g'(m_n)\int_0^{\tau_n r}\la_nh f(u_n)rdr\le d+o(1)
\end{equation}
as $n\to \infty$. Moreover, from Lemma \ref{lem:c12}, we get that $\rho_{k,n}/\tau_n\le \sigma_n/\tau_n=o(1)$ as $n\to \infty$. It follows from Lemmas \ref{lem:g3} and \ref{lem:g4} and \eqref{as:en} that  
\[
-r w_n'(r) = \frac{g'(m_n)}{g'(\mu_{n})} g'(\mu_{n})\int_0^{\tau_n r}\la_n h f(u_n)rdr
\ge
\tilde{\eta}_{k+1} \sum_{i=1}^k \frac{2a_i}{\tilde{\eta}_i}+o(1)
\]
as $n\to \infty$. Integrating the above two inequalities over $[r,1]$, we get
\begin{equation}\label{es:wn2}
\left(\tilde{\eta}_{k+1} \sum_{i=1}^n \frac{2a_i}{\tilde{\eta}_i}+o(1)\right)\log{\frac1r}\le  w_n(r) \le (d+o(1))\log{\frac{1}{r}}
\end{equation}
as $n\to \infty$ where $o(1)\to0$ as $n\to \infty$ uniformly for each compact subset of $(0,1]$.  Next we set $r>1$. Then, integrating the equation in \eqref{wn} over $[1,r]$ and noting Lemma \ref{lem:monog}, we get a constant $C>0$ which is independent of the choice of $r>1$ such that 
\begin{equation}\label{es:wn3}
-w_n'(r)\le Cr
\text{ and }-w_n(r)\le \frac{C}{2}r^2
\end{equation}
by integration again for all large $n\in \mathbb{N}$. Consequently, \eqref{es:wn1}, \eqref{es:wn2}, and \eqref{es:wn3} prove that $(w_n)$ is bounded in $C^1_{\text{loc}}((0,\infty))$. Then the Ascoli-Arzel\`{a} theorem, the equation in \eqref{wn}, and Lemma \ref{lem:g1} ensure that there exists a smooth function $w$ such that $w_n\to w$ in $C^2_{\text{loc}}((0,\infty))$ and  
\[
\begin{cases}
\displaystyle -w''-\frac1r w'=c e^w \text{ in }(0,\infty),\\
w(1)=0,\ w'(1)=-d.
\end{cases}
\]
Hence setting functions $z(r)=w(r/\sqrt{c})$ for all $r\in(0,\infty)$ and $z_n(r)=w_n(r/\sqrt{\phi_n(\tau_n)})$ for all $r\in [0,\sqrt{\phi_n(\tau_n)}/\tau_n]$ and all large $n\in \mathbb{N}$, we get $z_n\to z$ in $C^2_{\text{loc}}((0,\infty))$ as $n\to \infty$ and $z$ satisfies 
\begin{equation}\label{z}
\begin{cases}
\displaystyle -z''-\frac1r z'= e^z \text{ in }(0,\infty),\\
z(\sqrt{c})=0,\ \sqrt{c}z'(\sqrt{c})=-d.
\end{cases}
\end{equation}
Integrating the equation in \eqref{z} gives that 
\[
z(r)=\log{\frac{2a^2\tilde{b}}{r^{2-a}(1+\tilde{b} r^{a})^2}}
\]
for all $r>0$ with some constants $a,\tilde{b}>0$. (See (17) in the proof of Proposition 3.1 in \cite{GGP}.) Notice that the first inequality in \eqref{es:wn2} yields that $a<2$. 

Now, as in the argument in Subsection \ref{sub:cfc}, we look for the center of the concentration and  modify  the scaled function appropriately.  To this end, take any value $R>0$ and set $r_n=R \tau_n/\sqrt{\phi_n(\tau_n)}$ for all $n\in \mathbb{N}$. It follows from Lemma  \ref{lem:g1} that
\[ 
\phi_n(r_n)=R^2 \frac{h(r_n)}{h(\tau_n)}\frac{f'(u_n(r_n))}{f'(u_n(\tau_n))}=(1+o(1))\frac{2a^2\tilde{b}R^a}{(1+\tilde{b} R^{a})^2}
\]
as $n\to \infty$. Notice that the function $R\mapsto 2a^2\tilde{b}R^{a}/(1+\tilde{b}R^{a})^2$ on $(0,\infty)$  has a unique maximum point $R^*=(1/\tilde{b})^{1/a}$ with the maximum value $a^2/2$. Hence we find a sequence $(r_{k+1,n})\subset (\sigma_n,1)$ of values such that $\phi_n(r_{k+1,n})$ attains a local maximum value  of $\phi_n$, in particular $\phi_n'(r_{k+1,n})=0$, for every large $n\in \mathbb{N}$ and $\phi_n(r_{k+1,n})\to a^2/2$ and $r_{k+1,n}/\tau_n\to R^*/\sqrt{c}$ as $n\to \infty$. Furthermore, we see 
\begin{equation}\label{eq:y1}
u_n(r_{k+1,n})=m_n+\frac{z(R^*)+o(1)}{g'(m_n)}
\end{equation}
as $n\to \infty$. Then since $\sigma_n/\tau_n=o(1)$, we have that
\[
\begin{split}
\psi_n(r_{k+1,n})&\ge \frac{g'(u_n(r_{k+1,n}))}{g'(u_n(\tau_n))}\frac{g'(u_n(\tau_n))}{g'(\mu_{k,n})}\frac{g'(\mu_{k,n})}{g'(\mu_n)}g'(\mu_n)\int_0^{\sigma_n}\la_n hf(u_n)rdr\\
&=\tilde{\eta}_{k+1}\sum_{i=1}^k \frac{2a_i}{\tilde{\eta}_i}+o(1)
\end{split}
\]
as $n\to \infty$ by Lemma \ref{lem:g2} with \eqref{eq:y1}, Lemmas \ref{lem:g3} and \ref{lem:g4}, and  \eqref{as:en} with Lemma \ref{lem:c10}.  Consequently, Lemma \ref{lem:D1} ensures that $\psi_n(r_{k+1,n})\to2$ as $n\to \infty$.

 Then we define the modified scaled function. Putting $\mu_{k+1,n}=u_n(r_{k+1,n})$ for all $n\in \mathbb{N}$, we set sequences $(\ga_{k+1,n})$ of values and $(z_{k+1,n})$ of functions so that for each large $n\in \mathbb{N}$,
\[
\la_n h(r_{k+1,n})f'(\mu_{k+1,n})\ga_{k+1,n}^2=1
\]
and 
\[
z_{k+1,n}(r)=g'(\mu_{k+1,n})(u_n(\ga_{k+1,n}r)-\mu_{k+1,n})
\]
for all $r\in[0,1/\ga_{k+1,n}]$. Then since $(r_{k+1,n}/\ga_{k+1,n})^2=a^2/2+o(1)$, we have $\ga_{k+1,n}\to0$ as $n\to \infty$ by Lemma \ref{lem:gl}. Moreover, note that 
\begin{equation}\label{zkn}
\begin{cases}
\displaystyle-z_{k+1,n}''-\frac1r z_{k+1,n}'=\frac{h(\ga_{k+1,n}\cdot)}{h(r_{k+1,n})}\frac{f(u_n(\ga_{k+1,n} \cdot))}{f(\mu_{k+1,n})} \text{ in }(0,1/\ga_{k+1,n}),\\
z_{k+1,n}\left(\frac{r_{k+1,n}}{\ga_{k+1,n}}\right)=0,\ \left(\frac{r_{k+1,n}}{\ga_{k+1,n}}\right)z_{k+1,n}'\left(\frac{r_{k+1,n}}{\ga_{k+1,n}}\right)=-2+o(1),
\end{cases}
\end{equation}
and
\[
z_{k+1,n}(r)=\frac{g'(\mu_{k+1,n})}{g'(m_n)}\left\{z_n\left(\frac{\sqrt{\phi_n(\tau_n)}}{\tau_n}\ga_{k+1,n}r\right)-z_n\left(\frac{\sqrt{\phi_n(\tau_n)}}{\tau_n}r_{k+1,n}\right)\right\}
\]
for all $n\in \mathbb{N}$. Then, using \eqref{eq:y1} with Lemma \ref{lem:g2} and putting
\[
z_{k+1}(r):=z\left(\frac{\sqrt{2}}{a}\left(\frac1{\tilde{b}}\right)^{1/a}r\right)-z\left(\left(\frac1{\tilde{b}}\right)^{1/a}\right)
\]
for all $r\in(0,\infty)$, we derive that $z_{k+1,n}\to z_{k+1}$ in $C^2_{\text{loc}}((0,\infty))$ as $n\to \infty$,  
\[
z_{k+1}(r)=\log{\frac{2a^2b}{r^{2-a}(1+b r^{a})^2}}
\]
 with $b=(\sqrt{2}/a)^a$, and by Lemma \ref{lem:g1}, 
\begin{equation}\label{zk1}
\begin{cases}
\displaystyle-z_{k+1}''-\frac1r z_{k+1}'=e^{z_{k+1}} \text{ in }(0,\infty),\\
z_{k+1}(a/\sqrt{2})=0,\ \left(a/\sqrt{2}\right)z_{k+1}'(a/\sqrt{2})=-2.
\end{cases}
\end{equation}
Now, noting \eqref{eq:y1} with Remark \ref{rmk:k1}, we get $\mu_{k+1,n}/\mu_{k,n}\to \delta_{k+1,n}/\delta_k$ if $q>1$. If $q=1$, from \eqref{eq:y1} with Remark \ref{rmk:k1} and Lemma \ref{lem:g4}, we have
\[
\begin{split}
\mu_{k+1,n}&=\mu_{k,n}-\left(\log{\frac{\eta_k}{\eta_{k+1}}}-\frac{z(R^*)+o(1)}{g(\mu_{k,n})}\frac{g'(\mu_{k,n})}{g'(m_n)}+o(1)\right)\frac{g(\mu_{k,n})}{g'(\mu_{k,n})}\\
&=\mu_{k,n}-\left(\log{\frac{\eta_k}{\eta_{k+1}}}+o(1)\right)\frac{g(\mu_{k,n})}{g'(\mu_{k,n})}
\end{split}
\]
as $n\to \infty$. 
 
Let us complete the final assertions.  Using Lemma \ref{lem:g1} again, for any values $0<\e<R$, we get
\[
\begin{split}
\lim_{n\to \infty}g'(\mu_{k+1,n})\int_{\e \ga_{k+1,n}}^{R \ga_{k+1,n}}\la_nh f(u_n)rdr&=\lim_{n\to \infty}\int_{\e }^R \frac{h(\ga_{k+1,n}r)}{h(r_{k+1,n})}\frac{f(u_n(\ga_{k+1,n}r))}{f(\mu_{k+1,n})}rdr\\
&=\int_\e^Re^{z_{k+1}}rdr
\end{split}
\]
and analogously,
\[
\begin{split}
\lim_{n\to \infty}\int_{\e \ga_{k+1,n}}^{R \ga_{k+1,n}}\la_nh f'(u_n)rdr=\int_\e^Re^{z_{k+1}}rdr.
\end{split}
\] 
Finally, choose any compact subset $K\subset (0,\infty)$. Then, we deduce form Lemma \ref{lem:g1} that
\[
\begin{split}
\sup_{r\in K}\left|\frac{\phi_n(\ga_{k+1,n}r)}{r^2e^{z_{k+1}(r)}}-1\right|=\sup_{r\in K}\left|\frac{\frac{h(\ga_{k+1,n}r)}{h(r_{k+1,n})}\frac{f(u_n(\ga_{k+1,n}r))}{f(\mu_{k+1,n})}}{e^{z_{k+1}(r)}}-1\right|\to 0
\end{split}
\]
as $n\to \infty$.  We finish the proof. 
\end{proof}
We slightly extend the present concentration region and connect it  with the previous one.
\begin{lemma}\label{lem:c14} There exist sequences $(\bar{\rho}_{k+1,n}),(\rho_{k+1,n})\subset (0,1)$ of values such that  $\bar{\rho}_{k+1,n}/\ga_{k+1,n}\to0$,  $\rho_{k+1,n}/\ga_{k+1,n}\to \infty$, $g(\mu_{k+1,n})^{-1}\log{(\bar{\rho}_{k+1,n}/\ga_{k+1,n})}\to0$,
 $g(\mu_{k+1,n})^{-1}\log{(\rho_{k+1,n}/\ga_{k+1,n})}\to0$,
\[
\|z_{k+1,n}-z_{k+1}\|_{C^2([\bar{\rho}_{k+1,n}/\ga_{k+1,n},\rho_{k+1,n}/\ga_{k+1,n}])}\to0,
\]
\[
\sup_{r\in[\bar{\rho}_{k+1,n}/\ga_{k+1,n},\rho_{k+1,n}/\ga_{k+1,n}]}|rz_{k+1,n}'(r)-rz_{k+1}'(r)|\to0,
\]
\[
\sup_{r\in [\bar{\rho}_{k+1,n}/\ga_{k+1,n}, \rho_{k+1,n}/\ga_{k+1,n}]}\left|\frac{h(\ga_{k+1,n}r)}{h(r_{k+1,n})}\frac{f(u_n(\ga_{k+1,n}r))}{f(\mu_{k+1,n})}-e^{z_{k+1}(r)}\right|\to0
\]
\[
\sup_{r\in [\bar{\rho}_{k+1,n}/\ga_{k+1,n}, \rho_{k+1,n}/\ga_{k+1,n}]}\left|\frac{h(\ga_{k+1,n}r)}{h(r_{k+1,n})}\frac{f'(u_n(\ga_{k+1,n}r))}{f'(\mu_{k+1,n})}-e^{z_{k+1}(r)}\right|\to0
\]
\[
\sup_{r\in [\bar{\rho}_{k+1,n}/\ga_{k+1,n}, \rho_{k+1,n}/\ga_{k+1,n}]}\left|\frac{\phi_n(\ga_{k+1,n}r)}{r^2e^{z_{k+1}(r)}}-1\right|\to0,
\]
\[
g'(\mu_{k+1,n})\int_{\bar{\rho}_{k+1,n}}^{\rho_{k+1,n}}\la_n hf(u_n)rdr\to2 a,
\]
and
\[
\int_{\bar{\rho}_{k+1,n}}^{\rho_{k+1,n}}\la_n hf'(u_n)rdr\to 2 a
\]
 as $n\to \infty$ up to a subsequence.   Moreover,  for the sequences $(r_n)=(\rho_{k+1,n}),(\bar{\rho}_{k+1,n})$, we have that 
\[
u_n(r_n)=
\begin{cases}
\displaystyle\left(\frac{\delta_{k+1}}{\delta_k}+o(1)\right) \mu_{k,n}&\text{ if $q>1$},\\
\displaystyle\mu_{k,n}-\left(\log{\frac{\eta_{k}}{\eta_{k+1}}}+o(1)\right) \frac{g(\mu_{k,n})}{g'(\mu_{k,n})}&\text{ if $q=1$}, 
\end{cases}
\]
$r_n\to 0$, and $\phi_n(r_n)\to0$ as $n\to \infty$. Finally, we obtain that
\[
g'(\mu_{n})\int_{\rho_{k,n}}^{\bar{\rho}_{k+1,n}}\la_n hf(u_n)rdr\to 0
\]
as $n\to \infty$ up to a subsequence.
\end{lemma}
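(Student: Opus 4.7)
The proof would mirror Lemma~\ref{lem:c7}, starting from the $C^2_{\text{loc}}((0,\infty))$ convergence $z_{k+1,n}\to z_{k+1}$ provided by Lemma~\ref{lem:c13}. By a standard Cantor/diagonal extraction together with Lemma~\ref{lem:g1}, I would choose sequences $\bar R_n\to 0$ and $R_n\to\infty$ slowly enough that all six uniform-convergence statements listed in the conclusion hold on $[\bar R_n,R_n]$, together with $g(\mu_{k+1,n})^{-1}\log R_n\to 0$ and $g(\mu_{k+1,n})^{-1}\log(1/\bar R_n)\to 0$, and additionally $\bar R_n\ga_{k+1,n}>\sigma_n$. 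The last property is feasible because Lemma~\ref{lem:c12} gives $\sigma_n/\tau_n\to 0$ while $\tau_n/\ga_{k+1,n}$ is bounded (from $\tau_n\asymp r_{k+1,n}$ and $r_{k+1,n}/\ga_{k+1,n}\to a/\sqrt{2}$), hence $\sigma_n/\ga_{k+1,n}\to 0$. Then set $\bar\rho_{k+1,n}:=\bar R_n\ga_{k+1,n}$ and $\rho_{k+1,n}:=R_n\ga_{k+1,n}$.

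The height and $\phi_n$ asymptotics follow from the explicit form of $z_{k+1}$: as $r\to 0^+$ one has $z_{k+1}(r)\sim(2-a)\log(1/r)$, and as $r\to\infty$ one has $z_{k+1}(r)\sim -(2+a)\log r$. Both are $o(g(\mu_{k+1,n}))$ at $r=\bar R_n,R_n$ by the choice of extraction, so the identity $u_n(\ga_{k+1,n}r)=\mu_{k+1,n}+z_{k+1,n}(r)/g'(\mu_{k+1,n})$ combined with Lemmas~\ref{lem:g3}--\ref{lem:g4} converts the asymptotic of $\mu_{k+1,n}$ obtained in Lemma~\ref{lem:c13} into the claimed asymptotic at $\bar\rho_{k+1,n}$ and $\rho_{k+1,n}$. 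For $\phi_n$, the uniform convergence of $\phi_n(\ga_{k+1,n}r)/(r^2 e^{z_{k+1}(r)})$ to $1$, together with $\bar R_n^2 e^{z_{k+1}(\bar R_n)}\sim 2a^2 b\,\bar R_n^{a}\to 0$ and $R_n^2 e^{z_{k+1}(R_n)}\sim (2a^2/b)\, R_n^{-a}\to 0$, yields $\phi_n(\bar\rho_{k+1,n}),\phi_n(\rho_{k+1,n})\to 0$. The energy convergence $g'(\mu_{k+1,n})\int_{\bar\rho_{k+1,n}}^{\rho_{k+1,n}}\la_n hf(u_n)r\,dr\to 2a$ follows from the change of variables $r=\ga_{k+1,n}s$ and the uniform convergence of the integrand to $e^{z_{k+1}(s)}$ on $[\bar R_n,R_n]$, combined with the integrability of $e^{z_{k+1}}r$ at both $0$ and $\infty$ (giving $\int_0^\infty e^{z_{k+1}}r\,dr=2a$ from Lemma~\ref{lem:c13}); the $f'$-integral is handled identically.

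For the bridge estimate I would split $[\rho_{k,n},\bar\rho_{k+1,n}]=[\rho_{k,n},\sigma_n]\cup[\sigma_n,\bar\rho_{k+1,n}]$. On the first piece, Lemma~\ref{lem:c10} yields $g'(\mu_{k,n})\int_{\rho_{k,n}}^{\sigma_n}\la_n hf r\,dr\to 0$, and since $g'(\mu_{k,n})/g'(\mu_n)\to\tilde\eta_k>0$ by Lemmas~\ref{lem:g3}--\ref{lem:g4}, the same holds with $g'(\mu_n)$. On the second piece I would invoke Lemma~\ref{lem:D2} with $r_n=\sigma_n$, $s_n=\bar\rho_{k+1,n}$. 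The hypothesis $g'(u_n(\bar\rho_{k+1,n}))/g'(\mu_n)\to\tilde\eta_{k+1}>0$ is immediate from the height asymptotic, and the hypothesis $\psi_n(\sigma_n)\to c_1\in(0,2)$ is verified by direct computation: $\psi_n(\sigma_n)\to\tilde\eta_{k+1}\sum_{i=1}^k 2a_i/\tilde\eta_i=2-a_{k+1}\in(0,2)$, using (A$_k$), Lemma~\ref{lem:c10}, the height asymptotic at $\sigma_n$ via Lemmas~\ref{lem:g3}--\ref{lem:g4}, and Lemma~\ref{lem:b3}.

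The main obstacle is the remaining hypothesis $\sup_{[\sigma_n,\bar\rho_{k+1,n}]}\phi_n\to 0$, because the $C^2$-convergence on $[\bar R_n,R_n]$ does not a priori cover the scaled region $[\sigma_n/\ga_{k+1,n},\bar R_n]$. I would resolve this via Lemma~\ref{lem:D1}: since $g''/g'^2\to 0$ (from \eqref{g1}) and $rh'/h\to 0$ (as $r\to 0$), the sign of $\phi_n'$ on $[\sigma_n,r_{k+1,n}]$ is asymptotically controlled by $2-\psi_n$. If there were any interior critical point of $\phi_n$ in $(\sigma_n,r_{k+1,n})$ at which $\phi_n$ was bounded below by a positive constant, the scaling procedure of Lemma~\ref{lem:c13} applied at that point would extract an additional nontrivial bubble sitting strictly between $\sigma_n$ and $r_{k+1,n}$, contradicting the fact that $\phi_n(\tau_n)\to c$ was the only concentration detected between $\sigma_n$ and $\tilde t_n$. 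Hence $\phi_n$ is (asymptotically) monotone nondecreasing on $[\sigma_n,r_{k+1,n}]$, and in particular the sup on $[\sigma_n,\bar\rho_{k+1,n}]\subset[\sigma_n,r_{k+1,n}]$ equals $\phi_n(\bar\rho_{k+1,n})\to 0$. Applying Lemma~\ref{lem:D2} then closes the bridge.
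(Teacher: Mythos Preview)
Your approach tracks the paper's proof closely: same diagonal extraction of $\bar R_n\to 0$ and $R_n\to\infty$, same height and $\phi_n$ computations at the endpoints, same splitting of the bridge interval at $\sigma_n$, and same appeal to Lemma~\ref{lem:D2} on the second piece. The one point where your argument diverges from the paper and becomes shaky is the final paragraph, where you justify $\sup_{[\sigma_n,\bar\rho_{k+1,n}]}\phi_n\to 0$ by an ``additional bubble'' contradiction. Nothing in Lemmas~\ref{lem:c12}--\ref{lem:c13} asserts that $(\tau_n)$ is the \emph{only} concentration sequence in $[\sigma_n,\tilde t_n]$; those lemmas merely exhibit one such sequence. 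So your contradiction has no target as stated, and the claimed monotonicity of $\phi_n$ on $[\sigma_n,r_{k+1,n}]$ is not established.

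The paper closes this gap directly, without any contradiction argument. The proof of Lemma~\ref{lem:c12} already records (see \eqref{eq:infps} and \eqref{eq:supps}) that there are constants $0<c_1<c_2<2$ with $c_1\le\psi_n(r)\le c_2$ for all $r\in[\sigma_n,\tau_n]$ and large $n$. Since $\bar\rho_{k+1,n}/r_{k+1,n}\to 0$ while $r_{k+1,n}/\tau_n$ converges to a positive constant, one has $\bar\rho_{k+1,n}<\tau_n$ and hence $[\sigma_n,\bar\rho_{k+1,n}]\subset[\sigma_n,\tau_n]$. Lemma~\ref{lem:D1} then gives $\phi_n'>0$ on $[\sigma_n,\bar\rho_{k+1,n}]$ for large $n$, so $\sup_{[\sigma_n,\bar\rho_{k+1,n}]}\phi_n=\phi_n(\bar\rho_{k+1,n})\to 0$, and Lemma~\ref{lem:D2} applies. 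Replacing your last paragraph with this two-line observation completes the proof.
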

\begin{proof} From Lemma \ref{lem:c13} with Lemma \ref{lem:g1}, we get sequences $(\e_n),(R_n)$ of values such that $\e_n\to0$, $R_n\to \infty$,  $g(\mu_{k+1,n})^{-1}\log{\e_n}\to 0$, $g(\mu_{k+1,n})^{-1}\log{R_n}\to 0$, $\|z_{k+1,n}-z_{k+1}\|_{C^2([\e_n,R_n])}\to0$, $\sup_{r\in[\e_n,R_n]}|rz_{k+1,n}'(r)-rz_{k+1}'(r)|\to0$, 
\[
\sup_{r\in [\e_n,R_n]}\left|\frac{h(\ga_{k+1,n}r)}{h(r_{k+1,n})}\frac{f(u_n(\ga_{k+1,n}r)}{f(u_n(r_{k+1,n}))}-e^{z_{k+1}(r)}\right|\to0,
\]
\[
\sup_{r\in [\e_n,R_n]}\left|\frac{h(\ga_{k+1,n}r)}{h(r_{k+1,n})}\frac{f'(u_n(\ga_{k+1,n}r))}{f'(u_n(r_{k+1,n}))}-e^{z_{k+1}(r)}\right|\to0,
\]
\[
\sup_{r\in [\e_n,R_n]}\left|\frac{\phi_n(\ga_{k+1,n}r)}{r^2e^{z_{k+1}(r)}}-1\right|\to0,
\]
\[
g'(\mu_{k+1,n})\int_{\e_n \ga_{k+1,n}}^{R_n\ga_{k+1,n}}\la_n hf(u_n)rdr\to 2a,
\]
and
\[
\int_{\e_n \ga_{k+1,n}}^{R_n \ga_{k+1,n}}\la_n hg'(u_n)f(u_n)rdr\to 2a,
\]
as $n\to \infty$. Then setting $\bar{\rho}_{k+1,n}=\e_n \ga_{k+1,n}$ and $\rho_{k+1,n}=R_n \ga_{k+1,n}$ for all $n\in \mathbb{N}$, we obtain the first conclusions. 

Next we check the second ones. We see
\[
u_n(\bar{\rho}_{k+1,n})=\mu_{k+1,n}+\frac{z_{k+1}(\e_n)+o(1)}{g(\mu_{k+1,n})}\frac{g(\mu_{k+1,n})}{g'(\mu_{k+1,n})}
\] 
and
\[
u_n(\rho_{k+1,n})=\mu_{k+1,n}+\frac{z_{k+1}(R_n)+o(1)}{g(\mu_{k+1,n})}\frac{g(\mu_{k+1,n})}{g'(\mu_{k+1,n})}
\] 
as $n\to \infty$. Hence for the sequences $(r_n)=(\rho_{k+1,n}),(\bar{\rho}_{k+1,n})$, we deduce that  if $q>1$, then $u_n(r_n)/\mu_{k,n}\to \delta_{k+1}/\delta_k$ as $n\to \infty$ by our choice of $(\e_n)$ and $(R_n)$ and \eqref{g2} and if $q=1$, then
\[
\begin{split}
u_n(r_n)&=\mu_{k+1,n}+o(1)\cdot\frac{g(\mu_{k+1,n})}{g'(\mu_{k+1,n})}\\
&=\mu_{k,n}-\left(\log{\frac{\eta_k}{\eta_{k+1}}}+o(1)\right)\frac{g(\mu_{k,n})}{g'(\mu_{k,n})}
\end{split}
\]
by Lemma \ref{lem:g4}. It follows from Lemma \ref{lem:gl} that $\rho_{k+1,n}\to0$ as $n\to \infty$. Moreover, from our choice of $(\e_n)$ and $(R_n)$, we see that
\[
\begin{split}
\phi_n(\bar{\rho}_{k,n})
=(1+o(1))\frac{2a^2b \e_n^a}{(1+b\e_n^a)^2}\to0
\end{split}
\]
and similarly, $\phi_n(\rho_{k,n})\to0$ as $n\to \infty$.  This completes the second assertions. 

Finally, let $(\sigma_n)$ and $(\tau_n)$ be the sequences obtained in Lemmas \ref{lem:c10} and \ref{lem:c12} respectively. Then, if $\bar{\rho}_{k+1,n}\le \sigma_n$ for all large $n\in \mathbb{N}$, we clearly get from Lemma \ref{lem:c10} with Lemmas  \ref{lem:g3} and \ref{lem:g4} that 
\[
\lim_{n\to \infty}g'(\mu_n)\int_{\rho_{k,n}}^{\bar{\rho}_{k+1,n}}\la_n hf(u_n)rdr= 0.
\]
In the other case, we get $\sigma_n< \bar{\rho}_{k+1,n}<\tau_n$ for all $n\in \mathbb{N}$ up to a subsequence since $\bar{\rho}_{k+1,n}/r_{k+1,n}=\e_n \ga_{k+1,n}/r_{k+1,n}\to0$ and $r_{k+1,n}/\tau_n\to c_0$ as $n\to \infty$ for some value $c_0\not=0$  by our choice of $(r_{k+1,n})$ in the proof of Lemma \ref{lem:c13}. Recall that there exist constants $0<c_1<c_2<2$ such that $c_1\le\psi_n(r)\le c_2$ for all $r\in[\sigma_n,\tau_n]$ and all large $n\in \mathbb{N}$ by \eqref{eq:infps} and \eqref{eq:supps}. Then, we obtain from Lemma \ref{lem:D1} that, as $n\to \infty$, $\sup_{r\in[\sigma_n,\bar{\rho}_{k+1,n}]}\phi_n(r)=\phi_n(\bar{\rho}_{k+1,n})\to0$. Consequently, Lemma \ref{lem:D2} shows that   
\[
\lim_{n\to \infty}g'(\mu_{n})\int_{\sigma_n}^{\bar{\rho}_{k+1,n}}\la_n hf(u_n)rdr= 0.
\]
This with Lemma \ref{lem:c10} completes the final conclusion. We finish the proof.  
\end{proof}
Let us determine the number $a$.
\begin{lemma}\label{lem:c15} Let $a$ be the number obtained in Lemma \ref{lem:c13}. Then we have that $a=a_{k+1}$.
\end{lemma}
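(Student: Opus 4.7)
The plan is to evaluate the energy identity $\psi_n(r_{k+1,n})\to 2$, established in Lemma \ref{lem:c13} from $\phi_n'(r_{k+1,n})=0$ via Lemma \ref{lem:D1}, by splitting the representation
\[
\psi_n(r_{k+1,n})=g'(\mu_{k+1,n})\int_0^{r_{k+1,n}}\la_n hf(u_n)rdr
\]
into three pieces over $[0,\rho_{k,n}]$, $[\rho_{k,n},\bar{\rho}_{k+1,n}]$, and $[\bar{\rho}_{k+1,n},r_{k+1,n}]$, and then solving for $a$.

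First I would use (A$_k$), Lemma \ref{lem:c14}, and the rescaling around $r_{k+1,n}$ to compute the three contributions. The inductive hypothesis gives $g'(\mu_n)\int_0^{\rho_{k,n}}\la_n hf(u_n)rdr\to \sum_{i=1}^k2a_i/\tilde{\eta}_i$; the intermediate piece $g'(\mu_n)\int_{\rho_{k,n}}^{\bar{\rho}_{k+1,n}}\la_n hf(u_n)rdr\to 0$ by the last assertion of Lemma \ref{lem:c14}. For the third, since $r_{k+1,n}/\ga_{k+1,n}\to a/\sqrt{2}$ (from $\phi_n(r_{k+1,n})\to a^2/2$) and $\bar{\rho}_{k+1,n}/\ga_{k+1,n}\to 0$, Lemma \ref{lem:c14} with $z_{k+1,n}\to z_{k+1}$ and the explicit form $e^{z_{k+1}(r)}=2a^2 b r^{a-2}/(1+br^a)^2$ give
\[
g'(\mu_{k+1,n})\int_{\bar{\rho}_{k+1,n}}^{r_{k+1,n}}\la_n hf(u_n)rdr\longrightarrow \int_0^{a/\sqrt{2}}e^{z_{k+1}}rdr.
\]
The substitution $u=br^a$ (noting $b(a/\sqrt{2})^a=1$) reduces this to $\int_0^1 2a/(1+u)^2\,du=a$.

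Next I must match the weight $g'(\mu_{k+1,n})$ used in the third piece with the weight $g'(\mu_n)$ used in the first two. In the case $q>1$, $\mu_{k+1,n}/\mu_n\to \delta_{k+1}$ and Lemma \ref{lem:g3} yield $g'(\mu_{k+1,n})/g'(\mu_n)\to \delta_{k+1}^{p-1}=\tilde{\eta}_{k+1}$ (using $p/q=p-1$). In the case $q=1$, $\mu_{k+1,n}=\mu_n-(\log(1/\eta_{k+1})+o(1))g(\mu_n)/g'(\mu_n)$ and Lemma \ref{lem:g4} give $g'(\mu_{k+1,n})/g'(\mu_n)\to \eta_{k+1}=\tilde{\eta}_{k+1}$. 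So in either case the common limit ratio is $\tilde{\eta}_{k+1}$, and passing to the limit in the three-piece decomposition produces
\[
2 \;=\; \tilde{\eta}_{k+1}\sum_{i=1}^k\frac{2a_i}{\tilde{\eta}_i}\;+\;0\;+\;a.
\]

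The final step combines Lemma \ref{lem:b3} with the recurrence formulas \eqref{b2} and \eqref{b4}, both of which can be written uniformly as $a_{k+1}=2-(\tilde{\eta}_{k+1}/\tilde{\eta}_k)(2+a_k)$. Since Lemma \ref{lem:b3} gives $\tilde{\eta}_k\sum_{i=1}^k 2a_i/\tilde{\eta}_i=2+a_k$, we obtain $\tilde{\eta}_{k+1}\sum_{i=1}^k 2a_i/\tilde{\eta}_i=(\tilde{\eta}_{k+1}/\tilde{\eta}_k)(2+a_k)=2-a_{k+1}$, whence $2=(2-a_{k+1})+a$, i.e. $a=a_{k+1}$. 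The principal obstacle is verifying uniformly across $q\in(1,2)$ and $q=1$ that $g'(\mu_{k+1,n})/g'(\mu_n)\to\tilde{\eta}_{k+1}$, which requires careful use of Lemmas \ref{lem:g3} and \ref{lem:g4} together with the different asymptotic forms of $\mu_{k+1,n}$ in the two regimes; once this is in hand the rest is bookkeeping with \eqref{b2}, \eqref{b4}, and Lemma \ref{lem:b3}.
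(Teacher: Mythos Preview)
Your proposal is correct and follows essentially the same route as the paper: both evaluate $\lim_{n\to\infty}\psi_n(r_{k+1,n})=2$ by splitting the integral at $\rho_{k,n}$ and $\bar{\rho}_{k+1,n}$, use \eqref{as:en}, Lemma \ref{lem:c14}, and Lemmas \ref{lem:g3}--\ref{lem:g4} to identify the first piece as $\tilde{\eta}_{k+1}\sum_{i=1}^k 2a_i/\tilde{\eta}_i$ and the third as $\int_0^{a/\sqrt{2}}e^{z_{k+1}}r\,dr=a$, and then invoke Lemma \ref{lem:b3} with \eqref{b2}--\eqref{b4}. The only cosmetic difference is that the paper factors $g'(\mu_{k+1,n})/g'(\mu_n)$ through $g'(\mu_{k,n})$, whereas you compute the ratio directly from the asymptotic form of $\mu_{k+1,n}$; both yield $\tilde{\eta}_{k+1}$.
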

\begin{proof}  Using Lemma \ref{lem:c13} and noting \eqref{as:en} with Lemma \ref{lem:c14} and Lemmas \ref{lem:g3} and \ref{lem:g4}, we calculate
\[
\begin{split}
2&=\lim_{n\to \infty}\psi_n(r_{k+1,n})\\
&=\lim_{n\to \infty}\Bigg(
\frac{g'(\mu_{k+1,n})}{g'(\mu_{k,n})}\frac{g'(\mu_{k,n})}{g'(\mu_{n})}g'(\mu_n)\int_0^{\rho_{k,n}}\la_n hf(u_n)rdr+o(1)\\
&\ \ \ \ \ \ \  \ \ \ \ \ \ \ \ \ \ \ \ \ \ \ \ \ \ \ \ \ \ +\int_{\bar{\rho}_{k+1,n}/\ga_{k+1,n}}^{r_{k+1,n}/\ga_{k+1,n}} \frac{h(\ga_{k+1,n}r)}{h(r_{k+1,n})}\frac{f(u_n(\ga_{k+1,n}r))}{f(\mu_{k+1,n})}rdr\Bigg)\\
&=
\tilde{\eta}_{k+1} \sum_{i=1}^k \frac{2a_i}{\tilde{\eta}_i}+a
\\
&=2-a_{k+1}+a
\end{split}
\]
by Lemma \ref{lem:b3}, \eqref{b2}, and \eqref{b4}. This completes the proof. 
\end{proof}
Lastly, we deduce the pointwise estimate for the next step. 
\begin{lemma}\label{lem:c16} We obtain that
\[
z_{k+1,n}(r)\le -(2+a_{k+1}+o(1))\log{r}
\]
for all $r\in [\rho_{k+1,n}/\ga_{k+1,n},1/\ga_{k+1,n}]$ and all large $n\in \mathbb{N}$ where $o(1)\to0$ as $n\to \infty$ uniformly for all $r$ in the interval.
\end{lemma}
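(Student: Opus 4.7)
The plan is to follow the template used in Lemma \ref{lem:c7} for the first bubble: exploit the monotonicity of $s\mapsto s z_{k+1,n}'(s)$, which follows directly from the nonnegativity of the right-hand side in \eqref{zkn}, together with the decay of the limit profile $z_{k+1}$ at infinity, starting from the endpoint $R_n:=\rho_{k+1,n}/\ga_{k+1,n}$ and integrating outward.

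The preparatory computation is to read off the asymptotics of $z_{k+1}$ explicitly. With $b=(\sqrt{2}/a_{k+1})^{a_{k+1}}$, a direct calculation gives
\[
rz_{k+1}'(r)=-(2-a_{k+1})-\frac{2a_{k+1}br^{a_{k+1}}}{1+br^{a_{k+1}}}\longrightarrow -(2+a_{k+1})
\]
as $r\to\infty$, together with the one-term expansion $z_{k+1}(r)=-(2+a_{k+1})\log r+O(1)$ as $r\to\infty$. Lemma \ref{lem:c14} supplies $\|z_{k+1,n}-z_{k+1}\|_{C^2([\bar{\rho}_{k+1,n}/\ga_{k+1,n},R_n])}\to 0$ with $R_n\to\infty$ and $g(\mu_{k+1,n})^{-1}\log R_n\to 0$, so at the endpoint I get
\[
R_n z_{k+1,n}'(R_n)=-(2+a_{k+1})+o(1),\qquad z_{k+1,n}(R_n)=-(2+a_{k+1})\log R_n+O(1).
\]

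Then I would apply monotonicity: since the right-hand side of \eqref{zkn} is nonnegative, $sz_{k+1,n}'(s)$ is nonincreasing in $s$, hence $sz_{k+1,n}'(s)\le R_n z_{k+1,n}'(R_n)=-(2+a_{k+1})+o(1)$ for every $s\in[R_n,1/\ga_{k+1,n}]$. Integrating $z_{k+1,n}'(s)\le (-(2+a_{k+1})+o(1))/s$ from $R_n$ to $r$ and combining with the value at $R_n$ yields
\[
z_{k+1,n}(r)\le z_{k+1,n}(R_n)-(2+a_{k+1}+o(1))\log\frac{r}{R_n}\le -(2+a_{k+1})\log r+O(1)+o(1)\log\frac{r}{R_n}.
\]
Finally, since $\log r\ge \log R_n\to\infty$ throughout $[R_n,1/\ga_{k+1,n}]$, both the bounded remainder and the term $o(1)\log(r/R_n)\le o(1)\log r$ can be absorbed into the coefficient of $\log r$ uniformly in $r$, producing the claimed bound $z_{k+1,n}(r)\le -(2+a_{k+1}+o(1))\log r$. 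I do not anticipate any real obstacle; the only mildly delicate point is the uniformity of the $o(1)$ in $r$, which is automatic because monotonicity supplies a single supremum bound at the left endpoint $R_n$ that controls the integrand over the entire interval.
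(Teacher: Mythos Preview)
Your argument is correct and essentially identical to the paper's: both use the monotonicity of $r\mapsto r z_{k+1,n}'(r)$ from \eqref{zkn}, evaluate at the endpoint $R_n=\rho_{k+1,n}/\ga_{k+1,n}$ via Lemma \ref{lem:c14}, integrate outward, and absorb the bounded remainder into the $o(1)\log r$. The only small adjustment is that to conclude $R_n z_{k+1,n}'(R_n)=-(2+a_{k+1})+o(1)$ you should invoke the uniform control $\sup|rz_{k+1,n}'-rz_{k+1}'|\to 0$ recorded in Lemma \ref{lem:c14} rather than the $C^2$ convergence alone, since $R_n\to\infty$.
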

\begin{proof} Set $R_n=\rho_{k+1,n}/\ga_{k+1,n}$ for all large $n\in \mathbb{N}$. Noting the fact that  $rz_{k+1,n}'(r)$ is nonincreasing  on $[R_n,1/\ga_{k+1,n}]$ by \eqref{zkn} and also using Lemma \ref{lem:c14}, we have  any $r\in[R_n,1/\ga_{k+1,n}]$  that 
\[
\begin{split}
rz_{k+1,n}'(r)&\le  R_nz_{k+1}'(R_n)+o(1)=-(2+a_{k+1}+o(1))
\end{split}
\] 
as $n\to \infty$ . Integrating over $[R_n,r]$, we obtain
\[
\begin{split}
z_{k+1,n}(r)&\le z_{k+1}(R_n)-(2+a_{k+1}+o(1))\log{\frac r{R_n}}+o(1)\\
&\le -(2+a_{k+1}+o(1))\log{r}
\end{split}
\]
as $n\to \infty$. We finish the proof. 
\end{proof}
\section{Infinite concentration estimates}\label{sec:prf}
In this final section, we collect all the results in the previous sections and complete the infinite concentration estimates  in Theorems \ref{th:sup1} and \ref{th:sup2}. 
\subsection{Proofs of Theorems \ref{th:sup1} and \ref{th:sup2}}
We first show Theorem \ref{th:sup1}. 
\begin{proof}[Proof of Theorem \ref{th:sup1}] We first assume that for any number $k\in \mathbb{N}$, the assumption (A$_k$) holds true. Then for the sequences $(r_n)=(r_{k+1,n}),(\rho_{k+1,n})$, and $(\bar{\rho}_{k+1,n})$ in Lemmas \ref{lem:c13} and \ref{lem:c14}, we have  that if $q>1$, then $u_n(r_n)/\mu_n
\to \delta_{k+1}$ and if $q=1$, then from Lemma \ref{lem:g4},
\[
\begin{split}
u_n(r_n)
&=\mu_n-\left(\log{\frac{1}{\eta_{k+1}}}+o(1)\right)\frac{g(\mu_{n})}{g'(\mu_{,n})}
\end{split}
\]
as $n\to \infty$. Moreover, by \eqref{as:en} and Lemma \ref{lem:c14} with Lemmas \ref{lem:c15}, \ref{lem:g3}, and \ref{lem:g4}, we have that
\[
\lim_{n\to \infty}g'(\mu_n)\int_0^{\rho_{k+1,n}}\la_n hf(u_n)rdr=\sum_{i=1}^{k+1}\frac{2a_i}{\tilde{\eta}_i}
\]
for all $q\ge1$. 

Let us start main argument of the proof. We choose the desired sequences by induction.  We first take the sequences $(r_{1,n}),(\ga_{1,n}),(\rho_{1,n})\subset(0,1)$ of values and $(z_{1,n})$ of functions from Lemmas \ref{lem:c5}, \ref{lem:c6}, and \ref{lem:c7}. Moreover, we define $\bar{\rho}_{1,n}=0$ for all $n\in \mathbb{N}$. Then, these sequences complete all the assertions of the theorem,  except for \eqref{eq:sup02}, for $k=1$. Next, we observe that they verify all the conditions in  (A$_1$) in Section \ref{sec:ic}. Then we can take sequences $(r_{2,n}),(\ga_{2,n}),(\bar{\rho}_{2,n}),(\rho_{2,n})\subset(0,1)$ of values and $(z_{2,n})$ of functions as in Lemmas \ref{lem:c13} and \ref{lem:c14}. Consequently, we complete \eqref{eq:sup02} for $k=1$. Moreover, noting also Lemma \ref{lem:c15} and the remark above, we confirm that all the conclusions of the theorem, except for \eqref{eq:sup02}, are satisfied  for $k=2$. In addition, using also Lemma \ref{lem:c16}, we check that they ensure  all the conditions in (A$_2$). Now we assume that this procedure is repeated $l-1$ times for some $l\in \mathbb{N}$ with $l\ge2$. That is, we suppose that we find   sequences $(r_{k,n}),(\ga_{k,n}),(\bar{\rho}_{k,n}),(\rho_{k,n})\subset(0,1)$ of values and $(z_{k,n})$ of functions for all $k=1,\cdots,l$ which satisfy all the assertions in the theorem for all $k=1,\cdots,l$ except for \eqref{eq:sup02} with $k=l$ and all the conditions  in (A$_k$) for all $k=1,\cdots,l$. Then thanks to (A$_{l}$), we can choose sequences $(r_{l+1,n}),(\ga_{l+1,n}),(\bar{\rho}_{l+1,n}),(\rho_{l+1,n})\subset(0,1)$ of values and $(z_{l+1,n})$ of functions as in Lemmas \ref{lem:c13} and \ref{lem:c14}. Then similarly to the previous discussion, we confirm \eqref{eq:sup02} for $k=l$. Moreover, we check all the assertions in the theorem except for \eqref{eq:sup02} for $k=l+1$ and all the conditions in (A$_{l+1}$). In this way, we can repeat the argument  arbitrary times and obtain the desired families of the sequences which satisfy all the assertions of the theorem.  This finishes the proof.   
\end{proof}
\begin{remark}\label{rmk:osc} We remark that from the proof above with  Lemmas \ref{lem:c7} and \ref{lem:c14}, we have that for all $k\in \mathbb{N}$,
\[
 \lim_{n\to \infty}\frac{\log{\frac{\rho_{k,n}}{\ga_{k,n}}}}{g(\mu_{k,n})}=0,\ \ \ \lim_{n\to \infty}\frac{\log{\frac{\bar{\rho}_{k,n}}{\ga_{k,n}}}}{g(\mu_{k,n})}=0,
\]
where we assumed $k\not=1$ in the latter formula, and 
\[
\lim_{n\to \infty}\sup_{r\in [\bar{\rho}_{k,n}/\ga_{k,n}, \rho_{k,n}/\ga_{k,n}]}\left|\frac{\phi_n(\ga_{k,n} r)}{r^2e^{z_k(r)}}-1\right|=0.
\]
We will use these fact in our oscillation analysis in \cite{N3}. 
\end{remark}
For the proof of Theorem \ref{th:sup2}, we give some preliminary lemmas. In the following, we assume that for all $k\in \mathbb{N}$, $(r_{k,n}),(\ga_{k,n}),(\bar{\rho}_{k,n}),(\rho_{k,n})\subset(0,1)$,  and $(z_{k,n})$ are the sequences in Theorem \ref{th:sup1} and put $\mu_{k,n}=u_n(r_{k,n})$ for all $n\in \mathbb{N}$ as before. Note that, as discussed in the proof of the theorem above, we may suppose that they satisfy all the assertions of (A$_k$) for all $k\in \mathbb{N}$. We first prove the next lemma. 
\begin{lemma}\label{lem:c18} For any numbers $k,l\in \mathbb{N}$, we have that
\[
\log{\frac{r_{l,n}}{r_{k,n}}}=\frac{g(\mu_{k,n})}2\left(1-\frac{\eta_l}{\eta_k}+o(1)\right)
\]
as $n\to \infty$.
\end{lemma}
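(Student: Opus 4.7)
The plan is to first establish the statement for $l = k+1$ and then extend to arbitrary $l$ by a telescoping induction. The key is to exploit Lemma \ref{lem:c8}, together with the energy recurrence formulas \eqref{b1} and \eqref{b3}, to produce matching upper and lower bounds on $\log(\sigma_n/\ga_{k,n})/g(\mu_{k,n})$, where $(\sigma_n)$ is the intermediate sequence supplied by Lemma \ref{lem:c10}.

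For the base case $l = k+1$, I apply Lemma \ref{lem:c8} with $r_n = \sigma_n$. The upper bound \eqref{eq:es2}, combined with Lemmas \ref{lem:g3} and \ref{lem:g4} applied to $u_n(\sigma_n)$ (whose asymptotics come from Lemma \ref{lem:c10}), directly yields $\log(\sigma_n/\ga_{k,n})/g(\mu_{k,n}) \le (1 - \eta_{k+1}/\eta_k + o(1))/2$. For the lower bound \eqref{eq:es1}, the numerator $(g'(\mu_{k,n})/g(\mu_{k,n}))(\mu_{k,n} - u_n(\sigma_n))$ tends to $p(1-\delta_{k+1}/\delta_k)$ via \eqref{g2} when $q > 1$ and to $\log(\eta_k/\eta_{k+1})$ via Lemma \ref{lem:g4} when $q = 1$, while the denominator $g'(\mu_{k,n}) \int_0^{\sigma_n} \la_n hf(u_n) rdr$ tends to $2 + a_k$ by (A$_k$), Lemma \ref{lem:c10}, and Lemma \ref{lem:b3}. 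The recurrence relations \eqref{b1} and \eqref{b3} provide precisely the identities
\[
\frac{p(1-\delta_{k+1}/\delta_k)}{2+a_k} = \frac{1 - \eta_{k+1}/\eta_k}{2}, \qquad \frac{\log(\eta_k/\eta_{k+1})}{2+a_k} = \frac{1 - \eta_{k+1}/\eta_k}{2},
\]
so that the lower and upper bounds coincide, yielding $\log(\sigma_n/\ga_{k,n}) = (g(\mu_{k,n})/2)(1 - \eta_{k+1}/\eta_k + o(1))$. Since $\sigma_n < r_{k+1,n}$ while \eqref{eq:es2} applied directly at $r_n = r_{k+1,n}$ (using $g(\mu_{k+1,n})/g(\mu_{k,n}) \to \eta_{k+1}/\eta_k$) delivers the same asymptotic upper bound, a sandwich argument forces the same limit for $\log(r_{k+1,n}/\ga_{k,n})$. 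Combining with $\log(r_{k,n}/\ga_{k,n}) = O(1)$ (since $r_{k,n}/\ga_{k,n} \to a_k/\sqrt{2}$ from Lemmas \ref{lem:c5} and \ref{lem:c13}), this settles the case $l = k+1$.

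For general $l > k$, I induct on $l - k$: decompose $\log(r_{l,n}/r_{k,n}) = \log(r_{l,n}/r_{l-1,n}) + \log(r_{l-1,n}/r_{k,n})$, apply the $(l-1,l)$ base case and the $(k,l-1)$ inductive hypothesis, and use $g(\mu_{l-1,n})/g(\mu_{k,n}) \to \eta_{l-1}/\eta_k$ together with the elementary identity $(\eta_{l-1}/\eta_k)(1 - \eta_l/\eta_{l-1}) + (1 - \eta_{l-1}/\eta_k) = 1 - \eta_l/\eta_k$ to close the telescoping. For $l < k$, the result follows by applying the already-proved case with the roles of $k$ and $l$ swapped, and then using $g(\mu_{l,n})/g(\mu_{k,n}) \to \eta_l/\eta_k$ to convert the prefactor back to $g(\mu_{k,n})$. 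The case $l = k$ is trivial.

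The main obstacle is the matching of the two bounds in Lemma \ref{lem:c8}. A naive application of \eqref{eq:es1} at $r_n = r_{k+1,n}$ is too weak, because $g'(\mu_{k,n}) \int_0^{r_{k+1,n}} \la_n hf(u_n) rdr \to 2\tilde{\eta}_k/\tilde{\eta}_{k+1} = 2(2+a_k)/(2-a_{k+1})$ already incorporates part of the $(k+1)$-th bubble's mass and produces a strictly weaker lower bound. Working at the intermediate radius $\sigma_n$, where Lemma \ref{lem:c10} guarantees that no new bubble energy has yet been picked up, is what allows the recurrence formulas \eqref{b1} and \eqref{b3} to close the gap exactly; the sharp upper bound at $r_{k+1,n}$ then completes the sandwich.
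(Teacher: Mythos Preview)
Your argument is correct, but it is far more elaborate than what the paper does. The paper's proof is essentially two lines: from \eqref{eq:sup0} one has $\la_n r_{i,n}^2 h(r_{i,n}) f'(\mu_{i,n})\to a_i^2/2$, and taking logarithms together with \eqref{eq:lgg} (so that $\log f'(\mu_{i,n})=g(\mu_{i,n})(1+o(1))$) gives $2\log(1/r_{i,n})=g(\mu_{i,n})(1+o(1))-\log(1/\la_n)$ for $i=k,l$; subtracting eliminates $\la_n$ and yields the result directly for \emph{any} pair $(k,l)$, with Lemmas \ref{lem:g3}--\ref{lem:g4} supplying $g(\mu_{l,n})/g(\mu_{k,n})\to\eta_l/\eta_k$.

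Your route instead rebuilds this information from the pointwise bounds of Lemma \ref{lem:c8} at the intermediate radius $\sigma_n$, and then invokes the recurrence identities \eqref{b1}/\eqref{b3} to close the sandwich, followed by a telescoping induction. This works---your verification that the lower bound at $\sigma_n$ matches the upper bound precisely because of \eqref{b1}/\eqref{b3} is correct, and the sandwich with $r_{k+1,n}$ is fine---but all of that machinery is redundant here: the algebraic content of \eqref{eq:sup0} already gives $\log(1/r_{k,n})$ in terms of $g(\mu_{k,n})$ and $\la_n$ without any analysis of energy between bubbles. The paper's approach also treats $l<k$ and $l>k$ symmetrically in one stroke, avoiding your separate case analysis. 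Your method does have the minor conceptual merit of showing that the same asymptotics are recoverable purely from the Green-type estimates, but for this particular lemma the direct logarithmic argument is decisively simpler.
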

\begin{proof} For $i=k,l$, from \eqref{eq:sup0} and \eqref{eq:lgg}, we get
\[
2\log{\frac1{r_{i,n}}}= g(\mu_{i,n})(1+o(1))-\log{\frac1{\la_n}}
\]
as $n\to \infty$. 
Combining this formula for $i=k$ with that for $i=l$, we have 
\[
\log{\frac{r_{l,n}}{r_{k,n}}}=\frac{g(\mu_{k,n})}2\left(1-\frac{g(\mu_{l,n})}{g(\mu_{k,n})}+o(1)\right)
\]
as $n\to \infty$. Then  Lemmas \ref{lem:g3} and \ref{lem:g4} prove the desired formula. This completes the proof. 
\end{proof}
Then, we get the next estimate for the scaling parameter. 
\begin{lemma}\label{lem:c18} For any $k\in \mathbb{N}$, we have that
\[
\liminf_{n\to \infty}\frac{\log{\frac1{\ga_{k,n}}}}{g(\mu_{k,n})}\ge\frac12.
\]
\end{lemma}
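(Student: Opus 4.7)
The plan is to leverage the entire infinite tower of concentration centres $(r_{l,n})_{l\in\mathbb{N}}$ provided by Theorem \ref{th:sup1} together with the asymptotic already established in the preceding Lemma \ref{lem:c18}, and then to let the auxiliary index $l\to\infty$.

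First I would replace $\gamma_{k,n}$ by $r_{k,n}$: from \eqref{eq:sup0} one has $(r_{k,n}/\gamma_{k,n})^2=\lambda_n r_{k,n}^2 h(r_{k,n}) f'(\mu_{k,n})\to a_k^2/2$, so $\log(\gamma_{k,n}/r_{k,n})=O(1)$. Dividing by $g(\mu_{k,n})\to\infty$ gives
\[
\frac{\log(1/\gamma_{k,n})}{g(\mu_{k,n})}=\frac{\log(1/r_{k,n})}{g(\mu_{k,n})}+o(1),
\]
and the task reduces to showing $\liminf_{n\to\infty}\log(1/r_{k,n})/g(\mu_{k,n})\ge 1/2$.

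For each fixed $l\ge k$ (which is available after the standard diagonal subsequence extraction, since the concentration centres in Theorem \ref{th:sup1} are constructed inductively), the preceding lemma supplies
\[
\log\frac{r_{l,n}}{r_{k,n}}=\frac{g(\mu_{k,n})}{2}\Bigl(1-\frac{\eta_l}{\eta_k}+o(1)\Bigr).
\]
I would then use the trivial inequality $\log(1/r_{l,n})\ge 0$ coming from $r_{l,n}\in(0,1)$ to write $\log(1/r_{k,n})=\log(r_{l,n}/r_{k,n})+\log(1/r_{l,n})\ge \log(r_{l,n}/r_{k,n})$. Dividing by $g(\mu_{k,n})$ and taking $\liminf_{n\to\infty}$ yields
\[
\liminf_{n\to\infty}\frac{\log(1/r_{k,n})}{g(\mu_{k,n})}\ge \frac{1}{2}\Bigl(1-\frac{\eta_l}{\eta_k}\Bigr)
\]
for every $l\ge k$. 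Since $\eta_l\to 0$ as $l\to\infty$ by Lemma \ref{lem:b2}, letting $l\to\infty$ on the right-hand side gives the bound $\liminf_n\log(1/r_{k,n})/g(\mu_{k,n})\ge 1/2$, which together with the reduction above completes the proof.

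There is essentially no substantial obstacle: the argument is a two-step consequence of the preceding lemma once one observes that the infinite tower of concentration centres combined with $\eta_l\to 0$ forces the logarithmic depth of each $r_{k,n}$ to be at least half of $g(\mu_{k,n})$. The only mildly delicate point is the diagonal extraction that allows all $r_{l,n}$ to be used simultaneously, which is standard.
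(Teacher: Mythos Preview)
Your proof is correct and essentially identical to the paper's: both use $r_{l,n}\in(0,1)$ together with the preceding lemma's asymptotic $\log(r_{l,n}/r_{k,n})=\tfrac{g(\mu_{k,n})}{2}(1-\eta_l/\eta_k+o(1))$ and then send $l\to\infty$ (the paper phrases this as choosing $l$ with $\eta_l/\eta_k<\varepsilon$ for arbitrary $\varepsilon$). The only cosmetic difference is that you first reduce $\gamma_{k,n}$ to $r_{k,n}$ via \eqref{eq:sup0}, whereas the paper carries the $O(1)$ term $\log(r_{k,n}/\gamma_{k,n})$ along in a single chain of inequalities.
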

\begin{proof} 
Assume $k\in \mathbb{N}$. Fix any value $\e\in(0,1)$. Noting Lemma \ref{lem:b2}, choose a number $l\in \mathbb{N}$ so that $\eta_{l}/\eta_k<\e$. Then from the previous lemma and Lemma \ref{eq:sup0}, we get that 
\[
\frac{\log{\frac1{\ga_{k,n}}}}{g(\mu_{k,n})}\ge \frac{\log{\frac{r_{l,n}}{r_{k,n}}}+\log{\frac{r_{k,n}}{\ga_{k,n}}}}{g(\mu_{k,n})}\ge \frac{1}{2}\left(1-\e\right)
\]
for all large $n\in \mathbb{N}$. Since $\e\in (0,1)$ is arbitrary, we prove 
\[
\liminf_{n\to \infty} \frac{\log{\frac1{\ga_{k,n}}}}{g(\mu_{k,n})}\ge\frac12.
\]
This completes the proof. 
\end{proof}
The next lemma will give an improved pointwise estimate. 
\begin{lemma} Fix any $k\in \mathbb{N}$. If $q>1$, choose any value $\e\in(0,1)$ and if $q=1$, select any number $M>0$. Then, there exists a sequence $(r_n)\subset (\rho_{k,n},1)$ such that 
\[
\liminf_{n\to \infty}g'(\mu_{k,n})\int_0^{r_n}\la_n hf(u_n)rdr > \begin{cases}
2p&\text{ if }q>1,\\
M&\text{ if }q=1,
\end{cases}
\]
and 
\[
z_{k,n}(r_n/\ga_{k,n})\le
\begin{cases}  -2p(1-\e)\log{(r_n/\ga_{k,n})}&\text{ if }q>1,\\
-M\log{(r_n/\ga_{k,n})}&\text{ if }q=1,
\end{cases}
\]
for all large $n\in \mathbb{N}$. 
\end{lemma}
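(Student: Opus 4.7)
The plan is to apply Lemma \ref{lem:c11} with a sufficiently small value of $\eta>0$ and take $(r_n)$ to be the sequence $(t_n)$ produced by that lemma. Since Lemma \ref{lem:c11} yields $(t_n)\subset (\sigma_n,1)\subset (\rho_{k,n},1)$, the required membership is automatic; both the energy bound and the pointwise estimate will then follow from the defining relation on $u_n(t_n)$, the upper bound \eqref{eq:es2}, and the growth comparisons of Lemmas \ref{lem:g3} and \ref{lem:g4}.

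First I would establish the energy estimate. Lemma \ref{lem:c11} provides a lower bound in terms of $g'(\mu_n)$; to convert it into one in terms of $g'(\mu_{k,n})$, multiply by $g'(\mu_{k,n})/g'(\mu_n)$, which tends to $\tilde{\eta}_k=\delta_k^{p-1}$ for $q>1$ (Lemma \ref{lem:g3}) and to $\tilde{\eta}_k=\eta_k$ for $q=1$ (Lemma \ref{lem:g4}). These factors exactly cancel those of Lemma \ref{lem:c11}, yielding
\[
\liminf_{n\to\infty}g'(\mu_{k,n})\int_0^{t_n}\la_n hf(u_n)rdr\ge
\begin{cases}
\dfrac{2p\bigl(\delta_{k+1}/\delta_k-\eta^{1/p}\bigr)}{(\delta_{k+1}/\delta_k)^p-\eta}&\text{if }q>1,\\
\dfrac{2\bigl(\log(1/\eta)-\log(\eta_k/\eta_{k+1})\bigr)}{\eta_{k+1}/\eta_k-\eta}&\text{if }q=1.
\end{cases}
\]
As $\eta\to 0^+$ the $q>1$ expression tends to $2p(\delta_k/\delta_{k+1})^{p-1}>2p$ (using $\delta_k>\delta_{k+1}$ and $p>2$) while the $q=1$ expression diverges to $+\infty$. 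Hence fixing $\eta>0$ small enough ensures that the first required strict inequality holds.

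Next I would verify the pointwise estimate for the same choice of $\eta$. The relation $u_n(t_n)=\eta^{1/p}\mu_{k,n}$ together with \eqref{g2} gives, for $q>1$,
\[
z_{k,n}(t_n/\ga_{k,n})=g'(\mu_{k,n})(u_n(t_n)-\mu_{k,n})=-(p+o(1))(1-\eta^{1/p})g(\mu_{k,n}),
\]
while the relation $u_n(t_n)=\mu_{k,n}-\log(1/\eta)g(\mu_{k,n})/g'(\mu_{k,n})$ for $q=1$ produces $z_{k,n}(t_n/\ga_{k,n})=-\log(1/\eta)\,g(\mu_{k,n})$ directly. The upper bound \eqref{eq:es2}, combined with Lemmas \ref{lem:g3} and \ref{lem:g4}, provides $\log(t_n/\ga_{k,n})\le \tfrac{g(\mu_{k,n})}{2}(1-\eta+o(1))$. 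The target inequality $z_{k,n}(t_n/\ga_{k,n})\le -c\log(t_n/\ga_{k,n})$, with $c=2p(1-\e)$ for $q>1$ and $c=M$ for $q=1$, then reduces to
\[
\frac{1-\eta^{1/p}}{1-\eta}\ge 1-\e+o(1)\quad (q>1),\qquad \log(1/\eta)\ge \frac{M(1-\eta)}{2}+o(1)\quad (q=1),
\]
both of which hold for every sufficiently small fixed $\eta>0$, since the first ratio tends to $1>1-\e$ and $\log(1/\eta)\to \infty$ as $\eta\to 0^+$.

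The principal technical point is the asymptotic bookkeeping: the passage between $g'(\mu_n)$- and $g'(\mu_{k,n})$-weighted integrals via the ratio $\tilde{\eta}_k$, and the application of \eqref{eq:es2} in the form supplied by Lemmas \ref{lem:g3} and \ref{lem:g4}. Because the two requirements on $\eta$ translate into independent smallness conditions, each satisfied for all sufficiently small $\eta>0$, a single common choice of $\eta$ closes the argument.
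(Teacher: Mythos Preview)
Your proposal is correct and follows essentially the same route as the paper: take $(r_n)=(t_n)$ from Lemma \ref{lem:c11} with $\eta$ small, convert the $g'(\mu_n)$--weighted bound to a $g'(\mu_{k,n})$--weighted one via Lemmas \ref{lem:g3}/\ref{lem:g4}, and for the pointwise inequality combine the explicit value of $z_{k,n}(t_n/\ga_{k,n})$ with the upper bound \eqref{eq:es2} (equivalently, the lower bound $g(\mu_{k,n})\ge 2\log(t_n/\ga_{k,n})/(1-\eta+o(1))$). The only cosmetic difference is that the paper writes the last step as a direct chain of inequalities for $z_{k,n}$, whereas you reduce to a coefficient comparison in $\eta$; both arguments are the same in substance.
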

\begin{proof} Take any value $\eta\in(0,\eta_{k+1}/\eta_k)$, which will be chosen to be smaller if necessary, and a sequence $(t_n)\subset (\rho_{k,n},1)$ as in Lemma \ref{lem:c11}.  In particular, we have that 
\[
u_n(t_n)=
\begin{cases}
\displaystyle\eta^{1/p} \mu_{k,n}&\text{ if $q>1$},\\
\displaystyle\mu_{k,n}-\log{\frac{1}{\eta}} \frac{g(\mu_{k,n})}{g'(\mu_{k,n})}&\text{ if $q=1$}, 
\end{cases}
\]
for all $n\in \mathbb{N}$ and, from Lemmas \ref{lem:g3} and \ref{lem:g4}, 
\[
\liminf_{n\to \infty}g'(\mu_{k,n})\int_0^{t_n}\la_n hf(u_n)rdr\ge
\begin{cases} \displaystyle \frac{2p\left(\frac{\delta_{k+1}}{\delta_k}-\eta^{1/p}\right)}{\left(\frac{\delta_{k+1}}{\delta_k}\right)^p-\eta}&\text{ if }q>1\vspace{0.3cm},\\
\displaystyle \frac{2\left(\log{\frac1{\eta}}-\log{\frac{\eta_{k}}{\eta_{k+1}}}\right)}{\frac{\eta_{k+1}}{\eta_{k}}-\eta}&\text{ if }q=1.
\end{cases}
\]
Note that choosing $\eta>0$ smaller if necessary, we have that the right-hand side is strictly greater than $2p$ if $q>1$ and $M$ if $q=1$.  This proves the former assertion. Next we calculate from \eqref{g2} if $q>1$ and \eqref{eq:es2} with Lemmas \ref{lem:g3} and \ref{lem:g4} that
\[
\begin{split}
z_{k,n}(t_n/\ga_{k,n})&=-g(\mu_{k,n})(\mu_{k,n}-u_n(t_n))\frac{g'(\mu_{k,n})}{g(\mu_{k,n})}\\
&\le 
\begin{cases}
-\displaystyle2p\left(\frac{1-\eta^{1/p}}{1-\eta}+o(1)\right)\log{\frac{t_n}{\ga_{k,n}}}&\text{ if }q>1,\vspace{0.2cm}\\
-\displaystyle2\left(\frac{\log{\frac1{\eta}}}{1-\eta}+o(1)\right)\log{\frac{t_n}{\ga_{k,n}}}&\text{ if }q=1
\end{cases}\\
&\le 
\begin{cases}
-\displaystyle 2p(1-\e)\log{\frac{t_n}{\ga_{k,n}}}&\text{ if }q>1,\\
-\displaystyle M\log{\frac{t_n}{\ga_{k,n}}}&\text{ if }q=1,
\end{cases}
\end{split}
\] 
for all large $n\in \mathbb{N}$ by taking $\eta>0$ smaller if necessary.   This completes the proof. 
\end{proof}
Then, we prove an improved pointwise estimate. 
\begin{lemma} Fix any $k\in \mathbb{N}$. If $q>1$, take any value $\e\in(0,1)$ and if $q=1$, choose any constant $M>0$. Then,   there exists a sequence $(r_n)\subset (\rho_{k,n},1)$ such that 
\[
z_{k,n}(r)\le
\begin{cases}
-2p(1-\e)\log{r}&\text{ if }q>1,\\
-M \log{r}&\text{ if }q=1
\end{cases}
\]
for all $r\in[r_n/\ga_{k,n},1/\ga_{k,n}]$ and all large $n\in \mathbb{N}$. 
\end{lemma}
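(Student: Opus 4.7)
The plan is to combine the two estimates supplied by the preceding lemma through the standard monotonicity of $rz_{k,n}'(r)$ on the scaled interval. First, I would apply the previous lemma to obtain a sequence $(r_n)\subset(\rho_{k,n},1)$ satisfying both the strict energy lower bound on $g'(\mu_{k,n})\int_0^{r_n}\la_nhf(u_n)r\,dr$ and the pointwise upper bound on $z_{k,n}(r_n/\ga_{k,n})$. Translating the energy bound via \eqref{id0} and the definition of $z_{k,n}$, one has the identity
\[
-r\,z_{k,n}'(r)=g'(\mu_{k,n})\int_0^{\ga_{k,n}r}\la_nhf(u_n)t\,dt,
\]
so evaluating at $r=r_n/\ga_{k,n}$ gives, for all large $n$,
\[
\frac{r_n}{\ga_{k,n}}z_{k,n}'\!\Big(\frac{r_n}{\ga_{k,n}}\Big)<-2p\ \ (q>1),\qquad \frac{r_n}{\ga_{k,n}}z_{k,n}'\!\Big(\frac{r_n}{\ga_{k,n}}\Big)<-M\ \ (q=1).
\]

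The second step is to propagate this slope bound. Rewriting the equation satisfied by $z_{k,n}$ as $-(rz_{k,n}')'=r\cdot\frac{h(\ga_{k,n}\cdot)}{h(r_{k,n})}\frac{f(u_n(\ga_{k,n}\cdot))}{f(\mu_{k,n})}\ge 0$, the map $r\mapsto rz_{k,n}'(r)$ is nonincreasing on $[0,1/\ga_{k,n}]$. Hence for every $r\in[r_n/\ga_{k,n},1/\ga_{k,n}]$, the slope inequality above persists, and integrating over $[r_n/\ga_{k,n},r]$ yields
\[
z_{k,n}(r)\le z_{k,n}\!\Big(\frac{r_n}{\ga_{k,n}}\Big)-C\Big(\log r-\log\frac{r_n}{\ga_{k,n}}\Big),
\]
where $C=2p$ when $q>1$ and $C=M$ when $q=1$.

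The final step plugs in the pointwise bound from the previous lemma. In the case $q=1$, the right-hand side becomes $\le -M\log(r_n/\ga_{k,n})-M\log r+M\log(r_n/\ga_{k,n})=-M\log r$, which is the claim. In the case $q>1$, one obtains
\[
z_{k,n}(r)\le 2p\,\e\,\log\frac{r_n}{\ga_{k,n}}-2p\,\log r,
\]
and since $r_n/\ga_{k,n}\ge\rho_{k,n}/\ga_{k,n}\to\infty$ implies $\log(r_n/\ga_{k,n})\le\log r$ and both are nonnegative for large $n$, the first term is bounded by $2p\,\e\,\log r$, giving $z_{k,n}(r)\le -2p(1-\e)\log r$ as claimed. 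The main (minor) point to be careful about is ensuring that the pointwise bound at $r_n/\ga_{k,n}$ and the slope bound share a common strict margin so that the elementary manipulation in the final step is valid; both come from the previous lemma's strict $\liminf$ inequality, so no additional difficulty arises beyond routine bookkeeping.
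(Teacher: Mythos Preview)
Your proof is correct and follows essentially the same approach as the paper's own proof: take $(r_n)$ from the previous lemma, convert the energy lower bound into a slope bound on $rz_{k,n}'(r)$ at $r_n/\ga_{k,n}$ via \eqref{id0}, propagate this by the monotonicity of $rz_{k,n}'(r)$, integrate, and absorb the resulting $\log(r_n/\ga_{k,n})$ term using the pointwise bound at $r_n/\ga_{k,n}$ together with $r\ge r_n/\ga_{k,n}$. The only cosmetic difference is that the paper writes the integrated slope as $-(2p-\e)$ in the $q>1$ case (likely a typo for $-2p$), whereas you correctly use $-2p$; both close in the same way.
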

\begin{proof} For any values $\e$ and $M$ chosen as in the assumption, take the sequence $(r_n)\subset (\rho_{k,n},1)$ as in the previous lemma. Then recalling the monotonicity of $rz_{k,n}'(r)$ by \eqref{zkn}, we have that
\[
rz_{k,n}'(r)\le (r_n/\ga_{k,n})z_{k,n}'(r_n/\ga_{k,n})\le 
\begin{cases} -2p&\text{ if }q>1\vspace{0.3cm},\\
\displaystyle -M&\text{ if }q=1,
\end{cases}
\]  
for all $r\in[r_n/\ga_{k,n},1/\ga_{k,n}]$ and  all large $n\in \mathbb{N}$. Integrating over $[r_n/\ga_{k,n},r]$, we get
\[
\begin{split}
z_{k,n}(r)&\le  \begin{cases}\displaystyle z_{k,n}(r_n/\ga_{k,n})-(2p-\e)\log{\frac{r}{r_n/\ga_{k,n}}}&\text{ if }q>1\vspace{0.3cm},\\
\displaystyle z_{k,n}(r_n/\ga_{k,n})-M\log{\frac{r}{r_n/\ga_{k,n}}}&\text{ if }q=1,
\end{cases}\\
&\le \begin{cases}
-2p(1-\e)\log{r}&\text{ if }q>1,\\
-M \log{r}&\text{ if }q=1,
\end{cases}
\end{split}
\]
for all large $n\in \mathbb{N}$ by the previous lemma.  We finish the proof.
\end{proof}
This proves the next lemma. 
\begin{lemma}\label{lem:c19} For any $k\in \mathbb{N}$, we have that
\[
\lim_{n\to \infty}\frac{\log{\frac1{\ga_{k,n}}}}{\mu_{k,n}g'(\mu_{k,n})}=\frac1{2p}
\]
where regarded $1/\infty=0$. Moreover, additionally assuming (H2) if $q=1$, we get 
\[
\lim_{n\to \infty}\frac{\log{\frac1{\ga_{k,n}}}}{g(\mu_{k,n})}=\frac12
\]
for all $q\in [1,2)$. 
\end{lemma}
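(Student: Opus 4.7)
The plan is to prove the first identity by matching upper and lower bounds, and then deduce the second identity from the first when $q>1$, handling the borderline case $q=1$ by a separate argument that exploits the upper bound on $\la_n$ provided by (H2).

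For the first identity, the upper bound comes from evaluating the improved pointwise estimate of the preceding lemma at the endpoint $r=1/\ga_{k,n}$ of its range of validity. Since $u_n(1)=0$, one has $z_{k,n}(1/\ga_{k,n}) = -\mu_{k,n}\,g'(\mu_{k,n})$, so the pointwise bound yields
\[
\mu_{k,n}\,g'(\mu_{k,n}) \;\ge\; 2p(1-\e)\,\log\tfrac{1}{\ga_{k,n}} \quad (q>1),
\qquad
\mu_{k,n}\,g'(\mu_{k,n}) \;\ge\; M\,\log\tfrac{1}{\ga_{k,n}} \quad (q=1),
\]
for any $\e\in(0,1)$ and any $M>0$; letting $\e\to 0^+$ or $M\to\infty$ gives $\limsup\le 1/(2p)$, with the convention $1/\infty=0$. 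For the lower bound, when $q>1$ I divide the inequality $\liminf\log(1/\ga_{k,n})/g(\mu_{k,n})\ge 1/2$ from Lemma \ref{lem:c18} by the convergent ratio $\mu_{k,n}g'(\mu_{k,n})/g(\mu_{k,n})\to p$ (which follows from \eqref{g2} and $\mu_{k,n}\to\infty$); when $q=1$ the matching bound $\liminf\ge 0$ is trivial. This establishes the first identity.

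For the second identity with $q>1$, multiplying the first identity by $\mu_{k,n}g'(\mu_{k,n})/g(\mu_{k,n})\to p$ immediately gives the limit $1/2$. The genuinely new input is needed for $q=1$, where the first identity yields only the uninformative value $0$. There I take logarithms in \eqref{eq:sup0}, use $\log f'(\mu_{k,n})=g(\mu_{k,n})+\log g'(\mu_{k,n})=g(\mu_{k,n})(1+o(1))$ from \eqref{eq:lgg}, and absorb $\log(r_{k,n}/\ga_{k,n})=O(1)$ to obtain
\[
\log\tfrac{1}{\ga_{k,n}} \;=\; \tfrac{1}{2}\log\tfrac{1}{\la_n} \;+\; \tfrac{1}{2}g(\mu_{k,n})(1+o(1)) \;+\; O(1).
\]
Under (H2), Lemma \ref{lem:kap} bounds $\la_n$ from above, so $\log(1/\la_n)\ge -C$; dividing by $g(\mu_{k,n})\to\infty$ yields $\limsup\le 1/2$, and combining with Lemma \ref{lem:c18} completes the proof. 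The main obstacle is precisely this $q=1$ case of the second identity: the first identity carries no rate information there, and the parameter bound from (H2) is exactly what prevents the term $\log(1/\la_n)$ from swamping the expansion and destroying the sharp asymptotic $\log(1/\ga_{k,n})\sim g(\mu_{k,n})/2$.
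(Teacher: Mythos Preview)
Your proof is correct and follows essentially the same route as the paper: the upper bound for the first identity comes from evaluating the improved pointwise estimate at $r=1/\ga_{k,n}$ and letting $\e\to 0$ (resp.\ $M\to\infty$); the lower bound comes from Lemma \ref{lem:c18} combined with \eqref{g2}; the second identity for $q>1$ is the first identity multiplied by $\mu_{k,n}g'(\mu_{k,n})/g(\mu_{k,n})\to p$; and for $q=1$ one expands $\log(1/\ga_{k,n})$ via the definition of $\ga_{k,n}$ together with \eqref{eq:lgg} and then uses the upper bound on $\la_n$ from Lemma \ref{lem:kap} to cap the $\la_n$-term, matching the lower bound from Lemma \ref{lem:c18}.

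One sign slip to correct: taking logarithms in the definition $\la_n h(r_{k,n})f'(\mu_{k,n})\ga_{k,n}^2=1$ (or equivalently in \eqref{eq:sup0} after absorbing $\log(r_{k,n}/\ga_{k,n})=O(1)$) gives
\[
\log\tfrac{1}{\ga_{k,n}} \;=\; -\tfrac{1}{2}\log\tfrac{1}{\la_n} \;+\; \tfrac{1}{2}g(\mu_{k,n})(1+o(1)) \;+\; O(1),
\]
with a minus sign in front of $\tfrac{1}{2}\log(1/\la_n)$, not a plus. Your subsequent reasoning (``$\log(1/\la_n)\ge -C$ hence $\limsup\le 1/2$'') is consistent with this correct sign and would not make sense with the plus sign you wrote, so this is evidently a typo rather than a conceptual error.
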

\begin{proof} Take any value $\e\in(0,1)$ if $q>1$ and $M>0$ if $q=1$. From the previous lemma, we get
\[
-\mu_{k,n}g'(\mu_{k,n})=z_{k,n}(1/\ga_{k,n})\le 
\begin{cases}
-2p(1-\e)\log{(1/\ga_{k,n})}&\text{ if }q>1,\\
-M \log{(1/\ga_{k,n})}&\text{ if }q=1,
\end{cases}
\]
for all large $n\in \mathbb{N}$. It follows that 
\[
\frac{\log{\frac1{\ga_{k,n}}}}{\mu_{k,n}g'(\mu_{k,n})}\le 
\begin{cases}
\displaystyle\frac1{2p(1-\e)}&\text{ if }q>1,\vspace{0.2cm}\\
\displaystyle\frac1M&\text{ if }q=1,
\end{cases}
\]
for all large $n\in \mathbb{N}$. Since $\e\in(0,1)$ and $M>0$ are arbitrary, we prove that 
\[
\limsup_{n\to \infty}\frac{\log{\frac1{\ga_{k,n}}}}{\mu_{k,n}g'(\mu_{k,n})}\le\frac1{2p}
\]
for all $q\ge1$ where $1/\infty=0$. Hence, we prove the former assertion by Lemma \ref{lem:c18} with \eqref{g2}. Clearly this is equivalent to the latter one if $q>1$ by \eqref{g2}. If $q=1$, from \eqref{eq:lgg} and Lemma \ref{lem:kap}, we obtain
\[
\limsup_{n\to \infty}\frac{\log{\frac{1}{\ga_{k,n}}}}{g(\mu_{k,n})}= \frac12\limsup_{n\to \infty}\frac{g(\mu_{k,n})(1+o(1))+\log{\la_n}}{g(\mu_{k,n})}\le \frac12.
\]  
Hence Lemma \ref{lem:c18} proves the desired formula. We finish the proof. 
\end{proof}
Lastly, we complete the final theorem. 
\begin{proof}[Proof of Theorem \ref{th:sup2}] Choose a sequence $(r_n)$ as in the theorem. Then for any $k\in \mathbb{N}$, we have that $u_n(\rho_{k,n})>u_n(r_n)$, which implies $\rho_{k,n}<r_n$, for all large $n\in \mathbb{N}$. It follows from Theorem \ref{th:sup1} that
\[
\int_0^{r_n}\la_n hg'(u_n)f(u_n)rdr\ge\int_0^{\rho_{k,n}}\la_n hg'(u_n)f(u_n)rdr=\sum_{i=1}^k (2a_i)+o(1)
\] 
as $n\to \infty$. Since $k$ is arbitrary, noting Lemma \ref{lem:b2}, we prove \eqref{eq:sup1}. Here, we may choose the sequence $(r_n)$ above so that $u_n(r_n)\to \infty$ as $n\to \infty$. Then \eqref{lem:gl} implies $r_n\to 0$ and thus, from \eqref{id2}, we have that for all $r\in (0,1)$, 
\[
g'(\mu_n) u_n(r)\ge g'(\mu_n)\int_0^r \la_n hf(u_n)rdr\log{\frac1r}\to \infty 
\]
as $n\to \infty$. This proves \eqref{eq:sup4}. 
Assume in addition (H2) if $q=1$. Then from the definition of $(\ga_{k,n})$ and the latter conclusion of Lemma \ref{lem:c19} with  \eqref{eq:lgg}, we have that
\[
\lim_{n\to \infty}\frac{\log{\frac1{\la_n}}}{g(\mu_{k,n})}=0.
\]
This with Lemmas \eqref{lem:g3} and  \eqref{lem:g4} proves  \eqref{eq:sup2}. Finally, from \eqref{eq:sup0}, \eqref{eq:sup2}, \eqref{eq:lgg}, and  Lemmas \ref{lem:g3} and \ref{lem:g4}, we derive \eqref{eq:sup3}.  We complete the proof. 
\end{proof}
\appendix
\section{Blow-up formulas for the subcritical case}\label{sec:sub}
In this appendix, we give proofs of the basic blow up estimates \eqref{eq:sub1}, \eqref{eq:sub3}, and \eqref{eq:sub2} for the subcritical case, that is, we prove the next theorem. 
\begin{theorem}\label{apth:sub} Suppose as in Theorem \ref{th:0} and $q>2$. Then we have that
\begin{equation}\label{aeq:sub1}
\lim_{n\to \infty}g'(\mu_n)\int_0^1\la_n hf(u_n)rdr=4=\lim_{n\to \infty}\int_0^1\la_n hf'(u_n)rdr,
\end{equation}
\begin{equation}\label{aeq:sub3}
\lim_{n\to \infty}g'(\mu_n)u_n(r)=4\log{\frac1r}\text{ in }C^2_{\text{loc}}((0,1]),
\end{equation}
and 
\begin{equation}\label{aeq:sub2}
\lim_{n\to \infty}\frac{\log{\frac1{\la_n}}}{g(\mu_n)}=\frac{2-p}{2}.
\end{equation}
\end{theorem}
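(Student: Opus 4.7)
The proof strategy adapts the scaling and Gaussian-integral machinery of Section \ref{sec:ic} to the subcritical regime $q>2$ (equivalently $p<2$), the key point being that, in contrast to the supercritical case, no further concentrating parts appear outside the first bubble $[0,\rho_{0,n}]$ detected in Theorem \ref{th:0}. All three claims will follow, in order, from the single key estimate
\begin{equation}\label{aesapp}
g'(\mu_n)\int_{\rho_{0,n}}^1 \la_n h f(u_n) r\, dr \to 0 \quad\text{as } n\to\infty.
\end{equation}

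For \eqref{aesapp}, I would combine the pointwise bound $z_{0,n}(r)\le -(4+o(1))\log r$ on $[\rho_{0,n}/\ga_{0,n},1/\ga_{0,n}]$ from Lemma \ref{lem:c2} with the Gaussian tail analysis of Lemma \ref{lem:c9} applied with $a_1=2$. The subcritical hypothesis enters through the constraint $a_1>(1-\eta)(2+a_1)^2/(4q\eta)$, which reduces to $\eta>2/(q+2)$ and leaves room in $(0,1)$. To push the integration all the way to $r=1$, one must also control the transition range $[r_n,1]$, where $u_n$ drops below $\eta^{1/p}\mu_n$: here Lemma \ref{lem:g3} gives $g(u_n)\le (\eta+o(1))g(\mu_n)$, so that $f(u_n)\le e^{(\eta+o(1))g(\mu_n)}$, which combined with $g'(\mu_n)\la_n = e^{-g(\mu_n)+o(g(\mu_n))}/(h(0)\ga_{0,n}^2)$ and the a priori estimate $\log(1/\ga_{0,n})/g(\mu_n)\le p/4+o(1)$ (obtained by evaluating Lemma \ref{lem:c2} at $r=1/\ga_{0,n}$ using $u_n(1)=0$ and \eqref{g2}) yields a bound of order $e^{-(1-\eta-p/2)g(\mu_n)+o(g(\mu_n))}$. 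The two constraints $2/(q+2)<\eta<(2-p)/2$ are simultaneously satisfiable precisely when $q>4$; for $2<q\le 4$ an iteration over successive layers $u_n\sim \eta_j^{1/p}\mu_n$ with $\eta_j\searrow 0$ is needed, each step sharpening the pointwise bound inherited from the previous one. This delicate matching of scales near the threshold $q=2$ is the main technical obstacle.

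Once \eqref{aesapp} is in hand, \eqref{aeq:sub1} is immediate from Theorem \ref{th:0}; the parallel statement for $\la_n h f'(u_n)$ follows via $f'=g'f$, the monotonicity $g'(u_n)\le g'(\mu_n)$, and dominated convergence in the bubble scaling. For \eqref{aeq:sub3}, I would apply identity \eqref{id2} at $s=1$ to write
\[
g'(\mu_n) u_n(r) = \log(1/r)\cdot g'(\mu_n)\int_0^r \la_n h f(u_n) t\, dt + g'(\mu_n)\int_r^1 \la_n h f(u_n) t\log(1/t)\, dt;
\]
the first term converges uniformly on compacts of $(0,1]$ to $4\log(1/r)$ by \eqref{aeq:sub1}, and the second vanishes since $t\log(1/t)$ is bounded on $[r,1]$ and \eqref{aesapp} controls the inner integral. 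Elliptic regularity applied to the equation for $v_n:=g'(\mu_n)u_n$, whose right-hand side $g'(\mu_n)\la_n h f(u_n)$ tends to zero locally uniformly in $(0,1]$, upgrades this to $C^2_{\text{loc}}$ convergence.

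Finally, for \eqref{aeq:sub2}, I would use identity \eqref{id1} at $r=1$, namely $\mu_n = \int_0^1 \la_n h f(u_n) s\log(1/s)\,ds$. The outer piece is $o(g(\mu_n)/g'(\mu_n))$ via \eqref{aesapp} together with $\log(1/\rho_{0,n})=o(g(\mu_n))$, while rescaling $s=\ga_{0,n} t$ in the inner piece yields
\[
g'(\mu_n)\int_0^{\rho_{0,n}}\la_n h f(u_n) s\log(1/s)\,ds = 4\log(1/\ga_{0,n}) + O(1).
\]
Using \eqref{g2} to replace $\mu_n g'(\mu_n)$ by $(p+o(1))g(\mu_n)$ gives $\log(1/\ga_{0,n})/g(\mu_n)\to p/4$, and combining with $\la_n h(0) f'(\mu_n)\ga_{0,n}^2=1$ and \eqref{eq:lgg} finally yields $\log(1/\la_n)/g(\mu_n)\to 1-p/2=(2-p)/2$, completing the proof of \eqref{aeq:sub2}.
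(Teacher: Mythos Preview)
Your reduction to the vanishing of $g'(\mu_n)\int_{\rho_{0,n}}^1\la_nhf(u_n)r\,dr$ and the derivation of \eqref{aeq:sub1}--\eqref{aeq:sub2} from it match the paper's route (Lemmas \ref{lem:c4}, \ref{lem:c41}). The gap is in that vanishing estimate for $2<q\le4$. Importing the Gaussian machinery of Lemma \ref{lem:c9} (built for $q<2$) forces the incompatible constraints $\eta>2/(q+2)$ and $\eta<(2-p)/2$, and your proposed iteration over layers $\eta_j\searrow0$ does not actually sharpen anything: since no further energy accumulates in the subcritical case, the bound $z_{0,n}(r)\le -(4+o(1))\log r$ persists unchanged on every shell (indeed $-r z_{0,n}'(r)=g'(\mu_n)\int_0^{\ga_{0,n}r}\la_nhf(u_n)s\,ds$ still tends to $4$ at each new cutoff), so re-running the Lemma \ref{lem:c9} analysis yields the same constraint each time.

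The paper avoids this by dropping the second-order Taylor expansion in favor of the elementary power bound $g(s)/g(t)\le (s/t)^{p-\e}$ for $t_\e\le s<t$ (Lemma \ref{lem:g30}, a direct consequence of \eqref{g2}). Splitting at $r_n$ with $g(u_n(r_n))=(2-p)g(\mu_n)/4$ (so your own tail argument handles $[r_n,1]$ at $\eta=(2-p)/4$), one gets on $[\rho_{0,n}/\ga_{0,n},r_n/\ga_{0,n}]$
\[
g(u_n(\ga_{0,n}r))-g(\mu_n)\le g(\mu_n)\Big[\big(1+\tfrac{z_{0,n}(r)}{\mu_ng'(\mu_n)}\big)^{p-\e}-1\Big]\le -\frac{4+o(1)}{p}\log r,
\]
using $(1-x)^{p-\e}\le 1-x$ for $p-\e>1$ together with $\mu_ng'(\mu_n)=(p+o(1))g(\mu_n)$ and the pointwise bound from Lemma \ref{lem:c2}. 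The resulting integrand $r^{1-4/p+o(1)}$ is integrable at infinity exactly because $p<2$, so the inner piece vanishes with no constraint on $\eta$ and no iteration, uniformly in $q>2$.
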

For the proof, we give  some preliminary lemmas. In the following, we always assume (H1) without further comments. We begin with the proof of a property of our nonlinearity. 
\begin{lemma}\label{lem:g30} Assume $q>1$. Then, for any $\e>0$, there exists a value $t_\e>0$ such that
\[
\left(\frac{t}{s}\right)^{p-\e}\le\frac{g(t)}{g(s)}\le \left(\frac{t}{s}\right)^{p+\e}
\]
for all $t>s\ge t_\e$.
\end{lemma}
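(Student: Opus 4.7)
The plan is to derive this estimate directly from the hypothesis \eqref{g2}, in exactly the same fashion as the opening step of the proof of Lemma \ref{lem:g3}, but now formulated for arbitrary pairs $t>s\ge t_\e$ rather than for sequences. Since $q>1$ forces $p=q/(q-1)\in(1,\infty)$ by Lemma \ref{lem:pq}, the limit $\lim_{t\to\infty}tg'(t)/g(t)=p$ is a genuine finite limit and can be converted into a two-sided pointwise bound on the logarithmic derivative of $g$.

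More precisely, first I would fix $\e>0$ and invoke \eqref{g2} to select a constant $t_\e\ge t_0$ such that
\[
\frac{p-\e}{t}\le \frac{g'(t)}{g(t)}=(\log g(t))'\le \frac{p+\e}{t}
\]
for every $t\ge t_\e$. Second I would integrate these inequalities on $[s,t]$ for any $t>s\ge t_\e$, which yields
\[
(p-\e)\log\frac{t}{s}\le \log\frac{g(t)}{g(s)}\le (p+\e)\log\frac{t}{s},
\]
and then exponentiate to obtain the desired estimate.

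The only mild point to verify is that $p<\infty$ so that the quantities $p\pm\e$ are meaningful, and this is exactly where the hypothesis $q>1$ enters through Lemma \ref{lem:pq}. No substantive obstacle is expected: the lemma is really the uniform (in $s,t$) pointwise version of the first display appearing in the proof of Lemma \ref{lem:g3}, and the argument is just the standard passage from a limit of $tg'/g$ to a sharp polynomial growth comparison for $g$ via logarithmic integration.
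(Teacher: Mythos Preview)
Your proposal is correct and essentially identical to the paper's own proof: both use \eqref{g2} with $q>1$ (so $p<\infty$ by Lemma \ref{lem:pq}) to bound $(\log g(t))'=g'(t)/g(t)$ between $(p-\e)/t$ and $(p+\e)/t$ for large $t$, integrate over $[s,t]$, and exponentiate. The paper just phrases the integration step as $\log(g(t)/g(s))=\int_s^t(\tau g'(\tau)/g(\tau))\,\tau^{-1}\,d\tau$, which is the same computation.
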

\begin{proof}
For any $t>s>0$, we have that 
\[
\log{\frac{g(t)}{g(s)}}=\int_{s}^{t}\frac{\tau g'(\tau)}{g(\tau)}\frac1\tau d\tau
=p\int_{s}^{t}\frac1\tau d\tau+\int_{s}^{t}\left(\frac{\tau g'(\tau)}{g(\tau)}-p\right)\frac1\tau d\tau.
\]
Then from \eqref{g2}, for any $\e>0$, there exists a value $t_\e>0$ such that   
\[
(p-\e)\log{\frac{t}{s}}\le\log{\frac{g(t)}{g(s)}}\le (p+\e)\log{\frac{t}{s}}
\]
for all $t>s\ge t_\e$. This proves the desired estimate. We finish the proof.  
\end{proof}
Let $(\ga_{0,n})$ be the sequence in Theorem \ref{th:0}. We give the next asymptotic estimate. 
\begin{lemma}\label{lem:c3} Suppose  $q\ge 1$. Then we have that
\[
\limsup_{n\to \infty}\frac{\log{\frac1{\ga_{0,n}}}}{g(\mu_n)}\le \frac{p}{4}.
\] 
\end{lemma}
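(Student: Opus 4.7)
The plan is to apply the Green-type identity \eqref{id1} at $r = 1$, namely
\[
\mu_n = \int_0^1 \la_n h(s) f(u_n(s)) s \log(1/s)\,ds.
\]
Since $\log(1/s) > 0$ on $(0,1)$, the integrand is nonnegative, so for any fixed $R > 0$ one may discard the contribution from $s > R\ga_{0,n}$ to obtain $\mu_n \ge \int_0^{R\ga_{0,n}} \la_n h f(u_n) s \log(1/s)\,ds$. Rescaling via $s = \ga_{0,n} t$ and using the definition $\la_n h(0) f'(\mu_n) \ga_{0,n}^2 = 1$ together with $f'(\mu_n) = g'(\mu_n) f(\mu_n)$, this becomes
\[
\mu_n \ge \frac{1}{g'(\mu_n)} \int_0^R \frac{h(\ga_{0,n}t)}{h(0)} \frac{f(u_n(\ga_{0,n}t))}{f(\mu_n)}\, t \bigl(\log(1/\ga_{0,n}) - \log t\bigr)\,dt.
\]

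The next step is a uniform passage to the limit on the compact window $[0,R]$. By Theorem \ref{th:0}, $z_{0,n} \to z_0$ in $C^2([0,R])$, so Lemma \ref{lem:g1} yields $f(u_n(\ga_{0,n}t))/f(\mu_n) \to e^{z_0(t)}$ uniformly on $[0,R]$, while $h(\ga_{0,n}t)/h(0) \to 1$ by continuity of $h$ at $0$. Splitting the logarithm and setting $I_R := \int_0^R e^{z_0(t)} t\,dt$ then produces
\[
\mu_n g'(\mu_n) \ge \log(1/\ga_{0,n})\bigl(I_R + o(1)\bigr) - C_R,
\]
for some constant $C_R$ depending only on $R$, since $\int_0^R e^{z_0(t)} t \log t\,dt$ is finite (the weight $t \log t$ is benign against $e^{z_0(t)} \sim 64/t^4$, and on a fixed window it cannot interfere with the diverging main term $\log(1/\ga_{0,n})$).

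To conclude, I would divide by $g(\mu_n)$, apply \eqref{g2} in the form $\mu_n g'(\mu_n)/g(\mu_n) \to p$, and take $\limsup_n$ to obtain
\[
p \ge I_R \cdot \limsup_{n\to\infty} \frac{\log(1/\ga_{0,n})}{g(\mu_n)}.
\]
Since $I_R \to \int_0^\infty e^{z_0} t\,dt = 4$ as $R \to \infty$, letting $R \to \infty$ yields $\limsup \le p/4$. For the degenerate case $p = \infty$ (which corresponds to $q = 1$) the bound $\limsup \le p/4 = \infty$ is trivially true, so the substantive content lies in the case $q > 1$. The only real subtlety is that the concentration information from Theorem \ref{th:0} is used only on the fixed finite window $[0, R]$, which sidesteps any issue with the unbounded growth of $z_{0,n}$ near the outer edge $\rho_{0,n}/\ga_{0,n}$ of the concentration region; the remainder past $R\ga_{0,n}$ is then absorbed by the positivity of the integrand.
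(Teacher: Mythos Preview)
Your argument is correct, but it takes a different route from the paper. The paper's proof is a two-liner: it evaluates the pointwise estimate from Lemma~\ref{lem:c2}, namely $z_{0,n}(r)\le -(4+o(1))\log r$, at the boundary point $r=1/\ga_{0,n}$, where $z_{0,n}(1/\ga_{0,n})=-g'(\mu_n)\mu_n$ because $u_n(1)=0$. This immediately gives $g'(\mu_n)\mu_n\ge (4+o(1))\log(1/\ga_{0,n})$, and dividing by $g(\mu_n)$ with \eqref{g2} yields the bound. Your approach instead goes back to the Green identity \eqref{id1} and exploits the local concentration profile on a finite window $[0,R]$, letting $R\to\infty$ at the end; this is essentially the same device the paper uses later in Lemma~\ref{lem:c41} for the \emph{matching lower bound} when $q>2$. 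The paper's version here is shorter because the pointwise decay from Lemma~\ref{lem:c2} already encodes, in integrated form, exactly the energy information you are rebuilding from scratch; your version is more self-contained in that it does not rely on that pointwise estimate. One small point worth making explicit in your write-up: when you pass to the $\limsup$, you implicitly use that $\log(1/\ga_{0,n})/g(\mu_n)$ is bounded, which follows (for $p<\infty$) directly from your inequality $\mu_n g'(\mu_n)\ge (I_R+o(1))\log(1/\ga_{0,n})-C_R$ since the left side is $(p+o(1))g(\mu_n)$.
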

\begin{proof} From Lemma \ref{lem:c2}, we get
\[
-g'(\mu_n)\mu_n=z_{0,n}\left(\frac1{\ga_{0,n}}\right)\le -(4+o(1))\log{\frac1{\ga_{0,n}}}
\]
as $n\to \infty$. This with \eqref{g2} proves the desired formula. We complete the proof. 
\end{proof}
We next give the limit energy.  
\begin{lemma}\label{lem:c4} Assume $q>2$. Then we get that
\[
\lim_{n\to \infty}g'(\mu_n)\int_0^1\la_nhf(u_n)rdr= 4.
\]
\end{lemma}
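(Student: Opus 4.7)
My plan is to reduce the assertion to showing that the tail integral
$g'(\mu_n)\int_{\rho_{0,n}}^{1}\la_nhf(u_n)r\,dr$
tends to zero, which together with Theorem \ref{th:0} would give the claimed limit. I will rescale with $t=s/\ga_{0,n}$ and write the tail as
\[
\int_{R_n}^{1/\ga_{0,n}}\frac{h(\ga_{0,n}t)}{h(0)}\frac{f(u_n(\ga_{0,n}t))}{f(\mu_n)}\,t\,dt,\qquad R_n=\rho_{0,n}/\ga_{0,n}\to\infty.
\]

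The first step will be to combine the pointwise decay $z_{0,n}(t)\le-(4+o(1))\log t$ from Lemma \ref{lem:c2}, the asymptotic $\mu_ng'(\mu_n)=(p+o(1))g(\mu_n)$ from \eqref{g2}, and Lemma \ref{lem:g30}. On the subset where $u_n(\ga_{0,n}t)\ge t_\e$, this chain should produce
\[
\frac{f(u_n(\ga_{0,n}t))}{f(\mu_n)}\le \exp\!\bigl(g(\mu_n)\Psi_n(\xi)\bigr),\ \ \Psi_n(\xi):=\bigl(1-(4/p+o(1))\xi\bigr)^{p-\e}-1,\ \ \xi=\tfrac{\log t}{g(\mu_n)}.
\]
After the substitution $u=\log t$, the integrand for the tail is bounded by $g(\mu_n)\exp(g(\mu_n)\Phi_n(\xi))$ with $\Phi_n(\xi)=2\xi+\Psi_n(\xi)$, the extra $2\xi$ coming from the weight $t\,dt$.

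The decisive step will be a convexity/chord inequality: since $p-\e\in(1,2)$, a direct calculation of $\Phi_n''$ shows that $\Phi_n$ is strictly convex on its natural interval $[0,p/4+o(1)]$, and one has $\Phi_n(0)=0$ while $\Phi_n(p/4+o(1))=p/2-1+o(1)$. The latter value is strictly negative \emph{precisely} when $p<2$, equivalently $q>2$. The chord inequality then forces $\Phi_n(\xi)\le -c_n\xi$ with $c_n\to 2(2-p)/p>0$, so that
\[
g(\mu_n)\int_{\xi_1}^{\xi_2}e^{g(\mu_n)\Phi_n(\xi)}\,d\xi\le c_n^{-1}R_n^{-c_n}\to 0,
\]
which is where the subcritical hypothesis truly enters.

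On the complementary subset $\{u_n(\ga_{0,n}t)<t_\e\}$, $f(u_n)$ is bounded by a constant, contributing at most $C\la_ng'(\mu_n)$. Lemma \ref{lem:c3} bounds $\log(1/\ga_{0,n})\le(p/4+o(1))g(\mu_n)$, and together with the identity $\la_ng'(\mu_n)=1/(h(0)f(\mu_n)\ga_{0,n}^2)$ this forces $\la_ng'(\mu_n)\le e^{-(1-p/2+o(1))g(\mu_n)}\to 0$, again using $p<2$. The main obstacle I anticipate is managing the $o(1)$ corrections uniformly in $\xi\in[0,p/4+o(1)]$ and the transition near the endpoint $\xi=p/4$ where the base $1-(4/p)\xi$ degenerates; the upper bound from Lemma \ref{lem:c3} should keep $\xi_2$ safely below this regime apart from a narrow strip that the bounded-$u_n$ estimate already covers.
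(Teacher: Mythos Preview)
Your proposal is correct and follows essentially the same route as the paper: reduce to the tail via Theorem \ref{th:0}, rescale, apply Lemma \ref{lem:g30} together with the pointwise bound of Lemma \ref{lem:c2}, and exploit convexity to get an integrable power of $t$, while handling the region where $u_n$ is bounded separately using Lemma \ref{lem:c3}. The paper splits at $r_n$ with $g(u_n(r_n))=(2-p)g(\mu_n)/4$ rather than at $u_n=t_\e$, and it invokes the elementary inequality $(1-x)^{p-\e}-1\le -x$ directly instead of phrasing it as a chord bound for $\Phi_n$, but after your substitution $\xi=\log t/g(\mu_n)$ these are the same estimate; in particular your $c_n\to 2(2-p)/p$ matches the paper's exponent $4/p-2$. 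Your worry about the base $1-(4/p)\xi$ degenerating is unnecessary: on the region $u_n(\ga_{0,n}t)>0$ the pointwise bound itself forces $1-(4/p+o(1))\xi\ge u_n/\mu_n>0$, so the convexity argument is valid throughout without any extra narrow-strip analysis.
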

\begin{proof} From Lemma \ref{lem:c3}, the definition of $(\ga_{0,n})$, \eqref{g2}, and \eqref{eq:lgg}, we deduce that 
\[
\la_n\le e^{-(2-p+o(1))g(\mu_n)/2}
\]
as $n\to \infty$. Noting Lemma \ref{lem:g3}, we choose a sequence $(r_n)\subset (\rho_{0,n},1)$ so that $g(u_n(r_n))=(2-p)g(\mu_n)/4+o(1)$ as  $n\to \infty$. It follows from Lemma \ref{lem:monog}  that
\[
g'(\mu_n)\int_{r_n}^1\la_n hf(u_n)rdr\le\max_{0\le r\le1}h(r) e^{-(2-p+o(1))g(\mu_n)/2} (e^{g(u_n(r_n))}+O(1))\to0
\] 
as $n\to \infty$. Hence in view of Theorem \ref{th:0}, it suffices to show 
\[
g'(\mu_n)\int_{\rho_{0,n}}^{r_n}\la_n hf(u_n)rdr\to0
\]
as $n\to \infty$. Then choose a value $\e>0$ so that $p-\e>1$. Using Lemma \ref{lem:g30}, \eqref{g2}, and Lemma \ref{lem:c2}, we find a constant $C>0$ such that   
\[
\begin{split}
g'(\mu_{n})\int_{\rho_{0,n}}^{r_n}\la_n hf(u_n)rdr& \le C\int_{\rho_{0,n}/\ga_{0,n}}^{r_n/\ga_{0,n}}e^{g(\mu_n)\left\{\frac{g\left(\mu_n+\frac{z_{0,n}(r)}{g'(\mu_n)}\right)}{g(\mu_n)}-1\right\}}rdr\\
&\le  C\int_{\rho_{0,n}/\ga_{0,n}}^{r_n/\ga_{0,n}}e^{g(\mu_n)\left\{\left(1+\frac{z_{0,n}(r)}{\mu_ng'(\mu_n)}\right)^{p-\e}-1\right\}}rdr\\
&\le  
 C\int_{\rho_{0,n}/\ga_{0,n}}^{r_n/\ga_{0,n}}e^{-\frac{4+o(1)}{p}\log{r}}rdr\\
&\to0
\end{split}
\]
as $n\to \infty$ since $p<2$ and $\rho_{0,n}/\ga_{0,n}\to\infty$ as $n\to \infty$. This finishes the proof. 
\end{proof}
Consequently, we get the exact asymptotic formulas for the sequence $(\ga_{0,n})$.
\begin{lemma}\label{lem:c41} If $q>2$, then we have that
\[
\lim_{n\to \infty}\frac{\log{\frac1{\ga_{0,n}}}}{g(\mu_n)}= \frac p4.
\]
\end{lemma}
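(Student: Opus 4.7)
My plan is to complement the $\limsup$ bound of Lemma \ref{lem:c3} with a matching $\liminf$ by extracting the leading order of $\ga_{0,n}$ from the exact identity \eqref{id1} at $r=1$, namely
\[
\mu_n=\int_0^1\la_n h(s)f(u_n(s))s\log(1/s)\,ds,
\]
together with the limit energy identity from Lemma \ref{lem:c4}. Writing $\log(1/s)=\log(1/\ga_{0,n})+\log(\ga_{0,n}/s)$ and multiplying by $g'(\mu_n)$ gives
\[
\mu_n g'(\mu_n)=\log(1/\ga_{0,n})\cdot g'(\mu_n)\int_0^1\la_n hf(u_n)s\,ds+J_n,
\]
where $J_n:=g'(\mu_n)\int_0^1\la_n h(s)f(u_n(s))s\log(\ga_{0,n}/s)\,ds$. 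By Lemma \ref{lem:c4} the first coefficient equals $4+o(1)$, so dividing by $g(\mu_n)$ and invoking \eqref{g2} reduces the claim to showing $J_n=o(g(\mu_n))$ (in fact I expect $J_n=O(1)$).

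To bound $J_n$, I would rescale via $s=\ga_{0,n}t$ to obtain
\[
J_n=\int_0^{1/\ga_{0,n}}\frac{h(\ga_{0,n}t)}{h(0)}\frac{f(u_n(\ga_{0,n}t))}{f(\mu_n)}\,t\log(1/t)\,dt
\]
and then split the interval into $[0,R]$, $[R,\rho_{0,n}/\ga_{0,n}]$, and $[\rho_{0,n}/\ga_{0,n},1/\ga_{0,n}]$. On the compact piece $[0,R]$, Lemmas \ref{lem:c1}, \ref{lem:c2}, and \ref{lem:g1} yield uniform convergence of the integrand to $e^{z_0(t)}t\log(1/t)$, whose improper integral over $(0,\infty)$ is finite. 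On the outer piece, Lemma \ref{lem:c2} supplies the pointwise bound $z_{0,n}(t)\le-(4+o(1))\log t$. To translate this into a bound on $f(u_n(\ga_{0,n}t))/f(\mu_n)=\exp\{g(\mu_n+z_{0,n}/g'(\mu_n))-g(\mu_n)\}$, I would apply Lemma \ref{lem:g30} with $\e$ chosen so that $p-\e>1$ (possible since $p<2$ from $q>2$), combined with the elementary inequality $(1+x)^{p-\e}-1\le x$ for $x\in[-1,0]$ (which holds because the convex function $(1+x)^{p-\e}$ lies below its secant on $[-1,0]$). Using \eqref{g2} this produces $f(u_n(\ga_{0,n}t))/f(\mu_n)\le t^{-(4+o(1))/p}$, so the outer contribution to $|J_n|$ is controlled by $\int_R^{1/\ga_{0,n}}t^{1-4/p+o(1)}\log t\,dt$, whose tail is finite and small for large $R$ since $p<2$. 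An analogous argument on $[R,\rho_{0,n}/\ga_{0,n}]$, where the pointwise estimate on $z_{0,n}$ is propagated inward from $r=\rho_{0,n}/\ga_{0,n}$ via the monotonicity of $rz_{0,n}'(r)$ inherited from \eqref{eq:z0n}, yields the same integrable dominant. Combining the three pieces gives $J_n=O(1)$.

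The main obstacle will be the outer piece, especially near the endpoint $t=1/\ga_{0,n}$ where $z_{0,n}/(\mu_n g'(\mu_n))\to-1$ and the local Taylor expansion underlying Lemma \ref{lem:g1} ceases to apply. This is precisely where Lemma \ref{lem:g30} and the global convexity inequality are indispensable, and it is also where the subcritical hypothesis $p<2$ enters decisively: it is exactly what ensures integrability of $t^{1-4/p}\log t$ at infinity and hence makes the whole tail contribution to $J_n$ bounded. Once $J_n=O(1)$ is secured, dividing the main identity by $g(\mu_n)$ and using \eqref{g2} yields $4\lim_{n\to\infty}\log(1/\ga_{0,n})/g(\mu_n)=p$, which combined with Lemma \ref{lem:c3} gives the stated limit.
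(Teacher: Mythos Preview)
Your plan is sound and would work, but you are doing more than necessary. The paper starts from the same identity and the same splitting $\mu_n g'(\mu_n)=(4+o(1))\log(1/\ga_{0,n})+J_n$, but then observes that only the one-sided bound $J_n\le O(1)$ is needed (Lemma~\ref{lem:c3} already gives $\limsup\le p/4$), and this is almost free: after rescaling, on $[R,1/\ga_{0,n}]$ with $R>1$ the integrand carries the factor $\log(1/t)<0$, so that whole piece is $\le 0$; the piece on $[0,R]$ converges to the finite number $\int_0^R e^{z_0}r\log(1/r)\,dr$. No tail estimate via Lemma~\ref{lem:g30} or any convexity inequality is required. Your route, establishing the two-sided $J_n=O(1)$, yields the limit directly without invoking Lemma~\ref{lem:c3}, but at the cost of essentially redoing the Lemma~\ref{lem:c4} computation.

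There is also a small technical gap in your outer-piece bound: Lemma~\ref{lem:g30} requires both arguments to exceed $t_\e$, but near $t=1/\ga_{0,n}$ one has $u_n(\ga_{0,n}t)\to 0$, so the lemma does not apply there (and $g$ is not even defined below $t_0$). You would have to split off the region $\{u_n(\ga_{0,n}t)\le t_\e\}$ and treat it directly using $\la_n\to 0$, exactly as in the proof of Lemma~\ref{lem:c4}. For the middle piece $[R,\rho_{0,n}/\ga_{0,n}]$, no ``inward propagation'' is needed: Lemma~\ref{lem:c2} already gives $z_{0,n}=z_0+o(1)$ uniformly there, so the integrand is dominated by $Ce^{z_0(t)}t\,|\log t|$, which is integrable.
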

\begin{proof} From Lemma \ref{lem:c3}, it suffices to show that
\[
\liminf_{n\to \infty}\frac{\log{\frac1{\ga_{0,n}}}}{g(\mu_n)}\ge \frac p4.
\]
Then, using \eqref{id1} and Lemma \ref{lem:c4}, we get
\[
\begin{split}
g'(\mu_n)\mu_n&=  g'(\mu_n)\int_0^1 \la_n hf(u_n)rdr\log{\frac1{\ga_{0,n}}}+g'(\mu_n)\int_0^1 \la_nhf(u_n)r\log{\frac{\ga_{0,n}}{r}}dr\\
&\le (4+o(1))\log{\frac1{\ga_{0,n}}}
\end{split}
\]
as $n\to \infty$ where we  estimated by taking large $R>1$ and using Lemma \ref{lem:c1} with Lemma \ref{lem:g1},
\[
\begin{split}
g'(\mu_n)\int_0^1 &\la_nhf(u_n)r\log{\frac{\ga_{0,n}}{r}}dr\\
&= \int_0^{1/\ga_{0,n}}\frac{h(\ga_{0,n}r)}{h(0)}\frac{f(u_n(\ga_{0,n}r))}{f(\mu_n)}r\log{\frac1r}dr\\
&=\int_0^{R}e^{z_0}r\log{\frac1r}dr+\int_R^{1/\ga_{0,n}}\frac{h(\ga_{0,n}r)}{h(0)}\frac{f(u_n(\ga_{0,n}r))}{f(\mu_n)}r\log{\frac1r}dr+o(1)\\
&<0
\end{split}
\]
for all large $n\in \mathbb{N}$. Hence noting \eqref{g2}, we prove the desired estimate. This completes the proof. 
\end{proof}
Finally we complete the proof.  
\begin{proof}[Proof of Theorem \ref{apth:sub}] First from Lemma \ref{lem:c41}, the definition of $(\ga_{0,n})$, and \eqref{eq:lgg}, we prove \eqref{aeq:sub2}. In particular, we get $\la_n\to0$ as $n\to \infty$. Then we choose $(r_n)\subset (0,1)$ so that $u_n(r_n)=t_0$ for all $n\in \mathbb{N}$ where $t_0$ is the number in our conditions on $f$. Noting the monotonicity of $g'$ noted in  Lemma \ref{lem:monog}, we see 
\[
\begin{split}
\int_0^{1} \la_n hf'(u_n)rdr&=\int_0^{r_n} \la_n hf'(u_n)rdr+o(1)\le g'(\mu_n)\int_0^1 \la_n hf(u_n)rdr+o(1)
\end{split}
\]
as $n\to \infty$. Hence Lemma \ref{lem:c4} gives
\[
\limsup_{n\to \infty}\int_0^{1} \la_n hf'(u_n)rdr\le4.
\]
Then recalling the final formula in Theorem \ref{th:0}, we prove \eqref{aeq:sub1}.  Finally, fix any $r_0\in(0,1)$. Then for all $r\in[r_0,1]$, using  \eqref{id2}, we get
\[
g'(\mu_n)\int_0^r\la_nf(u_n)sds\log{\frac1{r}}\le g'(\mu_n)u_n(r)\le g'(\mu_n)\int_0^1\la_nf(u_n)sds\log{\frac1{r}}
\]
for all $n\in \mathbb{N}$. Since  $\rho_{0,n}<r_0$ for all large $n\in \mathbb{N}$, we have from Theorem \ref{th:0} and \eqref{aeq:sub1} that
\[
g'(\mu_n)u_n(r)=(4+o(1))\log{\frac1r}
\]  
where $o(1)\to 0$ as $n\to \infty$ uniformly for all $r\in[r_0,1]$. Moreover, from \eqref{id0}, we similarly get
\[
g'(\mu_n)u_n'(r)=-(4+o(1))\frac1r
\]
as $n\to \infty$ with $o(1)$ as above. Consequently, from \eqref{pn},  the previous conclusions, and \eqref{aeq:sub2} with Lemma \ref{lem:gl} and \eqref{eq:lgg}, we obtain
\[
\begin{split}
g'(\mu_n)u_n''(r)&=-g'(\mu_n)u_n'(r)\frac1r-\la_ng'(\mu_n)h(r)f(u_n(r))\\
&=(4+o(1))\frac1{r^2}+o(1)
\end{split}
\]
as $n\to \infty$ as $n\to \infty$ with $o(1)$ as above. This completes the proof. 
\end{proof}
\subsection*{Acknowledgement} This work is supported by JSPS KAKENHI Grant Numbers 21K13813. 

\end{document}